\documentclass[a4paper,reqno]{amsart}
\usepackage{amsmath,amsfonts,amssymb,amsthm}
\usepackage{txfonts}
\usepackage{eucal}
\usepackage{graphicx}
\usepackage{amscd}
\usepackage[all]{xy}
\usepackage{quiver}
\usepackage{verbatim}
\usepackage{mathrsfs}
\usepackage{latexsym}
\usepackage{xspace}

\usepackage{epsfig}
\usepackage{float}

\usepackage{color}
\usepackage{fancybox}
\usepackage{colordvi}
\usepackage{multicol}
\usepackage[colorlinks, linkcolor=blue!72,anchorcolor=orange,
citecolor=red,urlcolor=Emerald, bookmarksopen,bookmarksdepth=2]{hyperref}
\usepackage{url}
\usepackage{mathtools}

\newcommand{\xdashrightarrow}[2][]{%
  \mathrel{\tikz[baseline=-0.5ex] \draw[-{Stealth[scale=1]}, dashed, dash pattern=on 3pt off 2pt] (0,0) -- node[above=-1pt] {$\scriptstyle #2$} node[below=-1pt] {$\scriptstyle #1$} (20pt,0);}%
}

\newcommand{\deleted}[1]{}
\newcommand{\delete}[1]{}
\newcommand{\mynotes}[1]{}
\newcommand\notes[1]{}

\newtheorem{theorem}{Theorem}[section]
\newtheorem{lemma}[theorem]{Lemma}
\newtheorem{corollary}[theorem]{Corollary}

\newtheorem{prop}[theorem]{Proposition}
\theoremstyle{definition}
\newtheorem{definition}[theorem]{Definition}
\newtheorem{remark}[theorem]{Remark}
\newtheorem{construction}[theorem]{Construction}
\newtheorem{exam}[theorem]{Example}

\deleted{\newtheorem{theorem}{Theorem}[section]
	\newtheorem{prop-def}{Proposition-Definition}[section]
	\newtheorem{coro-def}{Corollary-Definition}[section]

}
\newtheorem{prop-def}[theorem]{Proposition-Definition}
\numberwithin{equation}{section}

\def\mod{\operatorname{mod}}
\def\proj{\operatorname{proj}}
\def\add{\operatorname{add}}
\def\ker{\operatorname{Ker}}

\def\im{\operatorname{Im}}
\def\End{\operatorname{End}}
\def\Hom{\operatorname{Hom}}
\def\Pn{\mathbb{P}}
\def\tPn{\widetilde{\mathbb{P}}}
\def\Omegan{\Omega}
\def\tGamma{\widetilde{\Gamma}}

\def\k{\mathbb{K}}
\def\e{\mathbb{E}}
\def\c{\mathcal{C}}

\def\t{\mathcal{T}}
\def\f{\mathcal{F}}
\def\m{\mathcal{M}}
\def\p{\mathcal{P}}
\def\n{\mathcal{N}}
\def\X{\mathcal{X}}
\def\Y{\mathcal{Y}}
\def\M{\widetilde{\mathcal{M}}}

\def\Ext{\operatorname{Ext}}

\def\nprt{\mathrm{pr}_{\mathcal{T}}^{n+1}}

\def\id{\operatorname{Id}} 

\newcommand{\pr}[3]{\operatorname{pr}_{#1}^{#2}(#3)}
\newcommand{\pj}[1]{\mathcal{P}^{\leq{#1}}(\c)}
\newcommand{\cplx}[2]{C^{\bullet}(#1;#2)}
\newcommand{\cmap}[3]{\id_{#1}(#2,#3)}
\def\tu{\widetilde{u}}
\def\tv{\widetilde{v}}
\def\tw{\widetilde{w}}
\def\tz{\widetilde{z}}
\def\tf{\widetilde{f}}
\def\tg{\widetilde{g}}
\def\th{\widetilde{h}}
\def\tf{\widetilde{f}}
\def\tF{\widetilde{F}}
\def\tbeta{\widetilde{\beta}}
\def\talpha{\widetilde{\alpha}}
\def\tgamma{\widetilde{\gamma}}
\def\tdelta{\widetilde{\delta}}
\def\tDelta{\widetilde{\Delta}}
\def\ti{\widetilde{i}}
\def\tp{\widetilde{p}}

\title{From $(n+1)$-term subcategories to $(n+1)$-term complexes}

\author{Zhaotai Zhang}
\address{Department of Mathematical Sciences, 
Tsinghua University, 
100084 Beijing, 
China}
\email{ztzhangmath@163.com}
	
\author{Yu Zhou}
\address{School of Mathematical Sciences,
Beijing Normal University,
100875 Beijing,
China}
\email{yuzhoumath@gmail.com}
	
\author{Bin Zhu}
\address{Department of Mathematical Sciences, 
Tsinghua University, 
100084 Beijing, 
China}
\email{zhu-b@mail.tsinghua.edu.cn}

\dedicatory{Dedicated to the memory of Idun Reiten}

\keywords{$(n+1)$-term subcategory, $(n-1)$-Auslander extriangulated category, extriangulated functor, extriangle equivalence, silting subcategory, $n$-cluster tilting}

\thanks{The work was supported by National Natural Science Foundation of China (Grants No. 12271279;\  12371034).}

\begin{document}

\begin{abstract}
    Let $n$ be a positive integer. Given an $n$-rigid subcategory $\mathcal{M}$ of an algebraic triangulated category $\mathcal{T}$, we explicitly construct an extriangulated functor from the $(n+1)$-term subcategory of $\mathcal{T}$ generated by $\mathcal{M}$ to the full subcategory of $(n+1)$-term complexes in the bounded homotopy category $K^b(\mathcal{M})$, which restricts to the identity on $\mathcal{M}$. In the broader context of reduced $(n-1)$-Auslander extriangulated categories, we provide necessary and sufficient conditions for such a functor to be full, in which case it induces an equivalence of extriangulated categories modulo a certain ideal. Furthermore, we establish a mutation-compatible bijection between the silting subcategories of these categories. Finally, we apply these results to $n$-cluster tilting subcategories and $n$-cluster tilting objects in $(n+1)$-Calabi-Yau categories.
\end{abstract}
	
\maketitle
	
\tableofcontents

\section{Introduction}

As a natural extension of classical tilting theory in the representation theory of algebras, cluster tilting theory arose from the additive categorification of Fomin-Zelevinsky's cluster algebras \cite{FZ}. The key notions in this categorification are cluster tilting objects (or subcategories) and their mutations. These concepts were first introduced in cluster categories by Buan-Marsh-Reineke-Reiten-Todorov~\cite{BMRRT} (see \cite{CCS} for cluster categories of type $A_n$), and subsequently investigated in more general $2$-Calabi-Yau triangulated categories by Keller \cite{K1,K2}, Iyama-Yoshino \cite{IY}, and Gei{\ss}-Leclerc-Schr\"oer \cite{GLS}. One of the central problems in cluster tilting theory is to understand the relationships between the combinatorial mutation structure of cluster tilting objects in a $2$-Calabi-Yau triangulated category, the cluster algebras associated with their quivers, and the representation-theoretic properties of their endomorphism algebras \cite{AIR,KR07,KR08,R}.

To approach these relationships, a fundamental tool is the $\tau$-tilting theory introduced in \cite{AIR}. Let $\k$ be a field, and let $T$ be a cluster tilting object in a $\k$-linear Hom-finite Krull-Schmidt $2$-Calabi-Yau triangulated category $\t$. Let $\Lambda=\End_{\t}(T)$ be the endomorphism algebra of $T$, and let $\mod\Lambda$ be the category of finitely generated right $\Lambda$-modules. The functor $\Hom_{\t}(T,-)\colon \t\rightarrow\mod\Lambda$ induces a mutation-compatible bijection between the set of (basic) cluster tilting objects in $\t$ and the set of (basic) support $\tau$-tilting modules over $\Lambda$.  Notably, this connection is part of a broader algebraic property: for any finite-dimensional $\k$-algebra (and thus for $\Lambda$), the $0$-th cohomology functor $H^{0}(-)\colon K^{[-1,0]}(\proj\Lambda)\rightarrow\mod\Lambda$ induces a mutation-compatible bijection between the set of (basic) $2$-term silting objects in $K^{b}(\proj\Lambda)$ and the set of (basic) support $\tau$-tilting modules over $\Lambda$. Here, $K^{[-1,0]}(\proj\Lambda)$ denotes the full subcategory of the homotopy category $K^b(\proj\Lambda)$ consisting of complexes isomorphic to $2$-term complexes of the form $P_{-1}\rightarrow P_0$, where $P_{-1}, P_0\in\proj\Lambda$, and $\proj\Lambda$ is the subcategory of $\mod\Lambda$ consisting of finitely generated projective modules.
	
Recent developments have generalized this correspondence by dropping the $2$-Calabi-Yau assumption and weakening the cluster tilting condition. Let $\t$ be an arbitrary $\k$-linear Hom-finite Krull-Schmidt triangulated category with suspension functor $\Sigma$, and $T$ a rigid object in $\t$ with endomorphism algebra $\Lambda = \End_{\t}(T)$. By introducing the $2$-term subcategory $\c_T = (\add T) * (\add \Sigma T)$, the functor $\Hom_{\t}(T,-)\colon \c_{T} \rightarrow \mod\Lambda$ \cite{IY, KZ} induces a mutation-compatible bijection between the set of (basic) maximal relative rigid objects in $\c_T$ and the set of (basic) support $\tau$-tilting modules over $\Lambda$ \cite{YZ,FGL}. It is worth noting that this generalization naturally recovers the aforementioned setting: if $T$ is cluster tilting, then $\c_T=\t$, and when $\t$ is $2$-Calabi-Yau, the maximal relative rigid objects coincide precisely with the cluster tilting objects \cite{YZ,FGL}.
	
Recently, Yang \cite{Yang} explicitly constructed a $\k$-linear functor $\mathbb{P}$ from the $2$-term subcategory $\c_T$ to $K^{[-1,0]}(\proj\Lambda)$ in the case where the triangulated category $\t$ is algebraic (i.e., the stable category of a Frobenius exact category). This functor completes the following commutative triangle:
\[\begin{tikzcd}
	\c_{T} \arrow[rrd, "{\Hom_\t(T,-)}"'] \arrow[rr, "\mathbb{P}"] &  & {K^{[-1,0]}(\proj\Lambda)} \arrow[d, "H^{0}(-)"] \\
	&  & \mod\Lambda.                                   
\end{tikzcd}\]
We also refer to \cite{Chen1,FGPPP} for related results.

The main aim of this paper is to generalize Yang's functor to $(n+1)$-term subcategories. Let $\t$ be a triangulated category with suspension functor $\Sigma$. Let $n$ be a positive integer, and let $\m$ be an $n$-rigid subcategory of $\t$; that is, $\Hom_{\t}(\m,\Sigma^{i} \m)=0$ for all $1\leq i\leq n$. We assume that $\m$ is closed under direct summands. The $(n+1)$-term subcategory 
\[\pr{\t}{n+1}{\m}:=(\m)*(\Sigma \m)*\cdots*(\Sigma^{n}\m)\] 
admits an extriangulated structure (see Proposition~\ref{prop:extri}). Let $K^{[-n,0]}(\m)$ be the full subcategory of the bounded homotopy category $K^b(\m)$ consisting of objects isomorphic to complexes concentrated in degrees $-n$ to $0$. This category inherits a natural extriangulated structure from the triangulated structure of $K^b(\m)$. Under the assumption that $\t$ is algebraic, we construct an extriangulated functor (see Definition~\ref{defn:extri fun}) between these two extriangulated categories.

\begin{theorem}[Theorem~\ref{preserve}]\label{thm:exist}
    Suppose $\t$ is an algebraic triangulated category. Then there exists an extriangulated functor
    \[(\Pn,\Omegan)\colon\pr{\t}{n+1}{\m}\rightarrow K^{[-n,0]}(\m),\]
    whose restriction to $\m$ is the identity; that is,  $\Pn|_\m=\id_\m$.
\end{theorem}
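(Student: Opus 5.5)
The plan is to use the Frobenius model directly. Since $\t$ is algebraic, write $\t=\underline{\mathcal{E}}$ for a Frobenius exact category $\mathcal{E}$ whose class of projective-injectives is $\mathcal{I}$. Let $\M$ be the preimage of $\m$ in $\mathcal{E}$; it contains $\mathcal{I}$. Since $\Ext^i_{\mathcal{E}}(-,-)\cong\Hom_{\t}(-,\Sigma^i-)$ for $i\geq1$, $n$-rigidity of $\m$ gives $\Ext^i_{\mathcal{E}}(\M,\M)=0$ for $1\leq i\leq n$.

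\textbf{Step 1: resolutions.} For $X\in\pr{\t}{n+1}{\m}$ we have $X\in\m*\Sigma(\pr{\t}{n}{\m})$. Hence there is a triangle
\[X_1\to M_0\to X\to\Sigma X_1\]
with $M_0\in\m$ and $X_1\in\pr{\t}{n}{\m}$. Adding a projective to $M_0$, we lift this triangle to a conflation $\widetilde X_1\rightarrowtail\widetilde M_0\twoheadrightarrow\widetilde X$ in $\mathcal{E}$ with $\widetilde M_0\in\M$. Iterating $n$ times, the last syzygy $\widetilde X_n$ lies in $\M$. Splicing the conflations gives an exact complex
\[0\to\widetilde M_n\to\cdots\to\widetilde M_0\to\widetilde X\to0\]
in $\mathcal{E}$ with all $\widetilde M_i\in\M$. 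Define $\Pn(X)$ to be the image of $\widetilde M_n\to\cdots\to\widetilde M_0$ in $K^{[-n,0]}(\m)$, obtained by passing to $\underline{\mathcal{E}}$ termwise, so that the projective summands become zero.

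\textbf{Step 2: functoriality.} For a morphism $f\colon X\to Y$, lift it to $\widetilde f$ in $\mathcal{E}$. Extend $\widetilde f$ to a chain map between the resolutions by the comparison argument. Each step is a lifting problem against a conflation whose kernel is an iterated extension of objects of $\M$ in at most $n$ steps, and it is solvable because $\Ext^{1}_{\mathcal{E}}(\widetilde M_i,\widetilde X_{j})=0$. This vanishing follows by dimension shifting from $\Ext^{\le n}_{\mathcal{E}}(\M,\M)=0$, together with the fact that the syzygies are truncated by length.

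The same argument applied to differences shows that two lifts of $\widetilde f$ are homotopic. If $\widetilde f$ factors through an object of $\mathcal{I}$, then the chain map is homotopic to one factoring through a resolution of a projective-injective. That resolution is concentrated in degree $0$ and vanishes in $\underline{\mathcal{E}}$, so the map is null-homotopic. Hence $\Pn$ is a well-defined additive functor, independent of the choices of resolutions up to canonical isomorphism. For $X=M\in\m$ we may take the trivial resolution, which gives $\Pn|_{\m}=\id_{\m}$.

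\textbf{Step 3: the natural transformation $\Omegan$.} The extriangulated structure on $\pr{\t}{n+1}{\m}$ (Proposition~\ref{prop:extri}) has $\e(Z,X)=\Hom_{\t}(Z,\Sigma X)$. I would realize each $\delta\in\e(Z,X)$ by a conflation $\widetilde X\rightarrowtail\widetilde Y\twoheadrightarrow\widetilde Z$ in $\mathcal{E}$, apply a horseshoe-type construction to the resolutions of $\widetilde X$ and $\widetilde Z$, and obtain a degreewise split short exact sequence of complexes
\[\Pn(X)\to\Pn(Y)\to\Pn(Z).\]
Its connecting chain map $\Pn(Z)\to\Sigma\Pn(X)$ in $K^b(\m)$ defines $\Omegan(\delta)$. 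The third term $\Pn(Y)$ lies in $K^{[-n,0]}(\m)$, so this is an $\e$-triangle there. Applied to $\delta$ itself, the same construction shows that $\Pn$ sends the $\e$-triangle realizing $\delta$ to the one realizing $\Omegan(\delta)$.

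\textbf{Main obstacle.} The remaining and hardest work is well-definedness and naturality of $\Omegan$. One must show that $\Omegan(\delta)$ is independent of the chosen conflation and the chosen horseshoe data, that $\Omegan$ is additive, and that it is natural in both variables with respect to $\Pn$. For this I would first try a purely homotopical description: $\Omegan(\delta)$ should be the unique homotopy class lifting $\widetilde\delta$ along the resolutions, which again uses the $\Ext$-vanishing from Step 2. Uniqueness of the lift then gives independence of choices and naturality at once.
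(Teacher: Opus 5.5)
\textbf{Verdict: there is a genuine gap, in Step~3.} Your Steps~1 and~2 reproduce the paper's construction of $\Pn$ in Section~\ref{section3}. Your way of killing maps that factor through a projective-injective $P$ (compose lifts through the one-term resolution of $P$) is a clean substitute for the explicit null-homotopy of Lemma~\ref{homotopy}. But Step~3 is where the content of the theorem lies, and it has two problems.

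\textbf{The extension group is misidentified.} The extension group of $\pr{\t}{n+1}{\m}$ is not $\Hom_{\t}(Z,\Sigma X)$. It is $\e_\m(Z,X)=[\Sigma\pr{\t}{n}{\m}](Z,\Sigma X)$. Writing $Z_1\xrightarrow{\alpha^Z_0}M^Z_0\xrightarrow{\beta^Z_0}Z\xrightarrow{\gamma^Z_0}\Sigma Z_1$ for the first triangle of a presentation of $Z$, this is exactly the set of $\delta$ with $\delta\circ\beta^Z_0=0$. It is in general a proper subgroup: objects of $\m$ are projective, so $\e_\m(M,\Sigma^nM)=0$, whereas $\Hom_{\t}(M,\Sigma^{n+1}M)\cong D\End_{\t}(M)\neq 0$ in the $(n+1)$-Calabi--Yau setting of Section~\ref{section6}. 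This matters for your construction. The first step of your horseshoe must lift $\tbeta^Z_0$ through the deflation $\widetilde Y\to\widetilde Z$, which is possible exactly when $\delta\circ\beta^Z_0=0$. Your $\Ext^1$-vanishing from Step~2 only covers the syzygies $\widetilde X_j$ with $j\geq 1$, not $\widetilde X$ itself.

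\textbf{Well-definedness and naturality of $\Omegan$ are left open.} Your proposed remedy, ``the unique homotopy class lifting $\widetilde\delta$ along the resolutions'', does not make sense as stated. In general $\Sigma X\notin\pr{\t}{n+1}{\m}$, so $\Sigma X$ has no resolution of the kind in Step~1, and $\Sigma\Pn(X)$ is not such a resolution.

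The missing idea is the one the paper is built on:
\begin{itemize}
\item Since $\delta\in\e_\m(Z,X)$, it factors as $\delta=\Sigma g\circ h$ through some $\Sigma V$ with $V\in\pr{\t}{n}{\m}$. For instance, $\delta=\Sigma\delta'\circ\gamma^Z_0$ with $\delta'\colon Z_1\to X$.
\item On $\pr{\t}{n}{\m}$ one constructs a natural isomorphism $\Gamma\colon\Pn\circ\Sigma\Rightarrow\Sigma\circ\Pn$ (Construction~\ref{cons:tG}, Lemma~\ref{lem:natiso}).
\item This allows the intrinsic definition $\Omegan(\delta)=\Sigma\Pn(g)\circ\Gamma_V\circ\Pn(h)$.
\end{itemize}
Even then there is no ``unique lift''. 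The map $\delta'$ is determined only up to adding $\varphi\circ\alpha^Z_0$ with $\varphi\colon M^Z_0\to X$. This ambiguity disappears only after composing with $\Gamma_{Z_1}\circ\Pn(\gamma^Z_0)$, using $\Sigma\alpha^Z_0\circ\gamma^Z_0=0$ and the naturality of $\Gamma$; this is the paper's well-definedness lemma. Once that is done, naturality of $\Omegan$ is formal.

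Finally, you still need to check, with the correct signs (cf.\ \eqref{eq:tg}, \eqref{eq:tGXvar}, \eqref{eq:tdelta}), that the connecting chain map \eqref{eq:chainmap} of an arbitrary horseshoe represents this intrinsically defined class. Only this shows that the horseshoe output is independent of the choices of conflation, lift of $\delta$, and horseshoe data. Without these ingredients, your Step~3 assigns to $\delta$ a class that depends on unverified choices. So you do not yet have a natural transformation, and hence no extriangulated functor.
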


The extriangulated category $\pr{\t}{n+1}{\m}$ is reduced $(n-1)$-Auslander (see Proposition~\ref{prop:reduced}), with its projective objects being precisely those in $\m$.  Conversely, as shown in \cite[Theorem~3.7]{Chen2} every algebraic reduced $(n-1)$-Auslander extriangulated category arises in this way up to extriangulated equivalence.  Consequently, given any algebraic reduced $(n-1)$-Auslander extriangulated category $(\c,\e)$, Theorem~\ref{thm:exist} yields an extriangulated functor from $\c$ to $K^{[-n,0]}(\m)$ that restricts to the identity functor on $\m$, where $\m$ denotes the full subcategory of projective objects in $\c$. We provide several equivalent conditions for such a functor to be full, in which case it induces an extriangle equivalence from an ideal quotient of $\c$ to $K^{[-n,0]}(\m)$.

\begin{theorem}[Theorems~\ref{finalequiv}, \ref{newcor} and \ref{silt_corres}]\label{thm:main}
    Let $(\c,\e)$ be a reduced $(n-1)$-Auslander extriangulated category, and let $\m$ denote its full subcategory of projective objects. Let
    $$(\Pn,\Omegan)\colon\c\to K^{[-n,0]}(\m)$$
    be an extriangulated functor whose restriction to $\m$ is the identity ($\Pn|_\m=\id_\m$). Then the following are equivalent:
    \begin{enumerate}
        \item[(i)] The functor $\Pn$ is full.
        \item[(ii)] The map $\Omegan^{X,Y}$ is injective for all objects $X,Y\in\c$.
        \item[(iii)] $\Hom_{\c}(\Sigma^{i}\m,\m)=0$ for all $1\leq i\leq n-1$.
    \end{enumerate}
    If any of these equivalent conditions holds, then $\Pn$ is dense, and $\Omegan$ is a natural isomorphism, and this extriangulated functor induces an extriangle equivalence: 
    \[\c/[\Sigma^n \m,\m]\rightarrow K^{[-n,0]}(\m),\]
    where $[\Sigma^n \m,\m]$ denotes the ideal of $\c$ consisting of morphisms factoring through a morphism from an object in $\Sigma^{n}\m$ to an object in $\m$. 
    
    Moreover, under these same conditions, the functor $\Pn$ induces a bijection between the set of silting subcategories of $\c$ and the set of silting subcategories of 
	$K^{[-n,0]}(\m)$ (that is, the set of $(n+1)$-term silting subcategories of $K^b(\m)$), which is compatible with mutation.	
\end{theorem}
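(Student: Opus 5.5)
We work throughout in the reduced $(n-1)$-Auslander setting. Every object $X\in\c$ admits a sequence of conflations $X_{i+1}\to M_i\to X_i\dashrightarrow$ with $M_i\in\m$ projective, $X_0=X$, and $X_{n}\in\m$; this is what ``$(n-1)$-Auslander'' provides, namely projective resolutions of length at most $n$. The injectives are $\Sigma^n\m$, and in $\pr{\t}{n+1}{\m}$ these are just the shifts.

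\textbf{Step 1: an explicit model of $\Pn$.} Splicing the resolution gives a complex $M_n\to M_{n-1}\to\cdots\to M_0$ in $K^{[-n,0]}(\m)$. Since $\Pn|_\m=\id$ and $(\Pn,\Omegan)$ sends conflations to distinguished triangles, induction on the length of the resolution shows that $\Pn X$ is isomorphic to this spliced complex. In particular $\Pn$ is dense: every complex in $K^{[-n,0]}(\m)$ is realized, because each truncation cone can be lifted step by step using $\e(X,M)\cong\Hom_{K}(\Pn X,\Sigma M)$. This last isomorphism should itself follow from projectivity. I would first prove that $\Omegan^{X,M}$ is surjective for $M\in\m$, using the long exact Hom sequence along the resolution.

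\textbf{Step 2: the equivalences.} Apply $\Hom_\c(-,Y)$ and $\e(-,Y)$ to the resolution of $X$, and apply $\Hom_K(-,\Pn Y)$ to the stupid-truncation triangles of $\Pn X$. This yields two long exact sequences, and $(\Pn,\Omegan)$ maps the first to the second. On the $\m$-terms the maps are identities, so a five-lemma argument gives the following: fullness of $\Pn$ on $(X,Y)$ is equivalent to injectivity of $\Omegan$ on the adjacent terms $(X_1,Y)$. Running the induction over the syzygies $X_i$ gives (i)$\Leftrightarrow$(ii).

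For (ii)$\Leftrightarrow$(iii), compute the kernel of $\Omegan^{X,Y}$. In $K^{[-n,0]}(\m)$ we have $\Hom_K(\Pn X,\Sigma\Pn Y)$, and the relevant extension groups are detected by the terms $\Hom(M_i,M'_j)$. The extra classes in $\e$ that die correspond to morphisms $\Sigma^i M\to M'$ for $1\le i\le n-1$. Concretely, take $X=\Sigma^{i}M$; its resolution has length $i$. Then $\e(\Sigma^{i+1}M,M')$-type classes map to zero in $K$ (non-adjacent degrees give zero there), while in $\c$ they equal $\Hom(\Sigma^{i}M,M')$. Hence injectivity forces (iii). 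Conversely, (iii) makes the comparison maps isomorphisms on all $\m$-layers, which gives injectivity.

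\textbf{Step 3: the consequences.} Under these conditions $\Omegan$ is injective by (ii) and surjective by Step 1 together with the five lemma, so it is a natural isomorphism. We next identify the kernel of $\Pn$ on morphisms. A map $f\colon X\to Y$ with $\Pn f$ null-homotopic factors through the last term. Using the same long exact sequences, it factors through $X\to\Sigma^n M_n\to Y'$ with $Y'\in\m$, so the kernel is exactly $[\Sigma^n\m,\m]$. These morphisms are killed because $\Hom_K(\Sigma^nM,M')=0$ in $K^{[-n,0]}$. Full, dense, with that kernel, and with $\Omegan$ an isomorphism, $\Pn$ induces an extriangle equivalence $\c/[\Sigma^n\m,\m]\to K^{[-n,0]}(\m)$. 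One must check that the quotient inherits the extriangulated structure; this holds because the ideal is generated by morphisms from injectives to projectives, on which $\e$ vanishes.

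For silting, I would use that silting subcategories are defined via $\e$-vanishing and generation by conflations, together with the containment of projectives and injectives. The functor preserves $\e$ bijectively and induces a bijection on isoclasses of objects, since the quotient ideal contains no isomorphisms between nonzero objects. So silting subcategories correspond, and mutation, defined through $\e$-approximation conflations, is transported as well.

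\textbf{Main obstacle.} I expect the hard step to be (iii)$\Rightarrow$(ii). It requires controlling $\Omegan^{X,Y}$ for arbitrary $Y$, not just $Y\in\m$, and that needs a careful double induction over the resolutions of both $X$ and $Y$, tracking morphisms $\Sigma^i\m\to\m$.
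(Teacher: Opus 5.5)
\textbf{Gap 1: Step~1 rests on a false claim.} You assert that $\Omegan^{X,M}$ is surjective for $M\in\m$, and hence that $\Pn$ is dense, ``from projectivity''. Step~3 then uses this to get surjectivity of $\Omegan$. Without (iii) the claim is false. Let $n\ge 3$ and take $0\neq a\colon\Sigma M'\to M$ with $M,M'\in\m$.
\begin{itemize}
\item By Proposition~\ref{prop:complete}, $a$ is an inflation, so there is an extriangle $\Sigma M'\xrightarrow{a}M\xrightarrow{b}X_1\dashrightarrow$ with $X_1\in\pj{2}$.
\item By Lemma~\ref{newlem1}, $\Pn(a)$ lies in $\Hom_{K^b(\m)}(\Sigma M',M)=0$, so $\Pn(b)$ is a split monomorphism.
\item A retraction of the form $\Pn(f)$ would force $fb=\id_M$ (since $\Pn|_\m=\id_\m$), and then $a=fba=0$.
\end{itemize}
So $\Pn$ is not surjective on $\Hom_\c(X_1,M)$, and by Lemmas~\ref{lem:immdcor}(2) and~\ref{newlem1}, $\Omegan^{\Sigma X_1,M}$ is not surjective either.

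Under (iii) the surjectivity does hold, but proving it is exactly the induction you defer as the ``main obstacle''. The paper does this in two stages:
\begin{itemize}
\item Lemma~\ref{XtoM}: for $X\in\pj{p}$, $\Pn$ is bijective on $\Hom_\c(\Sigma^kX,\Sigma^lM)$ except at $(k,l)=(n-p,0)$, where it is surjective with kernel $[\Sigma^n\m]$. The base case is where (iii) kills $\Hom_\c(\Sigma^k\m,\Sigma^l\m)$ for $k\neq l$.
\item Proposition~\ref{prop:bi+surj}: induction on the projective dimension of $Y$. It uses the refined five lemma (Lemma~\ref{lem:five}, which gives $\ker c_3=\operatorname{Im}(a_2|_{\ker c_2})$) and the fact that $M\to Y$ is a right $\m$-approximation.
\end{itemize}
This produces bijectivity of $\Omegan$, fullness, and the kernel $[\Sigma^n\m,\m]$ together. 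Density then follows from fullness (Lemma~\ref{dense}); it should not be established beforehand.

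\textbf{Gap 2: Step~2 does not prove (i)$\Leftrightarrow$(ii).} A five-lemma comparison gives only one-sided implications, and only once the neighbouring comparison maps are known. For general $Y$, those maps (on $(X_1,Y)$ and $(\Sigma X_1,Y)$) are exactly the unknowns. As written, nothing in your outline passes from fullness to (ii) or (iii). The paper proves (i)$\Rightarrow$(iii) directly:
\begin{itemize}
\item For $f\colon\Sigma^iM\to M'$ with $1\le i\le n-1$, realize $(\epsilon_i^M)^{\#}(f)$ as an extriangle $M'\to N\xrightarrow{h}\Sigma^{i+1}M\dashrightarrow$.
\item Its image under $\Omegan$ lies in $\Hom_{K^b(\m)}(\Sigma^{i+1}M,\Sigma M')=0$, so $\Pn(h)$ is a split epimorphism.
\item Fullness lifts the section to some $g$, and faithfulness of $\Pn$ on $\Sigma^{i+1}\m$ (Lemma~\ref{newlem3}) gives $hg=\id$.
\item Hence the extriangle splits and $f=0$.
\end{itemize}
Your (ii)$\Rightarrow$(iii) argument agrees with the paper's.

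\textbf{Two smaller points.}
\begin{itemize}
\item ``The ideal contains no isomorphisms'' does not imply that $\Pn$ reflects isomorphism classes. What is needed is $[\Sigma^n\m,\m]^2=0$, which holds because $\Hom_\c(\m,\Sigma^n\m)\cong\e(\m,\Sigma^{n-1}\m)=0$; this makes $\id+x$ invertible for $x$ in the ideal.
\item Presilting involves all $\e^i$, not just $\e$. These match by Lemma~\ref{lem:siltiff}.
\end{itemize}
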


Applying this theorem to the case $\c=\pr{\t}{n+1}{\m}$, we obtain corresponding results (see Theorem~\ref{thm:algtri}) for the functor constructed in Theorem~\ref{thm:exist}. In particular, for $n=1$, setting $\m = \add T$, reduces the category $\pr{\t}{2}{\m}$ to the $2$-term subcategory $\c_T$. In this setting, condition (ii) is automatically satisfied, and thus our result recovers \cite[Theorem~1.1]{Yang}.

As a further application, we generalize \cite[Theorem~1.2]{Yang} to arbitrary $n$. One one hand, Corollary~\ref{cor:application} extends part (a) to the setting where $\m$ is an $n$-cluster tilting subcategory. On the other hand, Corollary~\ref{cor:application2} generalizes part (b) to the case where $\t$ is $(n+1)$-Calabi-Yau and $\m$ is the additive hull of an $n$-cluster tilting object, using a formulation adapted to the higher-dimensional setting to bypass characterizations specific to the $n=1$ case.
	
\begin{corollary}[Corollary~\ref{cor:cluster-tilting}]\label{cor:application}
    Let $\t$ be an algebraic triangulated category with an $n$-cluster tilting subcategory $\m$. Suppose that $\Hom_{\t}(\Sigma^{i}\m,\m)=0$ for all $1\leq i\leq n-1$. Then there exists an extriangulated functor
	\[
		(\Pn,\Omegan)\colon \t\rightarrow K^{[-n,0]}(\m),
	\]
	which induces an extriangle equivalence
	\[
		\t/[\Sigma^n\m,\m]\xrightarrow{\sim}K^{[-n,0]}(\m). 
	\]
	Moreover, the functor $\Pn$ is an equivalence if and only if $\Sigma^{n+1}\m=\m$. In this case, $K^{[-n,0]}(\m)$ admits a triangulated structure. If $\t$ is $\k$-linear and admits a Serre functor $\mathbb{S}$, then $\Pn$ is an equivalence if and only if $\m=\mathbb{S}\m$, which is also equivalent to $\m$ being self-injective. 
\end{corollary}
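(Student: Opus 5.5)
The plan is to reduce everything to Theorems~\ref{thm:exist} and \ref{thm:main}, taking $\c=\pr{\t}{n+1}{\m}$.

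\textbf{Step 1: the category is all of $\t$.} For an $n$-cluster tilting subcategory $\m$ one has $\t=\m*\Sigma\m*\cdots*\Sigma^n\m$. Given $X\in\t$, take successive right $\m$-approximations and their cones. Each cone lies in $\Sigma^{-?}$ of the suitably shifted perpendicular category. After $n$ steps the remaining object lies in $\Sigma^n\m$, by the cluster tilting condition
\[\m=\{X\mid \Hom(\m,\Sigma^i X)=0,\ 1\le i\le n\}.\]
This is the standard Iyama--Yoshino argument, so $\pr{\t}{n+1}{\m}=\t$.

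\textbf{Step 2: existence and the equivalence.} Theorem~\ref{thm:exist} gives $(\Pn,\Omegan)\colon\t\to K^{[-n,0]}(\m)$ with $\Pn|_\m=\id_\m$. By Proposition~\ref{prop:reduced}, $\t$ with this extriangulated structure is reduced $(n-1)$-Auslander with projectives $\m$. The hypothesis $\Hom_\t(\Sigma^i\m,\m)=0$ for $1\le i\le n-1$ is condition (iii) of Theorem~\ref{thm:main}. That theorem therefore gives that $\Pn$ is full and dense and that $\Omegan$ is an isomorphism. It also yields the extriangle equivalence $\t/[\Sigma^n\m,\m]\simeq K^{[-n,0]}(\m)$.

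\textbf{Step 3: when $\Pn$ is an equivalence.} Since $\Pn$ is full and dense and induces an equivalence modulo $[\Sigma^n\m,\m]$, it is an equivalence iff this ideal vanishes, i.e.\ iff $\Hom_\t(\Sigma^n\m,\m)=0$. Given the hypothesis, this is iff $\Hom(\Sigma^{i}\m,\m)=0$ for $1\le i\le n$, equivalently $\Hom(\m,\Sigma^{i}(\Sigma^{-n-1}\m)\ldots)$. I would phrase it as follows: $\Sigma^{-(n+1)}$ of the condition reads $\Hom(\Sigma^{n+1}\m,\Sigma^{j}\m)=0$ for $j=1,\dots,n$. By the cluster tilting characterization $\m=\{X\mid\Hom(X,\Sigma^j\m)=0,\ 1\le j\le n\}$, this gives $\Sigma^{n+1}\m\subseteq\m$; applying the same argument to $\Sigma^{-(n+1)}$ gives the reverse inclusion. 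Conversely, if $\Sigma^{n+1}\m=\m$, then $\Hom(\Sigma^n\m,\m)=\Hom(\Sigma^n\m,\Sigma^{n+1}\m)=\Hom(\m,\Sigma\m)=0$. In that case $K^{[-n,0]}(\m)\simeq\t$ is an extriangle equivalence, so the triangulated structure transfers. Concretely, the extriangulated structure of $K^{[-n,0]}(\m)$ is then triangulated because that of $\t$ is.

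\textbf{Step 4: the Serre functor case.} Using $\Hom(\Sigma^n M,M')\cong D\Hom(M',\mathbb S\Sigma^nM)$, vanishing becomes $\Hom(\m,\Sigma^{n}\mathbb S_{n}\ldots)$. More cleanly, write $\mathbb S=\mathbb S_n\Sigma^n$, where $\mathbb S_n=\mathbb S\Sigma^{-n}$ is the standard autoequivalence with $\mathbb S_n\m=\m$ for $n$-cluster tilting $\m$ (Iyama--Yoshino). Then $\Sigma^{n+1}\m=\m$ iff $\mathbb S\m=\mathbb S_n\Sigma^{n}\m=\Sigma^{-1}\ldots$; I need to recheck this composition. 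Alternatively, and more directly, by Serre duality $\Hom(\Sigma^n\m,\m)=0$ iff $\Hom(\m,\mathbb S\Sigma^n\m)=0$. Since $\mathbb S\Sigma^n\m=\mathbb S_n\Sigma^{2n}\m$, matching with the characterization of $\m$ should yield $\mathbb S\m=\m$. Self-injectivity of $\m$, meaning projectives coincide with injectives in $\t$ viewed as extriangulated, is the statement that the injectives $\Sigma^n\m$ equal the projectives $\m$ up to the relevant shift. Identifying the injectives via $\mathbb S$ gives the last equivalence.

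I expect the bookkeeping in Step 4, matching shifts and Serre duality, to be the main obstacle.
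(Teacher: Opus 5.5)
Your Steps 1--2 are exactly the paper's argument: \cite[Theorem~3.1]{IY} gives $\t=\pr{\t}{n+1}{\m}$, and then Theorems~\ref{thm:exist} and \ref{thm:main} apply. In Step 3, your derivation of $\Hom_\t(\Sigma^n\m,\m)=0\iff\Sigma^{n+1}\m=\m$ from the two perpendicular descriptions of $\m$ is correct. It is a self-contained replacement for what the paper takes from \cite[Proposition~3.6]{IO}.

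One sentence in Step 3 is false, however. The extriangulated structure on $\t$ in this corollary is $\e_\m$, not $\Hom_\t(-,\Sigma-)$. Both $(\t,\e_\m)$ and $K^{[-n,0]}(\m)$ have the nonzero projectives $\m$ and injectives $\Sigma^n\m$. So neither is a triangulated category as an extriangulated category, and the extriangle equivalence cannot make $K^{[-n,0]}(\m)$ triangulated. The triangulated structure in the statement is simply the one transported from the triangulated category $\t$ along the additive equivalence $\Pn$. It is unrelated to the extriangles that $K^{[-n,0]}(\m)$ inherits from $K^b(\m)$.

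The genuine gap is Step 4. It is unfinished and rests on two mistakes.

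First, the index is wrong. With the paper's convention (vanishing for $1\le i\le n$, which is Iyama--Yoshino's $(n+1)$-cluster tilting), the invariant autoequivalence is $\mathbb{S}\Sigma^{-(n+1)}$, not $\mathbb{S}\Sigma^{-n}$. For $M\in\m$ and $1\le i\le n$, Serre duality gives $\Hom_\t(\m,\Sigma^i\mathbb{S}\Sigma^{-(n+1)}M)\cong D\Hom_\t(M,\Sigma^{n+1-i}\m)=0$. It also gives $\Hom_\t(\Sigma^{n+1}\mathbb{S}^{-1}M,\Sigma^i\m)\cong D\Hom_\t(\m,\Sigma^{n+1-i}M)=0$. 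Hence $\mathbb{S}\m=\Sigma^{n+1}\m$, and $\mathbb{S}\m=\m\iff\Sigma^{n+1}\m=\m$ is immediate.

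Second, ``self-injective'' is a property of $\mod\m$, not of $(\t,\e_\m)$. In $(\t,\e_\m)$ one has $\m\cap\Sigma^n\m=0$, so under your reading the statement would be false. The relevant link is $D\Hom_\m(M,-)\cong\Hom_\t(-,\mathbb{S}M)|_\m$, so $\mathbb{S}\m=\m$ makes injectives and projectives of $\mod\m$ coincide.

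The converse needs a real argument. With $\mathbb{S}M\cong\Sigma^{n+1}N$, $N\in\m$, self-injectivity gives $\phi\colon M'\to\Sigma^{n+1}N$ with $M'\in\m$ and $\Hom_\t(L,\phi)$ bijective for all $L\in\m$. You must show $\phi$ is an isomorphism. Complete $\phi$ to a triangle $Z\to M'\to\Sigma^{n+1}N\to\Sigma Z$. Bijectivity of $\phi_*$ and $n$-rigidity give $\Hom_\t(\m,Z)=0=\Hom_\t(\m,\Sigma Z)$. For $2\le i\le n$, use $\Hom_\t(L,\Sigma^{n+i}N)\cong D\Hom_\t(\Sigma^{i-1}N,\mathbb{S}\Sigma^{-(n+1)}L)=0$, which is exactly where the hypothesis $\Hom_\t(\Sigma^{j}\m,\m)=0$ for $1\le j\le n-1$ enters. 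This gives $\Hom_\t(\m,\Sigma^iZ)=0$ for $0\le i\le n$, so $Z\in\m$ and then $Z=0$.

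This is the content the paper imports from \cite[Proposition~3.6]{IO}. Without it, Step 4 establishes neither of the last two equivalences.
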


\begin{corollary}[Corollary~\ref{cor:cto}]\label{cor:application2}
    Let $\t$ be a Hom-finite Krull-Schmidt $(n+1)$-Calabi-Yau algebraic triangulated category, and let $M$ be an $n$-cluster tilting object in $\t$ satisfying $\Hom_{\t}(\Sigma^i M,M)=0$ for all $1\leq i\leq n-1$. Then the functor $\Pn$ induces a bijection between the isoclasses of basic $2$-term $n$-cluster tilting objects in $\t$ and the isoclasses of basic $2$-term silting complexes in $K^b(\proj\End_\t(M))$, which is compatible with mutation. Here, an object $X$ in $\t$ is called $2$-term if $X\in(\add M)*\add(\Sigma M)$.

    Assume in addition that $\Hom_\t(\Sigma^n M,M)=0$. Then the above bijection restricts to a bijection between the isoclasses of basic $2$-term $n$-cluster tilting objects $X$ in $\t$ satisfying $\Hom_\t(\Sigma X,X)=0$ and the isoclasses of basic $2$-term tilting complexes in $K^b(\proj\End_\t(M))$. Moreover, for any $2$-term $n$-cluster tilting object $X$ in $\t$, $\Hom_\t(\Sigma X, X)=0$ if and only if $\Hom_\t(\Sigma^i X,X)=0$ for all $1\leq i\leq n$.
\end{corollary}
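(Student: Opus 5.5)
We plan to deduce Corollary~\ref{cor:application2} from Theorem~\ref{thm:main} with $n$ replaced by $1$ in the target, by passing from $\t$ to a suitable reduced $0$-Auslander extriangulated category. Put $\m=\add M$ and $\Lambda=\End_\t(M)$. The first step is to identify the relevant categories. The $2$-term subcategory $\c_M=(\add M)*(\add\Sigma M)$ equals $\pr{\t}{2}{\m}$ and is reduced $0$-Auslander by Proposition~\ref{prop:reduced}. Its projectives are exactly $\m$, since $M$ is $n$-rigid, hence $1$-rigid. Theorem~\ref{thm:exist} with $n=1$ gives an extriangulated functor $\c_M\to K^{[-1,0]}(\m)\simeq K^{[-1,0]}(\proj\Lambda)$. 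For $n=1$ condition (iii) of Theorem~\ref{thm:main} is vacuous. Hence that theorem yields a mutation-compatible bijection between silting subcategories of $\c_M$ and $2$-term silting subcategories of $K^b(\proj\Lambda)$. Passing to basic additive generators, which is possible by Hom-finiteness and Krull--Schmidt, turns this into a bijection of isoclasses of basic objects.

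The main work is to show that the silting objects of the extriangulated category $\c_M$ are exactly the $2$-term $n$-cluster tilting objects of $\t$. In $\c_M$, $\e(X,Y)=\Hom_\t(X,\Sigma Y)$, so silting there means $\Hom_\t(X,\Sigma X)=0$ plus a generation condition: each $m\in\m$ admits a conflation $m\to X_0\to X_1$ with $X_i\in\add X$. For one direction, take $X$ $n$-cluster tilting lying in $\c_M$. It is rigid. Since $M$ is $2$-term with respect to $X$ (use the Calabi--Yau duality and the hypothesis $\Hom(\Sigma^iM,M)=0$ for $1\le i\le n-1$), a left $\add X$-approximation of $M$ gives the required conflation, with cone in $\add X$ by the usual rank and $n$-cluster-tilting argument.

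The converse is the main obstacle. Here $X\in\c_M$ is silting in $\c_M$, and we must prove $\Hom_\t(X,\Sigma^iX)=0$ for $2\le i\le n$ and then maximality. For the vanishing, we would resolve $X$ by the triangle $M_1\to M_0\to X\to\Sigma M_1$. Applying $\Hom(-,\Sigma^iX)$ and using that $X\in\m*\Sigma\m$ together with the vanishing hypotheses, $\Hom(M,\Sigma^i X)=0$ for $2\le i\le n$; the remaining term is controlled by $(n+1)$-CY duality. Maximality should follow by counting summands: silting objects in $\c_M$ have $|M|$ summands by the bijection with $2$-term silting complexes, and every $n$-cluster tilting object in an $(n+1)$-CY category has the same number of summands. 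If counting is unavailable, we would instead show directly that $\add X$ is functorially finite and that $\Hom(X,\Sigma^{1..n}Y)=0$ forces $Y\in\add X$, using the generation conflations and the $n$-cluster tilting property of $M$. Mutation compatibility is then inherited from Theorem~\ref{thm:main}, since mutation in $\c_M$ agrees with $n$-cluster tilting mutation via the same exchange triangles.

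For the second part, assume also $\Hom(\Sigma^nM,M)=0$. A $2$-term silting complex $P$ is tilting iff $\Hom(P,P[-1])=0$, and $\Pn$ preserves Hom spaces modulo $[\Sigma\m,\m]$. This ideal vanishes here: by $(n+1)$-CY duality, $\Hom(\Sigma M,M)\cong D\Hom(M,\Sigma^{n}M)=0$ since $M$ is $n$-rigid. Hence $\Hom(\Sigma X,X)\cong\Hom(\Pn X,\Pn X[-1])$, which gives the restricted bijection. Finally, $\Hom(\Sigma^iX,X)\cong D\Hom(X,\Sigma^{n+1-i}X)$. This vanishes for $2\le i\le n$, because then $n+1-i\in[1,n-1]$ and $X$ is $n$-rigid. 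So only $i=1$ matters, which is the last equivalence.
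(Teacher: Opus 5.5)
Your route differs from the paper's. You work in the $2$-term category $\c_M=\pr{\t}{2}{\m}$ with the $n=1$ (Yang-type) functor. The paper keeps the whole $(n+1)$-term category $(\t,\e_\m)=\pr{\t}{n+1}{\m}$ and the degree-$n$ functor $\Pn\colon\t\to K^{[-n,0]}(\m)$, and obtains the first assertion from Theorem~\ref{silt_corres} and Proposition~\ref{prop:3equiv}.

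Your forward direction is fine; it is the approximation argument of Proposition~\ref{prop:3equiv}. The converse has a genuine gap at maximality.
\begin{itemize}
\item The counting argument fails. Constancy of the number of summands of $n$-cluster tilting objects is not available in an arbitrary $(n+1)$-Calabi--Yau category for $n\ge2$. Even granting it, equal counts prove nothing unless you already know $X$ is a summand of some $n$-cluster tilting object.
\item Your fallback only restates the goal. The underlying problem is that $\c_M$ sees only $2$-term objects, while $n$-cluster tilting must be tested against \emph{every} $Y\in\t$ with $\Hom_\t(X,\Sigma^iY)=0$ for $1\le i\le n$.
\item The paper handles arbitrary $Y$ by using $\Pn$ on all of $\t$. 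Lemma~\ref{lem:shorter} and Calabi--Yau duality turn this vanishing into $\e_\m^i(X,Y)=0=\e_\m^i(Y,X)$ for all $i>0$. Theorem~\ref{newcor}(2) transports this to $K^b(\m)$, and Lemma~\ref{lem:silequiv} (Aihara--Iyama) gives $\Pn(Y)\in\Pn(\add X)$.
\item To rescue your route, you must first show such $Y$ is $2$-term. From generation triangles $M\to X^0\to X^1\to\Sigma M$ one gets $\Hom_\t(M,\Sigma^iY)=0$ for $1\le i\le n-1$. Since $\m$ is $n$-cluster tilting, this forces $Y\in\m*\Sigma\m$, and only then can you apply Aihara--Iyama in $K^{[-1,0]}(\m)$.
\item Your claim $\Hom_\t(M,\Sigma^nX)=0$ is false, e.g.\ for $X=\Sigma M$. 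The vanishing of $\Hom_\t(X,\Sigma^nX)$ must instead come from $\Hom_\t(X,\Sigma^nX)\cong D\Hom_\t(X,\Sigma X)$.
\end{itemize}

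The second part also has gaps.
\begin{itemize}
\item Your duality formulas are wrong. With Serre functor $\Sigma^{n+1}$ one has $\Hom_\t(\Sigma^iX,X)\cong D\Hom_\t(X,\Sigma^{n+1+i}X)$ and $\Hom_\t(\Sigma M,M)\cong D\Hom_\t(M,\Sigma^{n+2}M)$; you have confused $\Hom_\t(\Sigma^iX,X)$ with $\Hom_\t(X,\Sigma^iX)$. The ideal $[\Sigma\m,\m]$ does vanish, but only because $\Hom_\t(\Sigma M,M)=0$ is assumed.
\item Your last step would give $\Hom_\t(\Sigma^nX,X)=0$ for every $n$-rigid $X$ in any $(n+1)$-Calabi--Yau category. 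This already fails for $X=P_1\oplus P_2$ in the $2$-cluster category of Example~\ref{exm:1}, where $\Hom_\t(\Sigma^2P_2,P_1)\neq0$.
\item For $2$-term $X$ the cases $2\le i\le n-1$ do follow from the hypotheses on $M$, but $i=n$ is the real content, and you are missing the paper's idea:
  \begin{itemize}
  \item $\Hom_\t(\Sigma^nM,M)=0$ makes $\Pn\colon\t\to K^{[-n,0]}(\proj A)$ an equivalence, so $A$ is self-injective by Corollary~\ref{cor:cluster-tilting}.
  \item A tilting complex over a self-injective algebra has self-injective endomorphism algebra. So if $\Pn(X)$ is tilting, $\End_\t(X)$ is self-injective.
  \item Corollary~\ref{cor:cluster-tilting} applied to $\add X$ then gives $\Hom_\t(\Sigma^nX,X)=0$.
  \end{itemize}
\item The isomorphism $\Hom_\t(\Sigma X,X)\cong\Hom_{K^b(\m)}(\Sigma\Pn(X),\Pn(X))$ does not follow from ``$\Pn$ is faithful modulo $[\Sigma\m,\m]$'', because $\Sigma X$ and $\Sigma^{-1}X$ lie outside $\c_M$. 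You need a comparison of negative Homs as in Lemma~\ref{lem:firstneg}. Its kernel $[\Sigma^{-1}\m](X,\Sigma^{-1}X)$ vanishes since $\Hom_\t(X,\Sigma^{-1}\m)=0$ for $2$-term $X$, which uses $\Hom_\t(\Sigma\m,\m)=0=\Hom_\t(\Sigma^2\m,\m)$.
\end{itemize}
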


At the final stage of the preparation of this paper, we learned of a forthcoming work by Lior Silberberg \cite{S}, who has independently obtained some related results.
	
The paper is organized as follows. In Section~\ref{section2}, we recall some basic notions of extriangulated categories and study the two extriangulated categories $\pr{\t}{n+1}{\m}$ and $K^{[-n,0]}(\m)$ associated with an $n$-rigid subcategory $\m$ of a triangulated category $\t$. In Section~\ref{section3}, assuming that $\t$ is algebraic, we construct a functor $\Pn\colon\pr{\t}{n+1}{\m}\rightarrow K^{[-n,0]}(\m)$. In Section~\ref{sec:extri}, we prove that $\Pn$ can be lifted to an extriangulated functor. In Section~\ref{section5}, we deduce our main result, Theorem~\ref{thm:main}. Finally, in Section~\ref{section6}, we apply this result to the settings of $n$-cluster tilting subcategories and $n$-cluster tilting objects in $(n+1)$-Calabi-Yau categories.

\subsection*{Convention}

Throughout this paper, all categories are additive. All subcategories are assumed to be full, additive, and closed under isomorphisms. For a category $\c$, we write $X\in\c$ for an object in $\c$, and $\m\subseteq\c$ for a subcategory of $\c$. For a subcategory $\X\subseteq\c$, we denote by $[\X]$ the ideal of $\c$ consisting of morphisms factoring through an object in $\X$. For two subcategories $\X,\Y\subseteq\c$, we denote by $[\X,\Y]$ the ideal of $\c$ consisting of morphisms factoring through a morphism from an object in $\X$ to an object in $\Y$. For a class $\X$ of objects in $\c$, let $\add\X$ denote the subcategory of $\c$ whose objects are direct summands of finite direct sums of objects in $\X$. If $\X=\{X\}$, we simply write $\add X$ instead of $\add\{X\}$. The composition of morphisms $f\colon X \rightarrow Y$ and $g\colon Y \rightarrow Z$ is denoted by $gf$ (or $g \circ f$). For any two subcategories $\X$ and $\Y$ of a triangulated category $\t$, we denote by $\X*\Y$ the subcategory of $\t$ consisting of the objects $Z$ such that there exists a triangle $X\to Z\to Y\to \Sigma X$ with $X\in\X$ and $Y\in\Y$.

\section{Preliminaries}\label{section2}

In this section, we first recall some basic notions of extriangulated categories and then focus on two extriangulated categories arising from a certain subcategory of a triangulated category.

\subsection{Basic notions of extriangulated categories}

An \emph{extriangulated} category is a triple $(\mathcal{C},\mathbb{E},\mathfrak{s})$, where 
\begin{itemize}
    \item $\c$ is an additive category,
    \item $\e\colon\c^{\mathrm{op}}\times\c\rightarrow\mathrm{Ab}$ is a biadditive functor taking values in the category $\mathrm{Ab}$ of abelian groups, and
    \item $\mathfrak{s}$ is an additive realization sending each element $\xi\in\e(X,Y)$ to an equivalence class of sequences of morphisms $Y\to Z\to X$.
\end{itemize}
This triple is subject to certain axioms; see \cite[Definition 2.12]{NP} or \cite[Appendix A]{GNP2}.

For any $w\in\e(X,Y)$ and any sequence $Y\xrightarrow{u} Z\xrightarrow{v} X$ in $\mathfrak{s}(w)$, the sequence
\[Y\xrightarrow{u} Z\xrightarrow{v} X \xdashrightarrow{w}\]
is called an \emph{extriangle}. The group $\e(X,Y)$ is called the \emph{extension group} from $X$ to $Y$. When the additive realization $\mathfrak{s}$ is clear from the context, we simply denote the extriangulated category by $(\c,\e)$.

Let $w\in\e(X,Y)$. For $f\in\Hom_{\c}(X',X)$ and $g\in\Hom_{\c}(Y,Y')$, we denote $\e(f,Y)(w)$ and $\e(X,g)(w)$ by $f^{*}(w)$ and $g_{*}(w)$, respectively. Moreover, by Yoneda's lemma, the element $w$ of $\e(X,Y)$ induces natural transformations $w_{\#}:\Hom_{\c}(-,X)\rightarrow\e(-,Y)$ and $w^{\#}:\Hom_{\c}(Y,-)\rightarrow\e(X,-)$. For any $C\in\c$, the components $(w_{\#})_{C}$ and $(w^{\#})_{C}$ are given by:
\[
(w_{\#})_{C}: \Hom_{\c}(C,X)\rightarrow\e(C,Y);\quad f\mapsto f^{*}(w),
\]
\[
(w^{\#})_{C}:\Hom_{\c}(Y,C)\rightarrow\e(X,C);\quad g\mapsto g_{*}(w). 
\]


\begin{prop}[{\cite[Proposition A.8]{GNP2}}]\label{def:exactseq}
Let $(\c,\e)$ be an extriangulated category and $Y\xrightarrow{u}Z\xrightarrow{v}X\xdashrightarrow{w}$ be an extriangle in $\c$. Then the following sequences of natural transformations are exact:
\[
\Hom_{\c}(X,-)\xrightarrow{-\circ v}\Hom_{\c}(Z,-)\xrightarrow{-\circ u}\Hom_{\c}(Y,-)\xrightarrow{w^{\#}}\e(X,-)\xrightarrow{v^{*}}\e(Z,-)\xrightarrow{u^{*}}\e(Y,-),
\]
\[
\Hom_{\c}(-,Y)\xrightarrow{u\circ -}\Hom_{\c}(-,Z)\xrightarrow{v\circ -}\Hom_{\c}(-,X)\xrightarrow{w_{\#}}\e(-,Y)\xrightarrow{u_{*}}\e(-,Z)\xrightarrow{v_{*}}\e(-,X).
\]
\end{prop}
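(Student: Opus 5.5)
We prove the two long exact sequences directly from the extriangulated axioms (ET1)--(ET4), (ET4)$^{\mathrm{op}}$ of \cite[Definition 2.12]{NP}. By the duality $(\c,\e)\mapsto(\c^{\mathrm{op}},\e^{\mathrm{op}})$, which swaps the two sequences, it suffices to prove the first. Each term is a functor of the variable object $C\in\c$. Composability of consecutive maps is checked first: the compositions $u\circ v$ and $w^{\#}\circ(-\circ u)$, $v^*\circ w^{\#}$, $u^*\circ v^*$ vanish. The facts needed here are that $vu=0$, $u_*w=0$ and $v^*w=0$, standard from \cite[Lemma 3.2]{NP}, together with the functoriality $v^*u^* \,\leftrightarrow\, (vu)^*=0$.

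Exactness at $\Hom(Z,C)$ and $\Hom(Y,C)$ is the content of \cite[Corollary 3.12]{NP}.

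\emph{At $\Hom(Z,C)$:} if $fu=0$, we apply (ET3)$^{\mathrm{op}}$ to the morphism of extriangles from $Y\to Z\to X$ to $0\to C= C$ to obtain a factorization of $f$ through $v$.

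\emph{At $\Hom(Y,C)$:} if $g_*w=0$, the realized extriangle $C\to E\to X$ of $g_*w$ splits. (ET2) and (ET3) then give a morphism from $Y\to Z\to X$ to the split sequence $C\to C\oplus X\to X$ extending $g$. Its first component $Z\to C$ restricts to $g$ along $u$.

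\emph{At $\e(X,C)$:} given $\delta\in\e(X,C)$ with $v^*\delta=0$, we realize $\delta$ as $C\to E\to X$. Since $v^*\delta=0$, the map $v$ lifts through $E\to X$ via (ET4)$^{\mathrm{op}}$/the exactness just proved for the second sequence's lower part. The induced morphism of extriangles from $(Y,Z,X,w)$ to $(C,E,X,\delta)$ has first component $g$ with $g_*w=\delta$ by (ET3) compatibility.

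\emph{At $\e(Z,C)$:} this requires the long-exact-sequence argument using (ET4)$^{\mathrm{op}}$, as in \cite[Proposition 3.11]{NP}. Given $\theta$ with $u^*\theta=0$, we realize $\theta$ and pull back along $u$ to get a split extriangle. We then use (ET4)$^{\mathrm{op}}$ to construct an extension of $X$ by $C$ whose pullback along $v$ is $\theta$.

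The main obstacle is this last step, the exactness at $\e(Z,-)$. Unlike the other steps it does not follow from (ET3) alone, and it needs the octahedral-type axiom together with careful sign/identification bookkeeping. It is precisely the extra content of \cite[Proposition A.8]{GNP2} beyond \cite[Corollary 3.12]{NP}.
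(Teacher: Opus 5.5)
Your proposal is correct. The paper gives no proof of its own (it only cites \cite[Proposition~A.8]{GNP2}), and your route is the standard argument for this result: the five-term part comes from the realization property, (ET3) and (ET3)$^{\mathrm{op}}$ as in \cite[Corollary~3.12]{NP}, and exactness at $\e(Z,-)$ comes from (ET4)$^{\mathrm{op}}$.

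A few minor corrections are needed:
\begin{itemize}
\item The factorization at $\Hom_{\c}(Z,-)$ uses (ET3), not (ET3)$^{\mathrm{op}}$, because the square you are given, $f\circ u=0$, is the left one.
\item The lift at $\e(X,-)$ uses (ET3)$^{\mathrm{op}}$, and (ET4)$^{\mathrm{op}}$ is not needed there.
\item The last step involves no sign bookkeeping. Apply (ET4)$^{\mathrm{op}}$ to a realization $C\xrightarrow{d}E\to Z$ of $\theta$ and to $Y\xrightarrow{u}Z\xrightarrow{v}X$. This yields an extriangle $C\xrightarrow{d'}E'\to Y$ realizing $u^{*}\theta=0$, together with some $\delta''\in\e(X,E')$ satisfying $d'_{*}\theta=v^{*}\delta''$. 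Since $u^{*}\theta=0$, the morphism $d'$ is a split monomorphism, and a retraction $r$ of it gives $\theta=v^{*}(r_{*}\delta'')$.
\end{itemize}
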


\begin{definition}\label{defn:extri fun}
    Let $(\c,\e)$ and $(\c',\e')$ be two extriangulated categories. An \emph{extriangulated functor} from $(\c,\e)$ to $(\c',\e')$ is a pair $(\mathbb{P},\Omega)$, where $\mathbb{P}$ is an additive functor from $\c$ to $\c'$, and $\Omega=\{\Omega^{X,Y}\mid X,Y\in\c\}$ is a natural transformation
    \[\Omega\colon\e(-,-)\to \e'(\mathbb{P}^{\mathrm{op}}(-),\mathbb{P}(-))\]
    between functors $\c^{\mathrm{op}}\times\c\rightarrow\mathrm{Ab}$, such that if
    \[Y\xrightarrow{u} Z\xrightarrow{v} X \xdashrightarrow{w}\]
    is an extriangle in $\c$, then 
    \[\mathbb{P}(Y)\xrightarrow{\mathbb{P}(u)} \mathbb{P}(Z)\xrightarrow{\mathbb{P}(v)} \mathbb{P}(X) \xdashrightarrow{\Omega^{X,Y}(w)}\]
    is an extriangle in $\c'$.

    An extriangulated functor $(\mathbb{P},\Omega)$ is called an \emph{extriangle equivalence} if $\mathbb{P}$ is an equivalence of additive categories and $\Omega$ is a natural isomorphism.
\end{definition}

Note that while our definition of extriangulated functors coincides with that in \cite[Definition~2.32]{BS}, our definition of extriangle equivalences differs slightly, as we additionally require $\Omega$ to be a natural isomorphism; see also \cite[Definition~4.9 and Proposition~4.11]{BHSS}.

\begin{remark}\label{rmk:detectextri}
    Let $(\mathbb{P},\Omega)\colon(\c,\e)\to(\c',\e')$ be an extriangulated functor such that $\mathbb{P}$ is dense and each $\Omega^{X,Y}\colon\e(X,Y)\to \e'(\mathbb{P}(X),\mathbb{P}(Y))$ is surjective. Then $\mathbb{P}$ reflects extriangles as follows. Let \begin{equation}\label{eq:extri2}
        Y'\xrightarrow{u'} Z'\xrightarrow{v'} X'\xdashrightarrow{w'}
    \end{equation} 
    be an extriangle in $(\c',\e')$. Since $\mathbb{P}$ is dense, there exist objects $X, Y \in \c$ and isomorphisms $a\colon \mathbb{P}(Y) \to Y'$ and $c\colon \mathbb{P}(X) \to X'$. Consider the element $\e'(c, a^{-1})(w') \in \e'(\mathbb{P}(X), \mathbb{P}(Y))$. Since the map $\Omega^{X,Y}$ is surjective, there exists an element $w \in \e(X,Y)$ such that 
    \[ \Omega^{X,Y}(w) = \e'(c, a^{-1})(w'). \]
    Equivalently, this relation can be written as
    \begin{equation}\label{eq:equiv}
        \e'(\mathbb{P}(X),a)(\Omega^{X,Y}(w)) = \e'(c,Y')(w').
    \end{equation}
    Let $Y\xrightarrow{u} Z\xrightarrow{v} X \xdashrightarrow{w}$ be an extriangle realizing $w$ in $(\c,\e)$. Since $(\mathbb{P},\Omega)$ is an extriangulated functor, by definition, the induced sequence
    \[ \mathbb{P}(Y)\xrightarrow{\mathbb{P}(u)} \mathbb{P}(Z)\xrightarrow{\mathbb{P}(v)} \mathbb{P}(X) \xdashrightarrow{\Omega^{X,Y}(w)} \]
    is an extriangle in $(\c',\e')$. By \cite[Definition 2.9]{NP}, the equality \eqref{eq:equiv} guarantees the existence of a morphism $b\colon \mathbb{P}(Z) \to Z'$ such that the following diagram commutes:
    \[\xymatrix{
    \mathbb{P}(Y)\ar[r]^{\mathbb{P}(u)}\ar[d]_{a} & \mathbb{P}(Z)\ar[r]^{\mathbb{P}(v)}\ar[d]_{b} &\mathbb{P}(X)\ar[d]^{c}\\
    Y'\ar[r]^{u'} & Z'\ar[r]^{v'} & X'.
    }\]
    Finally, since $a$ and $c$ are isomorphisms, it follows from \cite[Corollary 3.6~(1)]{NP} that $b$ is also an isomorphism.
\end{remark}

To define presilting and silting subcategories, we require the notion of higher extensions. Although higher extensions can be defined for general extriangulated categories \cite[Section 3]{GNP1}, we only recall the construction introduced in \cite[Section~3]{HLN2} for extriangulated categories with enough projective objects, as the two specific categories we focus on in the sequel satisfy this property. This construction is equivalent to the general one in this setting; see \cite[Corollary~3.21]{GNP1}. An object $X$ in an extriangulated category $(\c,\e)$ is called \emph{projective} if $\e(X,Y)=0$ for any object $Y\in \c$. We say that $\c$ has \emph{enough projective objects} if for any object $X$ in $\c$, there exists an extriangle
\begin{equation}\label{eq:procov}
    X_1\to P\to X\dashrightarrow,
\end{equation}
with $P$ projective. For any $i>1$, the $i$-th extension group is defined recursively by
\begin{equation}\label{eq:higherext}
    \e^i(X,Y):=\e^{i-1}(X_1,Y).
\end{equation}
This group is well-defined up to canonical isomorphism, independent of the choice of the extriangle \eqref{eq:procov}. Dually, one defines the notions of injective objects and having enough injective objects. For any extriangle
\[Y\to I\to Y_1\dashrightarrow,\]
with $I$ injective in $\c$, we have
\begin{equation}\label{eq:extinj}
    \e^i(X,Y)\cong\e^{i-1}(X,Y_1),\ \text{for all $i>1$}.
\end{equation}
The following lemma will be used later.

\begin{lemma}\label{lem:siltiff}
    Let $(\mathbb{P},\Omega)\colon (\c,\e) \to (\c',\e')$ be an extriangulated functor such that $\Omega$ is a natural isomorphism. Assume that both $(\c,\e)$ and $(\c',\e')$ have enough projective objects, and that $\mathbb{P}$ sends projective objects to projective objects. Then for any objects $X$ and $Y$ in $\c$, there are isomorphisms
    \[\e^i(X,Y)\cong{\e'}^i(\mathbb{P}(X),\mathbb{P}(Y)), \ \text{for all $i>0$}.\]
\end{lemma}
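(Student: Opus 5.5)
We argue by induction on $i$, using the recursive definition \eqref{eq:higherext} of higher extensions via projective resolutions. For $i=1$ there is nothing to prove: $\Omega^{X,Y}\colon\e(X,Y)\to\e'(\Pn(X),\Pn(Y))$ is an isomorphism because $\Omega$ is a natural isomorphism.

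For the inductive step, fix $X\in\c$ and, since $\c$ has enough projective objects, choose an extriangle
\[X_1\xrightarrow{u} P\xrightarrow{v} X\overset{w}{\dashrightarrow}\]
in $\c$ with $P$ projective. As $(\Pn,\Omega)$ is an extriangulated functor,
\[\Pn(X_1)\xrightarrow{\Pn(u)}\Pn(P)\xrightarrow{\Pn(v)}\Pn(X)\overset{\Omega^{X,X_1}(w)}{\dashrightarrow}\]
is an extriangle in $\c'$. By hypothesis $\Pn(P)$ is projective in $\c'$, so this is a projective presentation of the kind used in \eqref{eq:procov} for $\Pn(X)$. Since the higher extension groups in $\c'$ are independent, up to canonical isomorphism, of the chosen extriangle, we obtain for $i>1$
\[{\e'}^i(\Pn(X),\Pn(Y))\cong{\e'}^{i-1}(\Pn(X_1),\Pn(Y)).\]
On the other side, $\e^i(X,Y)=\e^{i-1}(X_1,Y)$ by definition. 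The induction hypothesis, applied to the pair $(X_1,Y)$, gives $\e^{i-1}(X_1,Y)\cong{\e'}^{i-1}(\Pn(X_1),\Pn(Y))$. Composing these isomorphisms yields $\e^i(X,Y)\cong{\e'}^i(\Pn(X),\Pn(Y))$.

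Because the induction hypothesis is stated for all pairs of objects, the induction is on $i$ alone, uniformly in $X$ and $Y$. The only subtle point is the well-definedness of ${\e'}^i$ independently of the chosen projective presentation. This is exactly the statement recalled after \eqref{eq:higherext} (see \cite[Section~3]{HLN2}, \cite[Corollary~3.21]{GNP1}), so the image extriangle may be used legitimately even though it need not be the one originally chosen in $\c'$. No naturality of the resulting isomorphisms is claimed, so nothing further is required.
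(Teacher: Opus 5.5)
Your proof is correct and follows the paper's argument essentially verbatim. It runs by induction on $i$, with the base case given by $\Omega$ being a natural isomorphism. The inductive step applies $(\mathbb{P},\Omega)$ to a projective presentation $X_1\to P\to X\dashrightarrow$ and uses that $\mathbb{P}(P)$ is projective, together with the independence of the higher extension groups from the chosen presentation.
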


\begin{proof}
    We proceed by induction on $i$. The base case $i=1$ follows from the fact that $\Omega$ is a natural isomorphism. Assume that the statement holds for $i=s$, where $s$ is a positive integer. Consider the case $i=s+1$. Since $\c$ has enough projective objects, there exists an extriangle as in \eqref{eq:procov}:
    \[X_1\to P\to X\dashrightarrow,\]
    where $P$ is projective. Then, by definition, we have $\e^{s+1}(X,Y)\cong\e^s(X_1,Y)$. 
    
    On the other hand, since $(\mathbb{P},\Omega)$ is an extriangulated functor, by definition, there is an induced extriangle in $\c'$:
    \[\mathbb{P}(X_1)\to\mathbb{P}(P)\to\mathbb{P}(X)\dashrightarrow.\]
    Since $\mathbb{P}(P)$ is projective in $\c'$, the definition of higher extensions yields ${\e'}^{s+1}(\mathbb{P}(X),\mathbb{P}(Y))\cong{\e'}^s(\mathbb{P}(X_1),\mathbb{P}(Y))$. By the induction hypothesis, we have $\e^s(X_1,Y)\cong {\e'}^s(\mathbb{P}(X_1),\mathbb{P}(Y))$. Combining these isomorphisms yields the required isomorphism for $i=s+1$, completing the induction.
\end{proof}

We are now in a position to define presilting subcategories. Let $(\c,\e)$ be an extriangulated category with enough projective objects.

\begin{definition}
    A subcategory $\X$ of $\c$ is called \emph{presilting} if $\X=\add\X$ and
    \[\e^i(\X,\X)=0,\ \text{for all $i>0$.}\]
\end{definition}

A subcategory $\X$ of $\c$ is called \emph{thick} provided that it is closed under direct summands, and that for any extriangle
\[Y\to Z\to X\dashrightarrow\]
in $\c$, if two of the objects $X$, $Y$, and $Z$ belong to $\X$, then so does the third. For any subcategory $\X$ of $\c$, we denote by $\mathrm{thick}_{\c}\X$ the smallest thick subcategory of $\c$ containing $\X$.

\begin{definition}[{\cite[Definition~5.1]{AT22}}]
    A presilting subcategory $\X$ is called \emph{silting} if
    \[\c=\mathrm{thick}_{\c}\X.\]
    An object $X$ in $\c$ is called \emph{silting} if $\add X$ is a silting subcategory.
\end{definition}

We recall from \cite{AT25} the notion of mutations of silting subcategories of an extriangulated category.

\begin{definition}[{\cite[Definitions~4.6 and~4.9]{AT25}}]
    Let $\X$ be a silting subcategory of $\c$. A subcategory $\Y$ of $\X$ is called \emph{good covariantly finite} if for each object $X$ in $\X$, there exists an extriangle
    \[X\xrightarrow{f} Y\to Z_X\dashrightarrow,\]
    where $f$ is a left $\Y$-approximation of $X$, that is, $Y\in\Y$ and any morphism from $X$ to an object in $\Y$ factors through $f$. 
    
    Let $\Y$ be a good covariantly finite subcategory of $\X$. The subcategory
    \[\mu^L(\X;\Y):=\add(\Y\cup\{Z_X\mid X\in\X\})\]
    of $\c$ is called the \emph{left mutation} of $\X$ with respect to $\Y$. The right mutation $\mu^R(\X;\Y)$ of $\X$ with respect to a good contravariantly finite subcategory $\Y$ of $\X$ is defined dually. 
\end{definition}

The left (resp., right) mutation $\mu^L(\X;\Y)$ (resp., $\mu^R(\X;\Y)$) of a silting subcategory $\X$ with respect to a good covariantly (resp., contravariantly) finite subcategory of $\X$ is again a silting subcategory \cite[Theorem~4.12]{AT25}.

Let $(\c,\e)$ be an extriangulated category with enough projective objects. Let $m$ be a non-positive integer. We call an object $X\in\c$ has \emph{projective dimension at most $m$} if there exist $m+1$ extriangles
\[X_{i+1}\to P_i\to X_i\dashrightarrow,\ 0\leq i\leq m,\]
where each $P_i$ is projective and $X_{m+1}=0$. We call $\c$ has \emph{global dimension at most $m$} if each object in $\c$ has projective dimension at most $m$. We call $\c$ has \emph{dominant dimension at least $m$} if for any projective object $P$ in $\c$, there exist $m$ extriangles
\[Y_i\to I_i\to Y_{i+1}\dashrightarrow,\ 0\leq i\leq m-1,\]
where $Y_0=P$, each $I_i$ is projective-injective, and $Y_m$ is injective.


\begin{definition}[{\cite[Definition 3.3]{Chen2}, \cite[Definition 3.7]{GNP2} and \cite[Definition 7.3]{PZ}}]
    Let $d$ be a non-negative integer. An extriangulated category $(\c,\e)$ with enough projective objects is \emph{$d$-Auslander} if $(\c,\e)$ has global dimension at most $d+1$ and has dominant dimension at least $d+1$. A $d$-Auslander extriangulated category is \emph{reduced}, if the only projective-injective objects are $0$. 
\end{definition}

\subsection{Two extriangulated categories from rigid subcategories}
	
Let $\t$ be a triangulated category with suspension functor $\Sigma$. Fix a positive integer $n$ and an $n$-rigid subcategory $\m$ of $\t$; that is, 
\[\Hom_{\t}(\m,\Sigma^{i}\m)=0, \ \text{for all $1\leq i\leq n$.}\]
We assume that $\m$ is closed under direct summands.

For the remainder of this section, we consider two extriangulated categories arising from $\m$: the $(n+1)$-term subcategory $\pr{\t}{n+1}{\m}$ of $\t$ generated by $\m$, and the subcategory $K^{[-n,0]}(\m)$ of the bounded homotopy category $K^b(\m)$ consisting of object isomorphic to complexes concentrated in degrees $-n$ to $0$.

\begin{definition}\label{def:m+1term}
    Let $m$ be a non-negative integer. An object $X$ of $\t$ is said to be \emph{$(m+1)$-presented} by $\m$ if there exist $m+1$ triangles in $\t$ of the form
    \begin{equation}\label{eq:tri for prt}
	X_{i+1} \xrightarrow{\alpha^X_i} M^X_{i} \xrightarrow{\beta^X_i} X_{i} \xrightarrow{\gamma^X_i} \Sigma X_{i+1}, \ 0\leq i\leq m, 
    \end{equation}
    where $X=X_0$, $X_{m+1}=0$, and $M^X_i\in\m$ for all $0\leq i\leq m$. We denote by $\pr{\t}{m+1}{\m}$ the subcategory of $\t$ consisting of all objects $(m+1)$-presented by $\m$, which is usually called the \emph{$(m+1)$-term subcategory of $\t$ generated by $\m$}. In particular, $\pr{\t}{1}{\m}=\m$.
\end{definition}
	
By definition, we have
\[\pr{\t}{m+1}{\m}=\m*\Sigma\m*\cdots*\Sigma^m\m.\]
This yields the following ascending chain of subcategories:
\[\pr{\t}{1}{\m} \subseteq \pr{\t}{2}{\m} \subseteq \cdots \subseteq \pr{\t}{n}{\m} \subseteq \pr{\t}{n+1}{\m}.\]

\begin{remark}\label{rmk:presen}
    Note that since $\m$ is $n$-rigid, we have
    \begin{equation}\label{eq:Mtopr}
        \Hom_{\t}(\m,\Sigma\pr{\t}{n}{\m})=0.
    \end{equation}
    Furthermore, in the triangles \eqref{eq:tri for prt}, for any $0\leq i\leq n-1$, we have
    \[X_{i+1}\in \pr{\t}{n-i}{\m} \ \text{and} \ \Hom_{\t}(\m, \Sigma X_{i+1})=0.\]
\end{remark}

Before equipping $\pr{\t}{n+1}{\m}$ with an extriangulated structure, we need to understand the behavior of extensions within the smaller subcategories $\pr{\t}{m}{\m}$, where $m\leq n$. The following lemma shows that they are closed under extensions in $\t$.

\begin{lemma}\label{lem:extforn} 
    For each $1\leq m\leq n$, the subcategory $\pr{\t}{m}{\m}$ of $\t$ is closed under extensions.
\end{lemma}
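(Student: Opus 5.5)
We argue by induction on $m$, using the description $\pr{\t}{m}{\m}=\m*\Sigma\m*\cdots*\Sigma^{m-1}\m$. For $m\geq 2$ we rewrite this as $\pr{\t}{m}{\m}=\m*\Sigma\,\pr{\t}{m-1}{\m}$. We also use the standard fact that the operation $*$ on subcategories of a triangulated category is associative, which follows from the octahedral axiom.

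\emph{Base case $m=1$.} Take a triangle $M_1\to Z\to M_2\xrightarrow{h}\Sigma M_1$ with $M_1,M_2\in\m$. Since $\m$ is $n$-rigid and $n\geq 1$, we have $h=0$. Hence the triangle splits and $Z\cong M_1\oplus M_2\in\m$. This gives $\m*\m=\m$.

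\emph{A commutation step.} Set $\mathcal{A}=\pr{\t}{m-1}{\m}$ with $2\leq m\leq n$. We claim that $\Sigma\mathcal{A}*\m\subseteq \m*\Sigma\mathcal{A}$. Take a triangle $\Sigma A\to W\to M\xrightarrow{h}\Sigma^2A$ with $A\in\mathcal{A}$ and $M\in\m$. Then
\[\Sigma^2A\in\Sigma^2\m*\cdots*\Sigma^{m}\m.\]
For $2\leq i\leq m\leq n$ we have $\Hom_\t(\m,\Sigma^i\m)=0$. Applying $\Hom_\t(M,-)$ to the defining triangles and using half-exactness, we get $\Hom_\t(M,\Sigma^2A)=0$. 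This is the same reasoning as in Remark~\ref{rmk:presen}. Therefore $h=0$, and
\[W\cong M\oplus\Sigma A,\]
which lies in $\m*\Sigma\mathcal{A}$ via the split triangle.

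\emph{Inductive step.} Assume $\mathcal{A}*\mathcal{A}\subseteq\mathcal{A}$ with $m\leq n$. Associativity and the commutation step give
\[\pr{\t}{m}{\m}*\pr{\t}{m}{\m}=\m*(\Sigma\mathcal{A}*\m)*\Sigma\mathcal{A}\subseteq \m*\m*\Sigma\mathcal{A}*\Sigma\mathcal{A}.\]
By the base case $\m*\m=\m$. By the induction hypothesis $\Sigma\mathcal{A}*\Sigma\mathcal{A}=\Sigma(\mathcal{A}*\mathcal{A})\subseteq\Sigma\mathcal{A}$. Combining these, the right-hand side is contained in $\m*\Sigma\mathcal{A}=\pr{\t}{m}{\m}$, which completes the induction.

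The main point needing care is the commutation step, namely the vanishing $\Hom_\t(\m,\Sigma^2\mathcal{A})=0$. This is exactly where the hypothesis $m\leq n$ enters, since it requires vanishing of $\Hom_\t(\m,\Sigma^i\m)$ up to $i=m$. The associativity of $*$ is the only other ingredient, and it is standard.
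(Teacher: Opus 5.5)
Your proof is correct. It relies on the same mechanism as the paper: induction on $m$, a Hom-vanishing that forces a connecting morphism to be zero so a triangle splits, and the octahedral axiom to regroup the pieces. The organisation differs, though. You work purely with $*$-products. You commute $\Sigma\mathcal{A}$ past $\m$ using $\Hom_\t(\m,\Sigma^i\m)=0$ for $2\le i\le m$, absorb $\m*\m=\m$ using the case $i=1$, and let the associativity of $*$ carry all the octahedral work.

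The paper instead fixes one triangle $Y\to Z\to X\xrightarrow{w}\Sigma Y$ and uses only that the composite $w\circ\beta^X_0$ vanishes, which follows from $\Hom_\t(\m,\Sigma\pr{\t}{n}{\m})=0$. It then applies two explicit octahedra. The first gives a triangle $X_1\to M^X_0\oplus Y\to Z\to\Sigma X_1$; this is your commutation step, but it splits $M^X_0$ off against all of $Y$ at once rather than treating $\Sigma\mathcal{A}$ and $\m$ separately. The second is in effect an instance of associativity, and it yields a concrete presentation $Z_1\to M^X_0\oplus M^Y_0\to Z$ with $Z_1\in Y_1*X_1$.

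Your version is shorter and more conceptual. The paper's version buys reusability. Because it needs only $w\circ\beta^X_0=0$, not a blanket vanishing between subcategories, the identical diagrams prove Lemma~\ref{extclose} for $\e_\m$-extensions in $\pr{\t}{n+1}{\m}$. Your $*$-calculus cannot see the restriction on $w$, so there your commutation step would require $\Hom_\t(\m,\Sigma^{n+1}\m)=0$, which fails in general.
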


\begin{proof}
    We prove the lemma by induction on $m$. Since $\m$ is $n$-rigid, the base case $m=1$ holds. Assume that the assertion holds for $m=s$ for some $s<n$, and consider the case $m=s+1$. Let 
    \[Y\rightarrow Z\rightarrow X\xrightarrow{w}\Sigma Y\] 
    be a triangle with $X,Y\in\pr{\t}{s+1}{\m}$. By \eqref{eq:Mtopr}, we have $\Hom_{\t}(M_0^X,\Sigma Y)=0$, which implies $w\circ \beta^X_0=0$. Thus, the octahedral axiom yields the left-hand commutative diagram in \eqref{eq:twodiagrams}. Next, applying the octahedral axiom to the composition $(0\ \gamma^Y_0)\circ f$, we obtain the right-hand commutative diagram in \eqref{eq:twodiagrams}:
    \begin{equation}\label{eq:twodiagrams}
		\begin{tikzcd}[sep=1.25em]
		& X_{1} \arrow[d, dashed, "f"'] \arrow[r, equal] & X_{1} \arrow[d, "\alpha_0^X"]                    &                                         &                                                        & \Sigma^{-1}Z \arrow[d, dashed] \arrow[r, equal] & \Sigma^{-1}Z \arrow[d]                &                                             \\
		Y \arrow[r, dashed] \arrow[d, equal] & M_{0}^{X}\oplus Y \arrow[d, dashed] \arrow[r, dashed]  & M_{0}^{X} \arrow[d, "\beta_0^X"] \arrow[r, "0"] & \Sigma Y \arrow[d, equal] & Y_{1} \arrow[d, equal] \arrow[r, dashed] & {Z_{1}} \arrow[d, dashed] \arrow[r, dashed]          & X_{1} \arrow[d, "f"'] \arrow[r, "(0\ \gamma^Y_0)\circ f"]             & \Sigma Y_{1} \arrow[d, equal] \\
		Y \arrow[r]                                        & Z \arrow[r] \arrow[d, dashed]                          & X \arrow[r, "w"] \arrow[d, "\gamma_0^X"]              & \Sigma Y                                & Y_{1} \arrow[r]                                        & M_{0}^{X}\oplus M_{0}^{Y} \arrow[r] \arrow[d, dashed]         & M_{0}^{X}\oplus Y \arrow[r, "(0\ \gamma^Y_0)"'] \arrow[d] & \Sigma Y_{1}                                \\
		& \Sigma X_{1} \arrow[r, equal]            & \Sigma X_{1},                       &                                         &                                                        & Z \arrow[r, equal]                              & Z.                                     &                                            
		\end{tikzcd}
    \end{equation}
	Since both $X_1$ and $Y_1$ belong to $\pr{\t}{s}{\m}$, the induction hypothesis implies that $Z_1$ also belongs to $\pr{\t}{s}{\m}$. Consequently, the triangle in the second column of the right-hand diagram shows that $Z\in\pr{\t}{s+1}{\m}$. This completes the induction.
\end{proof}

We define a subfunctor $\e_{\m}$ of the bifunctor $\Hom_{\t}(-,\Sigma-)$ as follows: for any $X,Y\in\t$,
\[\e_{\m}(X,Y):=[\Sigma\pr{\t}{n}{\m}](X,\Sigma Y).\]
For any $w\in\e_\m(X,Y)$, we define
\[\mathfrak{s}_\m(w)=[Y\xrightarrow{u}Z\xrightarrow{v}X],\]
where $Y\xrightarrow{u}Z\xrightarrow{v}X\xrightarrow{w}\Sigma Y$ is a triangle in $\t$.

\begin{lemma}\label{esubfunctor}
    The triple $(\t,\e_{\m},\mathfrak{s}_\m)$ is an extriangulated category.
\end{lemma}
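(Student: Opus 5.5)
The natural route is the general result on relative extriangulated structures: by Herschend–Liu–Nakaoka \cite{HLN2} (Proposition~3.16 there), if $(\t,\Hom_\t(-,\Sigma-),\mathfrak{s})$ is the triangulated extriangulated structure and $\mathbb{F}\subseteq\Hom_\t(-,\Sigma-)$ is a \emph{closed} subfunctor, then restricting $\mathfrak{s}$ to $\mathbb{F}$ gives an extriangulated category. Our task therefore reduces to two points. First, $\e_\m$ is a biadditive subfunctor. Second, it is closed. Closedness can be tested either by showing $\e_\m$ is the subfunctor generated by an ideal of the form "morphisms factoring through a fixed subcategory" (which \cite{HLN2} shows is always closed when the subcategory is suitably shifted), or by directly verifying (ET4)/(ET4$^{\mathrm{op}}$) for $\e_\m$-triangles.

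The subfunctor property is straightforward. The set $[\Sigma\pr{\t}{n}{\m}](X,\Sigma Y)$ is an ideal of $\t$ evaluated at $(X,\Sigma Y)$, and $\pr{\t}{n}{\m}$ is additive. Hence it is a subgroup, and it is stable under precomposition with any $X'\to X$ and postcomposition with $\Sigma g$ for any $g\colon Y\to Y'$. This gives biadditive functoriality.

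For closedness, I would use the standard ideal criterion. For an additive subcategory $\X$ of $\t$, the functor $[\X](-,\Sigma-)$ is a closed subfunctor; this is \cite[Proposition~3.19]{HLN2}, phrased as "$\mathbb{E}$-triangles whose connecting morphism factors through $\X$". Applying it with $\X=\Sigma\pr{\t}{n}{\m}$ gives the claim. If that reference is not quite applicable, I would prove (ET4) by hand. Take $\e_\m$-triangles $A\xrightarrow{f}B\to D\xrightarrow{\delta}\Sigma A$ and $B\xrightarrow{g}C\to F\xrightarrow{\delta'}\Sigma B$, and form the octahedron, which yields $A\to C\to E\xrightarrow{\delta''}\Sigma A$ and $D\to E\to F\xrightarrow{\epsilon}\Sigma D$.
\begin{itemize}
\item The map $\epsilon$ equals $\Sigma(\text{map }B\to D)\circ\delta'$, so it factors through whatever $\delta'$ factors through.
\item The map $\delta''$ composed with $D\to E$ equals $\delta$.
\item The map $\delta''$ itself equals $\Sigma f\circ$ (something) modulo the octahedral relations.
\end{itemize}
The subtle point is that $\delta''$ must factor through $\Sigma\pr{\t}{n}{\m}$. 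The octahedron gives $E\to\Sigma A$ in a triangle with $D$ and $F$. A morphism vanishing on $D$-part factors through $F$, but $\delta''$ is only known to restrict to $\delta$ on $D$. For this I would use the fact that a map out of an extension factors through $\Sigma\pr{\t}{n}{\m}$ if its restrictions do, with the extension-closedness of $\pr{\t}{n}{\m}$ from Lemma~\ref{lem:extforn} controlling the glued object.

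The main obstacle is exactly this last gluing of factorizations. My plan for it is as follows. Factor $\delta=\sigma_2\sigma_1$ with $\sigma_1\colon D\to\Sigma U$ and $U\in\pr{\t}{n}{\m}$, and $\delta'=\tau_2\tau_1$ with $\tau_1\colon F\to\Sigma V$. Then build a morphism of triangles from $D\to E\to F$ to a triangle $\Sigma U\to\Sigma W\to\Sigma V$, which I would obtain via a homotopy-pushout/octahedral construction. By Lemma~\ref{lem:extforn}, $W$ is again in $\pr{\t}{n}{\m}$. Finally, show that $\delta''$ factors through $E\to\Sigma W$. Once (ET4) holds, (ET4$^{\mathrm{op}}$) follows by the dual argument, and \cite{HLN2} completes the proof.
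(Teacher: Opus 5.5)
\textbf{Verdict: genuine gap.} Your primary route relies on a false statement, and your fallback leaves its decisive step unproved.

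\emph{The primary route.} \cite[Proposition~3.19]{HLN1} is not about subfunctors of the form $[\X](-,\Sigma-)$. It asserts that $\mathbb{E}_{\mathcal{D}}$ and $\mathbb{E}^{\mathcal{D}}$ are closed. These consist of the classes $\delta$ killed by precomposition (resp.\ postcomposition) with all morphisms from (resp.\ to) objects of a fixed subcategory $\mathcal{D}$.

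The statement you attribute to it is false in general. Take $\t=D^b(\mod\k Q)$ with $Q$ linearly oriented of type $A_3$, labelled so that $P_1\subset P_2\subset P_3$ and $P_1$ is simple, and put $\X=\add(S_2\oplus S_3)$.
\begin{itemize}
\item The inclusions $f\colon P_1\to P_2$ and $g\colon P_2\to P_3$ have cones $S_2$ and $S_3$, so their connecting morphisms factor through $\X$ trivially.
\item $gf$ has cone $P_3/P_1$, which is uniserial with top $S_3$ and socle $S_2$. Its connecting morphism $P_3/P_1\to\Sigma P_1$ is nonzero, since the sequence $0\to P_1\to P_3\to P_3/P_1\to 0$ does not split.
\item That morphism cannot factor through $\X$, because $\Hom(P_3/P_1,S_2)=0$ and $\Hom_\t(S_3,\Sigma P_1)=\Ext^1(S_3,P_1)=0$.
\end{itemize}
So inflations are not closed under composition and (ET4) fails.

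The idea missing from this route is the one the paper uses. Take $X\in\pr{\t}{n+1}{\m}$ with its triangle $X_1\to M_0^X\xrightarrow{\beta_0^X}X\xrightarrow{\gamma_0^X}\Sigma X_1$. Since $\Hom_\t(\m,\Sigma\pr{\t}{n}{\m})=0$, a morphism $X\to\Sigma Y$ lies in $[\Sigma\pr{\t}{n}{\m}]$ iff it kills $\beta_0^X$, iff it kills every map from $\m$. Thus $\e_\m$ is exactly $\mathbb{E}_{\m}$ in the sense of Herschend--Liu--Nakaoka, and that subfunctor is closed.

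\emph{The fallback.} It points in a viable direction: the $\X$ above is not extension-closed, so Lemma~\ref{lem:extforn} is exactly what has to enter. But its decisive step, ``show that $\delta''$ factors through $E\to\Sigma W$'', is asserted rather than proved. It does not follow for a morphism of triangles $\omega$ obtained by (TR3). Write $d\colon D\to E$ and $e\colon E\to F$ for the octahedron maps. From such a filler you only get $\delta''=\psi\omega+\chi e$, with $\chi\colon F\to\Sigma A$ uncontrolled. Also, the octahedron does not say $\delta''$ is ``$\Sigma f$ composed with something''; the relation it gives is $\Sigma f\circ\delta''=\delta'\circ e$.

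That relation is the one to exploit. It reduces the problem to the following: if $f$ is an inflation with $\delta=\sigma_2\sigma_1$ through $\Sigma U$, and $\Sigma f\circ h=\tau_2\tau_1$ through $\Sigma V$ with $U,V\in\pr{\t}{n}{\m}$, then $h$ factors through $\Sigma\pr{\t}{n}{\m}$. Here is one way to prove it.
\begin{itemize}
\item Take the homotopy pullback $P$ of $\Sigma f$ and $\tau_2$. Then $h$ lifts to $P$.
\item $P$ lies in a triangle $D\to P\to\Sigma V\to\Sigma D$, and the map $P\to\Sigma A$ restricts to $\delta$ on $D$, up to sign.
\item Take the homotopy pushout along $\sigma_1$. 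It yields $\Sigma W$ with $W\in U*V\subseteq\pr{\t}{n}{\m}$, by Lemma~\ref{lem:extforn}.
\item The weak pushout property, applied with $\sigma_2$ on $\Sigma U$, factors $P\to\Sigma A$, and hence $h$, through $\Sigma W$.
\end{itemize}

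Completed this way, your argument differs genuinely from the paper's and gives closedness on all of $\t$. The paper's ghost identification uses the presentation of $X$, so it really covers only $X\in\pr{\t}{n+1}{\m}$. That is all Proposition~\ref{prop:extri} needs.
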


\begin{proof}
    Consider the first triangle in \eqref{eq:tri for prt}:
	\[X_1\xrightarrow{\alpha^X_0} M^X_0\xrightarrow{\beta^X_0} X\xrightarrow{\gamma^X_0} \Sigma X_1,\]
	where $X_1\in \pr{\t}{n}{\m}$ and $M^X_0\in\m$. Since $\m$ is $n$-rigid, we have $\Hom_\t(\m,\Sigma\pr{\t}{n}{\m})=0$ by \eqref{eq:Mtopr}. It then follows from this triangle that any morphism from an object in $\m$ to $X$ factors through $\beta^X_0$, and any morphism in $[\Sigma\pr{\t}{n}{\m}](X,\Sigma Y)$ factors through $\gamma^X_0$. This implies that
    \[
    \e_{\m}(X,Y) = \{ f\in\Hom_{\t}(X,\Sigma Y) \mid f\circ g=0\text{ for all $g\colon M\to X$ with $M\in\m$}\}.\]
    Therefore, by \cite[Proposition 3.19]{HLN1}, the assertion follows.
\end{proof}
	
Unlike the subcategories $\pr{\t}{m}{\m}$ for $m \leq n$, which are closed under extensions in $\t$ by Lemma~\ref{lem:extforn}, $\pr{\t}{n+1}{\m}$ does not generally share this property in the triangulated category $\t$. However, as shown in the following lemma, it becomes extension-closed when viewed as a subcategory of the extriangulated category $(\t, \e_\m)$.
	
\begin{lemma}\label{extclose}
	Let $X,Y\in\pr{\t}{n+1}{\m}$. If $Y\rightarrow Z\rightarrow X\xrightarrow{w}\Sigma Y$ is a triangle in $\t$ such that $w\in\e_{\m}(X,Y)$, then $Z\in\pr{\t}{n+1}{\m}$.
\end{lemma}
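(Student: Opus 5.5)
We mimic the proof of Lemma~\ref{lem:extforn}, replacing the use of the induction hypothesis at level $n+1$ by Lemma~\ref{lem:extforn} at level $n$. Let $X,Y\in\pr{\t}{n+1}{\m}$ and take the first triangles of \eqref{eq:tri for prt}:
\[X_1\xrightarrow{\alpha^X_0}M^X_0\xrightarrow{\beta^X_0}X\xrightarrow{\gamma^X_0}\Sigma X_1,\qquad Y_1\xrightarrow{\alpha^Y_0}M^Y_0\xrightarrow{\beta^Y_0}Y\xrightarrow{\gamma^Y_0}\Sigma Y_1,\]
with $X_1,Y_1\in\pr{\t}{n}{\m}$.

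The key point is that we cannot use $\Hom_\t(\m,\Sigma Y)=0$, which may fail since $Y$ is only in $\pr{\t}{n+1}{\m}$. We use instead the hypothesis $w\in\e_\m(X,Y)$. By the description in the proof of Lemma~\ref{esubfunctor}, $w\circ g=0$ for every $g\colon M\to X$ with $M\in\m$; in particular $w\circ\beta^X_0=0$. Hence the octahedral axiom, applied exactly as in the left-hand diagram of \eqref{eq:twodiagrams}, gives a triangle
\[X_1\xrightarrow{f}M^X_0\oplus Y\to Z\to\Sigma X_1,\]
together with $Y\to M^X_0\oplus Y\to M^X_0\xrightarrow{0}\Sigma Y$.

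Next we apply the octahedral axiom to the composition $(0\ \gamma^Y_0)\circ f\colon X_1\to\Sigma Y_1$, as in the right-hand diagram of \eqref{eq:twodiagrams}. Here $(0\ \gamma^Y_0)\colon M^X_0\oplus Y\to\Sigma Y_1$ has cone corresponding to the triangle $Y_1\to M^X_0\oplus M^Y_0\to M^X_0\oplus Y\to\Sigma Y_1$. This produces an object $Z_1$ with triangles
\[Y_1\to Z_1\to X_1\xrightarrow{(0\ \gamma^Y_0)f}\Sigma Y_1\qquad\text{and}\qquad Z_1\to M^X_0\oplus M^Y_0\to Z\to\Sigma Z_1.\]
Since $X_1,Y_1\in\pr{\t}{n}{\m}$ and $n\le n$, Lemma~\ref{lem:extforn} (with $m=n$) gives $Z_1\in\pr{\t}{n}{\m}$. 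The second triangle then exhibits $Z\in\m*\Sigma\pr{\t}{n}{\m}=\pr{\t}{n+1}{\m}$.

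The only delicate step is the first octahedron. We must check that the vanishing $w\beta^X_0=0$ comes purely from $w\in\e_\m(X,Y)$, namely from $w$ factoring through $\Sigma\pr{\t}{n}{\m}$ together with \eqref{eq:Mtopr}, rather than from any rigidity involving $Y$. The remainder is a verbatim repetition of the argument in Lemma~\ref{lem:extforn}.
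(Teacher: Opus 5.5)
Your proof is correct and matches the paper's argument essentially verbatim. You get $w\circ\beta^X_0=0$ from $w$ factoring through $\Sigma\pr{\t}{n}{\m}$ together with \eqref{eq:Mtopr}, then build the two octahedra of \eqref{eq:twodiagrams}, and finally apply Lemma~\ref{lem:extforn} with $m=n$ to $Z_1$.
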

	
\begin{proof} 
    By \eqref{eq:Mtopr}, we have $\Hom_{\t}(M_{0}^{X},\Sigma\pr{\t}{n}{\m})=0$. Since $w$ factors through an object in $\Sigma\pr{\t}{n}{\m}$ by definition, we obtain $w\circ \beta_0^X=0$. Thus, the octahedral axiom yields the left-hand commutative diagram in \eqref{eq:twodiagrams}. Next, applying the octahedral axiom again to the composition $(0\ \gamma^Y_0)\circ f$, we obtain the right-hand commutative diagram. Since both $X_1$ and $Y_1$ belong to $\pr{\t}{n}{\m}$, Lemma~\ref{lem:extforn} implies that $Z_1$ also belongs to $\pr{\t}{n}{\m}$. Consequently, the triangle in the second column of the right-hand diagram in \eqref{eq:twodiagrams} shows that $Z\in\pr{\t}{n+1}{\m}$. This completes the proof.
\end{proof}
	
Having established this closure property, we arrive at the following proposition.
	
\begin{prop}\label{prop:extri}
	The triple $(\pr{\t}{n+1}{\m},\e_{\m},\mathfrak{s}_\m)$ is an extriangulated category. 
\end{prop}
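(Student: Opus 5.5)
The plan is to realize $(\pr{\t}{n+1}{\m},\e_\m,\mathfrak{s}_\m)$ as an extension-closed subcategory of the extriangulated category $(\t,\e_\m,\mathfrak{s}_\m)$ from Lemma~\ref{esubfunctor}, and then to invoke the general fact that an extension-closed subcategory of an extriangulated category inherits an extriangulated structure. This fact is \cite[Remark~2.18]{NP}: the restricted bifunctor and the restricted realization satisfy the axioms. So the argument has two parts, namely checking the standing hypotheses of that remark and checking extension-closedness.

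First, $\pr{\t}{n+1}{\m}$ is a full additive subcategory closed under isomorphisms. Additivity follows by taking direct sums of the defining triangles \eqref{eq:tri for prt}: the sum of triangles is a triangle, and $\m$ is additive. If one only needs closure under finite direct sums, this already follows from extension-closedness applied to split extensions, since the zero map lies in $\e_\m$.

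Second, and this is the substantive point, extension-closedness in $(\t,\e_\m)$ means the following. Take $X,Y\in\pr{\t}{n+1}{\m}$ and $w\in\e_\m(X,Y)$. Then any realization $Y\to Z\to X$ of $w$, which by definition of $\mathfrak{s}_\m$ is part of a triangle $Y\to Z\to X\xrightarrow{w}\Sigma Y$ in $\t$, must have $Z\in\pr{\t}{n+1}{\m}$. This is exactly Lemma~\ref{extclose}. Since $\mathfrak{s}_\m(w)$ is an equivalence class and the subcategory is closed under isomorphisms, it suffices to check one representative.

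With both parts in place, the restriction of $\e_\m$ to $\pr{\t}{n+1}{\m}^{\mathrm{op}}\times\pr{\t}{n+1}{\m}$ together with $\mathfrak{s}_\m$ satisfies the axioms (ET1)--(ET4) and (ET4)$^{\mathrm{op}}$. The axioms (ET3) and (ET4) require producing middle objects via realizations; extension-closedness guarantees these objects stay inside the subcategory, and the morphisms are inherited because the subcategory is full. The main obstacle, extension-closedness, has already been handled in Lemma~\ref{extclose}, so the proof is essentially a citation.
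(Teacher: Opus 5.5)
Your proposal is correct and is essentially the paper's own proof. The paper likewise combines Lemma~\ref{esubfunctor} (the ambient $(\t,\e_\m,\mathfrak{s}_\m)$ is extriangulated) with Lemma~\ref{extclose} (extension-closedness) and concludes via \cite[Remark~2.18]{NP}; your extra remarks on additivity and on checking a single representative of $\mathfrak{s}_\m(w)$ are accurate but not needed beyond that.
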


\begin{proof}
    By Lemma~\ref{esubfunctor}, the ambient category $(\t, \e_\m, \mathfrak{s}_\m)$ is extriangulated. Furthermore, Lemma~\ref{extclose} shows that $\pr{\t}{n+1}{\m}$ is closed under extensions in $(\t, \e_\m)$. Therefore, it naturally inherits the extriangulated structure (see, for instance, \cite[Remark 2.18]{NP}).
\end{proof}

Henceforth, we view $\pr{\t}{n+1}{\m}$ as an extriangulated category endowed with the structure above. In this setup, its extriangles are exactly the triangles $Y\rightarrow Z\rightarrow X\xrightarrow{w}\Sigma Y$ in $\t$ satisfying $X, Y, Z\in\pr{\t}{n+1}{\m}$ and $w\in[\Sigma\pr{\t}{n}{\m}](X,\Sigma Y)$. Thus, by a slight abuse of terminology, we will simply call such triangles extriangles in $\pr{\t}{n+1}{\m}$.

Since $\m$ is $n$-rigid, it is straightforward to see that $\pr{\t}{n+1}{\m}$ has enough projective and injective objects, which are given by the subcategories $\m$ and $\Sigma^n\m$, respectively. By construction, each object in $\pr{\t}{n+1}{\m}$ has projective dimension at most $n$. For any $M\in\m$, there exist extriangles:
\[
\Sigma^{i}M\to 0\to \Sigma^{i+1}M\xdashrightarrow{\id_{\Sigma^{i+1}M}},\ 0\le i\le n-1,
\]
which show that the dominant dimension of $(\pr{\t}{n+1}{\m},\e_\m)$ is at least $n$. Moreover, since $\Hom(\m,\Sigma^n\m)=0$, we have $\m\cap\Sigma^n\m=\{0\}$. Thus, we obtain the following result.

\begin{prop}\label{prop:reduced}
    The extriangulated category $(\pr{\t}{n+1}{\m},\e_{\m})$ is reduced $(n-1)$-Auslander.
\end{prop}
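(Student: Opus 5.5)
To prove Proposition~\ref{prop:reduced}, I will verify the defining conditions of a reduced $(n-1)$-Auslander extriangulated category for $(\pr{\t}{n+1}{\m},\e_\m)$ (Proposition~\ref{prop:extri}) with $d=n-1$. There are four of them: enough projectives, global dimension at most $n$, dominant dimension at least $n$, and the only projective-injective object is $0$.

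\textbf{Projectives and injectives.} First I show that the projectives are exactly the objects of $\m$.

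For $M\in\m$ and any $Y$, an element of $\e_\m(M,Y)$ factors through $\Sigma\pr{\t}{n}{\m}$. By \eqref{eq:Mtopr} it is therefore zero, so $\m$ consists of projectives.

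For $X\in\pr{\t}{n+1}{\m}$, consider the first triangle $X_1\to M_0^X\to X\xrightarrow{\gamma_0^X}\Sigma X_1$ of \eqref{eq:tri for prt}.
\begin{itemize}
\item Its connecting morphism $\gamma_0^X$ lies in $[\Sigma\pr{\t}{n}{\m}]$, since $X_1\in\pr{\t}{n}{\m}$.
\item All three terms lie in $\pr{\t}{n+1}{\m}$.
\end{itemize}
So it is an extriangle, and the category has enough projectives.

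If $X$ is itself projective, this extriangle splits, so $X$ is a summand of $M_0^X$. Since $\m$ is closed under summands, $X\in\m$.

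Dually, I expect $\Sigma^n\m$ to be exactly the injectives. For $N\in\Sigma^n\m$ and $X\in\pr{\t}{n+1}{\m}$, an element of $\e_\m(X,N)$ factors as $X\to\Sigma Z\to\Sigma N$ with $Z\in\pr{\t}{n}{\m}$. Here $\Hom(\Sigma Z,\Sigma^{n+1}M)=\Hom(Z,\Sigma^n M)$ vanishes by $n$-rigidity, because $Z$ is built from $\m,\dots,\Sigma^{n-1}\m$. Only the injectivity of $\Sigma^n\m$ is needed below.

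\textbf{Global dimension.} The triangles \eqref{eq:tri for prt} with $m=n$ give the required resolution. Each triangle $X_{i+1}\to M_i^X\to X_i\to\Sigma X_{i+1}$ is an extriangle in $\pr{\t}{n+1}{\m}$:
\begin{itemize}
\item $X_{i+1}\in\pr{\t}{n-i}{\m}\subseteq\pr{\t}{n}{\m}$, by Remark~\ref{rmk:presen};
\item consequently the connecting map factors through $\Sigma\pr{\t}{n}{\m}$;
\item all terms lie in $\pr{\t}{n+1}{\m}$.
\end{itemize}
These are $n+1$ extriangles ending with $X_{n+1}=0$, so every object has projective dimension at most $n$.

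\textbf{Dominant dimension.} For $M\in\m$, I use the triangles $\Sigma^iM\to0\to\Sigma^{i+1}M\xrightarrow{\id}\Sigma^{i+1}M$ for $0\le i\le n-1$. Each is an extriangle:
\begin{itemize}
\item $\Sigma^{i+1}M=\Sigma(\Sigma^iM)$ with $\Sigma^iM\in\pr{\t}{n}{\m}$, so the identity lies in $[\Sigma\pr{\t}{n}{\m}]$;
\item all terms lie in $\pr{\t}{n+1}{\m}$.
\end{itemize}
The middle terms are $0$, which is trivially projective-injective. The final term $Y_n=\Sigma^nM$ is injective by the previous paragraph. So the dominant dimension is at least $n$.

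\textbf{Reducedness.} A projective-injective object lies in $\m\cap\Sigma^n\m$. For such an object $X$, its identity map lies in $\Hom(\m,\Sigma^n\m)=0$, so $X=0$.

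\textbf{Main obstacle.} The only mildly delicate point is the identification of the injectives, in particular checking that $\Sigma^n\m$ is injective. This rests on the vanishing $\Hom_\t(\pr{\t}{n}{\m},\Sigma^n\m)=0$, which I would prove by induction on $m$ using the triangles defining $\pr{\t}{m}{\m}$ together with $n$-rigidity.
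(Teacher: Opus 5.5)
Your proposal follows the same route as the paper, which handles this in the paragraph just before the proposition. There, $\m$ is the class of projectives, the defining triangles \eqref{eq:tri for prt} are extriangles giving resolutions of length $n$, and the extriangles $\Sigma^iM\to0\to\Sigma^{i+1}M\dashrightarrow$ give dominant dimension at least $n$. Reducedness then comes from $\m\cap\Sigma^n\m=\{0\}$. Your verifications of projectivity of $\m$, enough projectives, global dimension, dominant dimension and injectivity of $\Sigma^n\m$ are all correct.

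There is one unsupported step. In the reducedness paragraph you say a projective-injective object lies in $\m\cap\Sigma^n\m$. That uses the inclusion ``every injective lies in $\Sigma^n\m$'', which you never prove. You only showed that objects of $\Sigma^n\m$ are injective, and you explicitly claimed that this is the only direction needed.

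The missing inclusion is true. As in the proof of Corollary~\ref{cor:exinject}, rotating \eqref{eq:tri for prt} gives extriangles $\Sigma^iX_i\to\Sigma^{i+1}X_{i+1}\to\Sigma^{i+1}M^X_i\dashrightarrow$ in $\pr{\t}{n+1}{\m}$ for $0\le i\le n-1$. Their connecting maps factor through $\Sigma^{i+1}M^X_i\in\Sigma\pr{\t}{n}{\m}$. Composing these inflations by (ET4) gives an inflation $X\to\Sigma^nX_n$ with $X_n\in\m$, which splits if $X$ is injective.

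For reducedness you can also avoid characterizing injectives altogether. If $X$ is projective-injective, then $X\in\m\subseteq\pr{\t}{n}{\m}$, so $\id_{\Sigma X}\in\e_\m(\Sigma X,X)$. This is the class of the $i=0$ extriangle you already use for dominant dimension. Injectivity of $X$ forces $\id_{\Sigma X}=0$, i.e.\ $X=0$. With either repair the proof is complete.
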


The following lemma will be used later.

\begin{lemma}\label{lem:shorter}
    Let $X$ and $Y$ be objects in $\pr{\t}{n+1}{\m}$.
    \begin{enumerate}
        \item If $X\in\pr{\t}{i}{\m}$ for some $1\leq i\leq n$, then
        \[
            \e_\m^j(X,Y)=0,\ \text{for all $j\geq i$}.\]
        \item If $Y\in\pr{\t}{i}{\m}$ for some $1\leq i\leq n$, then
        \[\e_\m^j(X,Y)\cong\begin{cases}
            \Hom_\t(X,\Sigma^j Y) & \text{if $1\leq j\leq n+1-i$,}\\
            [\Sigma\pr{\t}{n}{\m}](X,\Sigma^j Y) & \text{if $j=n+2-i$.}
        \end{cases}\]
    \end{enumerate}
\end{lemma}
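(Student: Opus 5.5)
For (1) we compute higher extensions directly with the recursive definition \eqref{eq:higherext}, using the triangles \eqref{eq:tri for prt} as projective resolutions. For (2) we compare $\e_\m^j$ with $\Hom_\t(-,\Sigma^j Y)$ by dimension shifting along these triangles. Nothing beyond Remark~\ref{rmk:presen}, the description of $\e_\m$ in the proof of Lemma~\ref{esubfunctor}, and the long exact Hom sequences in $\t$ is needed.

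\textbf{Resolutions are extriangles.} Let $X\in\pr{\t}{n+1}{\m}$, with triangles $X_{k+1}\xrightarrow{\alpha^X_k}M^X_k\xrightarrow{\beta^X_k}X_k\xrightarrow{\gamma^X_k}\Sigma X_{k+1}$. By Remark~\ref{rmk:presen}, $X_{k+1}\in\pr{\t}{n-k}{\m}\subseteq\pr{\t}{n}{\m}$. Hence $\gamma^X_k$ factors through $\Sigma X_{k+1}\in\Sigma\pr{\t}{n}{\m}$, so $\gamma^X_k\in\e_\m(X_k,X_{k+1})$. All terms lie in $\pr{\t}{n+1}{\m}$, so each triangle is an extriangle with projective middle term. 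Therefore
\[\e_\m^j(X,Y)\cong\e_\m(X_{j-1},Y).\]

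\textbf{Part (1).} If $X\in\pr{\t}{i}{\m}$, we may choose the triangles with $X_k\in\pr{\t}{i-k}{\m}$. Then $X_{i-1}\in\m$ is projective, so $\e_\m^i(X,Y)=\e_\m(X_{i-1},Y)=0$. For $j>i$ we have $X_{j-1}=0$, so these groups vanish as well.

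\textbf{Part (2), step 1: $\e_\m(X_{j-1},Y)$ is all of $\Hom_\t(X_{j-1},\Sigma Y)$.} Assume $Y\in\pr{\t}{i}{\m}=\m*\Sigma\m*\cdots*\Sigma^{i-1}\m$ with $i\le n$. By $n$-rigidity, $\Hom_\t(\m,\Sigma^l Y)=0$ for $1\le l\le n+1-i$. In particular $\Hom_\t(\m,\Sigma Y)=0$. The proof of Lemma~\ref{esubfunctor} describes $\e_\m(X',Y)$ as the morphisms $X'\to\Sigma Y$ killed by all maps from $\m$; so here $\e_\m(X',Y)=\Hom_\t(X',\Sigma Y)$ for every $X'$. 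Hence $\e_\m^j(X,Y)\cong\Hom_\t(X_{j-1},\Sigma Y)$.

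\textbf{Part (2), step 2: dimension shifting.} Apply $\Hom_\t(-,\Sigma^{l}Y)$ to the triangle $X_k\to M^X_{k-1}\to X_{k-1}\xrightarrow{\gamma^X_{k-1}}\Sigma X_k$. This gives the exact sequence
\[\Hom_\t(M^X_{k-1},\Sigma^{l-1}Y)\to\Hom_\t(X_k,\Sigma^{l-1}Y)\xrightarrow{(\Sigma^{-1}\gamma)^*}\Hom_\t(X_{k-1},\Sigma^{l}Y)\to\Hom_\t(M^X_{k-1},\Sigma^lY).\]
For $2\le l\le n+1-i$ both outer terms vanish, so the middle map is an isomorphism. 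Composing these isomorphisms gives
\[\Hom_\t(X_{j-1},\Sigma Y)\cong\Hom_\t(X,\Sigma^jY)\quad\text{for } 1\le j\le n+1-i.\]

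\textbf{Part (2), step 3: the boundary case $j=n+2-i$.} All steps but the last are still isomorphisms, giving $\Hom_\t(X_{j-1},\Sigma Y)\cong\Hom_\t(X_1,\Sigma^{j-1}Y)$. In the last step, $l=j=n+2-i$: the left term $\Hom_\t(M^X_0,\Sigma^{j-1}Y)$ still vanishes, so $\Hom_\t(X_1,\Sigma^{j-1}Y)\to\Hom_\t(X,\Sigma^jY)$ is injective. Its image is exactly the set of morphisms factoring through $\gamma^X_0$. As in the proof of Lemma~\ref{esubfunctor}, since $\Hom_\t(\m,\Sigma\pr{\t}{n}{\m})=0$, a morphism $X\to\Sigma^j Y$ factors through $\gamma^X_0$ if and only if it lies in $[\Sigma\pr{\t}{n}{\m}](X,\Sigma^jY)$. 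This gives the second case of (2).

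The main point needing care is this boundary case: one must check the vanishing range and identify the image precisely with the ideal $[\Sigma\pr{\t}{n}{\m}]$.
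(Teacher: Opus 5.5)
Your proof is correct. Part (1) is the paper's argument: shift along the projective resolution until a projective term appears.

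For part (2) you take a genuinely different route. The paper shifts in the second variable. Since $Y\in\pr{\t}{i}{\m}$, the triangles $\Sigma^{s-1}Y\to 0\to\Sigma^{s}Y\to\Sigma^{s}Y$ for $1\le s\le n+1-i$ are extriangles with injective middle term. So by \eqref{eq:extinj}, for $1\le j\le n+2-i$,
$$\e_\m^j(X,Y)\cong\e_\m(X,\Sigma^{j-1}Y)=[\Sigma\pr{\t}{n}{\m}](X,\Sigma^{j}Y).$$
This is all of $\Hom_\t(X,\Sigma^jY)$ exactly when $\Sigma^{j-1}Y\in\pr{\t}{n}{\m}$, i.e.\ $j\le n+1-i$. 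The boundary case $j=n+2-i$ is then literally the definition of $\e_\m$.

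You instead shift in the first variable along the projective resolution of $X$. You then transport $\Hom_\t(X_{j-1},\Sigma Y)$ back to $X$ through the long exact $\Hom$-sequences of the triangles in $\t$. This needs the vanishing $\Hom_\t(\m,\Sigma^lY)=0$ for $1\le l\le n+1-i$. In the boundary case it also needs the identification of the image of the last (only injective) connecting map with $[\Sigma\pr{\t}{n}{\m}](X,\Sigma^jY)$, via the argument of Lemma~\ref{esubfunctor}.

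The trade-off is as follows. The paper's version is shorter and makes the boundary case tautological. In exchange it uses the injective-side recursion \eqref{eq:extinj} and needs the check that $\Sigma^{s-1}Y\in\pr{\t}{n}{\m}$ and $\Sigma^{s}Y\in\pr{\t}{n+1}{\m}$. Yours uses only the defining recursion \eqref{eq:higherext} and exact sequences in $\t$. It gives the isomorphism explicitly through connecting maps, and it shows why the boundary case only yields a subgroup of $\Hom_\t(X,\Sigma^jY)$.

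One small point in your Step 1: the description of $\e_\m(X',Y)$ in the proof of Lemma~\ref{esubfunctor} is only established for $X'\in\pr{\t}{n+1}{\m}$. That suffices here, since $X'=X_{j-1}$. More directly, $\Sigma Y\in\Sigma\pr{\t}{n}{\m}$ already gives $\e_\m(X',Y)=\Hom_\t(X',\Sigma Y)$ for every $X'$.
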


\begin{proof}
    For (1), since $X\in\pr{\t}{i}{\m}$, there exist triangles in $\t$:
    \[X_{s}\to M_{s-1}\to X_{s-1}\to\Sigma X_s,\ \text{$s\geq 1$},\]
    where $X_s\in\m$ for all $s\geq i-1$ and $X_{0}=X$. Hence we have
    \[\e_\m^j(X,Y)=\e_\m^{j-1}(X_1,Y)=\cdots=\e_m(X_{j-1},Y)=0,\ \text{for all $j\geq i$.}\]

    For (2), since $Y\in\pr{\t}{i}{\m}$, there exist triangles in $\t$:
    \[\Sigma^{s-1}Y\to 0\to \Sigma^s Y\to \Sigma^s Y,\ \text{for all $1\leq s\leq n+1-i$.}\]
    Hence, by \eqref{eq:extinj}, for any $1\leq j\leq n+2-i$, we have
    \[\e_\m^j(X,Y)\cong\e_\m^{j-1}(X,\Sigma Y)\cong\cdots\cong\e_\m(X,\Sigma^{j-1}Y),\]
    where the last one equals $\Hom_\t(X,\Sigma^j Y)$ when $1\leq j\leq n+1-i$ since $\Sigma^j Y\in\Sigma\pr{\t}{n}{\m}$, and equals $[\Sigma\pr{\t}{n}{\m}](X,\Sigma^j Y)$ when $j=n+2-i$ by definition.
\end{proof}

\begin{exam}\label{exm:1}
    Let $Q$ be the quiver $2\to 1$, and let $\t=D^{b}(\k Q)/(\tau^{-1}\Sigma^{2})$ be the $2$-cluster category of $Q$, where $\tau$ denotes the Auslander-Reiten translation. The Auslander-Reiten quiver of $\t$ is given by
    \[\xymatrix@R=.6cm@C=.5cm{
    &P_2\ar[rd]&&\Sigma P_1\ar[rd]&&\Sigma S_2\ar[rd]&&\Sigma^2 P_2\ar[rd]&&P_2\\
    P_1\ar[ru]&&S_2\ar[ru]&&\Sigma P_2\ar[ru]&&\Sigma^2 P_1\ar[ru]&&P_1\ar[ru]
    }\]
    Here, $P_{i}$ (resp., $S_i$) denotes the indecomposable projective (resp., simple) module corresponding to vertex $i$. Let $\m=\add (P_1\oplus\Sigma P_2)$. Then $\m$ is a $2$-cluster subcategory of $\t$. In particular, it is $2$-rigid. Consider the following triangles:
    \[\Sigma P_1\to \Sigma P_2\to \Sigma S_2\to \Sigma^2 P_1,\]
    \[P_1\to 0\to \Sigma P_1\to\Sigma P_1.\]
    By definition, we have 
    \[\e_\m^2(\Sigma S_2,P_2)=\e_\m^1(\Sigma P_1,P_2)=[\Sigma\pr{\t}{2}{\m}](\Sigma P_1,\Sigma P_2)=\Hom_\t(\Sigma P_1,\Sigma P_2)\neq 0.\]
    Note that $\Hom_\t(\Sigma S_2,\Sigma^2 P_2)=0$. This implies that $\e_\m^2(\Sigma S_2,P_2)$ is not isomorphic to a subgroup of $\Hom_\t(\Sigma S_2,\Sigma^2 P_2)$.
\end{exam}
	
We now turn our attention to another extriangulated category associated with $\m$. Let $K^b(\m)$ be the homotopy category of bounded complexes of objects in $\m$. Let $K^{[-n,0]}(\m)$ be the subcategory of $K^{b}(\m)$ whose objects are isomorphic to complexes concentrated in degrees $-n$ to $0$. Since the subcategory $K^{[-n,0]}(\m)$ is closed under extensions in $K^b(\m)$, it naturally inherits an extriangulated structure. Its extension groups are given by
\[\e_{K^{[-n,0]}(\m)}(X,Y):=\Hom_{K^{b}(\m)}(X,\Sigma Y),\ \text{for $X,Y\in K^{[-n,0]}(\m)$},\]
and its extriangles are of the form
\[Y\xrightarrow{u} Z\xrightarrow{v} X\xdashrightarrow{w},\]
where $Y\xrightarrow{u} Z\xrightarrow{v} X\xrightarrow{w}\Sigma Y$ is a triangle in $K^b(\m)$ with $X,Y,Z\in K^{[-n,0]}(\m)$. It is straightforward to see that $K^{[-n,0]}(\m)$ has enough projective and injective objects, which are given by the subcategories $\m$ and $\Sigma^n\m$, respectively. Furthermore, we have the isomorphisms
\[\e_{K^{[-n,0]}(\m)}^i(X,Y)\cong\Hom_{K^{b}(\m)}(X,\Sigma^i Y), \ \text{for all $i>0$}.\]
Note that $K^{[-n,0]}(\m)=\pr{K^b(\m)}{n+1}{\m}$.

\begin{remark}\label{rmk:termsilt}
    The silting subcategories in the extriangulated category $K^{[-n,0]}(\m)$ are precisely the silting subcategories in the triangulated category $K^b(\m)$ that lie in $K^{[-n,0]}(\m)$, namely, the $(n+1)$-term silting subcategories of $K^b(\m)$.
\end{remark}

\begin{proof}
    By definition, any silting subcategory of $K^{[-n,0]}(\m)$ is silting in $K^b(\m)$. Conversely, let $\X$ be a subcategory of $K^{[-n,0]}(\m)$ that is silting in $K^b(\m)$. Then we have $\m\subseteq\Sigma^{-n}\X*\Sigma^{1-n}\X*\cdots*\Sigma^{-1}\X*\X$. In particular, for any object $M\in\m$, there exist the following triangles in $K^b(\m)$:
    \[M_{i-1}\to X_i\to M_i\to \Sigma M_{i-1},\ 1\leq i\leq n,\]
    where $M_0=M$, $M_n\in\X$, and $X_i\in\X$ for all $i$. From these triangles, we obtain
    \[M_i\in\X * \add \Sigma M_{i-1}\subseteq\X * \Sigma\X * \add\Sigma^2 M_{i-2}\subseteq\cdots\subseteq \X * \Sigma\X *\cdots*\Sigma^{i-1}\X* \Sigma^i\m\subseteq K^{[-n-i+1,0]}(\m).\]
    On the other hand, we have
    \[M_i\in\add\Sigma^{-1} M_{i+1}*\X\subseteq\add\Sigma^{-2} M_{i+2}*\Sigma^{-1}\X*\X\subseteq\cdots\subseteq \Sigma^{i-n}\X*\cdots*\Sigma^{-1}\X*\X\subseteq K^{[-n,n-i]}(\m).\]
    Hence, $M_i\in K^{[-n,0]}(\m)$ for each $i$. Consequently, the above triangles are all extriangles in $K^{[-n,0]}(\m)$, which implies that $\X$ is silting in $K^{[-n,0]}(\m)$.
\end{proof}

\begin{lemma}\label{lem:silequiv}
    Let $\X$ be a presilting subcategory of $K^{[-n,0]}(\m)$ that is covariantly finite in $K^b(\m)$. Then $\X$ is silting if and only if every object $Y\in K^{[-n,0]}(\m)$ satisfying
    \[\Hom_{K^b(\m)}(\X,\Sigma^i Y)=0=\Hom_{K^b(\m)}(Y,\Sigma^i\X)\ \text{for all $i>0$}\]
    belongs to $\X$.
\end{lemma}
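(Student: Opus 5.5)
One direction is immediate. Suppose $\X$ is silting and $Y\in K^{[-n,0]}(\m)$ satisfies the stated vanishing. By Remark~\ref{rmk:termsilt}, $\X$ is silting in $K^b(\m)$. The standard fact for silting subcategories of a triangulated category is that $\X=\{Y\mid \Hom(\X,\Sigma^{>0}Y)=0=\Hom(Y,\Sigma^{>0}\X)\}$. We can verify this directly. Since $Y\in\mathrm{thick}\,\X$ and $\Hom(\X,\Sigma^{>0}Y)=0$, $Y$ lies in $\X*\Sigma^{-1}\X*\cdots*\Sigma^{-k}\X$ for some $k$. Then $\Hom(Y,\Sigma^{>0}\X)=0$ forces the connecting maps to split step by step, so $Y\in\X$ because $\X=\add\X$.

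For the converse, assume the maximality property, and aim to show $\m\subseteq\mathrm{thick}\,\X$. Then $\mathrm{thick}_{K^b(\m)}\X=K^b(\m)$, hence $\X$ is silting in $K^b(\m)$, and Remark~\ref{rmk:termsilt} gives silting in $K^{[-n,0]}(\m)$. Fix $M\in\m$ and build a left $\X$-approximation sequence, mimicking Aihara--Iyama/Bongartz completion. Put $M_0=M$. Since $\X$ is covariantly finite, choose a left $\X$-approximation $M_{i-1}\to X_i$ and complete it to a triangle
\[M_{i-1}\to X_i\to M_i\to\Sigma M_{i-1},\quad 1\le i\le n.\]
The approximation property together with $\Hom(\X,\Sigma^{>0}\X)=0$ gives, inductively, $\Hom(M_i,\Sigma^{j}\X)=0$ for $1\le j\le i$. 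The triangles also show $M_i\in \X*\Sigma\X*\cdots*\Sigma^{i-1}\X*\Sigma^i\m$. Since $\Sigma^n\m$ is killed appropriately, degree bounds give $\Hom(M_n,\Sigma^{j}\X)=0$ for all $j>0$. Indeed, $\X\subseteq K^{[-n,0]}(\m)$ and $\m$ sit in degrees making $\Hom(\Sigma^{n}\m,\Sigma^{j}\X)=0$ for $j>0$, and similarly for the other terms for large $j$.

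Next I need $\Hom(\X,\Sigma^{j}M_n)=0$ for $j>0$. Use $\Hom(\X,\Sigma^{j}M)=0$ for $j>0$, which holds since $\X$ is concentrated in degrees $[-n,0]$ and $M$ in degree $0$, together with $\Hom(\X,\Sigma^{>0}\X)=0$. Inducting along the triangles gives $\Hom(\X,\Sigma^{j}M_i)=0$ for $j>0$; the boundary term $\Hom(\X,\Sigma^{j+1}M_{i-1})$ also vanishes.

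The main obstacle is checking $M_n\in K^{[-n,0]}(\m)$. From $M_n\in\X*\cdots*\Sigma^{n-1}\X*\Sigma^n\m$ we get the bound on the left, up to degree $-2n+1$ a priori. I will instead use the vanishing $\Hom(M_n,\Sigma^{j}\X)$ and the representation in the other direction, $M\in \Sigma^{-1}M_1*\cdots$, as in the proof of Remark~\ref{rmk:termsilt}. The cleaner route is to argue by truncation: the degree-$(-n-k)$ components can be split off using $\Hom(M_n,\Sigma^{>0}\X)=0$. If this step fails, I would replace $M_n$ by its brutal truncation and check the vanishing survives.

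Granting $M_n\in K^{[-n,0]}(\m)$, the hypothesis gives $M_n\in\X$. Then $M$ is built from $X_1,\dots,X_n,M_n\in\X$ by cones, so $M\in\mathrm{thick}\,\X$, which finishes the proof.
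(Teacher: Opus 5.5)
Your ``if'' direction has a genuine gap exactly where you flag it: you never establish $M_n\in K^{[-n,0]}(\m)$. Without it, the maximality hypothesis, which only concerns objects of $K^{[-n,0]}(\m)$, cannot be applied to $M_n$. The condition $\Hom(\m,\Sigma^{>0}M_n)=0$ does hold. But membership in $K^{[-n,0]}(\m)$ also requires $\Hom(M_n,\Sigma^{>n}\m)=0$, and this does not follow from the presilting property, the approximation property and the two vanishing conditions.

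Here is an example. Let $A=\k Q/\operatorname{rad}^2$ for the linear quiver $Q$ of type $A_3$, labelled so that $0\to P_1\to P_2\to P_3\to S_3\to 0$ and $0\to P_1\to P_2\to S_2\to 0$ are minimal projective resolutions. Take $\m=\proj A$ and $n=2$. Then $X=(P_1\to P_2\to P_3)\cong S_3$ lies in $K^{[-2,0]}(\proj A)$, and $\X=\add X$ is presilting and functorially finite.
\begin{itemize}
\item For $M=P_3$ one gets $M_1\cong\Sigma S_2$.
\item Since $\Hom(M_1,\X)=0$, we have $X_2=0$, so $M_2\cong\Sigma^2S_2$, i.e.\ $P_1\to P_2$ in degrees $-3,-2$.
\item Both vanishing conditions hold for $M_2$, because $\mathrm{gl.dim}\,A=2$ and $\Hom_A(S_2,S_3)=0=\Ext^1_A(S_2,S_3)$.
\item Yet $\Hom(M_2,\Sigma^3P_1)\cong\Ext^1_A(S_2,P_1)\neq 0$, so $M_2\notin K^{[-2,0]}(\proj A)$, for any choice of approximations.
\end{itemize}
This $\X$ is not maximal ($\Sigma^2P_1$ satisfies both vanishing conditions), so it does not contradict the lemma. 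It does show that any proof of $M_n\in K^{[-n,0]}(\m)$ must use maximality in an essential way. For $n=1$ there is no issue, since $\operatorname{cone}(M\to X_1)\in K^{[-1,0]}(\m)$ automatically.

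Your suggested remedies do not use maximality, and each fails on its own terms.
\begin{itemize}
\item Splitting off the terms in degrees $<-n$ is governed by $\Hom(M_n,\Sigma^{>n}\m)$, not by $\Hom(M_n,\Sigma^{>0}\X)$; the latter vanishes in the example.
\item Arguing ``in the other direction'' as in Remark~\ref{rmk:termsilt} is circular: there the bound $M_i\in K^{[-n,n-i]}(\m)$ comes from $M_n\in\X$.
\item The brutal truncation $\sigma^{\geq -n}M_n$ keeps $\Hom(\X,\Sigma^{>0}-)=0$. However, $\Hom(\sigma^{\geq -n}M_n,\Sigma^{j}\X)\cong\Hom(\sigma^{<-n}M_n,\Sigma^{j+1}\X)$ for $j>0$, which has no reason to vanish. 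Even if the truncation lay in $\X$, you would still need $\sigma^{<-n}M_n\in\mathrm{thick}\,\X$ to recover $M$.
\end{itemize}
For comparison, the paper's proof runs the same approximation sequence. It passes straight from the two vanishing statements to ``$M_n\in\X$'' without addressing this point either; only the ``only if'' direction is used later, in Proposition~\ref{prop:3equiv}. So you have located the step of that argument that needs justification, but your proposal does not supply it.

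Two smaller slips:
\begin{itemize}
\item In the ``only if'' direction, $\Hom(\X,\Sigma^{>0}Y)=0$ puts $Y$ in $\operatorname{smd}(\X*\Sigma\X*\cdots*\Sigma^k\X)$, not in $\X*\Sigma^{-1}\X*\cdots*\Sigma^{-k}\X$. With this correction your splitting argument works.
\item The auxiliary claims $\Hom(\X,\Sigma^jM)=0$ and $\Hom(\Sigma^n\m,\Sigma^j\X)=0$ for all $j>0$ are false for $1\le j\le n$. The conclusions $\Hom(\X,\Sigma^{>0}M_n)=0=\Hom(M_n,\Sigma^{>0}\X)$ remain true; for instance, prove $\Hom(\X,\Sigma^jM_i)=0$ for $j>n-i$ by induction on $i$.
\end{itemize}
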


\begin{proof}
    If $\X$ is silting in $K^{[-n,0]}(\m)$, then it is silting in the triangulated category $K^b(\m)$. Let $Y$ be an object in $K^{[-n,0]}(\m)$ satisfying $\Hom_{K^b(\m)}(\X,\Sigma ^i Y)=0=\Hom_{K^b(\m)}(Y,\Sigma^i\X)$ for all $i>0$. By \cite[Proposition~2.17]{AI}, there exists a positive integer $l$ such that $Y\in\operatorname{smd}(\Sigma^{-l}\X*\Sigma^{-l+1}\X*\cdots*\Sigma^{l-1}\X*\Sigma^l\X)$, where $\operatorname{smd}(\Y)$ denotes the full subcategory consisting of all direct summands of objects in $\Y$. Since $\Hom_{K^b(\m)}(\X,\Sigma^i Y)=0=\Hom_{K^b(\m)}(Y,\Sigma^i \X)$ for all $i>0$, by \cite[Lemma~2.16]{AI} we have $Y\in\operatorname{smd}(\X)=\X$. This proves the ``only if" part.

    For the ``if" part: For any object $M\in\m$, since $\X$ is covariantly finite, there exists a left $\X$-approximation $f\colon M\to X_1$ of $M$. We extend this morphism to a triangle in $K^b(\m)$:
    \[M\xrightarrow{f} X_1\to M_1\to \Sigma M.\]
    Repeating this process, we obtain triangles in $K^b(\m)$:
    \[M_{i-1}\xrightarrow{f_i} X_i\to M_i\to\Sigma M_{i-1},\ 1\leq i\leq n,\]
    where each $f_i$ is a left $\X$-approximation of $M_{i-1}$, and $M_0=M$. Applying the functors $\Hom_{K^b(\m)}(\X,-)$ and $\Hom_{K^b(\m)}(-,\X)$ to these triangles yields $\Hom_{K^b(\m)}(\X,\Sigma^i M_n)=0=\Hom_{K^b(\m)}(M_n,\Sigma^i\X)$ for all $i>0$. Hence, $M_n\in\X$. Thus, $M\in\mathrm{thick}_{K^b(\m)}(\X)$, which implies that $\X$ is silting in $K^b(\m)$. By Remark~\ref{rmk:termsilt}, $\X$ is therefore also silting in $K^{[-n,0]}(\m)$.
\end{proof}

\section{Construction of the functor \texorpdfstring{$\Pn$}{P}}\label{section3}

Let $\t$ be an algebraic triangulated category, meaning it is the stable category of a Frobenius exact category $\f$. Fix a positive integer $n$ and an $n$-rigid subcategory $\m$ of $\t$.

In this section, we construct an additive functor
\[\Pn\colon\pr{\t}{n+1}{\m} \to K^{[-n,0]}(\m)\]
by first defining a functor on a corresponding subcategory of $\f$ and then passing to the stable category $\t$.

Let $\p$ be the subcategory of $\f$ consisting of all projective-injective objects, and let $\pi\colon\f\to \t$ be the canonical projection functor. We set $\M=\pi^{-1}(\m)$. Note that $\p\subseteq\M$. We denote by $K^b(\M)$ the homotopy category of bounded complexes of objects in $\M$, and by $K^{[-n,0]}(\M)$ the subcategory of $K^b(\M)$ consisting of complexes concentrated in degrees $-n$ to $0$.

\begin{definition}
    Let $m$ be a non-negative integer. For any object $X$ in $\f$, an \emph{$\M$-presentation of $X$ of length $m+1$} is a sequence $\eta^X=(\eta^X_i)_{0\leq i\leq m}$ of conflations in $\f$ of the form
    \begin{equation}\label{presinf}
    \eta^X_i\colon X_{i+1}\xrightarrow{\talpha_{i}^X}M_{i}^X\xrightarrow{\tbeta^X_{i}}X_{i}, \ 0\leq i\leq m,
    \end{equation}
    where $X_0=X$ and $M^X_i\in\M$ for all $0\leq i\leq m$.

    An object $X$ in $\f$ is said to be \emph{$(m+1)$-presented by $\M$} if it admits an $\M$-presentation \eqref{presinf} of length $m+1$ such that $X_{m+1}=0$ (in which case $\talpha^X_m=0$ and $\tbeta^X_{m}$ may be taken to be the identity). We denote by $\pr{\f}{m+1}{\M}$ the subcategory of $\f$ consisting of all objects $(m+1)$-presented by $\M$. In particular, $\pr{\f}{1}{\M}=\M$.
\end{definition}

This yields the chain of subcategories:
\[\pr{\f}{1}{\M}\subseteq \pr{\f}{2}{\M} \subseteq \cdots \subseteq \pr{\f}{n}{\M} \subseteq \pr{\f}{n+1}{\M}.\]
By construction, each object $X_i$ in \eqref{presinf} belongs to $\pr{\f}{m+1-i}{\M}$. When $m=n$, the $n$-rigidity of $\M$ implies that 
\begin{equation}\label{extMXi}
    \Ext^1_\f(\M,X_i)=0, \ \text{for all }1\leq i\leq n.
\end{equation}

\begin{definition}
    Let $X$ and $Y$ be objects in $\pr{\f}{n+1}{\M}$, and let $\eta$ and $\varepsilon$ be $\M$-presentations of length $n+1$ of $X$ and $Y$, respectively. For any morphism $\tf\colon X\to Y$, a \emph{presentation of $\tf$ from $\eta$ to $\varepsilon$} is a sequence of morphisms between conflations in $\f$:
    \begin{equation}\label{presinf2}
        \begin{tikzcd}
		\eta_i\colon & X_{i+1} \arrow[d, "\tF_{i+1}"] \arrow[r, "\talpha_{i}^{X}"] & M_{i}^{X} \arrow[d, "\tf_{i}"] \arrow[r, "\tbeta_{i}^{X}"] & X_{i} \arrow[d, "\tF_{i}"] \\
		\varepsilon_i\colon & Y_{i+1} \arrow[r, "\talpha_{i}^{Y}"] & M_{i}^{Y} \arrow[r, "\tbeta_{i}^{Y}"] & {Y_{i}}       
		\end{tikzcd}, \ 0\leq i\leq n,
    \end{equation}
    where $\tF_0=\tf$. Note that the morphisms $\tF_{i+1}$ are uniquely determined by the morphisms $\tf_i$ via the universal property of kernels. Therefore, for convenience, we shall often identify the presentation with the sequence of morphisms $(\tf_i)_{0 \leq i \leq n}$, and denote it by $\tf_\bullet\colon \eta\to \varepsilon$.
\end{definition}

\begin{remark}\label{rmk:comp}
    Let $\tf\colon X\to Y$ and $\tg\colon Y\to Z$ be morphisms in $\pr{\f}{n+1}{\M}$, and let $\tf_\bullet=(\tf_i)_{0 \leq i \leq n}\colon\eta\to\varepsilon$ and $\tg_\bullet=(\tg_i)_{0 \leq i \leq n}\colon\varepsilon\to \omega$ be presentations of $\tf$ and $\tg$, respectively. Then $\tg_\bullet\circ\tf_\bullet:=(\tg_i\circ\tf_i)_{0 \leq i \leq n}$ is a presentation of $\tg\circ\tf$ from $\eta$ to $\omega$.
\end{remark}

The following lemma establishes the existence of a presentation for any morphism, with respect to any chosen $\M$-presentations of its domain and codomain.

\begin{lemma}\label{lem:pre-mor}
    Let $X$ and $Y$ be objects in $\pr{\f}{n+1}{\M}$, and let $\eta$ and $\varepsilon$ be $\M$-presentations of length $n+1$ of $X$ and $Y$, respectively. Then any morphism $\tf\colon X\to Y$ admits a presentation from $\eta$ to $\varepsilon$.
\end{lemma}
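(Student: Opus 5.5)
We construct the morphisms $\tf_i$ (and hence $\tF_{i+1}$) inductively on $i$, starting from $\tF_0=\tf$. The key ingredient is the vanishing \eqref{extMXi}: $\Ext^1_\f(\M,Y_i)=0$ for $1\leq i\leq n$, which says that morphisms from objects of $\M$ lift along the deflations $\tbeta^Y_{i-1}$.

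Suppose that $\tF_i\colon X_i\to Y_i$ has been constructed for some $0\leq i\leq n$. Consider the composition
\[\tF_i\circ\tbeta^X_i\colon M^X_i\to Y_i.\]
We want to lift it along the deflation $\tbeta^Y_i\colon M^Y_i\to Y_i$ of the conflation $\varepsilon_i$. Applying $\Hom_\f(M^X_i,-)$ to $\varepsilon_i$ gives the exact sequence
\[\Hom_\f(M^X_i,M^Y_i)\xrightarrow{\tbeta^Y_i\circ-}\Hom_\f(M^X_i,Y_i)\to\Ext^1_\f(M^X_i,Y_{i+1}).\]

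\begin{itemize}
\item For $0\leq i\leq n-1$, we have $1\leq i+1\leq n$, so $\Ext^1_\f(M^X_i,Y_{i+1})=0$ by \eqref{extMXi}. Hence there exists $\tf_i\colon M^X_i\to M^Y_i$ with $\tbeta^Y_i\tf_i=\tF_i\tbeta^X_i$.
\item For $i=n$, we have $Y_{n+1}=0$, so the Ext-term vanishes trivially. (If the convention is $\tbeta^Y_n=\id$, one simply takes $\tf_n=\tF_n\tbeta^X_n$.)
\end{itemize}

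Next, we need $\tF_{i+1}$. Since $\tbeta^Y_i\circ\tf_i\circ\talpha^X_i=\tF_i\circ\tbeta^X_i\circ\talpha^X_i=0$ and $\talpha^Y_i$ is a kernel of $\tbeta^Y_i$ in $\f$ (inflations are kernels of their deflations), the composition $\tf_i\talpha^X_i$ factors uniquely through $\talpha^Y_i$. This gives $\tF_{i+1}\colon X_{i+1}\to Y_{i+1}$ making the left square commute. The induction then continues to step $i+1$.

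At the end, $i=n$ yields $\tF_{n+1}\colon 0\to 0$, so the sequence closes up. This produces the diagrams \eqref{presinf2} for all $0\leq i\leq n$.

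The only substantive point is the lifting step. It requires exactly the Ext-vanishing at $Y_{i+1}$ with $i+1\leq n$, which holds because both $X$ and $Y$ lie in $\pr{\f}{n+1}{\M}$ and $\M$ is $n$-rigid. The remaining verifications are diagram-chasing.
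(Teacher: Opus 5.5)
Your proof is correct and follows the paper's argument: you inductively lift $\tF_i\circ\tbeta^X_i$ along $\tbeta^Y_i$ using $\Ext^1_\f(M^X_i,Y_{i+1})=0$ from \eqref{extMXi}, then obtain $\tF_{i+1}$ from the kernel property of $\talpha^Y_i$. Your separate treatment of the step $i=n$ via $Y_{n+1}=0$ is slightly more careful than the paper's proof, which, like your argument, tacitly assumes the chosen presentations end in zero.
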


\begin{proof}
    Set $\tF_0=\tf$. We construct $\tf_i$ and $\tF_{i+1}$ for $0\leq i\leq n$ by induction as follows. Assume that $\tF_i$ has been constructed for some $i\geq 0$. Since $\Ext^1_\f(M^X_i,Y_{i+1})=0$ by \eqref{extMXi}, the morphism $\tF_i\circ\tbeta^X_i$ factors through $\tbeta^Y_i$; that is, there exists a morphism $\tf_i\colon M^X_i\to M^Y_i$ such that the right square of the diagram in \eqref{presinf2} commutes. Consequently, the universal property of kernels ensures the existence of a morphism $\tF_{i+1}\colon X_{i+1}\to Y_{i+1}$ that makes the left square commute. By induction, we obtain a presentation of $\tf$ from $\eta$ to $\varepsilon$.
\end{proof}

An $\M$-presentation $\eta$ of $X\in\pr{\f}{n+1}{\M}$ of length $n+1$ as in \eqref{presinf} induces a complex of the form
\begin{equation}\label{eq:cplx}
    \cplx{X}{\eta}\colon M^X_{n}\xrightarrow{\talpha^X_{n-1}\circ\tbeta^X_n}M^X_{n-1}\xrightarrow{\talpha^X_{n-2}\circ\tbeta^X_{n-1}}\cdots\xrightarrow{\talpha^X_{0}\circ\tbeta^X_1}M^X_{0}.
\end{equation}
Correspondingly, a presentation of a morphism $\tf\colon X\to Y$ as in \eqref{presinf2} gives rise to a chain map between the respective complexes:
\begin{equation}\label{eq:mor}
    \xymatrix@C=1.5cm{
    M_{n}^{X}\ar[r]^{\talpha_{n-1}^{X}\circ\tbeta^X_n}\ar[d]^{\tf_n} & M_{n-1}^{X}\ar[d]^{\tf_{n-1}}\ar[r]^{\talpha_{n-2}^{X}\circ\tbeta_{n-1}^{X}} & \cdots\ar[r]^{\talpha_{0}^{X}\circ\tbeta_{1}^{X}} & M_{0}^{X}\ar[d]^{\tf_0}\\
	M_{n}^{Y}\ar[r]_{\talpha_{n-1}^{Y}\circ\tbeta^Y_n} & M_{n-1}^{Y}\ar[r]_{\talpha_{n-2}^{Y}\circ\tbeta_{n-1}^{Y}} & \cdots\ar[r]_{\talpha_{0}^{Y}\circ\tbeta_{1}^{Y}} & M_{0}^{Y}.
}
\end{equation}
The homotopy class of this chain map is denoted by $[\tf_\bullet]:=[(\tf_{0},\tf_{1},\dots,\tf_{n})]$.


\begin{lemma}\label{homotopy}
    Let $\tf\colon X\rightarrow Y$ be a morphism in $\pr{\f}{n+1}{\M}$ with a presentation $\tf_\bullet$ of the form \eqref{presinf2}. If $\tf$ factors through an object $P\in\p$, say via $\tf=\tv\circ\tu$ with $\tu\colon X\rightarrow P$ and $\tv\colon P\rightarrow Y$, then the following chain map of complexes is null-homotopic:
	\[\xymatrix@C=1.5cm{
		M_{n}^{X}\ar[r]^{\talpha_{n-1}^{X}\circ\tbeta^X_n}\ar[d]^{\tf_n} & M_{n-1}^{X}\ar[d]^{\tf_{n-1}}\ar[r]^{\talpha_{n-2}^{X}\circ\tbeta_{n-1}^{X}} & \cdots\ar[r]^{\talpha_{0}^{X}\circ\tbeta_{1}^{X}} & M_{0}^{X}\ar[d]^{\tf_0}\ar[r]^{\tu\circ\tbeta_{0}^{X}} & P\\
		M_{n}^{Y}\ar[r]_{\talpha_{n-1}^{Y}\circ\tbeta^Y_n} & M_{n-1}^{Y}\ar[r]_{\talpha_{n-2}^{Y}\circ\tbeta_{n-1}^{Y}} & \cdots\ar[r]_{\talpha_{0}^{Y}\circ\tbeta_{1}^{Y}} & M_{0}^{Y}.
		}\]
	Consequently, the following statements hold.
    \begin{enumerate}
        \item[(1)] If $\tf=0$ in $\f$, then $[\tf_\bullet]=0$ in $K^{b}(\M)$.
        \item[(2)] If $\pi(\tf)=0$ in $\t$, then $\pi([\tf_\bullet])=0$ in $K^{b}(\m)$.
    \end{enumerate}
\end{lemma}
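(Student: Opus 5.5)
We construct the null-homotopy degree by degree, from degree $1$ (where $P$ sits) down to degree $-n$. The main tool, used at every step, is the vanishing \eqref{extMXi} together with the injectivity of $P$ in $\f$.

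First, view the top row as a complex $\widetilde{C}$ in degrees $-n,\dots,1$, with $P$ in degree $1$, and the chain map as $\tf_\bullet$ extended by zero in degree $1$. Note that the bottom complex $C^\bullet(Y;\varepsilon)$ has no term in degree $1$. We seek maps $h_0\colon P\to M_0^Y$ and $h_{-i}\colon M_{i-1}^X\to M_i^Y$ for $1\le i\le n$ such that $\tf_0=h_0\tu\tbeta_0^X+\talpha_0^Y\tbeta_1^Y h_{-1}$ and $\tf_i=h_{-i}\talpha_{i-1}^X\tbeta_i^X+\talpha_i^Y\tbeta_{i+1}^Y h_{-i-1}$, with $h_{-n-1}=0$. (That $\tu\tbeta^X_0\circ\talpha_0^X\tbeta_1^X=0$ holds because $\tbeta_0^X\talpha_0^X=0$, so the top row is indeed a complex.)

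\emph{Step 0.} Since $P$ is projective, $\tv$ lifts along the deflation $\tbeta_0^Y$: write $\tv=\tbeta_0^Y h_0$. Then $\tbeta_0^Y(\tf_0-h_0\tu\tbeta_0^X)=\tf\tbeta^X_0-\tv\tu\tbeta_0^X=0$, so $\tf_0-h_0\tu\tbeta_0^X=\talpha_0^Y g_1$ for a unique $g_1\colon M_0^X\to Y_1$.

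\emph{Inductive step.} Suppose we have $g_i\colon M_{i-1}^X\to Y_i$ with $\talpha_{i-1}^Y g_i$ equal to the residual of $\tf_{i-1}$. By \eqref{extMXi}, $\Ext^1_\f(M_{i-1}^X,Y_{i+1})=0$, so $g_i$ lifts along $\tbeta_i^Y$; set $g_i=\tbeta_i^Y h_{-i}$. Then we compare $\tf_i$ with $h_{-i}\talpha_{i-1}^X\tbeta_i^X$. Composing the difference with $\tbeta_i^Y$, we want $\tbeta_i^Y\tf_i=\tF_i\tbeta_i^X$ to equal $g_i\talpha_{i-1}^X\tbeta_i^X$. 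This amounts to checking that $g_i\talpha_{i-1}^X=\tF_i$. That identity follows from $\talpha_{i-1}^Y g_i\talpha_{i-1}^X=(\tf_{i-1}-\cdots)\talpha_{i-1}^X=\talpha_{i-1}^Y\tF_i$, where the earlier terms vanish after composing with $\talpha^X_{i-1}$ (they end in $\tbeta^X_{i-1}\talpha^X_{i-1}=0$ or in $\tbeta\talpha=0$), and $\talpha_{i-1}^Y$ is monic. Hence the difference factors as $\talpha_i^Y g_{i+1}$, and the induction continues.

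\emph{Termination.} At $i=n$ we have $Y_{n+1}=0$, so $\tf_n=h_{-n}\talpha^X_{n-1}\tbeta^X_n$ exactly, and the homotopy closes. This last step is the one I expect to require the most care: we must make sure the boundary degree has no leftover term and that the identity $g_i\talpha^X_{i-1}=\tF_i$ is tracked correctly. The case $i=1$ uses $\tu\tbeta_0^X\talpha_0^X=0$ and $\tF_0=\tf$.

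Consequences. (1) Take $P=0$: then $[\tf_\bullet]$ is null-homotopic on the truncated complex, since the degree-$1$ term vanishes. (2) If $\pi(\tf)=0$, then $\tf$ factors through some $P\in\p$. Apply $\pi$ to the homotopy. Because $\pi(P)=0$, the term $h_0\tu\tbeta_0^X$ becomes zero in $\t$, so the homotopy restricted to degrees $\le 0$ shows that $\pi([\tf_\bullet])=0$ in $K^b(\m)$.
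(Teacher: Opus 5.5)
Your proof is correct and follows the paper's argument essentially step for step: your $h_{-i}$ and $g_i$ are the paper's $\th_{i-1}$ and $\tg_{i-1}$. They are built by the same cycle of lifting along $\tbeta^Y_i$ (by projectivity of $P$ at the start, and by $\Ext^1_\f(M^X_{i-1},Y_{i+1})=0$ afterwards), factoring the residual through $\talpha^Y_i$, and using that $\talpha^Y_{i-1}$ is monic to get $g_i\talpha^X_{i-1}=\tF_i$; consequences (1) and (2) are deduced the same way. The only differences are cosmetic: at the last step you lift along the isomorphism $\tbeta^Y_n$ rather than setting $\th_{n-1}=\tg_{n-1}$ with $\tbeta^Y_n=\id$, and your opening sentence says ``injectivity'' of $P$ where the argument actually (and correctly) uses projectivity.
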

    
\begin{proof}
    For convenience, we set $M^X_{-1}=P$ and $\talpha_{-1}^X=\tu\colon X_0\to M^X_{-1}$. Let $\tg_{-1}=\tv\colon M^X_{-1}\to Y_0$. We construct morphisms $\th_{i-1}\colon M_{i-1}^X\to M_i^Y$ and $\tg_i\colon M_i^X\to Y_{i+1}$, $0\leq i\leq n-1$, which satisfy
    \begin{equation}\label{eq:cons g h}
        \begin{cases}
            \tg_{i-1}  =  \tbeta_{i}^{Y}\circ \th_{i-1},\\
			\tf_{i}  =  \th_{i-1}\circ\talpha_{i-1}^{X}\circ\tbeta_{i}^{X}+\talpha_{i}^{Y}\circ \tg_{i},\\
			\tF_{i+1}  =  \tg_{i}\circ\talpha_{i}^{X}.
        \end{cases}
    \end{equation}
    See the following diagram.
    \[\begin{tikzcd}
		X_{i+1}\arrow[rr, "\talpha_{i}^{X}"]\arrow[dd, "\tF_{i+1}"] &  & M_{i}^{X}\arrow[dd, "\tf_{i}"]\arrow[rr, "\tbeta_{i}^{X}"]\arrow[lldd, "\tg_{i}", dotted] &  & X_{i}\arrow[dd, "\tF_{i}"]\arrow[rrd, "\talpha_{i-1}^{X}"] &  & \\
		& & & & & & M_{i-1}^{X}\arrow[lld, "\tg_{i-1}"]\arrow[lllld, "\th_{i-1}"' {pos=0.7}, dashed] \\
		Y_{i+1}\arrow[rr, "\talpha_{i}^{Y}"'] & & M_{i}^{Y}\arrow[rr, "\tbeta_{i}^{Y}"'] &  & Y_{i} &  &
	\end{tikzcd}\]
    Assume that $\th_{i-2}$ and $\tg_{i-1}$ satisfying the required conditions have been constructed. Since $\Ext^1_\f(M_{i-1}^{X},Y_{i+1})=0$, there exists a morphism $\th_{i-1}\colon M_{i-1}^{X} \rightarrow M_{i}^{Y}$ such that
	\[\tg_{i-1}=\tbeta_{i}^{Y}\circ \th_{i-1}.\]
    Since $\tbeta_{i}^{Y} \circ \th_{i-1} \circ \talpha_{i-1}^{X}= \tg_{i-1} \circ \talpha_{i-1}^{X} = \tF_i$, we have $\tbeta_{i}^{Y}\circ(\tf_{i}-\th_{i-1}\circ \talpha_{i-1}^{X}\circ\tbeta_{i}^{X})=\tbeta_{i}^{Y}\circ\tf_{i}-\tF_i\circ\tbeta_{i}^{X}=0$. By the universal property of kernels, there exists a morphism $\tg_i\colon M_{i}^{X}\rightarrow Y_{i+1}$ such that 
	\[\tf_i - \th_{i-1} \circ \talpha_{i-1}^X \circ \tbeta_{i}^{X} = \talpha_{i}^{Y} \circ \tg_i.\]
	Consequently, $\talpha_i^{Y}\circ(\tF_{i+1}-\tg_{i}\circ\talpha_{i}^{X})=\talpha_{i}^{Y}\circ\tF_{i+1}-\tf_{i}\circ\talpha_{i}^{X}=0.$ Since $\talpha_{i}^{Y}$ is an inflation, we conclude that 
	\[\tF_{i+1}=\tg_{i}\circ\talpha_{i}^{X}.\]
    This completes the construction of the morphisms. Finally, let $\th_{n-1}=\tg_{n-1}$. It follows from \eqref{eq:cons g h} that
    \[\tf_{i}=(\talpha_{i}^{Y}\circ\tbeta_{i+1}^{Y}) \circ \th_{i}+\th_{i-1} \circ (\talpha_{i-1}^{X}\circ\tbeta_{i}^{X}), \ 0\leq i\leq n-1.\]
	  and
    \[\tf_{n}=\th_{n-1}\circ\talpha_{n-1}^{X}.\]
	Therefore, $(\th_{n-1},\dots,\th_1,\th_0,\th_{-1})$ forms a null-homotopy of the shown chain map.

    For (1), since $\tf=0$, we may take $P=0$. In this case, the chain map shown above coincides with the chain map in \eqref{eq:mor}. Hence, $[\tf_\bullet]=0$ in $K^{b}(\M)$.

    For (2), since $\t$ is the stable category of $\f$, the condition $\pi(\tf)=0$ implies exactly that $\tf$ factors through some projective-injective object $P\in\p$. By the main assertion proved above, the corresponding augmented chain map ending in $P$ is null-homotopic in $K^b(\M)$. Applying the additive functor $\pi$ to this null-homotopy, the term $P$ vanishes since $\pi(P)=0$. As a result, the image of this augmented chain map under $\pi$ coincides exactly with $\pi(\tf_\bullet)$, showing that $\pi(\tf_\bullet)$ is null-homotopic in $K^b(\m)$. Thus, $\pi([\tf_\bullet])=0$.
\end{proof}

The following straightforward consequence of Lemma~\ref{homotopy}~(1) is essential for our subsequent development.

\begin{lemma}\label{lem:homotopy=}
    Let $X$ and $Y$ be objects in $\pr{\f}{n+1}{\M}$, and let $\eta$ and $\varepsilon$ be $\M$-presentations of length $n+1$ of $X$ and $Y$, respectively.  If $\tf_\bullet=(\tf_i)_{0 \leq i \leq n}$ and $\tf'_\bullet=(\tf'_i)_{0 \leq i \leq n}$ are two presentations of a morphism $\tf \colon X \to Y$ from $\eta$ to $\varepsilon$, then
    $[\tf_\bullet]=[\tf'_\bullet]$
    in $K^{b}(\M)$.
\end{lemma}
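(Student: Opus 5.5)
The plan is to reduce the statement to Lemma~\ref{homotopy}~(1) by linearity. Given two presentations $\tf_\bullet=(\tf_i)_{0\le i\le n}$ and $\tf'_\bullet=(\tf'_i)_{0\le i\le n}$ of the same morphism $\tf\colon X\to Y$ from $\eta$ to $\varepsilon$, I will consider their componentwise difference
\[\tg_\bullet:=(\tf_i-\tf'_i)_{0\le i\le n}.\]
The first step is to check that $\tg_\bullet$ is a presentation, from $\eta$ to $\varepsilon$, of the zero morphism $0=\tf-\tf\colon X\to Y$. This is straightforward because the defining conditions are linear. Let $\tF_{i+1}$ and $\tF'_{i+1}$ denote the morphisms induced on the kernels, with $\tF_0=\tF'_0=\tf$. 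Then the squares in \eqref{presinf2} commute for $\tg_i$ once we put $\tG_{i}:=\tF_{i}-\tF'_{i}$. In particular $\tG_0=\tf-\tf=0$, and each $\tG_{i+1}$ is exactly the morphism induced by $\tg_i$ via the kernel property, by uniqueness. So $\tg_\bullet$ is a presentation of $0$.

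The second step is to apply Lemma~\ref{homotopy}~(1) to $\tg_\bullet$. It gives $[\tg_\bullet]=0$ in $K^b(\M)$.

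The final step is to note that the chain maps \eqref{eq:mor} associated with presentations are additive in the components. The chain map of $\tg_\bullet$ is therefore the difference of the chain maps of $\tf_\bullet$ and $\tf'_\bullet$, so $[\tf_\bullet]-[\tf'_\bullet]=[\tg_\bullet]=0$, which gives $[\tf_\bullet]=[\tf'_\bullet]$.

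There is no real obstacle here. The only point needing care is to confirm that the difference is a genuine presentation in the sense of the definition. In particular, the kernel maps must be the differences $\tF_{i}-\tF'_{i}$, which holds because $\talpha^Y_i$ is a monomorphism, so the induced maps are unique and the construction is linear.
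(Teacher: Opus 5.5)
Your proposal is correct and matches the paper's approach. The paper records this lemma as a direct consequence of Lemma~\ref{homotopy}~(1), and your argument is exactly how that consequence is obtained: the componentwise difference of the two presentations is a presentation of the zero morphism, and the associated chain maps are additive.
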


We now construct an assignment $\tPn$ from $\pr{\f}{n+1}{\M}$ to $K^{b}(\M)$.

\begin{construction}\label{cons:tP}
    For any $X\in\pr{\f}{n+1}{\M}$, we fix an $\M$-presentation $\eta^X$ of $X$ of the form \eqref{presinf} and define $\tPn(X)=\cplx{X}{\eta^X}$ (see \eqref{eq:cplx}). 
    
    For any morphism $\tf\colon X\to Y$ in $\pr{\f}{n+1}{\M}$, Lemma~\ref{lem:pre-mor} guarantees the existence of a presentation $\tf_\bullet$ of $\tf$ from $\eta^X$ to $\eta^Y$. We define $\tPn(\tf)=[\tf_\bullet]\colon \cplx{X}{\eta^X}\to \cplx{Y}{\eta^Y}$ (see \eqref{eq:mor}). By Lemma~\ref{lem:homotopy=}, this assignment is well-defined.
\end{construction}

Let $X$ be an object in $\pr{\f}{n+1}{\M}$, and let $\eta$ and $\varepsilon$ be two $\M$-presentations of $X$ of length $n+1$. By Lemma~\ref{lem:pre-mor}, there exists a presentation $\cmap{X}{\eta}{\varepsilon}$ of the identity $\id_X$ from $\eta$ to $\varepsilon$ and a presentation $\cmap{X}{\varepsilon}{\eta}$ of the identity $\id_X$ from $\varepsilon$ to $\eta$. By Lemma~\ref{lem:homotopy=}, the morphisms
\begin{equation}\label{eq:id1}
    [\cmap{X}{\eta}{\varepsilon}]\colon\cplx{X}{\eta}\to\cplx{X}{\varepsilon}
\end{equation}
and
\begin{equation}\label{eq:id2}
[\cmap{X}{\varepsilon}{\eta}]\colon\cplx{X}{\varepsilon}\to\cplx{X}{\eta}
\end{equation}
in $K^{[-n,0]}(\M)$ depend only on the presentations $\eta$ and $\varepsilon$.

\begin{lemma}\label{lem:mutinv}
    The morphisms $[\cmap{X}{\eta}{\varepsilon}]$ and $[\cmap{X}{\varepsilon}{\eta}]$ are mutually inverse isomorphisms. Moreover, for any morphism $\tf\colon X\to Y$ in $\pr{\f}{n+1}{\M}$, let $\eta'$ and $\varepsilon'$ be two $\M$-presentations of $Y$ of length $n+1$, and let $\tf^{\eta}_\bullet$ and $\tf^{\varepsilon}_\bullet$ be presentations of $\tf$ from $\eta$ to $\eta'$ and from $\varepsilon$ to $\varepsilon'$, respectively. Then we have the following commutative diagram
    \begin{equation}\label{eq:comdia}
        \xymatrix{
    \cplx{X}{\eta}\ar[r]^{[\cmap{X}{\eta}{\varepsilon}]}\ar[d]_{[\tf^{\eta}_\bullet]} & \cplx{X}{\varepsilon}\ar[d]^{[\tf^{\varepsilon}_\bullet]}\\
    \cplx{Y}{\eta'}\ar[r]_{[\cmap{Y}{\eta'}{\varepsilon'}]} & \cplx{Y}{\varepsilon'}.
    }
    \end{equation}
\end{lemma}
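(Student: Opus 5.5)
The plan is to reduce both assertions to the uniqueness statement of Lemma~\ref{lem:homotopy=}, combined with the compatibility of presentations with composition recorded in Remark~\ref{rmk:comp}. No new homotopies should need to be constructed by hand.

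For the first assertion, consider the composite presentation $\cmap{X}{\varepsilon}{\eta}\circ\cmap{X}{\eta}{\varepsilon}$ from $\eta$ to $\eta$. By Remark~\ref{rmk:comp}, this is a presentation of $\id_X\circ\id_X=\id_X$ from $\eta$ to itself. On the other hand, the trivial sequence $(\id_{M^X_i})_{0\leq i\leq n}$, with all $\tF_{i}=\id_{X_i}$, is also a presentation of $\id_X$ from $\eta$ to $\eta$, and its homotopy class is the identity of $\cplx{X}{\eta}$. Lemma~\ref{lem:homotopy=} then gives
\[
[\cmap{X}{\varepsilon}{\eta}]\circ[\cmap{X}{\eta}{\varepsilon}]=\id_{\cplx{X}{\eta}}
\]
in $K^b(\M)$. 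Here I use that taking homotopy classes is compatible with composition of chain maps, which is immediate since the composite presentation $(\tg_i\circ\tf_i)$ is literally the composite chain map in \eqref{eq:mor}. Exchanging the roles of $\eta$ and $\varepsilon$ gives the other composite, so the two morphisms are mutually inverse isomorphisms.

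For the commutative square \eqref{eq:comdia}, I would compare the two composites $\tf^{\varepsilon}_\bullet\circ\cmap{X}{\eta}{\varepsilon}$ and $\cmap{Y}{\eta'}{\varepsilon'}\circ\tf^{\eta}_\bullet$. By Remark~\ref{rmk:comp}, both are presentations from $\eta$ to $\varepsilon'$: the first of $\tf\circ\id_X=\tf$, and the second of $\id_Y\circ\tf=\tf$. Lemma~\ref{lem:homotopy=} therefore shows that their homotopy classes agree, and this is exactly the commutativity of \eqref{eq:comdia}.

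I expect no real obstacle. The only point needing care is the bookkeeping that the composite in Remark~\ref{rmk:comp} also composes the induced maps $\tF_{i}$ on the syzygies, so that it is genuinely a presentation in the sense of \eqref{presinf2}. That holds because the $\tF_{i+1}$ are uniquely determined by the $\tf_i$.
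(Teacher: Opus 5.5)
Your proposal is correct and follows the paper's own proof: both assertions come from Remark~\ref{rmk:comp} combined with Lemma~\ref{lem:homotopy=}. For invertibility you compare the composite presentations with the identity presentation, and for the square you compare the two composites as presentations of $\tf$ from $\eta$ to $\varepsilon'$; in fact you spell out the commutativity of the square, which the paper only says can be proved similarly.
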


\begin{proof}
    By Remark~\ref{rmk:comp}, $\cmap{X}{\varepsilon}{\eta}\circ\cmap{X}{\eta}{\varepsilon}$ is a presentation of $\id_X$ from $\eta$ to itself, and $\cmap{X}{\eta}{\varepsilon}\circ\cmap{X}{\varepsilon}{\eta}$ is a presentation of $\id_X$ from $\varepsilon$ to itself. Hence, by Lemma~\ref{lem:homotopy=}, we have
    \[[\cmap{X}{\varepsilon}{\eta}]\circ[\cmap{X}{\eta}{\varepsilon}]=[\cmap{X}{\varepsilon}{\eta}\circ\cmap{X}{\eta}{\varepsilon}]=[\cmap{X}{\eta}{\eta}]=\id_{\cplx{X}{\eta}},\]
    and
    \[[\cmap{X}{\eta}{\varepsilon}]\circ[\cmap{X}{\varepsilon}{\eta}]=[\cmap{X}{\eta}{\varepsilon}\circ\cmap{X}{\varepsilon}{\eta}]=[\cmap{X}{\varepsilon}{\varepsilon}]=\id_{\cplx{X}{\varepsilon}}.\]
    The commutativity of the diagram~\eqref{eq:comdia} can be proved similarly.
\end{proof}

The following proposition shows that Construction~\ref{cons:tP} indeed yields a well-defined functor, which is unique up to natural isomorphism.

\begin{prop}\label{niso}
	The assignment $\tPn$ is an additive functor from $\pr{\f}{n+1}{\M}$ to $K^{b}(\M)$. Moreover, up to natural isomorphism, $\tPn$ does not depend on the choice of presentations of objects in $\pr{\f}{n+1}{\M}$.
\end{prop}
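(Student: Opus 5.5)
We verify the functor axioms directly from the uniqueness result Lemma~\ref{lem:homotopy=}, together with Remark~\ref{rmk:comp}.

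\emph{Identities.} For $X\in\pr{\f}{n+1}{\M}$ with fixed presentation $\eta^X$, the sequence $(\id_{M^X_i})_{0\le i\le n}$ is a presentation of $\id_X$ from $\eta^X$ to itself. Its homotopy class is $\id_{\cplx{X}{\eta^X}}$, so $\tPn(\id_X)=\id_{\tPn(X)}$.

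\emph{Composition.} Let $\tf\colon X\to Y$ and $\tg\colon Y\to Z$, with presentations $\tf_\bullet\colon\eta^X\to\eta^Y$ and $\tg_\bullet\colon\eta^Y\to\eta^Z$. By Remark~\ref{rmk:comp}, $\tg_\bullet\circ\tf_\bullet$ is a presentation of $\tg\circ\tf$ from $\eta^X$ to $\eta^Z$. Lemma~\ref{lem:homotopy=} then gives $\tPn(\tg\tf)=[\tg_\bullet\circ\tf_\bullet]=[\tg_\bullet]\circ[\tf_\bullet]$. The last equality holds because composition of chain maps is componentwise and is compatible with homotopy classes.

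\emph{Additivity.} Given $\tf,\tf'\colon X\to Y$ with presentations $\tf_\bullet,\tf'_\bullet$ from $\eta^X$ to $\eta^Y$, we check that $(\tf_i+\tf'_i)_i$ is a presentation of $\tf+\tf'$. The right squares in \eqref{presinf2} commute by linearity. The induced kernel maps are $\tF_{i+1}+\tF'_{i+1}$ by uniqueness of maps into kernels. Hence $\tPn(\tf+\tf')=\tPn(\tf)+\tPn(\tf')$.

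\emph{Target.} Each $\cplx{X}{\eta^X}$ is a complex concentrated in degrees $-n$ to $0$ with terms in $\M$. It is indeed a complex, since $\tbeta^X_{i}\circ\talpha^X_{i}=0$ gives $(\talpha_{i-1}\tbeta_i)(\talpha_i\tbeta_{i+1})=0$. So $\tPn$ lands in $K^{[-n,0]}(\M)\subseteq K^b(\M)$.

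\emph{Independence of choices.} Let $\tPn'$ be built from another family of presentations $\varepsilon^X$. Define $\theta_X:=[\cmap{X}{\eta^X}{\varepsilon^X}]\colon\tPn(X)\to\tPn'(X)$. By Lemma~\ref{lem:mutinv}, $\theta_X$ is an isomorphism with inverse $[\cmap{X}{\varepsilon^X}{\eta^X}]$. For any $\tf\colon X\to Y$, the commutative square \eqref{eq:comdia} with $\eta=\eta^X$, $\varepsilon=\varepsilon^X$, $\eta'=\eta^Y$ and $\varepsilon'=\varepsilon^Y$ reads $\tPn'(\tf)\circ\theta_X=\theta_Y\circ\tPn(\tf)$. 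This is exactly naturality of $\theta$, so $\theta\colon\tPn\Rightarrow\tPn'$ is a natural isomorphism.

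\emph{Where the work lies.} The only point needing care is the commutativity of \eqref{eq:comdia}, which was stated in Lemma~\ref{lem:mutinv} as ``proved similarly''. Both composites $\tf^\varepsilon_\bullet\circ\cmap{X}{\eta}{\varepsilon}$ and $\cmap{Y}{\eta'}{\varepsilon'}\circ\tf^\eta_\bullet$ are presentations of $\tf=\tf\circ\id_X=\id_Y\circ\tf$ from $\eta$ to $\varepsilon'$, by Remark~\ref{rmk:comp}. Lemma~\ref{lem:homotopy=} therefore identifies their homotopy classes. All remaining steps are formal consequences of that lemma.
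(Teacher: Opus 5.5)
Your proposal is correct and follows essentially the same route as the paper. It uses the presentation $(\id_{M^X_i})_i$ for identities, and Remark~\ref{rmk:comp} together with Lemma~\ref{lem:homotopy=} for composition. For independence of choices it uses the isomorphisms $[\cmap{X}{\eta^X}{\varepsilon^X}]$ from Lemma~\ref{lem:mutinv}. Your explicit checks of additivity and of the square \eqref{eq:comdia} fill in exactly the steps the paper dismisses with ``similarly''; the latter rests on both composites being presentations of $\tf$ from $\eta$ to $\varepsilon'$.
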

	
\begin{proof}
    For any object $X\in\pr{\f}{n+1}{\M}$, the identity morphism $\id_X$ admits a presentation $(\id_{M_n^X}, \dots, \id_{M_{0}^X})$. Hence, $\tPn(\id_X)=\id_{\tPn(X)}$. By Remark~\ref{rmk:comp}, for any two morphisms $\tf\colon X\to Y$ and $\tg\colon Y\to Z$ in $\pr{\f}{n+1}{\M}$, and any presentations $\tf_\bullet$ of $\tf$ and $\tg_\bullet$ of $\tg$, the composition $\tg_\bullet\circ\tf_\bullet$ is a presentation of $\tg\circ \tf$. It follows that
    \[\tPn(\tg\circ \tf)=[\tg_\bullet\circ\tf_\bullet]=[\tg_\bullet]\circ[\tf_\bullet]=\tPn(\tg)\circ \tPn(\tf).\]
    Therefore, $\tPn$ is a functor. Similarly, one can verify that the functor $\tPn$ is additive.

    Suppose that we choose another presentation $\varepsilon^X$ for each object $X$ in $\pr{\f}{n+1}{\M}$. Following Construction~\ref{cons:tP}, this defines another functor $\tPn'\colon\pr{\f}{n+1}{\M} \to K^{b}(\M)$ such that $\tPn'(X)=\cplx{X}{\varepsilon^X}$. By Lemma~\ref{lem:mutinv}, the family of isomorphisms $\{[\cmap{X}{\eta^X}{\varepsilon^X}]\}_{X \in \pr{\f}{n+1}{\M}}$ provides a natural isomorphism from $\tPn$ to $\tPn'$.
\end{proof}

\begin{remark}
    If the category $\f$ is $\k$-linear, where $\k$ is a field, then the functor $\tPn$ is $\k$-linear.
\end{remark}

We recall from \cite[Chapter 1]{H} the triangulated structure of $\t$. For any object $Y$ in $\f$, we fix a conflation
\[Y\xrightarrow{\ti_Y}I(Y)\xrightarrow{\tp_Y}\Sigma Y\]
in $\f$, where $I(Y)\in\p$ and $\Sigma$ is the suspension functor of $\t$. Let $Y\xrightarrow{\tu}Z\xrightarrow{\tv}X$ be an arbitrary conflation in $\f$. Since $I(Y)$ is injective in $\f$ and $\tu$ is an inflation, there exist morphisms yielding the following commutative diagram:
\begin{equation}\label{eq:tri-in-stable}
    \begin{tikzcd}
		Y \arrow[r, "\tu"] \arrow[d, equal] & Z \arrow[r, "\tv"] \arrow[d, dashed] & X \arrow[d, "{\tw}", dashed] \\
		Y \arrow[r, "\ti_Y"']                                 & I(Y) \arrow[r, "\tp_Y"']        & \Sigma Y.                        
	\end{tikzcd}
\end{equation}
Then $Y\xrightarrow{\pi(\tu)}Z\xrightarrow{\pi(\tv)}X\xrightarrow{\pi(\tw)}\Sigma Y$ is called a \emph{standard triangle} in $\t$. The triangles in $\t$ are defined to be the sequences isomorphic to standard triangles. 

Moreover, any morphism $\alpha\colon Y\to Z$ in $\t$ is isomorphic to $\pi(\tu)\colon Y\to Z'$ for some inflation $\tu\colon Y\to Z'$ in $\f$, and any morphism $\beta\colon Z\to X$ in $\t$ is isomorphic to $\pi(\tv)\colon Z''\to X$ for some deflation $\tv\colon Z''\to X$ in $\f$.

\begin{lemma}\label{inclusion}
    The canonical functor $\pi$ induces an identification of categories 
    \[\pi(\pr{\f}{n+1}{\M})=\pr{\t}{n+1}{\m}.\] 
    Consequently, $\pi$ restricts to a functor
    \[\pi\colon\pr{\f}{n+1}{\M}\to \pr{\t}{n+1}{\m},\]
    which is full and surjective on objects. The kernel ideal of this restriction consists of all morphisms that factor through objects in $\p$.
\end{lemma}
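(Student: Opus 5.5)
The plan is to prove the two inclusions for the identification $\pi(\pr{\f}{n+1}{\M})=\pr{\t}{n+1}{\m}$ separately. Fullness and the description of the kernel then follow from the corresponding standard facts about $\pi\colon\f\to\t$. Note that $\f$ and $\t$ have the same objects. Also, $\M=\pi^{-1}(\m)$ is closed under isomorphisms in $\t$, because $\m$ is, and it contains $\p$.

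For the inclusion $\subseteq$, let $X\in\pr{\f}{n+1}{\M}$ with an $\M$-presentation $(\eta^X_i)_{0\le i\le n}$ satisfying $X_{n+1}=0$. By the construction \eqref{eq:tri-in-stable}, each conflation $\eta^X_i$ yields a standard triangle $X_{i+1}\to M^X_i\to X_i\to\Sigma X_{i+1}$ in $\t$ with $M^X_i\in\m$. These are precisely the triangles \eqref{eq:tri for prt}, so $X\in\pr{\t}{n+1}{\m}$.

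For the inclusion $\supseteq$, I would first prove an auxiliary claim by induction on $m\ge 1$: \emph{$\pr{\f}{m}{\M}$ is closed under isomorphisms in $\t$.}
\begin{enumerate}
\item[(a)] \emph{Adding projective-injectives.} If $C\in\pr{\f}{m}{\M}$ and $Q\in\p$, then $C\oplus Q\in\pr{\f}{m}{\M}$: replace $\eta^C_0$ by $X_1\to M_0^C\oplus Q\to C\oplus Q$, using $Q\in\M$.
\item[(b)] \emph{Removing projective-injectives.} If $X\oplus P\in\pr{\f}{m}{\M}$ with $P\in\p$, then $X\in\pr{\f}{m}{\M}$. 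Take the presentation $X_1\to M_0\to X\oplus P$ and compose its deflation with the split epimorphism $X\oplus P\to X$. The composite is a deflation $M_0\to X$. Its kernel $K$ fits into a conflation $X_1\to K\to P$, which splits since $P$ is projective, so $K\cong X_1\oplus P$. By (a) applied at level $m-1$, $K\in\pr{\f}{m-1}{\M}$, and hence $X\in\pr{\f}{m}{\M}$.
\item[(c)] \emph{Stable isomorphism.} If $X\cong C$ in $\t$, then $X\oplus P\cong C\oplus Q$ in $\f$ for some $P,Q\in\p$. Steps (a) and (b) then give $X\in\pr{\f}{m}{\M}$.
\end{enumerate}

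Granting the claim, let $X\in\pr{\t}{n+1}{\m}$ with triangles as in \eqref{eq:tri for prt}. I would show $X_i\in\pr{\f}{n+1-i}{\M}$ by descending induction on $i$. The base case is $X_n\cong M_n$ in $\t$, so $X_n\in\M$. For the inductive step, realize $\alpha_i\colon X_{i+1}\to M_i$ by the inflation $X_{i+1}\to M_i\oplus I(X_{i+1})$, with cokernel $C$. Then $M_i\oplus I(X_{i+1})\in\M$, so by the induction hypothesis $C\in\pr{\f}{n+1-i}{\M}$. The corresponding standard triangle shows $C\cong X_i$ in $\t$, so the claim gives $X_i\in\pr{\f}{n+1-i}{\M}$. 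The case $i=0$ is the desired statement.

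Finally, $\pi$ restricts to full subcategories whose objects correspond. Since $\pi\colon\f\to\t$ is full and its kernel consists exactly of the morphisms factoring through $\p$, the restriction is full, surjective on objects, and has the stated kernel. I expect the main obstacle to be step (b), specifically checking that the kernel of the composite deflation lies in the correct level $\pr{\f}{m-1}{\M}$; once (a) and (b) are in place, the rest is routine.
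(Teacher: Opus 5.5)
Your proof is correct, but for the inclusion $\supseteq$ it takes a longer route than the paper.

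\textbf{The paper's route.} The paper runs an ascending induction on $m$, with the hypothesis quantified over \emph{all} objects of $\pr{\t}{s}{\m}$. It realizes the deflation side of the triangle: $\beta^X_0\colon M^X_0\to X$ is replaced by $\pi(\tv)$ for a deflation $\tv\colon M'\to X$ in $\f$ whose target is $X$ itself. The terms $M'$ and $X'=\ker\tv$ are then only isomorphic in $\t$ to $M^X_0$ and $X_1$. This costs nothing, because $\m$ and $\pr{\t}{s}{\m}$ are closed under isomorphisms in $\t$ by definition. So the induction hypothesis applies to $X'$ directly, and the conflation $X'\to M'\to X$ completes the step.

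\textbf{Your route.} You realize the inflation $\alpha_i$ instead. This keeps $X_{i+1}$ fixed but only produces an object $C$ stably isomorphic to $X_i$. That is why you need the separate claim that $\pr{\f}{m}{\M}$ is invariant under stable isomorphism, proved via steps (a)--(c) and the fact that stably isomorphic objects of a Frobenius category become isomorphic after adding summands from $\p$.

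Those steps are valid. In (b), the conflation $X_1\to K\to P$ is the pullback of $M_0\to X\oplus P$ along the inclusion $P\to X\oplus P$. For $m=1$, step (b) as written does not apply, since there is no level $0$. There you need your separate remark that $\M=\pi^{-1}(\m)$ is closed under isomorphisms in $\t$, and it is worth stating that explicitly as the base case.

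\textbf{Comparison.} Your route gives an explicit stable-invariance statement. In the paper's setup, though, this follows immediately from the lemma itself, since on objects $\pr{\f}{m}{\M}$ is exactly the preimage of the isomorphism-closed class $\pr{\t}{m}{\m}$. Keeping the target of the deflation fixed is what makes your auxiliary claim, and its exact-category bookkeeping, unnecessary.
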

	
\begin{proof} 
    Since the canonical functor $\pi \colon \f \to \t$ is full and acts as the identity on objects, it suffices to show that the objects of $\pr{\f}{n+1}{\M}$ and $\pr{\t}{n+1}{\m}$ coincide. Since conflations of the form \eqref{presinf} are mapped under $\pi$ to triangles of the form \eqref{eq:tri for prt}, every object in $\pr{\f}{n+1}{\M}$ belongs to $\pr{\t}{n+1}{\m}$. It thus remains to show the reverse inclusion.
    
    We prove by induction on $m$ that for each $1\le m\leq n+1$, every object in $\pr{\t}{m}{\m}$ belongs to $\pr{\f}{m}{\M}$. For the base case $m=1$, we have $\pr{\t}{1}{\m}=\m$ and $\pr{\f}{1}{\M}=\M$, hence the assertion holds. Assume that the assertion holds when $m=s$ for some $1\leq s\leq n$. 
    
    Now consider the case $m=s+1$. Let $X$ be an object in $\pr{\t}{s+1}{\m}$. By definition, there exists a triangle
    \[X_1\xrightarrow{\alpha_0^X} M_0^X\xrightarrow{\beta_0^X} X\xrightarrow{\gamma_0^X} \Sigma X_1,\]
    where $M_0^X\in\m$ and $X_1\in\pr{\t}{s}{\m}$. By the facts recalled above, the morphism $\beta^X_0$ is isomorphic in $\t$ to $\pi(\tv)$ for some deflation $\tv\colon M'\to X$ in $\f$. We complete the deflation $\tv$ to a conflation $X'\xrightarrow{\tu}M'\xrightarrow{\tv}X$ in $\f$, which induces a standard triangle
    \[X'\xrightarrow{\pi(\tu)}M'\xrightarrow{\pi(\tv)}X\xrightarrow{\pi(\tw)}\Sigma X'\] 
    in $\t$. Since $\beta^X_0$ is isomorphic to $\pi(\tv)$ as morphisms ending in $X$, this isomorphism of triangles implies that $M'$ and $X'$ are isomorphic in $\t$ to $M_0^X$ and $X_1$, respectively. Since the subcategories $\m$ and $\pr{\t}{s}{\m}$ are closed under isomorphisms in $\t$, we obtain $M' \in \m$ and $X' \in \pr{\t}{s}{\m}$. By the induction hypothesis, $M'$ and $X'$ belong to $\M$ and $\pr{\f}{s}{\M}$, respectively. Consequently, $X$ belongs to $\pr{\f}{s+1}{\M}$, which completes the proof.
\end{proof}

The projection $\pi \colon \M \to \m$ naturally induces a functor
\begin{equation}\label{eq:pi}
    \pi \colon K^{[-n,0]}(\M) \to K^{[-n,0]}(\m).
\end{equation}
By Construction~\ref{cons:tP}, we have a functor $\tPn \colon \pr{\f}{n+1}{\M} \to K^{[-n,0]}(\M)$. By Lemma~\ref{homotopy}~(2), the composite functor 
\[
\pi \circ \tPn \colon \pr{\f}{n+1}{\M} \to K^{[-n,0]}(\m)
\]
annihilates all morphisms factoring through objects in $\p$. Consequently, Lemma~\ref{inclusion} and the universal property of quotient categories guarantee that $\pi \circ \tPn$ factors uniquely through the quotient functor $\pi \colon \pr{\f}{n+1}{\M} \to \pr{\t}{n+1}{\m}$.

\begin{definition}
    We define $\Pn$ to be the unique additive functor 
    \[
    \Pn \colon \pr{\t}{n+1}{\m} \to K^{[-n,0]}(\m)
    \]
    making the following diagram commute:
	\[\begin{tikzcd}
			\pr{\f}{n+1}{\M} \arrow[d, "\tPn"'] \arrow[r, "\pi"] & \pr{\t}{n+1}{\m} \arrow[d, "\Pn"] \\
			K^{[-n,0]}(\M) \arrow[r, "\pi"']                                                        & {K^{[-n,0]}(\m)}.
		\end{tikzcd}\]
\end{definition}

To investigate the action of $\Pn$ on extriangles in the next section, we need the following construction.

\begin{construction}\label{cons:tG}
    We construct, for any $X_1\in\pr{\f}{n}{\M}\subseteq\pr{\f}{n+1}{\M}$, a morphism 
    \[\tGamma_{X_1}\colon\tPn(\Sigma X_1)\to\Sigma \tPn(X_1)\]
    as follows.

    Since $X_1\in\pr{\f}{n}{\M}$, it admits an $\M$-presentation of length $n$. Anticipating its later role in the presentation of an object $X$, we denote this presentation by $(\varepsilon^{X_1}_i)_{0\leq i\leq n-1}$ and index its terms accordingly:
    \[\varepsilon^{X_1}_i\colon X_{i+2}\xrightarrow{\talpha_{i+1}^X}M^X_{i+1}\xrightarrow{\tbeta^X_{i+1}}X_{i+1}, \ 0\leq i\leq n-1.\]
    On the one hand, we extend it to an $\M$-presentation $\varepsilon^{X_1}=(\varepsilon^{X_1}_i)_{0\leq i\leq n}$ of $X_1$ of length $n+1$ by adding a conflation $\varepsilon^{X_1}_n$, all of whose terms are zero. On the other hand, we construct an $\M$-presentation $\varepsilon^{\Sigma X_1}=(\varepsilon^{\Sigma X_1}_i)_{0\leq i\leq n}$ of $\Sigma X_1$ of length $n+1$, via
    \[\varepsilon^{\Sigma X_1}_0\colon X_1 \xrightarrow{\ti_{X_1}} I(X_1)\xrightarrow{\tp_{X_1}} \Sigma X_1,\]
    and
    \[\varepsilon^{\Sigma X_1}_i\colon X_{i+1}\xrightarrow{\talpha_{i}^X}M^X_i\xrightarrow{-\tbeta^X_{i}}X_{i}, \ 1\leq i\leq n.\]
    As in \eqref{eq:cplx}, we obtain two complexes in $K^{[-n,0]}(\M)$:
    \[\cplx{X_1}{\varepsilon^{X_1}}\colon 0\to M^X_{n-1}\xrightarrow{\talpha^X_{n-2}\circ\tbeta^X_{n-1}}M^X_{n-2}\xrightarrow{\talpha^X_{n-3}\circ\tbeta^X_{n-2}}\cdots\xrightarrow{\talpha^X_1\circ\tbeta^X_2}M^X_1,\]
    and
    \[\cplx{\Sigma X_1}{\varepsilon^{\Sigma X_{1}}}\colon M^X_{n-1}\xrightarrow{-\talpha^X_{n-2}\circ\tbeta^X_{n-1}}M^X_{n-2}\xrightarrow{-\talpha^X_{n-3}\circ\tbeta^X_{n-2}}\cdots\xrightarrow{-\talpha^X_1\circ\tbeta^X_2}M^X_1\xrightarrow{-\ti_{X_1}\circ\tbeta^X_1}I(X_1).\]
    Let $\tGamma_{X_1;\,\varepsilon^{X_1}}\colon\cplx{\Sigma X_1}{\varepsilon^{\Sigma X_1}}\to\Sigma\cplx{X_1}{\varepsilon^{X_1}}$ be the homotopy class of the following chain map:
    \begin{equation}\label{eq:tGXvar}
        \xymatrix@C=4em{
        M^X_{n}\ar[d]^{\id}\ar[r]^{-\talpha^X_{n-1}\circ\tbeta^X_{n}}&M^X_{n-1}\ar[d]^{\id}\ar[r]^{-\talpha^X_{n-2}\circ\tbeta^X_{n-1}}&\cdots\ar[r]^{-\talpha^X_1\circ\tbeta^X_2}&M^X_1\ar[d]^{\id}\ar[r]^{-\ti_{X_1}\circ\tbeta^X_1}&I(X_1)\ar[d]^{0}\\
        M^X_{n}\ar[r]_{-\talpha^X_{n-1}\circ\tbeta^X_{n}}&M^X_{n-1}\ar[r]_{-\talpha^X_{n-2}\circ\tbeta^X_{n-1}}&\cdots\ar[r]_{-\talpha^X_1\circ\tbeta^X_2}&M^X_1\ar[r]_{0}&0.
        }
    \end{equation}
    Recall from Construction~\ref{cons:tP} that $\tPn(\Sigma X_1)=\cplx{\Sigma X_1}{\eta^{\Sigma X_1}}$ and $\Sigma\tPn(X_1)=\Sigma\cplx{X_1}{\eta^{X_1}}$. We define
    \begin{equation}\label{eq:tGX}
        \tGamma_{X_1}=\Sigma[\cmap{X_1}{\varepsilon^{X_1}}{\eta^{X_1}}]\circ\tGamma_{X_1;\, \varepsilon^{X_1}}\circ[\cmap{\Sigma X_1}{\eta^{\Sigma X_1}}{\varepsilon^{\Sigma X_1}}]\colon \tPn(\Sigma X_1)\to\Sigma\tPn(X_1).
    \end{equation}
    For any $X_1\in\pr{\t}{n}{\m}$, since $X_1$ is also an object of $\pr{\f}{n}{\M}$ by Lemma~\ref{inclusion}, we define 
    \[\Gamma_{X_1}:=\pi(\tGamma_{X_1})\colon\Pn(\Sigma X_1)\to\Sigma \Pn(X_1),\]
    where $\pi\colon K^{[-n,0]}(\M)\to K^{[-n,0]}(\m)$ is the functor shown in \eqref{eq:pi}.
\end{construction}

The following lemma verifies that the morphisms $\Gamma_{X_1}$ constructed above assemble into a natural isomorphism.

\begin{lemma}\label{lem:natiso}
    $\Gamma:=(\Gamma_{X_1})_{X_1\in\pr{\t}{n}{\m}}$ is a natural isomorphism from $\Pn\circ\Sigma$ to $\Sigma\circ\Pn$, as functors from $\pr{\t}{n}{\m}$ to $K^{[-n,0]}(\m)$.
\end{lemma}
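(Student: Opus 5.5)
The plan is to check two things separately: that each $\Gamma_{X_1}$ is an isomorphism in $K^{[-n,0]}(\m)$, and that the family is natural in $X_1$. In both parts I would work with the auxiliary presentations $\varepsilon^{X_1}$ and $\varepsilon^{\Sigma X_1}$ of Construction~\ref{cons:tG}. I would only pass back to the fixed presentations $\eta$ at the end, using Lemma~\ref{lem:mutinv}: by \eqref{eq:tGX}, $\tGamma_{X_1}$ is $\tGamma_{X_1;\,\varepsilon^{X_1}}$ conjugated by the isomorphisms $[\cmap{-}{-}{-}]$, and these are natural with respect to presentations of morphisms.

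\emph{Isomorphism.} Apply $\pi\colon K^{[-n,0]}(\M)\to K^{[-n,0]}(\m)$ to the chain map \eqref{eq:tGXvar}. The only degree at which the two complexes differ is degree $0$, where the source has the term $I(X_1)\in\p$ and $\pi(I(X_1))=0$. So after applying $\pi$:
\begin{itemize}
\item the complex $\pi\cplx{\Sigma X_1}{\varepsilon^{\Sigma X_1}}$ becomes literally the complex $\Sigma\,\pi\cplx{X_1}{\varepsilon^{X_1}}$, with the same terms $M^X_n,\dots,M^X_1$ and the same differentials $-\talpha^X_{i-1}\circ\tbeta^X_i$ (these are the negated differentials required by the shift);
\item $\pi(\tGamma_{X_1;\,\varepsilon^{X_1}})$ is the identity chain map.
\end{itemize}
Hence $\Gamma_{X_1}=\pi(\tGamma_{X_1})$ is a composite of three isomorphisms, the outer two being images of the isomorphisms of Lemma~\ref{lem:mutinv}.

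\emph{Naturality.} Let $f\colon X_1\to Y_1$ be a morphism in $\pr{\t}{n}{\m}$. By Lemma~\ref{inclusion}, lift it to $\tf\colon X_1\to Y_1$ in $\pr{\f}{n}{\M}$. By Lemma~\ref{lem:pre-mor}, choose a presentation $(\tf_i)$ of $\tf$ from $\varepsilon^{X_1}$ to $\varepsilon^{Y_1}$; it is zero in the padded top term. Since $I(Y_1)$ is injective, $\ti_{Y_1}\circ\tf$ extends along $\ti_{X_1}$ to a morphism $\tilde h\colon I(X_1)\to I(Y_1)$. This induces $\tilde s\colon \Sigma X_1\to\Sigma Y_1$ with $\pi(\tilde s)=\Sigma f$; this is the standard description of $\Sigma$ on morphisms in the stable category \cite{H}. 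Then the sequence
\[(\tilde h,\ \tf_0,\ \tf_1,\ \dots,\ \tf_{n-1})\]
is a presentation of $\tilde s$ from $\varepsilon^{\Sigma X_1}$ to $\varepsilon^{\Sigma Y_1}$, with the $\tf_{i}$ placed in the component of $M^X_{i+1}$. The sign change on $\tbeta$ occurs in both presentations, so the squares still commute. The connecting maps are $\tF_1=\tf$ and the maps $\tF_{i+1}$ from the original presentation.

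With these choices, the naturality square for $\tGamma_{\cdot;\varepsilon}$ commutes already at the chain level. The vertical maps agree in every degree except degree $0$, and there both horizontal maps vanish on one side: the degree-$0$ component of $\tGamma$ is $0$, and the target of $\Sigma[\tf_\bullet]$ is $0$ in degree $0$. Conjugating by the maps $[\cmap{}{}{}]$ via the commutative diagram \eqref{eq:comdia}, and then applying $\pi$, gives
\[\Gamma_{Y_1}\circ\Pn(\Sigma f)=\Sigma\Pn(f)\circ\Gamma_{X_1}.\]
Lemma~\ref{homotopy}(2) ensures that this identity is independent of the chosen lifts $\tf$, $\tilde h$ and $\tilde s$.

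The main obstacle is the bookkeeping around $\Sigma$. Two points need care. First, $\Pn(\Sigma f)$ must be computed through a lift of the specific morphism $\Sigma f$, and I need to check that the $\tilde s$ induced by $\tilde h$ is such a lift. This is where I would invoke the construction of $\Sigma$ in \cite{H}: any two choices of $\tilde s$ differ by a map factoring through $\p$, and Lemma~\ref{homotopy}(2) then makes the choice irrelevant. Second, the sign conventions of $\varepsilon^{\Sigma X_1}$ must be compatible with the sign convention for the shift in $K^b(\M)$, so that the augmented sequence above really is a presentation.
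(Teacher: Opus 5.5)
Your proposal is correct and takes essentially the same approach as the paper. Invertibility comes from $\pi(I(X_1))=0$ making $\pi(\tGamma_{X_1;\,\varepsilon^{X_1}})$ invertible. Naturality comes from the augmented presentation $(\tilde h,\tf_0,\dots,\tf_{n-1})$ of a lift of $\Sigma f$ from $\varepsilon^{\Sigma X_1}$ to $\varepsilon^{\Sigma Y_1}$, a chain-level check of the square for $\tGamma_{-;\,\varepsilon}$, and transport to the fixed presentations $\eta$ via \eqref{eq:comdia} before applying $\pi$.
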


\begin{proof}
    For any object $X\in\pr{\t}{n}{\m}$, since $I(X)=0$ in $\t$, the morphism $\pi(\tGamma_{X;\, \varepsilon^{X}})$ is an isomorphism in $K^{[-n,0]}(\m)$. By Lemma~\ref{lem:mutinv}, both $[\cmap{\Sigma X}{\eta^{\Sigma X}}{\varepsilon^{\Sigma X}}]$ and $[\cmap{X}{\varepsilon^{X}}{\eta^{X}}]$ are isomorphisms, hence their images under $\pi$ are also isomorphisms. Therefore, $\Gamma_{X}$ is an isomorphism.

    For any morphism $\tf\colon X\to Y$ in $\pr{\f}{n}{\M}$, let $\tf^{\eta}_\bullet$ and $\tf^{\varepsilon}_\bullet$ be presentations of $\tf$ from $\eta^{X}$ to $\eta^{Y}$ and from $\varepsilon^{X}$ to $\varepsilon^{Y}$, respectively. For the morphism $\Sigma\tf$, let $(\Sigma\tf)^{\eta}_\bullet$ and $(\Sigma\tf)^{\varepsilon}_\bullet$ be presentations of $\Sigma\tf$ from $\eta^{\Sigma X}$ to $\eta^{\Sigma Y}$ and from $\varepsilon^{\Sigma X}$ to $\varepsilon^{\Sigma Y}$, respectively. We construct another presentation $(\Sigma\tf)^{\varepsilon,\prime}_\bullet$ of $\Sigma\tf$ from $\varepsilon^{\Sigma X}$ to $\varepsilon^{\Sigma Y}$ as follows. Let $(\Sigma\tf)^{\varepsilon,\prime}_{i+1}=\tf^{\varepsilon}_{i}$ for $0\le i\le n-1$, and let $(\Sigma\tf)^{\varepsilon,\prime}_{0}$ be the morphism induced by the injectivity of $I(X)$. Set $(\Sigma\tf)^{\varepsilon,\prime}_\bullet:=((\Sigma\tf)^{\varepsilon,\prime}_{i})_{0\le i\le n}$. By Lemma~\ref{lem:homotopy=}, we have $[(\Sigma\tf)^{\varepsilon}_\bullet]=[(\Sigma\tf)^{\varepsilon,\prime}_\bullet]$. Using this construction, a straightforward check establishes the equality
    \[\Sigma[\tf^{\varepsilon}_\bullet]\circ\tGamma_{X;\, \varepsilon^X} = \tGamma_{Y;\, \varepsilon^Y}\circ[(\Sigma\tf)^{\varepsilon}_\bullet].\]
    Applying this, we can now compute
    \[\begin{array}{rcl}
        \Sigma\tPn(\tf)\circ\tGamma_X & = & \Sigma[\tf^{\eta}_\bullet]\circ\Sigma[\cmap{X}{\varepsilon^X}{\eta^X}]\circ\tGamma_{X;\, \varepsilon^X}\circ[\cmap{\Sigma X}{\eta^{\Sigma X}}{\varepsilon^{\Sigma X}}] \\
        & = & \Sigma[\cmap{Y}{\varepsilon^Y}{\eta^Y}]\circ\Sigma[\tf^{\varepsilon}_\bullet]\circ\tGamma_{X;\, \varepsilon^X}\circ[\cmap{\Sigma X}{\eta^{\Sigma X}}{\varepsilon^{\Sigma X}}] \\
        & = & \Sigma[\cmap{Y}{\varepsilon^Y}{\eta^Y}]\circ\tGamma_{Y;\, \varepsilon^Y}\circ[(\Sigma\tf)^{\varepsilon}_\bullet]\circ[\cmap{\Sigma X}{\eta^{\Sigma X}}{\varepsilon^{\Sigma X}}] \\
        & = & \Sigma[\cmap{Y}{\varepsilon^Y}{\eta^Y}]\circ\tGamma_{Y;\, \varepsilon^Y}\circ[\cmap{\Sigma Y}{\eta^{\Sigma Y}}{\varepsilon^{\Sigma Y}}]\circ[(\Sigma\tf)^{\eta}_\bullet] \\
        & = & \tGamma_Y\circ\tPn(\Sigma\tf),
    \end{array}\]
    where the first and fifth equalities follow from the constructions, and the second and fourth equalities follow from the commutative diagram~\eqref{eq:comdia} in Lemma~\ref{lem:mutinv}. Applying the functor $\pi$, we obtain the following commutative diagram for any morphism $f\colon X\to Y$ in $\pr{\t}{n}{\m}$:
    \[\xymatrix{
    \Pn(\Sigma X)\ar[r]^{\Gamma_X}\ar[d]_{\Pn(\Sigma f)}&\Sigma \Pn(X)\ar[d]^{\Sigma \Pn(f)}\\
    \Pn(\Sigma Y)\ar[r]_{\Gamma_Y}&\Sigma \Pn(Y).
    }\]
    Therefore, $\Gamma$ is a natural isomorphism from $\Pn\circ\Sigma$ to $\Sigma\circ\Pn$.
\end{proof}

Before proceeding to the next section, it is instructive to explicitly compute the action of the functor $\tPn$ on the morphisms of a standard triangle arising from an $\M$-presentation. These explicit formulas will be crucial for understanding the action of $\Pn$ on extriangles.
	
\begin{exam}\label{intrinsic} 
    Let $X\in\pr{\f}{n+1}{\M}$ with its $\M$-presentation $\eta^X=(\eta^X_i)_{0\leq i\leq n}$:
    \[\eta^X_i\colon X_{i+1}\xrightarrow{\talpha^X_i}M^X_i\xrightarrow{\tbeta^X_i}X_i, \ 0\leq i\leq n.\]
    The first conflation $\eta^X_0$ gives rise to a triangle in $\t$:
    \[X_1\xrightarrow{\pi(\talpha^X_0)}M^X_0\xrightarrow{\pi(\tbeta^X_0)}X\xrightarrow{\pi(\tgamma^X_0)} \Sigma X_1,\]
    where $\tgamma^X_0$ comes from the following commutative diagram in $\f$:
    \[\begin{tikzcd}
		X_1 \arrow[r, "\talpha^X_0"] \arrow[d, equal] & M^X_0 \arrow[r, "\tbeta^X_0"] \arrow[d, "\tz"] & X \arrow[d, "{\tgamma^X_0}"] \\
		X_1 \arrow[r, "\ti_{X_1}"']                                 & I(X_1) \arrow[r, "\tp_{X_1}"']        & \Sigma X_1.                        
	\end{tikzcd}\]
    Consider the $\M$-presentation $\varepsilon^{X_1}=(\varepsilon^{X_1}_i)_{0\leq i\leq n}$ of $X_1$, where $\varepsilon^{X_1}_i=\eta^X_{i+1}$ for all $0\leq i\leq n-1$ and $\varepsilon^{X_1}_n$ is the conflation, all of whose items are zero. As in Construction~\ref{cons:tG}, $\Sigma X_1$ admits an $\M$-presentation $\varepsilon^{\Sigma X_1}=(\varepsilon^{\Sigma X_1}_i)_{0\leq i\leq n}$, where $\varepsilon^{\Sigma X_1}_0$ is the conflation in the last row of the above diagram, and 
    \[\varepsilon^{\Sigma X_1}_i\colon X_{i+1}\xrightarrow{\talpha^X_i}M^X_i\xrightarrow{-\tbeta^X_i}X_i,\ 1\leq i\leq n.\]

    A direct calculation shows that 
    \begin{itemize}
        \item $\tPn(\talpha_{0}^{X})$ is the composition of the morphism 
        \[[\cmap{X_1}{\eta^{X_1}}{\varepsilon^{X_1}}]\colon\tPn(X_1)\to\cplx{X_1}{\varepsilon^{X_1}}\] 
        with the homotopy class of the chain map
        \[\begin{tikzcd}[column sep=3em]
				0 \arrow[r] \arrow[d] & M_{n}^{X} \arrow[r, "\talpha_{n-1}^{X}\circ\tbeta_{n}^{X}"] \arrow[d] & M_{n-1}^{X} \arrow[r, "\talpha_{n-2}^{X}\circ\tbeta_{n-1}^{X}"] \arrow[d] & \cdots \arrow[r, "\talpha_{1}^{X}\circ\tbeta_{2}^{X}"] & M_{1}^{X} \arrow[d, "\talpha_{0}^{X}\circ\tbeta_{1}^{X}"] \\
				0 \arrow[r]           & 0 \arrow[r]                                       & 0 \arrow[r]                                                             & \cdots \arrow[r]                                     & M_{0}^{X},                                           
			\end{tikzcd}\]
        \item $\tPn(\tbeta_{0}^{X})$ is the homotopy class of the chain map
		\[\begin{tikzcd}[column sep=3em]
				0 \arrow[r] \arrow[d]                   & 0 \arrow[r] \arrow[d]                                         & \cdots \arrow[r]                                     & M_{0}^{X} \arrow[d, "\id_{M_{0}^{X}}"] \\
				M_{n}^{X} \arrow[r, "\talpha_{n-1}^{X}\circ\tbeta_{n}^{X}"] & M_{n-1}^{X} \arrow[r, "\talpha_{n-2}^{X}\circ\tbeta_{n-1}^{X}"] & \cdots \arrow[r, "\talpha_{0}^{X}\circ\tbeta_{1}^{X}"] & M_{0}^{X},                              
		\end{tikzcd}\]
        \item $\tPn(\tgamma_{0}^{X})$ is the composition of the homotopy class of the chain map
		\begin{equation}\label{eq:tg}
		    \begin{tikzcd}[column sep=4em]
				M_{n}^{X} \arrow[r, "\talpha_{n-1}^{X}\circ\tbeta_{n}^{X}"] \arrow[d, "(-1)^n\id"'] & M_{n-1}^{X} \arrow[r, "\talpha_{n-2}^{X}\circ\tbeta_{n-1}^{X}"] \arrow[d, "(-1)^{n-1}\id"] & \cdots \arrow[r, "\talpha_{1}^{X}\circ\tbeta_{2}^{X}"] & M_{1}^{X} \arrow[r, "\talpha_{0}^{X}\circ\tbeta_{1}^{X}"] \arrow[d, "-\id"'] & M_{0}^{X} \arrow[d, "\tz"] \\
				M_{n}^{X} \arrow[r, "-\talpha_{n-1}^{X}\circ\tbeta_{n}^{X}"']                                & M_{n-1}^{X} \arrow[r, "-\talpha_{n-2}^{X}\circ\tbeta_{n-1}^{X}"']                                & \cdots \arrow[r, "-\talpha_{1}^{X}\circ\tbeta_{2}^{X}"'] & M_{1}^{X} \arrow[r, "-\ti_{X_1}\circ\tbeta_{1}^{X}"']                                                                    & I(X_{1}).                 
			\end{tikzcd}
		\end{equation}
        with the morphism $[\cmap{\Sigma X_1}{\varepsilon^{\Sigma X_1}}{\eta^{\Sigma X_1}}]\colon\cplx{\Sigma X_1}{\varepsilon^{\Sigma X_1}}\to\tPn(\Sigma X_1)$.
    \end{itemize}
\end{exam}

\section{Extriangulated structure on \texorpdfstring{$\Pn$}{P}}\label{sec:extri}
	
In this section, we construct a natural transformation $\Omegan$ such that the pair $(\Pn,\Omegan)$ is an extriangulated functor from $\pr{\t}{n+1}{\m}$ to $K^{[-n,0]}(\m)$.

\begin{definition}\label{def:Omega}
	Let $X$ and $Y$ be objects in $\pr{\t}{n+1}{\m}$. We define a map
	\[\Omegan^{X,Y}\colon \e_{\m}(X,Y)\rightarrow\Hom_{K^{b}(\m)}(\Pn(X),\Sigma\Pn(Y))\]
	as follows. For any $f\in \e_{\m}(X,Y)$, by definition, there exist an object $V\in \pr{\t}{n}{\m}$ and morphisms $h\colon X\to \Sigma V$ and $g\colon V\to Y$ such that $f=\Sigma g \circ h$. We then define $\Omegan^{X,Y}(f)$ to be $\Sigma\Pn(g)\circ\Gamma_V\circ \Pn(h)$, that is,
    \[\Omegan^{X,Y}(f)\colon\Pn(X)\xrightarrow{\Pn(h)}\Pn(\Sigma V)\xrightarrow{\Gamma_V}\Sigma \Pn(V)\xrightarrow{\Sigma\Pn(g)}\Sigma\Pn(Y),\]
    where the morphism $\Gamma_V$ is defined in Construction~\ref{cons:tG}.
\end{definition}

The following lemma shows that each $\Omegan^{X,Y}$ is a well-defined map.

\begin{lemma}
    Let $f=\Sigma g_i \circ h_i\in \e_\m(X,\Sigma Y)$, $i=1,2$, where $h_i\colon X\to \Sigma V_i$, $g_i\colon V_i\to Y$ and $V_i\in\pr{\t}{n}{\m}$. Then 
    \[\Sigma\Pn(g_1)\circ\Gamma_{V_1}\circ \Pn(h_1) = \Sigma\Pn(g_2)\circ\Gamma_{V_2}\circ \Pn(h_2).\]
\end{lemma}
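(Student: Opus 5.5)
We reduce the comparison of the two factorizations to a single one. Set $V=V_1\oplus V_2\in\pr{\t}{n}{\m}$ (closed under direct sums by Lemma~\ref{lem:extforn}), $h=\binom{h_1}{-h_2}\colon X\to\Sigma V$ and $g=(g_1\ g_2)\colon V\to Y$. Then $\Sigma g\circ h=f-f=0$. Since $\Pn$ is additive and $\Gamma$ is natural (Lemma~\ref{lem:natiso}), hence compatible with direct sums, we have
\[\Sigma\Pn(g)\circ\Gamma_V\circ\Pn(h)=\Sigma\Pn(g_1)\circ\Gamma_{V_1}\circ\Pn(h_1)-\Sigma\Pn(g_2)\circ\Gamma_{V_2}\circ\Pn(h_2).\]
So it suffices to prove the following claim: if $V\in\pr{\t}{n}{\m}$, $h\colon X\to\Sigma V$ and $g\colon V\to Y$ satisfy $\Sigma g\circ h=0$, then $\Sigma\Pn(g)\circ\Gamma_V\circ\Pn(h)=0$.

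To prove the claim, complete $g$ to a triangle $W\xrightarrow{a}V\xrightarrow{g}Y\xrightarrow{b}\Sigma W$. Since $\Sigma g\circ h=0$, the morphism $h$ factors through $\Sigma a$, say $h=\Sigma a\circ h'$ with $h'\colon X\to\Sigma W$. Then
\[\Sigma\Pn(g)\circ\Gamma_V\circ\Pn(\Sigma a)=\Sigma\Pn(g)\circ\Sigma\Pn(a)\circ\Gamma_W=\Sigma\Pn(ga)\circ\Gamma_W=0,\]
by naturality of $\Gamma$ and $ga=0$. The obstacle is that $W$ need not lie in $\pr{\t}{n}{\m}$, so neither $\Gamma_W$ nor $\Pn(\Sigma W)$ need be defined.

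To avoid this, we work with $X$ instead and use its presentation. Take the triangle $X_1\xrightarrow{\alpha_0^X}M_0^X\xrightarrow{\beta_0^X}X\xrightarrow{\gamma_0^X}\Sigma X_1$ with $X_1\in\pr{\t}{n}{\m}$. Since $\Hom_\t(\m,\Sigma\pr{\t}{n}{\m})=0$ by \eqref{eq:Mtopr}, we get $h\circ\beta_0^X=0$. Hence $h=\Sigma k\circ\gamma_0^X$ for some $k\colon X_1\to V$ (using $\Sigma$ full on morphisms). Naturality of $\Gamma$ on $\pr{\t}{n}{\m}$ gives
\[\Sigma\Pn(g)\circ\Gamma_V\circ\Pn(h)=\Sigma\Pn(gk)\circ\Gamma_{X_1}\circ\Pn(\gamma_0^X).\]
Thus every value is expressed through the canonical factorization $f=\Sigma(gk)\circ\gamma_0^X$ of $f$. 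Applying the same reduction to both $f=\Sigma g_i\circ h_i$, we obtain morphisms $k_i\colon X_1\to V_i$ with $\Sigma(g_1k_1)\circ\gamma_0^X=\Sigma(g_2k_2)\circ\gamma_0^X$.

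It remains to show that $\Sigma\Pn(\ell)\circ\Gamma_{X_1}\circ\Pn(\gamma_0^X)$ depends only on $\Sigma\ell\circ\gamma_0^X$ for $\ell\colon X_1\to Y$. If $\Sigma\ell\circ\gamma_0^X=0$, then $\Sigma\ell$ factors through $\Sigma\alpha_0^X$, so $\ell=\ell'\circ\alpha_0^X$ with $\ell'\colon M_0^X\to Y$. Since $M_0^X\in\m\subseteq\pr{\t}{n}{\m}$, naturality gives
\[\Sigma\Pn(\ell)\circ\Gamma_{X_1}\circ\Pn(\gamma_0^X)=\Sigma\Pn(\ell')\circ\Gamma_{M_0^X}\circ\Pn(\Sigma\alpha_0^X\circ\gamma_0^X)=0,\]
because $\Sigma\alpha_0^X\circ\gamma_0^X=0$ in the triangle. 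Applying this to $\ell=g_1k_1-g_2k_2$ finishes the proof. No direct-sum trick is needed, only naturality of $\Gamma$ (Lemma~\ref{lem:natiso}). The main step to check carefully is the factorization $h=\Sigma k\circ\gamma_0^X$, which relies on the $n$-rigidity vanishing \eqref{eq:Mtopr}.
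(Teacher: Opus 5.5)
Your argument is correct and, once the abandoned direct-sum reduction and the triangle on $g$ are removed, it is essentially the paper's proof. You factor each $h_i$ through $\gamma_0^X$ using $\Hom_\t(\m,\Sigma\pr{\t}{n}{\m})=0$, write $g_1k_1-g_2k_2=\varphi\circ\alpha_0^X$, and conclude by naturality of $\Gamma$ together with $\Sigma\alpha_0^X\circ\gamma_0^X=0$. The opening paragraphs can simply be deleted, since the final argument never uses them.
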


\begin{proof}
    Consider the first conflation $X_1\xrightarrow{\talpha^X_0} M^X_0\xrightarrow{\tbeta^X_0} X$ in the presentation of $X$ involved in the construction of $\Pn(X)$. Applying the functor $\pi$ to it, we obtain a triangle
    \[X_1\xrightarrow{\alpha^X_0} M^X_0\xrightarrow{\beta^X_0} X\xrightarrow{\gamma^X_0}\Sigma X_1\]
    in $\t$. Since $\Hom_\t(\m,\Sigma\pr{\t}{n}{\m})=0$, the morphism $h_i\colon X\to\Sigma V_i$ factors through $\gamma^X_0$. Hence, there exists a morphism $h'_i\colon X_1\to V_i$ such that $h_i=\Sigma h'_i\circ\gamma^X_0$. Then, on one hand, we have $f=\Sigma (g_i\circ h'_i) \circ \gamma^X_0$. It follows that $\Sigma(g_1\circ h_1'-g_2\circ h_2')\circ \gamma^X_0=0$. This implies that $g_1\circ h_1'-g_2\circ h_2'$ factors through $\alpha^X_0$, i.e., there exists a morphism $\varphi\colon M_0^X\to Y$ such that 
    \[g_1\circ h_1'-g_2\circ h_2'=\varphi\circ\alpha^X_0.\]
    On the other hand, we have
    \[\begin{array}{rcl}
        \Sigma\Pn(g_i)\circ\Gamma_{V_i}\circ\Pn(h_i) & = & \Sigma\Pn(g_i)\circ\Gamma_{V_i}\circ \Pn(\Sigma h'_i)\circ\Pn(\gamma^X_0)\\
        & = & \Sigma\Pn(g_i)\circ\Sigma\Pn(h'_i)\circ\Gamma_{X_1}\circ\Pn(\gamma^X_0)\\
        & = & \Sigma\Pn(g_i\circ h'_i)\circ\Gamma_{X_1}\circ\Pn(\gamma^X_0)
    \end{array}\]
    where the second equality follows from the naturality of $\Gamma$ (see Lemma~\ref{lem:natiso}). Hence, 
    \[\begin{array}{rcl}
         \Sigma\Pn(g_1)\circ\Gamma_{V_1}\circ\Pn(h_1)-\Sigma\Pn(g_2)\circ\Gamma_{V_2}\circ \Pn(h_2) & = & \Sigma\Pn(g_1\circ h'_1-g_2\circ h'_2)\circ\Gamma_{X_1}\circ\Pn(\gamma^X_0) \\
         & = & \Sigma\Pn(\varphi\circ\alpha^X_0)\circ\Gamma_{X_1}\circ\Pn(\gamma^X_0)\\
         & = & \Sigma\Pn(\varphi)\circ\Sigma\Pn(\alpha^X_0)\circ\Gamma_{X_1}\circ\Pn(\gamma^X_0)\\
         & = & \Sigma\Pn(\varphi)\circ\Gamma_{M^X_0}\circ\Pn(\Sigma\alpha^X_0)\circ\Pn(\gamma^X_0)\\
         & = & \Sigma\Pn(\varphi)\circ\Gamma_{M^X_0}\circ\Pn(\Sigma \alpha^X_0\circ\gamma^X_0)\\
         & = & \Sigma\Pn(\varphi)\circ\Gamma_{M^X_0}\circ\Pn(0)\\
         & = & 0,
    \end{array}
    \]
    as required.
\end{proof}

\begin{remark}\label{Gamma}
    Let us consider a special case where $X\in\pr{\t}{n+1}{\m}$ and $Y\in\pr{\t}{n}{\m}$. In this case, $\e_{\m}(X,Y)=\Hom_\t(X,\Sigma Y)$. For any $f\in\Hom_\t(X,\Sigma Y)$, by taking $V=Y$, we obtain the decomposition $f=\Sigma \id_Y \circ f$. It follows from the definition that
    \[\Omegan^{X,Y}(f)=\Gamma_Y\circ\Pn(f).\]
    Since $\Gamma_Y$ is an isomorphism, the map $\Omegan^{X,Y}$ is injective, surjective, or bijective if and only if the map
    \[\Pn\colon\Hom_\t(X,\Sigma Y)\to \Hom_{K^{[-n,0]}(\m)}(\Pn(X),\Pn(\Sigma Y))\]
    is injective, surjective, or bijective, respectively.
\end{remark}

Let $\Omegan=\{\Omegan^{X,Y}\mid X,Y\in\pr{\t}{n+1}{\m}\}$. Recall from Definition~\ref{defn:extri fun} the notion of extriangulated functors.
	
\begin{theorem}\label{preserve}
	The collection $\Omegan$
    is a natural transformation from the functor $\e_\m(-,-)$ to the functor $\e_{K^{[-n,0]}(\m)}(\Pn(-),\Pn(-))$. Moreover, the pair $(\Pn,\Omegan)$ is an extriangulated functor from $(\pr{\t}{n+1}{\m},\e_\m)$ to $(K^{[-n,0]}(\m),\e_{K^{[-n,0]}(\m)})$.
\end{theorem}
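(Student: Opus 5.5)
Naturality of $\Omegan$ comes first and is formal. For $a\colon X'\to X$ and $b\colon Y\to Y'$ in $\pr{\t}{n+1}{\m}$, and $f=\Sigma g\circ h\in\e_\m(X,Y)$ with $V\in\pr{\t}{n}{\m}$, the element $\Sigma b\circ f\circ a$ factors as $\Sigma(b g)\circ(h a)$ through the same $V$. Definition~\ref{def:Omega}, together with the well-definedness lemma above, then gives
\[\Omegan^{X',Y'}(\Sigma b\circ f\circ a)=\Sigma\Pn(b)\circ\Sigma\Pn(g)\circ\Gamma_V\circ\Pn(h)\circ\Pn(a)=\Sigma\Pn(b)\circ\Omegan^{X,Y}(f)\circ\Pn(a).\]
Additivity of each $\Omegan^{X,Y}$ follows in the same way. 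Write $f+f'=\Sigma(g,g')\circ\binom{h}{h'}$ through $V\oplus V'$ and use that $\Gamma$ is a natural isomorphism of additive functors (Lemma~\ref{lem:natiso}), hence compatible with direct sums.

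For the extriangle condition, let $Y\xrightarrow{u}Z\xrightarrow{v}X\xrightarrow{w}\Sigma Y$ be an extriangle in $\pr{\t}{n+1}{\m}$. I reduce to a standard form in two steps. First, since $w\in\e_\m(X,Y)$, it factors as $w=\Sigma w'\circ\gamma^X_0$ with $w'\colon X_1\to Y$; this is the argument used in the well-definedness lemma, via $\Hom_\t(\m,\Sigma\pr{\t}{n}{\m})=0$. So $w$ is the pushout along $w'$ of the triangle $X_1\xrightarrow{\alpha^X_0}M^X_0\xrightarrow{\beta^X_0}X\xrightarrow{\gamma^X_0}\Sigma X_1$. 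By (ET4)-type arguments, equivalently the homotopy pushout, our triangle is then isomorphic to
\[Y\xrightarrow{\binom{1}{0}}Y\oplus M^X_0\to X\xrightarrow{w},\]
where the middle map is obtained from the cone of $\binom{-w'}{\alpha^X_0}\colon X_1\to Y\oplus M^X_0$. Second, I lift everything to $\f$. Using Lemma~\ref{inclusion} and the standard-triangle description, I realize $X_1\to Y\oplus M^X_0$ as an inflation $\tilde\alpha$ in $\f$, and I show that $\tPn$ sends the resulting conflation to the mapping cone sequence of $\tPn(\tilde\alpha)$ in $K^b(\M)$. The pushout description gives explicit presentations. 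An $\M$-presentation of the middle term is obtained by gluing the presentation of $Y$ with $\eta^X$, in horseshoe fashion. For this I use $\Ext^1_\f(\M,Y_i)=0$ from \eqref{extMXi} together with $\Ext^1_\f(\M,X_{i})=0$. The complex $\tPn(Z)$ is then, up to the isomorphisms of Lemma~\ref{lem:mutinv}, the cone of the chain map $\tPn(X_1)\to\tPn(Y)$ shifted appropriately. Concretely, it is the complex with terms $M^Y_i\oplus M^X_i$ and upper-triangular differentials.

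Since $\Pn$ of a split triangle is split, it remains to identify the connecting morphism. The third morphism of the cone triangle $\Pn(Y)\to\Pn(Z)\to\Pn(X)\to\Sigma\Pn(Y)$ must be $\Sigma\Pn(w')\circ\Gamma_{X_1}\circ\Pn(\gamma^X_0)$, which is $\Omegan^{X,Y}(w)$ for the factorization with $V=X_1$, $h=\gamma^X_0$, $g=w'$. Example~\ref{intrinsic} computes $\Pn(\gamma^X_0)$ explicitly, and composing with $\tGamma_{X_1;\varepsilon^{X_1}}$ from \eqref{eq:tGXvar} kills the $I(X_1)$-component. This yields the canonical projection $\tPn(X)\to\Sigma\cplx{X_1}{\varepsilon^{X_1}}$ with identity components, up to the signs $(-1)^i$, which are absorbed by the isomorphism of $\Sigma$ on complexes. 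Composing with $\Sigma\tPn(w')$ gives exactly the connecting morphism of the mapping cone of the chain map $\tPn(X_1)\to\tPn(Y)$. Finally, I transfer along the isomorphism of triangles in $\t$ to the original extriangle, using that $\Pn$ is a functor and that $\Omegan$ is natural.

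The main obstacle is this last sign and homotopy bookkeeping. I must show that the glued presentation of $Z$ produces a complex isomorphic to the cone in a way compatible with $\Pn(u)$, $\Pn(v)$ and $\Omegan(w)$ simultaneously, and not merely term-wise. I would handle it by choosing the presentation of $Z$ adapted to $\eta^Y$ and $\eta^X$, so that presentations of $u$ and $v$ are the obvious inclusion and projection. Lemma~\ref{lem:homotopy=} then lets me use these choices, and only the connecting map needs the explicit computation from Example~\ref{intrinsic}.
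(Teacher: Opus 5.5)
This is essentially the paper's proof. Naturality and additivity are the same formal computation, and for the extriangle condition you likewise pass to a conflation in $\f$, build a horseshoe presentation of the middle term so that $\Pn(u)$ and $\Pn(v)$ are the inclusion and projection, and identify the connecting map with $\Omegan^{X,Y}(w)$ by composing the chain maps from Example~\ref{intrinsic} and \eqref{eq:tGXvar}. The only variation is that you factor $w=\Sigma w'\circ\gamma^X_0$ first and use the pushout conflation along a lift $\tw'$. This makes the paper's $\tdelta$ equal to $\tw'$ by construction, but you still need an argument like the paper's $\tDelta_i$-recursion to see that the off-diagonal entries of the differential form a presentation of it.

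There are two slips to fix. First, the middle term of your displayed triangle must be the pushout, i.e.\ the cone of $\binom{-w'}{\alpha^X_0}$; a triangle starting with the split monomorphism $\binom{1}{0}\colon Y\to Y\oplus M^X_0$ would force $w=0$. Second, $\Pn(Z)$ is the cone of a map $\Sigma^{-1}\Pn(X)\to\Pn(Y)$, equivalently of $\Pn(X_1)\to\Pn(Y)\oplus M^X_0$. It is not the cone of $\Pn(X_1)\to\Pn(Y)$, which lacks the summand $M^X_0$ in degree $0$.
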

	
\begin{proof}
    It follows directly from the definition that for any objects $X$ and $Y$ in $\pr{\t}{n+1}{\m}$ and any elements $f_1,f_2\in\e_\m(X,Y)$, we have $\Omegan^{X,Y}(f_1+f_2)=\Omegan^{X,Y}(f_1)+\Omegan^{X,Y}(f_2)$. Thus, each map $\Omegan^{X,Y}\colon\e_\m(X,Y)\to \Hom_{K^b(\m)}(\Pn(X),\Sigma \Pn(Y))$ is a homomorphism of abelian groups. For any objects $X,X',Y,Y'\in\pr{\t}{n+1}{\m}$, any $f\in\e_{\m}(X,Y)$, and any morphisms $u\colon X'\to X$ and $v\colon Y\to Y'$, let $f=\Sigma g \circ h$, where $h\colon X\to \Sigma V$ and $g\colon V\to Y$ are morphisms with $V\in \pr{\t}{n}{\m}$. Then $\Sigma v \circ f\circ u = (\Sigma (v\circ g))\circ (h\circ u)$. Hence, by definition, we have
    \[\begin{array}{rcl}
        \Omegan^{X',Y'}(\Sigma v \circ f\circ u) & = & \Sigma\Pn(v\circ g)\circ\Gamma_V\circ \Pn(h\circ u)\\
        & = & \Sigma\Pn(v)\circ\Sigma\Pn(g)\circ\Gamma_V\circ\Pn(h)\circ\Pn(u)\\
        & = & \Sigma\Pn(v)\circ\Omegan^{X,Y}(f)\circ\Pn(u).
    \end{array}\]
    This shows that $\Omegan$ is a natural transformation.
    
    To prove that the pair $(\Pn,\Omegan)$ is an extriangulated functor, by definition, it suffices to show that for any extriangle $Y'\xrightarrow{u'}Z'\xrightarrow{v'}X'\xrightarrow{w'}\Sigma Y'$ in $\pr{\t}{n+1}{\m}$, the sequence
    \[\Pn(Y')\xrightarrow{\Pn(u')}\Pn(Z')\xrightarrow{\Pn(v')}\Pn(X')\xrightarrow{\Omegan^{X',Y'}(w')}\Sigma\Pn(Y')\]
    is a triangle in $K^b(\m)$. By the triangulated structure of $\t$, the triangle $Y'\xrightarrow{u'}Z'\xrightarrow{v'}X'\xrightarrow{w'}\Sigma Y'$ is isomorphic to the triangle induced by a conflation $Y\xrightarrow{\tu}Z\xrightarrow{\tv}X$ in $\f$. That is, there exists an isomorphism of triangles in $\t$:
    \[\xymatrix{
    Y\ar[r]^{\pi(\tu)} \ar[d]^{f}& Z\ar[r]^{\pi(\tv)}\ar[d]^{g} & X\ar[r]^{\pi(\tw)}\ar[d]^{h} & \Sigma Y\ar[d]^{\Sigma f} \\
    Y'\ar[r]^{u'} & Z'\ar[r]^{v'} & X'\ar[r]^{w'} & \Sigma Y',
    }\]
    where $\tw$ is obtained as shown in diagram~\eqref{eq:tri-in-stable}. Then we obtain a diagram in $K^{[-n,0]}(\m)$:
    \[\xymatrix@C=1.5cm{
    \Pn(Y)\ar[r]^{\Pn(\pi(\tu))} \ar[d]^{\Pn(f)}& \Pn(Z)\ar[r]^{\Pn(\pi(\tv))}\ar[d]^{\Pn(g)} & \Pn(X)\ar[r]^{\Omegan^{X,Y}(\pi(\tw))}\ar[d]^{\Pn(h)} & \Sigma \Pn(Y)\ar[d]^{\Sigma \Pn(f)} \\
    \Pn(Y')\ar[r]^{\Pn(u')} & \Pn(Z')\ar[r]^{\Pn(v')} & \Pn(X')\ar[r]^{\Omegan^{X',Y'}(w')} & \Sigma \Pn(Y'),
    }\]
    where the first two squares commute since $\Pn$ is a functor, and the third square commutes since $\Omegan$ is a natrual transformation. Therefore, to show the second row is a triangle in $K^b(\m)$, it suffices to show that the first row is a triangle.
    
    Starting with the presentations 
    \[X_{i+1}\xrightarrow{\talpha^X_i}M^X_i\xrightarrow{\tbeta^X_i}X_i,\ 0\leq i\leq n\text{ and }Y_{i+1}\xrightarrow{\talpha^Y_i}M^Y_i\xrightarrow{\tbeta^Y_i}Y_i,\ 0\leq i\leq n\]
    of $X$ and $Y$ in $\f$ respectively, we adapt the method in~\cite[Theorem 12.8]{Buhler} (horseshoe lemma) to construct a presentation of $Z$ in $\f$ as follows. 
    
    We recursively construct conflations in $\f$:
    \[Y_i\xrightarrow{\tu_i}Z_i\xrightarrow{\tv_i}X_i,\ 0\leq i\leq n+1,\]
    where the initial case ($i=0$) is $Y\xrightarrow{\tu}Z\xrightarrow{\tv}X$. Assume that we have constructed the conflation $Y_i\xrightarrow{\tu_i}Z_i\xrightarrow{\tv_i}X_i$ for some $0\leq i\leq n$. Consider the following commutative diagram in $\f$:
    \[\begin{tikzcd}
		Y_i \arrow[r, "\tu_i"] \arrow[d, equal] & Z_i \arrow[r, "\tv_i"] \arrow[d, dashed] & X_i \arrow[d, "{\tw_i}", dashed] \\
		Y_i \arrow[r, "\ti_{Y_i}"']                                 & I(Y_i) \arrow[r, "\tp_{Y_i}"']        & \Sigma Y_i.
	\end{tikzcd}\]
    If $i=0$, since $w'$ factors through an object in $\Sigma\pr{\t}{n}{\m}$, then so does $\pi(\tw_0)=\pi(\tw)$; if $i>0$, the fact that $Y_i\in \pr{\t}{n}{\m}$ implies that $\pi(\tw_i)$ factors through an object in $\Sigma\pr{\t}{n}{\m}$. In either case, we have $\pi(\tw_i)\circ\pi(\tbeta^X_i)=0$, which implies that $\tw_i\circ\tbeta^X_i$ factors through the morphism $\tp_{Y_i}$. Therefore, $\tbeta^X_i$ factors through $\tv_i$, that is, there exists a morphism $\tf_i\colon  M^X_i\to Z_i$ such that 
    \begin{equation}\label{eq:bvf}
        \tbeta^X_i=\tv_i\circ\tf_i.
    \end{equation}
    This yields the two commutative squares shown in the right part of the following diagram.
    \begin{equation}\label{eq:3times3}
        \begin{tikzcd}[ampersand replacement=\&,column sep=4em, row sep=3em]
		Y_{i+1} \arrow[r, "\talpha_{i}^{Y}"] \arrow[d, "{\tu_{i+1}}"', dashed]                                                                         \& M_{i}^{Y} \arrow[r, "\tbeta_{i}^{Y}"] \arrow[d, "\begin{pmatrix}0\\\id\end{pmatrix}"]                                                           \& Y_{i} \arrow[d, "\tu_{i}"] \\
		Z_{i+1} \arrow[d, "{\tv_{i+1}}"', dashed] \arrow[r, "{\begin{pmatrix}\tg_{i+1}\\\th_{i+1}\end{pmatrix}}", dashed] \& M_{i}^{X}\oplus M_{i}^{Y} \arrow[d, "{\begin{pmatrix} \id & 0 \end{pmatrix}}"] \arrow[r, "{\begin{pmatrix}\tf_{i} & \tu_{i}\circ\tbeta_{i}^{Y}\end{pmatrix}}", dashed] \& Z_{i} \arrow[d, "\tv_{i}"] \\
		X_{i+1} \arrow[r, "\talpha_{i}^{X}"']                                                                                                                   \& M_{i}^{X} \arrow[r, "\tbeta_{i}^{X}"']                                                                                                          \& X_{i}.                     
	   \end{tikzcd}
    \end{equation}
    Let $\begin{pmatrix}\tg_{i+1}\\\th_{i+1}\end{pmatrix}\colon Z_{i+1}\to M_{i}^{X}\oplus M_{i}^{Y}$ be the kernel of $\left(\tf_{i}\;\;\tu_{i}\circ\tbeta_{i}^{Y}\right)$. Then by the universal property of kernels, there exist morphisms $\tu_{i+1}$ and $\tv_{i+1}$ such that the two squares in the left part commute. Since all the rows, the middle column and the right column are conflations, by \cite[Corollary 3.6]{Buhler}, the left column is also a conflation, which is what we want to construct. 

    Since $X_{n+1}=Y_{n+1}=0$, we have $Z_{n+1}=0$. Therefore, the conflations in the middle row of the above diagram form a presentation of $Z$ in $\f$, which is denoted by $\varepsilon^Z$. We may assume that $\varepsilon^Z=\eta^Z$ and hence $\tPn(Z)=\cplx{Z}{\varepsilon^Z}$. This is because otherwise, using Lemma~\ref{lem:mutinv}, we can equivalently prove that
    \[\Pn(Y)\xrightarrow{\pi(\cmap{Z}{\eta^Z}{\varepsilon^Z})\circ\Pn(\pi(\tu))}\cplx{Z}{\varepsilon^Z}\xrightarrow{\Pn(\pi(\tv))\circ\pi(\cmap{Z}{\varepsilon^Z}{\eta^Z})}\Pn(X)\xrightarrow{\Omegan^{X,Y}(\pi(\tw))}\Sigma\Pn(Y)\]
    is a triangle. 
    
    Note that the commutativity of the bottom-left square in diagram~\ref{eq:3times3} implies 
    \begin{equation}\label{eq:gav}
        \tg_{i+1}=\talpha_{i}^{X}\circ\tv_{i+1},
    \end{equation}
    while the commutativity of the up-left square implies
    \begin{equation}\label{eq:ahu}
        \talpha^Y_{i}=\th_{i+1}\circ\tu_{i+1}.
    \end{equation}
    Let $\tdelta_{i}:=-\th_{i+1}\circ\tf_{i+1}$ for $0\leq i\leq n-1$. Then using \eqref{eq:bvf}, \eqref{eq:gav} and \eqref{eq:ahu}, we have
	\[
	\begin{pmatrix}
		\tg_{i+1} \\ \th_{i+1} 
	\end{pmatrix} \circ \begin{pmatrix}
		\tf_{i+1} & \tu_{i+1} \circ \tbeta_{i+1}^{Y} \end{pmatrix} = \begin{pmatrix}
		\talpha_{i}^{X} \circ \tbeta_{i+1}^{X} & 0\\-\tdelta_{i} & \talpha_{i}^{Y} \circ \tbeta_{i+1}^{Y}\end{pmatrix}.
	\]
    Hence, by Construction~\ref{cons:tP}, $\tPn(Z)$ is the following complex
    \[M_{n}^{X}\oplus M_{n}^{Y}
    \xrightarrow{\begin{pmatrix}\talpha^X_{n-1}\circ\tbeta^X_n & 0\\-\tdelta_{n-1} & \talpha_{n-1}^{Y}\circ\tbeta^Y_n\end{pmatrix}}
    M_{n-1}^{X}\oplus M_{n-1}^{Y}\to
		\cdots\xrightarrow{\begin{pmatrix}\talpha_{0}^{X}\circ\tbeta_{1}^{X} & 0\\-\tdelta_{0} &  \talpha_{0}^{Y}\circ\tbeta_{1}^{Y}\end{pmatrix}}M_{0}^{X}\oplus M_{0}^{Y}.\]
    Thus, we obtain the following chain map between complexes of objects in $\M$:
    \begin{equation}\label{eq:chainmap}
        \begin{tikzcd}
			& M_{n}^{X} \arrow[r, "\talpha_{n-1}^{X}\circ\tbeta^X_n"] \arrow[d, "\tdelta_{n-1}"'] & M_{n-1}^{X} \arrow[r, "\talpha_{n-2}^{X}\circ\tbeta_{n-1}^{X}"] \arrow[d, "\tdelta_{n-2}"'] & \cdots \arrow[r, "\talpha_{1}^{X}\circ\tbeta_{2}^{X}"]   & M_{1}^{X} \arrow[d, "\tdelta_{0}"] \arrow[r, "\talpha_{0}^{X}\circ\tbeta_{1}^{X}"] & M_{0}^{X} \\
			M_{n}^{Y} \arrow[r, "-\talpha_{n-1}^{Y}\circ\tbeta^Y_n"'] & M_{n-1}^{Y} \arrow[r, "-\talpha_{n-2}^{Y}\circ\tbeta_{n-1}^{Y}"'] & M_{n-2}^{Y} \arrow[r, "-\talpha_{n-3}^{Y}\circ\tbeta_{n-2}^{Y}"']                     & \cdots \arrow[r, "-\talpha_{0}^{Y}\circ\tbeta_{1}^{Y}"'] & M_{0}^{Y}.
		\end{tikzcd}
    \end{equation}
    
    By the commutative diagram~\ref{eq:3times3}, $(\begin{pmatrix}0\\\id\end{pmatrix},\dots,\begin{pmatrix}0\\\id\end{pmatrix})$ and $(\begin{pmatrix}\id & 0\end{pmatrix},\dots,\begin{pmatrix}\id & 0\end{pmatrix})$ are presentations of $\tu$ and $\tv$, respectively. Hence, we have
    \[\Pn(\tu)=[\begin{pmatrix}0\\\id\end{pmatrix},\dots,\begin{pmatrix}0\\\id\end{pmatrix}]\text{ and }\Pn(\tv)=[\begin{pmatrix}\id & 0\end{pmatrix},\dots,\begin{pmatrix}\id & 0\end{pmatrix}].\]
    Thus, to complete the proof, it suffices to show that $\Omegan^{X,Y}(\pi(\tw))$ is the homotopy class of the image of the chain map~\eqref{eq:chainmap} under the functor $\pi\colon K^{[-n,0]}(\M)\to K^{[-n,0]}(\m)$.
    
    We recursively construct morphisms $\tDelta_i\colon X_{i+1}\to Y_{i}$ for $i=n,n-1,\dots,0$, such that for any $0\leq i\leq n-1$, the following is a morphism of conflations in $\f$:
    \begin{equation}\label{eq:morph}
        \xymatrix{
    X_{i+2}\ar[r]^{\talpha^X_{i+1}}\ar[d]_{\tDelta_{i+1}} & M^X_{i+1}\ar[r]^{\tbeta^X_{i+1}}\ar[d]^{(-1)^{i+1}\tdelta_i} & X_{i+1}\ar[d]^{\tDelta_{i}}\\
    Y_{i+1}\ar[r]_{\talpha^Y_i} & M^Y_{i}\ar[r]_{\tbeta^Y_i} & Y_i.
    }
    \end{equation}
    For the initial case $i=n$, we set $\tDelta_{n}=0$. Since $X_{n+1}=0$, it follows that $(-1)^{n}\tdelta_{n-1}\circ \talpha^X_{n}=\talpha^Y_{n-1}\circ\tDelta_n=0$. Now, assume that $\tDelta_{i+1}$ has been constructed for some $1\leq i+1\leq n$ such that the left square in the above diagram commutes. By the universal property of cokernels, there exists a morphism $\tDelta_i$, making the right square commute. Then
    \[\talpha^Y_{i-1}\circ\tDelta_i\circ\tbeta^X_{i+1}=\talpha^Y_{i-1}\circ\tbeta^Y_i\circ(-1)^{i+1}\tdelta_{i}=(-1)^{i}\tdelta_{i-1}\circ\talpha^X_{i}\circ\tbeta^X_{i+1},\]
    which implies $\talpha^Y_{i-1}\circ\tDelta_i=(-1)^{i}\tdelta_{i-1}\circ\talpha^X_{i}$, because $\tbeta^X_{i+1}$ is an epimorphism. This completes the recursive construction. Let $\tdelta=\tDelta_0\colon X_1\to Y_0=Y$. Then the morphisms of conflations in \eqref{eq:morph} form a presentation of $\tdelta$.

    Since $\tu\circ\tdelta\circ\tbeta^X_1=-\tu\circ\tbeta^Y_0\circ\tdelta_0=\tf_0\circ\talpha^X_0\circ\tbeta^X_1$, where the second equality is due to the conflation in diagram~\eqref{eq:3times3}, we have $\tu\circ\tdelta=\tf_0\circ\talpha^X_0$. Then we obtain the following morphism of conflations in $\f$:
    \[\begin{tikzcd}
	    X_{1} \arrow[r, "\talpha_{0}^{X}"] \arrow[d, "\tdelta"'] & M_{0}^{X} \arrow[r, "\tbeta_{0}^{X}"] \arrow[d, "\tf_{0}"] & X \arrow[d, "\id"] \\
	    Y \arrow[r, "\tu"'] & Z \arrow[r, "\tv"']                                & X
	\end{tikzcd},\]
    which induces a morphism of triangles in $\t$ (cf. e.g., \cite[Proof of Proposition~3.3.2]{Kra22}):
    \[\begin{tikzcd}
	    X_{1} \arrow[r, "\pi(\talpha_{0}^{X})"] \arrow[d, "\pi(\tdelta)"'] & M_{0}^{X} \arrow[r, "\pi(\tbeta_{0}^{X})"] \arrow[d, "\pi(\tf_{0})"] & X \arrow[d, equal] \arrow[r, "\pi(\tgamma_{0}^{X})"] & \Sigma X_{1} \arrow[d, "\Sigma\pi(\tdelta)"] \\
	    Y \arrow[r, "\pi(\tu)"'] & Z \arrow[r, "\pi(\tv)"'] & X \arrow[r, "\pi(\tw)"'] & \Sigma Y.
	\end{tikzcd}\]
	In particular, we have $\pi(\tw)=\Sigma \pi(\tdelta)\circ\pi(\tgamma_{0}^{X})$. Hence, by Definition~\ref{def:Omega},
    \[\Omegan^{X,Y}(\pi(\tw))=\Sigma\Pn(\pi(\tdelta))\circ\Gamma_{X_1}\circ\Pn(\pi(\tgamma^X_0))=\Sigma\pi(\tPn(\tdelta))\circ\pi(\tGamma_{X_1})\circ\pi(\tPn(\tgamma^X_0))=\pi(\Sigma \tPn(\tdelta)\circ\tGamma_{X_1}\circ \tPn(\tgamma^X_0)),\]
    where the second equality follows from the definitions of $\Pn$ and $\Gamma_{X_1}$. Hence, it suffices to show that $\Sigma \tPn(\tdelta)\circ\tGamma_{X_1}\circ \tPn(\tgamma^X_0)$ equals the chain map shown in \eqref{eq:chainmap}. 
    
    Let $\varepsilon^{X_1}$ and $\varepsilon^{\Sigma X_1}$ be the presentations of $X_1$ and $\Sigma X_1$ considered in Example~\ref{intrinsic}. Then the morphisms of conflations shown in \eqref{eq:morph} form a presentation of $\tdelta$ from $\varepsilon^{X_1}$ to $\eta^{Y}$. Thus, the morphism $\tPn(\tdelta)$ is the composition of the morphism
    \[[\cmap{X_1}{\eta^{X_1}}{\varepsilon^{X_1}}]\colon\tPn(X_1)\to\cplx{X_1}{\varepsilon^{X_1}}\]
    and the homotopy class of the chain map
    \begin{equation}\label{eq:tdelta}
        \begin{tikzcd}[column sep=4em]
		& M_{n}^{X} \arrow[r, "\talpha_{n-1}^{X}\circ\tbeta^X_n"] \arrow[d, "(-1)^n\tdelta_{n-1}"'] & M_{n-1}^{X} \arrow[r, "\talpha_{n-2}^{X}\circ\tbeta_{n-1}^{X}"] \arrow[d, "(-1)^{n-1}\tdelta_{n-2}"'] & \cdots \arrow[r, "\talpha_{1}^{X}\circ\tbeta_{2}^{X}"]   & M_{1}^{X} \arrow[d, "-\tdelta_{0}"]  \\
		M_{n}^{Y} \arrow[r, "\talpha_{n-1}^{Y}\circ\tbeta^Y_n"'] & M_{n-1}^{Y} \arrow[r, "\talpha_{n-2}^{Y}\circ\tbeta_{n-1}^{Y}"'] & M_{n-2}^{Y} \arrow[r, "\talpha_{n-3}^{Y}\circ\tbeta_{n-2}^{Y}"']                     & \cdots \arrow[r, "\talpha_{0}^{Y}\circ\tbeta_{1}^{Y}"'] & M_{0}^{Y}.
	\end{tikzcd}
    \end{equation}
    By Construction~\ref{cons:tG}, $\tGamma_{X_1}=\Sigma[\cmap{X_1}{\varepsilon^{X_1}}{\eta^{X_1}}]\circ\tGamma_{X_1;\, \varepsilon^{X_1}}\circ[\cmap{\Sigma X_1}{\eta^{\Sigma X_1}}{\varepsilon^{\Sigma X_1}}]$, where $\tGamma_{X_1;\, \varepsilon^{X_1}}$ is shown in \eqref{eq:tGXvar}. By Example~\ref{intrinsic}, $\tPn(\tgamma_{0}^{X})$ is the composition of the homotopy class of the chain map shown in \eqref{eq:tg} with $[\cmap{\Sigma X_1}{\varepsilon^{\Sigma X_1}}{\eta^{\Sigma X_1}}]$. Hence, by Lemma~\ref{lem:mutinv}, the composition $\Sigma \tPn(\tdelta)\circ\tGamma_{X_1}\circ \tPn(\tgamma^X_0)$ equals the composition of the chain map in \eqref{eq:tg}, the chain map in \eqref{eq:tGXvar} and the shift of the chain map in \eqref{eq:tdelta}. A direct check shows that this composition is exactly the one shown in \eqref{eq:chainmap}, which completes the proof.
\end{proof}

\section{The main result}\label{section5}

In the previous section, we proved that for an $n$-rigid subcategory $\m$ of an algebraic triangulated category $\t$, there exists an extriangulated functor $(\Pn,\Omegan)\colon\pr{\t}{n+1}{\m}\to K^{[-n,0]}(\m)$   satisfying $\Pn|_\m=\id_\m$. Recall that $\pr{\t}{n+1}{\m}$ is a reduced $(n-1)$-Auslander extriangulated category. We now proceed to investigate the properties of such a functor within a broader setting.

Throughout this section, let $n$ be a positive integer, and let $(\c,\e)$ be a reduced $(n-1)$-Auslander extriangulated category. We denote by $\m$ the subcategory of $\c$ consisting of all projective objects. Since any direct summand of a projective object is again projective, the subcategory $\m$ is closed under direct summands.

Since the global dimension of $\c$ is at most $n$, each object $X\in\c$ has projective dimension at most $n$, i.e., there exist $n+1$ extriangles:
\begin{equation}\label{eq:gldim}
    X_{i+1}\to M_{i}\to X_{i}\dashrightarrow,\quad 0\le i\le n,
\end{equation}
where $X_0=X$, $M_{i}\in\m$ for all $0\leq i\leq n$, and $X_{n+1}=0$. 

\begin{definition} 
Let $m$ be a non-negative integer. We denote by $\pj{m}$ the subcategory of $\c$ consisting of all objects with projective dimension at most $m$.
\end{definition}

By the above definition, we have a chain of subcategories of $\c$:
\[\m=\pj{0}\subseteq \pj{1} \subseteq \cdots \subseteq \pj{n-1} \subseteq\pj{n}=\c.\]

\begin{exam}
    Let $\c=\pr{\t}{n+1}{\m}$, where $\t$ is a triangulated category and $\m$ is an $n$-rigid subcategory of $\t$. Then $\pj{m}$ coincides with $\pr{\t}{m+1}{\m}$, the $(m+1)$-term subcategory of $\t$ generated by $\m$ introduced in Definition~\ref{def:m+1term}.
\end{exam}

For any $X\in\c$, if the morphism $X\rightarrow 0$ occurs as the first morphism in an extriangle, we denote by $\Sigma X$ the third term in the extriangle, i.e., there exists an extriangle
\[X\to 0\to \Sigma X\dashrightarrow.\]

\begin{prop}\label{prop:extrishift}
Let $l$ be an integer such that $1\le l\le n$. For any $X\in\pj{l-1}$, there exist extriangles 
\begin{equation}\label{eq:extrishift}
    \Sigma^{i}X\rightarrow 0\rightarrow\Sigma^{i+1}X\xdashrightarrow{\delta_{i}^{X}},\ 0\le i\le n-l.
\end{equation}
\end{prop}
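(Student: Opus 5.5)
We argue by induction on $i$, producing the extriangle $\Sigma^iX\to 0\to\Sigma^{i+1}X\dashrightarrow$ together with the bound $\Sigma^{i}X\in\pj{l-1+i}$ at each stage. The basic tool is the dominant dimension. Because $\c$ is reduced $(n-1)$-Auslander, every projective $P\in\m$ admits extriangles $Y_j\to I_j\to Y_{j+1}\dashrightarrow$ for $0\le j\le n-1$, where $Y_0=P$ and each $I_j$ is projective-injective. Reducedness forces $I_j=0$. Thus $P\to 0\to Y_1\dashrightarrow$ is an extriangle, so $\Sigma P:=Y_1$ exists, and iterating gives $\Sigma^jP=Y_j$ for $j\le n$. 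So the statement already holds for projectives with $l=1$ and $0\le i\le n-1$.

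For general $X\in\pj{l-1}$, take the resolution extriangles $X_{k+1}\to M_k\to X_k\dashrightarrow$ with $X_l=0$, and induct on $l$. The case $l=1$ is the projective case above. For the inductive step, apply the hypothesis to $X_1\in\pj{l-2}$, which gives $\Sigma^iX_1$ for $0\le i\le n-l+1$, and use the shifts $\Sigma^iM_0$ for $i\le n-1$. We then need an object $W$ with an extriangle $X\to 0\to W\dashrightarrow$. Equivalently, we need $\mathbb{E}(W,X)$ to contain an element $\delta$ such that $\delta_\sharp$ and $\delta^\sharp$ induce the isomorphisms forced by a zero middle term.

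The natural candidate is obtained from the extriangles $M_0\to 0\to\Sigma M_0$ and $X_1\to 0\to\Sigma X_1$ together with $X_1\to M_0\to X$. We would apply (ET4)$^{\mathrm{op}}$ or (ET4) to the composite $X_1\to M_0\to 0$, regarding $X_1\to 0$ as the composition of the inflation $X_1\to M_0$ with $M_0\to 0$. The octahedral axiom (ET4), applied to the inflations $X_1\to M_0$ and $M_0\to 0$, yields an object $E$ and extriangles $X_1\to 0\to E\dashrightarrow$, so $E\cong\Sigma X_1$, and $X\to 0\to\Sigma M_0$... but this is not right. The correct output of (ET4) for the pair $f\colon X_1\to M_0$ (with cone $X$) and $g\colon M_0\to 0$ (with cone $\Sigma M_0$) is an extriangle $X\to\Sigma X_1\to\Sigma M_0\dashrightarrow$. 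That gives a presentation of $\Sigma X$-type objects, but not directly the extriangle $X\to 0\to W$.

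So we change tactics and use instead the extriangle $X\to\Sigma X_1\to\Sigma M_0\dashrightarrow$ from (ET4). Our next step is to realize $\Sigma X$ as the cone of a suitable inflation $\Sigma X_1\to\Sigma M_0$-type map, iterating the shift. We apply (ET4) once more to the composite $X\to\Sigma X_1\to 0$, where $\Sigma X_1\to 0$ is an inflation whenever $1\le n-l+1$, which holds because $l\le n$. This produces an object $W$ and extriangles $X\to 0\to W\dashrightarrow$ and $\Sigma M_0\to W\to\Sigma^2X_1\dashrightarrow$. We then set $\Sigma X:=W$.

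The second extriangle shows that $\Sigma X$ lies in the analogous class with a shorter resolution by shifted projectives. We have to check that the hypotheses propagate: the shifted object satisfies the same pattern with $\Sigma^iM_0$ and $\Sigma^{i}X_1$ replacing $M_0$ and $X_1$, and the relevant shifts exist for $i\le n-l$. For this we induct on $i$, applying (ET4) at each stage to $\Sigma^iX\to\Sigma^{i+1}X_1\to 0$. This step, namely tracking that the required $\Sigma^{i+1}X_1$ and $\Sigma^{i}M_0$ exist, that the index bounds $i+1\le n-l+1$ and $i\le n-1$ hold, and that the arrows are genuine inflations, is where we expect the main bookkeeping obstacle to lie.
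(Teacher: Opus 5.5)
Your first round is exactly the paper's. (ET4) applied to $X_1\to M_0\to 0$ gives $X\to\Sigma X_1\to\Sigma M_0\dashrightarrow$. Then (ET4) applied to this together with $\Sigma X_1\to 0\to\Sigma^2X_1$ gives $\delta_0^X$ and $\eta_1\colon\Sigma M_0\to\Sigma X\to\Sigma^2X_1\dashrightarrow$. This settles $i=0$, and hence the case $l=n$.

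The gap is the iteration for $i\ge 1$. You postpone it as bookkeeping, but it needs a genuinely new step. To apply (ET4) to $\Sigma^iX\to\Sigma^{i+1}X_1\to 0$, you need an extriangle starting at $\Sigma^iX$, namely $\Sigma^iX\to\Sigma^{i+1}X_1\to\Sigma^{i+1}M_0\dashrightarrow$. To obtain it the way you did for $i=0$, you would need $\Sigma^iX_1\to\Sigma^iM_0\to\Sigma^iX\dashrightarrow$. The previous round only gives you $\eta_i\colon\Sigma^iM_0\to\Sigma^iX\to\Sigma^{i+1}X_1\dashrightarrow$, with $\Sigma^iX$ as the middle term. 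In a triangulated category you would simply rotate. An extriangulated category has no rotation axiom, and nothing in your outline produces either of the required extriangles. The invariant $\Sigma^iX\in\mathcal{P}^{\le l-1+i}(\mathcal{C})$ you propose to carry does not help. It is true, but it places $\Sigma^iX$ in a class the induction hypothesis on $l$ does not cover.

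The paper fixes this by making $\omega^{X,i}\colon\Sigma^iX_1\to\Sigma^iM_0\to\Sigma^iX\dashrightarrow$ the inductive invariant in $i$. After the two (ET4) steps it restores the invariant using \cite[Proposition~3.15]{NP}. This is applied to $\eta_{i+1}$ and $\delta_{i+1}^{X_1}\colon\Sigma^{i+1}X_1\to 0\to\Sigma^{i+2}X_1\dashrightarrow$, which share the third term $\Sigma^{i+2}X_1$. One of the two resulting extriangles has the form $\Sigma^{i+1}M_0\to N\to 0\dashrightarrow$, which forces $N\cong\Sigma^{i+1}M_0$. The other is then exactly $\omega^{X,i+1}$.

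Alternatively, you can apply the dual of that proposition to $\eta_i$ and $\delta_i^{M_0}$, which share the first term $\Sigma^iM_0$. This directly yields $\Sigma^iX\to\Sigma^{i+1}X_1\to\Sigma^{i+1}M_0\dashrightarrow$, which is what your next (ET4) needs.

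Insert this step and check the indices: $i\le n-1$ for $\delta_i^{M_0}$, and $i+1\le n-l+1$ for $\delta_{i+1}^{X_1}$. With that, your outline becomes the paper's proof.
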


\begin{proof}
We proceed by induction on $l$. For the base case $l=1$, we have $\pj{0}=\m$, and the assertion holds since the dominant dimension of $\c$ is at least $n$.

Assume that the assertion holds for $l=s$ for some $s<n$, and consider the case $l=s+1$.  Since $\c$ has enough projectives $\m$, there exists an extriangle
\[X_1\to M\to X\xdashrightarrow{\omega^{X,0}},\]
where $X_{1}\in\pj{s-1}$ and $M\in\m$. To complete the induction, it suffices to show that for any $0\leq i\leq n-s-1$, the existence of an extriangle of the form
\[
\Sigma^{i}X_{1}\to\Sigma^{i}M\to\Sigma^{i}X\xdashrightarrow{\omega^{X,i}}
\]
implies the existence of both $\delta_i^X$ and $\omega^{X,i+1}$.

Applying \cite[Lemma 3.14]{NP} to $\omega^{X,i}$, $\delta_{i}^{X_{1}}$ (whose existence follows from the induction hypothesis) and $\delta_{i}^{M}$ (whose existence follows from the base case $l=1$) yields the left-hand commutative diagram of extriangles below. Then, applying \cite[Definition~2.12~(ET4)]{NP} to $\epsilon_{i}$ (which arises from the left-hand diagram) and $\delta_{i+1}^{X_{1}}$ (whose existence follows from the induction hypothesis) yields the right-hand commutative diagram of extriangles, where the second row ensures the existence of $\delta_i^X$. 
\[\begin{tikzcd}[column sep=.5cm]
\Sigma^{i}X_{1} \arrow[r] \arrow[d, equal] & \Sigma^{i}M \arrow[r] \arrow[d] & \Sigma^{i}X \arrow[d, dashed] \arrow[r, "\omega^{X,i}", dotted] & {} & \Sigma^{i}X \arrow[r] \arrow[d, equal] & \Sigma^{i+1}X_{1} \arrow[d] \arrow[r] & \Sigma^{i+1}M \arrow[d] \arrow[r, "\epsilon_{i}", dotted] & {} \\
\Sigma^{i}X_{1} \arrow[r] & 0 \arrow[d] \arrow[r] & \Sigma^{i+1}X_{1} \arrow[d, dashed] \arrow[r, "\delta_{i}^{X_{1}}", dotted] & {} & \Sigma^{i}X \arrow[r, dashed] & 0 \arrow[d] \arrow[r, dashed] & \Sigma^{i+1}X \arrow[d] \arrow[r, "\delta_{i}^{X}", dotted]       & {} \\ 
& \Sigma^{i+1}M \arrow[r, equal] \arrow[d, "\delta_{i}^{M}", dotted] & \Sigma^{i+1}M \arrow[d, "{\exists\,\epsilon_{i}}", dotted]          &    &                                                      & \Sigma^{i+2}X_{1} \arrow[r, equal] \arrow[d, "\delta_{i+1}^{X_{1}}", dotted] & \Sigma^{i+2}X_{1} \arrow[d, "{\exists\,\eta_{i+1}}", dotted]      &    \\                    & {} & {} &    & & {} & {} &   
\end{tikzcd}\]
If $i<n-s-1$, applying~\cite[Proposition 3.15]{NP} to $\eta_{i+1}$ (which arises from the above right-hand diagram) and $\delta^{X_1}_{i+1}$, we obtain the following commutative diagram of extriangles.
\begin{center}
\begin{tikzcd}
    & \Sigma^{i+1}X_{1} \arrow[r, equal] \arrow[d, dashed] & \Sigma^{i+1}X_{1} \arrow[d] & \\
    \Sigma^{i+1}M \arrow[r, "\id_{\Sigma^{i+1}M}"] \arrow[d, equal] & \Sigma^{i+1}M \arrow[d, dashed] \arrow[r, dashed]           & 0 \arrow[d] \arrow[r, "0", dotted] & {} \\
    \Sigma^{i+1}M \arrow[r] & \Sigma^{i+1}X \arrow[r] \arrow[d, "{\exists\,\omega^{X,i+1}}", dotted] & \Sigma^{i+2}X_{1} \arrow[r, "\eta_{i+1}", dotted] \arrow[d, "\delta_{i+1}^{X_{1}}", dotted] & {} \\
    & {}  & {} &   
\end{tikzcd}
\end{center}
Here, the second column ensures the existence of $\omega^{X,i+1}$, as required.
\end{proof}

For the rest of this section, for each $X\in\pj{n-1}$, we fix an extriangle 
\[X\rightarrow 0\rightarrow\Sigma X\xdashrightarrow{\epsilon_{0}^{X}}.\]
We also set $\epsilon_{i}^{Y}:=\epsilon_{0}^{\Sigma^{i}Y}$ for every $Y\in\pj{n-i-1}$.

\begin{lemma}\label{lem:immdcor}
The following statements hold:
\begin{enumerate}
    \item[(1)] For any $X\in\c$ and $Y\in\pj{n-1}$, the map $(\epsilon_{0}^{Y})_{\#}\colon\Hom_{\c}(X,\Sigma Y)\to \e(X,Y)$ is an isomorphism.
    \item[(2)] For any $X\in\pj{n-1}$ and $Y\in\c$, the map $(\epsilon_{0}^{X})^{\#}\colon\Hom_{\c}(X,Y)\to\e(\Sigma X,Y)$ is an isomorphism.
\end{enumerate}
\end{lemma}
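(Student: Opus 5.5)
Both parts come from the long exact sequences of Proposition~\ref{def:exactseq}, applied to the fixed extriangle $Y\to 0\to \Sigma Y\xdashrightarrow{\epsilon_0^Y}$ (resp.\ $X\to 0\to\Sigma X\xdashrightarrow{\epsilon_0^X}$). The middle term is $0$, so all $\Hom$ and $\e$ groups involving it vanish. Exactness then makes the connecting maps injective and surjective, except that surjectivity ends in a term we must show is zero.

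For (1), I would take the second exact sequence with $-$ evaluated at $X$:
\[\Hom_\c(X,0)\to\Hom_\c(X,\Sigma Y)\xrightarrow{(\epsilon_0^Y)_\#}\e(X,Y)\to\e(X,0).\]
Both outer terms are zero, so $(\epsilon_0^Y)_\#$ is an isomorphism. No hypothesis on $X$ or $Y$ beyond the existence of the extriangle is used here.

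For (2), I would take the first exact sequence evaluated at $Y$:
\[\Hom_\c(0,Y)\to\Hom_\c(X,Y)\xrightarrow{(\epsilon_0^X)^\#}\e(\Sigma X,Y)\to\e(0,Y).\]
Again both ends vanish, so $(\epsilon_0^X)^\#$ is bijective.

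The only subtlety is a convention check. The sequences of Proposition~\ref{def:exactseq} are written for an extriangle $Y\xrightarrow{u}Z\xrightarrow{v}X\xdashrightarrow{w}$, so the roles of the letters must be matched correctly. In (1), $\epsilon_0^Y\in\e(\Sigma Y,Y)$, and $(\epsilon_0^Y)_\#$ sends $f\colon X\to\Sigma Y$ to $f^*\epsilon_0^Y\in\e(X,Y)$. In (2), $\epsilon_0^X\in\e(\Sigma X,X)$, and $(\epsilon_0^X)^\#$ sends $g\colon X\to Y$ to $g_*\epsilon_0^X\in\e(\Sigma X,Y)$. With these identifications the maps in the exact sequences are exactly the ones in the statement, so the main step is just this bookkeeping rather than any real obstacle.
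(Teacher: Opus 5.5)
Your proposal is correct and matches the paper's argument, which simply says the lemma follows from Proposition~\ref{def:exactseq}. You apply the two long exact sequences to the fixed extriangles $Y\to 0\to\Sigma Y\overset{\epsilon_0^Y}{\dashrightarrow}$ and $X\to 0\to\Sigma X\overset{\epsilon_0^X}{\dashrightarrow}$, and observe that the flanking terms $\Hom_\c(X,0)$, $\e(X,0)$, $\Hom_\c(0,Y)$ and $\e(0,Y)$ vanish.
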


\begin{proof}
    This follows from Proposition~\ref{def:exactseq}.
\end{proof}

\begin{remark}\label{rmk:Sigma}
Let $f\colon X\to Y$ be a morphism in $\pj{n-1}$. Then $f_{*}(\epsilon_0^X)\in\e(\Sigma X,Y)$. By Lemma~\ref{lem:immdcor}~(1), the map $(\epsilon_0^Y)_{\#}\colon\Hom_\c(\Sigma X,\Sigma Y)\to\e(\Sigma X,Y)$ is an isomorphism. We define $\Sigma f\in\Hom_\c(\Sigma X, \Sigma Y)$ to be the unique morphism satisfying $(\Sigma f)^{*}(\epsilon_{0}^{Y}) = (\epsilon_{0}^{Y})_{\#}(\Sigma f) = f_{*}(\epsilon_{0}^{X})$. Thus, $\Sigma$ gives rise to a functor between additive categories
\begin{equation}\label{eq:shiftfunc}
    \Sigma\colon\pj{n-1}\rightarrow\Sigma\pj{n-1},
\end{equation}
where $\Sigma\pj{n-1}$ is the subcategory of $\c$ whose objects are isomorphic to objects of the form $\Sigma X$ for some $X\in\pj{n-1}$. We claim that this functor is an equivalence. Indeed, by definition, this functor is dense and faithful. To show it is full, let $g\colon\Sigma X\to \Sigma Y$ be an arbitrary morphism in $\Sigma\pj{n-1}$. Since $g^*(\epsilon_0^Y)\in\e(\Sigma X,Y)$, by Lemma~\ref{lem:immdcor}~(2), there exists a (unique) $f\in\Hom_{\c}(X,Y)$ such that $f_*(\epsilon_0^X)=(\epsilon_0^X)^{\#}(f)=g^*(\epsilon_0^Y)$, which implies $g=\Sigma f$. Therefore, the functor~\eqref{eq:shiftfunc} is full.

\end{remark}

Certain extriangles in $\c$ can be ``rotated" in the following sense.

\begin{prop}\label{prop:exrotation}
Let $X\xrightarrow{f}Y\xrightarrow{g}Z\xdashrightarrow{\omega}$ be an extriangle in $\c$. If $X\in\pj{n-1}$, there exists an extriangle
    \[
    Y\xrightarrow{g}Z\xrightarrow{h}\Sigma X\xdashrightarrow{f_{*}(\epsilon_{0}^{X})},
    \]
    where $h$ is the unique morphism in $\Hom_{\c}(Z,\Sigma X)$, satisfying $(-h)^{*}(\epsilon_{0}^{X})=\omega$. Moreover, if in addition $Y\in\pj{n-1}$, we have the following extriangle in $\c$:
    \[
    Z\xrightarrow{h}\Sigma X\xrightarrow{-\Sigma f}\Sigma Y\xdashrightarrow{g_{*}(\epsilon_{0}^{Y})}. 
    \]
\end{prop}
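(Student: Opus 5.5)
The plan is to build both extriangles from (ET4) and its dual (ET4)$^{\mathrm{op}}$ of \cite[Definition~2.12]{NP}, using the fixed extriangle $X\to 0\to\Sigma X\xdashrightarrow{\epsilon_0^X}$, and then to pin down the morphisms and extension classes with Lemma~\ref{lem:immdcor}.

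\emph{First extriangle.} Since $X\in\pj{n-1}$, Lemma~\ref{lem:immdcor}~(1) (with the roles of the objects adjusted) shows that $(\epsilon_0^X)_{\#}\colon\Hom_\c(Z,\Sigma X)\to\e(Z,X)$ is an isomorphism. Hence there is a unique $h$ with $(-h)^*(\epsilon_0^X)=\omega$, and it suffices to produce an extriangle $Y\xrightarrow{g}Z\xrightarrow{h}\Sigma X\xdashrightarrow{f_*(\epsilon_0^X)}$. Consider the two inflations $X\xrightarrow{f}Y$ and $X\to 0$, and apply \cite[Proposition~3.15]{NP} (the dual of (ET4) for a common first term) to $\omega$ and $\epsilon_0^X$. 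This gives an object $E$ together with extriangles $Y\to E\to\Sigma X\xdashrightarrow{f_*\epsilon_0^X}$ and $0\to E\to Z\dashrightarrow$. The second extriangle forces $E\to Z$ to be an isomorphism, so after identifying $E$ with $Z$ we obtain an extriangle $Y\xrightarrow{g'}Z\xrightarrow{h'}\Sigma X\xdashrightarrow{f_*\epsilon_0^X}$. The compatibility statements in \cite[Proposition~3.15]{NP} give $g'=g$ (up to the identification) and $h'^*(\epsilon_0^X)=-\omega$ or $\omega$, depending on sign conventions. Uniqueness from Lemma~\ref{lem:immdcor} then gives $h'=h$ or $h'=-h$. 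If the sign comes out wrong, I would replace the extriangle by an isomorphic one, changing $\Sigma X$ by $-\id$ and using $(-\id)^*f_*\epsilon_0^X$; this is compatible with the requirements because realizations are additive. I expect this sign bookkeeping to be the main obstacle, and I would verify it directly against the statement of \cite[Proposition~3.15]{NP}.

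\emph{Second extriangle.} Now assume also $Y\in\pj{n-1}$, and apply the first part to the extriangle $Y\xrightarrow{g}Z\xrightarrow{h}\Sigma X\xdashrightarrow{f_*\epsilon_0^X}$, whose first term $Y$ lies in $\pj{n-1}$. This produces an extriangle $Z\xrightarrow{h}\Sigma X\xrightarrow{k}\Sigma Y\xdashrightarrow{g_*\epsilon_0^Y}$, where $k$ is the unique morphism with $(-k)^*(\epsilon_0^Y)=f_*(\epsilon_0^X)$. By the definition of $\Sigma f$ in Remark~\ref{rmk:Sigma} we have $(\Sigma f)^*(\epsilon_0^Y)=f_*(\epsilon_0^X)$. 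The injectivity in Lemma~\ref{lem:immdcor}~(1) therefore gives $-k=\Sigma f$, that is, $k=-\Sigma f$, which is exactly the claimed extriangle.
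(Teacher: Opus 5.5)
Your route is the paper's: apply the dual of \cite[Proposition~3.15]{NP} to $\omega$ and $\epsilon_0^X$, get uniqueness of $h$ from Lemma~\ref{lem:immdcor}~(1), then apply the first part to the new extriangle and compare with Remark~\ref{rmk:Sigma}. Note that \cite[Proposition~3.15]{NP} itself treats two extriangles with a common \emph{third} term, so what you need is its dual, for a common first term. Your second paragraph is correct as written.

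There is a genuine gap at the sign in the first part. You leave it undetermined, and the repair you propose does not work. Transporting an extriangle along the isomorphism $-\id_{\Sigma X}$ of its third term turns $Y\xrightarrow{g}Z\xrightarrow{h'}\Sigma X\xdashrightarrow{\theta}$ into $Y\xrightarrow{g}Z\xrightarrow{-h'}\Sigma X\xdashrightarrow{-\theta}$. With $\theta=f_*(\epsilon_0^X)$ you would therefore end up with the class $-f_*(\epsilon_0^X)$, not the asserted one. More generally, using $-\id$ on the first, middle or third term always flips exactly two of the three signs of $(g,h,\theta)$. So an ``anti-extriangle'' with one wrong sign cannot be repaired by such rescalings, and additivity of $\mathfrak{s}$ is beside the point. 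Had the sign really come out opposite, your argument would prove a different statement, and your second part would then give $+\Sigma f$ rather than $-\Sigma f$.

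The missing step is to read off the relation that the dual of \cite[Proposition~3.15]{NP} provides. It gives an object $M$ with extriangles $0\to M\xrightarrow{e_1}Z\dashrightarrow$ and $Y\xrightarrow{m_2}M\xrightarrow{e_2}\Sigma X\xdashrightarrow{f_*(\epsilon_0^X)}$, such that $e_1m_2=g$ and $e_1^*(\omega)+e_2^*(\epsilon_0^X)=0$ in $\e(M,X)$. Since $e_1$ is an isomorphism, you may take $M=Z$ and $e_1=\id_Z$, so $m_2=g$. Setting $h:=e_2$, the relation reads $h^*(\epsilon_0^X)=-\omega$, i.e.\ $(-h)^*(\epsilon_0^X)=\omega$, which is exactly the normalization in the statement. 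Thus the sign is forced rather than a convention, and $h'=h$ on the nose. With this step supplied, your proof coincides with the paper's.
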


\begin{proof}
Applying the dual of~\cite[Proposition 3.15]{NP} to $\omega$ and $\epsilon_0^X$, we obtain the following commutative diagram of extriangles:
\begin{center}
\begin{tikzcd}
X \arrow[r, "f"] \arrow[d]                                                  & Y \arrow[r, "g"] \arrow[d, "g", dashed]                             & Z \arrow[d, equal] \arrow[r, "\omega", dotted] & {} \\
0 \arrow[d] \arrow[r, dashed]                                               & Z \arrow[r, "\id_Z"] \arrow[d, "{\exists\, h}", dashed] & Z \arrow[r, "0", dotted]                                     & {} \\
\Sigma X \arrow[r, equal] \arrow[d, "\epsilon_{0}^{X}", dotted] & \Sigma X \arrow[d, "f_{*}(\epsilon_{0}^{X})", dotted]                 &                                                              &    \\
{}                                                                          & {}                                                                  &                                                              &   
\end{tikzcd}
\end{center}
which satisfies $h^{*}(\epsilon_{0}^{X})+(\id_{Z})^{*}(\omega)=0$ in $\e(Z,X)$. Consequently, $(-h)^{*}(\epsilon_{0}^{X})=\omega$. Since $\omega=(-h)^{*}(\epsilon_{0}^{X})=(\epsilon_{0}^{X})_{\#}(-h)$, the uniqueness of $h$ follows from Lemma~\ref{lem:immdcor} (1). 

Similarly, rotating the extriangle $Y\xrightarrow{g}Z\xrightarrow{h}\Sigma X\xdashrightarrow{f_{*}(\epsilon_{0}^{X})}$, we obtain an extriangle:
\[
Z\xrightarrow{h}\Sigma X\xrightarrow{\rho}\Sigma Y\xdashrightarrow{g_{*}(\epsilon_{0}^{Y})},
\]
where $\rho$ is the unique morphism in $\Hom_{\c}(\Sigma X,\Sigma Y)$ satisfying $(-\rho)^{*}(\epsilon_{0}^{Y})=f_{*}(\epsilon_{0}^{X})$. By Remark~\ref{rmk:Sigma}, we have $\rho=-\Sigma f$.
\end{proof}

\begin{corollary}\label{cor:exinject}
An object $X\in\c$ is injective if and only if $X\in\Sigma^{n}\m$.
\end{corollary}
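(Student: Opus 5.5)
We prove the two inclusions separately. Throughout, $\Sigma^n M$ is understood via the extriangles of Proposition~\ref{prop:extrishift} with $l=1$: for $M\in\m$ there are extriangles $\Sigma^iM\to 0\to\Sigma^{i+1}M\dashrightarrow$ for $0\le i\le n-1$.

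\emph{Objects of $\Sigma^n\m$ are injective.} Let $M\in\m$ and $Y\in\c$. Since $\Sigma^{i}M\in\pj{n-1}$ for $i\le n-1$, Lemma~\ref{lem:immdcor}~(1) together with the dimension-shifting isomorphism \eqref{eq:extinj} applied to the extriangles $\Sigma^iM\to0\to\Sigma^{i+1}M\dashrightarrow$ gives
\[
\e(Y,\Sigma^nM)\cong\e^2(Y,\Sigma^{n-1}M)\cong\cdots\cong\e^{n+1}(Y,M).
\]
Here each middle term $0$ is projective, hence projective-injective. Dimension shifting via \eqref{eq:extinj} requires $0$ to be injective, and it is. The right-hand group vanishes because $Y$ has projective dimension at most $n$: splice the $n+1$ extriangles \eqref{eq:gldim} to get $\e^{n+1}(Y,M)\cong\e(X_n,M)$, and $X_n\in\m$ is projective. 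Hence $\e(Y,\Sigma^nM)=0$ for all $Y$, so $\Sigma^nM$ is injective.

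\emph{Injective objects lie in $\Sigma^n\m$.} Let $I$ be injective and take the extriangles \eqref{eq:gldim} for $X=I$. Working from the top, I would show inductively that the resolution of $I$ can be replaced so that $X_{i}\cong\Sigma^{\,n-i}$ of something in $\m$ and the middle terms vanish. The direct route is an induction on projective dimension. Consider the first extriangle $X_1\to M_0\to I\dashrightarrow$ with $X_1\in\pj{n-1}$. By Proposition~\ref{prop:exrotation} it rotates to an extriangle $M_0\to I\to\Sigma X_1\dashrightarrow$. Since $I$ is injective, $\e(\Sigma X_1,M_0)\to\e(\Sigma X_1,I)$, and the connecting element lies in the image of $\e(\Sigma X_1,X_1)$. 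It is simpler to argue this way: the extriangle $X_1\to M_0\to I$ has class in $\e(I,X_1)\cong\Hom(I,\Sigma X_1)$ by Lemma~\ref{lem:immdcor}. Alternatively, use dominant dimension. For the projective $M_0$ there are extriangles $\Sigma^iM_0\to0\to\Sigma^{i+1}M_0$, so $\Sigma^nM_0$ is injective, and injectives are closed under direct summands.

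The key step, which I expect to be the main obstacle, is to prove that $\Hom(-,I)$ being exact forces $I$ to be a summand of $\Sigma^n$ of a projective. I would proceed by induction on $\operatorname{pd} I$. If $\operatorname{pd}I=0$ then $I\in\m$ is projective-injective, hence $I=0\in\Sigma^n\m$ by reducedness. Otherwise write $X_1\to M_0\to I\dashrightarrow\omega$. Since $\e(I,X_1)\cong\Hom_\c(I,\Sigma X_1)$ via $(\epsilon_0^{X_1})_\#$, we get $\omega=(-h)^*\epsilon_0^{X_1}$. Injectivity of $I$ gives that $h^*$ applied to the extriangle $X_1\to0\to\Sigma X_1$ splits after pushing to $I$. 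Using the Hom-sequence of Proposition~\ref{def:exactseq} for $M_0\to I\to\Sigma X_1$, we show $h\colon I\to\Sigma X_1$ is a split monomorphism whose cokernel term comes from $\Sigma M_0$. Concretely, $\e(\Sigma X_1,I)\cong\Hom(X_1,I)$ by Lemma~\ref{lem:immdcor}~(2) must vanish, so $\Hom(X_1,I)=0$. Then $M_0\to I$ is the cokernel, and comparing $I$ with $\Sigma X_1$ yields $I\cong\Sigma X_1'$ with $X_1'$ of projective dimension $n-1$ injective in a shifted sense. Equivalently, $\Hom_\c(Y,X_1')\cong\Hom(\Sigma Y,I)$ shows $\e^{\,\cdot}(-,X_1')$ vanishes in the appropriate range. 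Applying the inductive hypothesis along the equivalence $\Sigma\colon\pj{n-1}\to\Sigma\pj{n-1}$ of Remark~\ref{rmk:Sigma} gives $X_1'\in\Sigma^{n-1}\m$, and hence $I\in\Sigma^n\m$. Making this shifted-injectivity induction precise, namely checking that the needed vanishing $\e^{j}(\c,X_1')=0$ for $j$ up to $n-1$ transfers through the rotations of Proposition~\ref{prop:exrotation}, is where I expect the real work.
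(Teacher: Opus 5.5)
Your argument that every object of $\Sigma^n\m$ is injective is correct and is the paper's: dimension shifting along $\Sigma^iM\to 0\to\Sigma^{i+1}M\dashrightarrow$ gives $\e(Y,\Sigma^nM)\cong\e^{n+1}(Y,M)=0$.

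The converse, however, has a genuine gap. Your first step can be made correct. Rotating $X_1\to M_0\to I\dashrightarrow$ twice by Proposition~\ref{prop:exrotation} gives an extriangle $I\xrightarrow{h}\Sigma X_1\to\Sigma M_0\dashrightarrow$. Its class lies in $\e(\Sigma M_0,I)=0$, so $I$ is a direct summand of $\Sigma X_1$. You then want to write $I\cong\Sigma X_1'$ and induct on $X_1'$, and this cannot work with the statement you are inducting on. That statement is about injective objects and $\Sigma^n\m$, while $X_1'$ is not injective. In the model case $I=\Sigma^nM$ with $0\neq M\in\m$, the object you want is $X_1'=\Sigma^{n-1}M$, and $\Sigma^{n-1}M\to 0\to\Sigma^nM\dashrightarrow$ does not split. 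The statement also does not have the form that would output $X_1'\in\Sigma^{n-1}\m$. A suitable ``shifted injectivity'' statement would have to be formulated and proved, and you leave exactly this open. The existence of $X_1'$ and claims such as ``$M_0\to I$ is the cokernel'' are likewise unsupported in the extriangulated setting.

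The missing idea is that no transfer of injectivity is needed: keep $I$ fixed and compose inflations. Rotate each extriangle $X_{i+1}\to M_i\to X_i\dashrightarrow$ of the resolution of $I$, repeatedly applying Proposition~\ref{prop:exrotation}. This gives extriangles $\Sigma^iX_i\xrightarrow{f_i}\Sigma^{i+1}X_{i+1}\to\Sigma^{i+1}M_i\dashrightarrow$ for $0\le i\le n-1$; your first step is the case $i=0$. By axiom (ET4), the composite $f_{n-1}\circ\cdots\circ f_0\colon I\to\Sigma^nX_n$ is again an inflation. The resulting extriangle $I\to\Sigma^nX_n\to W\dashrightarrow$ has class in $\e(W,I)=0$ and therefore splits. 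So $I$ is a direct summand of $\Sigma^nX_n$ with $X_n\in\m$, whence $I\in\Sigma^n\m$. This is the paper's proof, and it uses the injectivity of $I$ only once, at the end.
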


\begin{proof}
Suppose first that $X=\Sigma^{n}M$ with $M\in\m$. Then we have the following extriangles:
\[
\Sigma^{i}M\rightarrow 0\rightarrow\Sigma^{i+1}M\xdashrightarrow{\epsilon_{i}^{M}},\quad 0\le i\le n-1. 
\]
For any $C\in\c$, applying~\cite[Proposition 3.20]{HLN2} to the above extriangles, we have
\[
\e(C,X)=\e(C,\Sigma^{n}M)\cong\e^{2}(C,\Sigma^{n-1}M)\cong\cdots\cong\e^{n+1}(C,M)=0,
\]
where the last equality follows from $\c=\pj{n}$. Hence $X$ is injective in $\c$. 

Conversely, suppose that $X$ is an injective object in $\c$. Since $\c=\pj{n}$, there are $n$ extriangles:
\[
X_{i+1}\rightarrow M_{i}\rightarrow X_{i}\dashrightarrow,\quad 0\le i\le n-1,
\]
where $X_{0}=X$, $M_{i}\in\m$ for all $0\le i\le n-1$, and $X_{n}\in\m$. 

By Proposition~\ref{prop:exrotation}, we have the following extriangles:
\[
\Sigma^{i}X_{i}\xrightarrow{f_{i}}\Sigma^{i+1}X_{i+1}\rightarrow\Sigma^{i+1} M_{i}\dashrightarrow,\quad 0\le i\le n-1.
\]
Set $f=f_{n-1}\circ f_{n-2}\circ\cdots\circ f_{0}\colon X=X_0\to \Sigma^n X_n$. Since each $f_{i}$ is an inflation in $\c$ (i.e. the first morphism in an extriangle) for all $0\le i\le n-1$, by applying \cite[Definition~2.12~(ET4)]{NP} repeatedly, $f$ is also an inflation in $\c$, that is, there exists an extriangle:
\[
X\xrightarrow{f}\Sigma^{n}X_{n}\rightarrow Y\dashrightarrow. 
\]
Since $X$ is injective, $X$ is a direct summand of $\Sigma^{n}X_{n}\in\Sigma^{n}\m$. Hence $X\in\Sigma^{n}\m$. 
\end{proof}

The following proposition shows that certain morphisms in $\c$ can be completed into extriangles in $\c$. 

\begin{prop}\label{prop:complete}
Let $f:X\rightarrow Y$ be a morphism in $\c$. If $X\in\pj{n-1}$, then $f$ is an inflation in $\c$. Dually, if $Y=\Sigma Y'$ with $Y'\in\pj{n-1}$, then $f$ is a deflation in $\c$. 
\end{prop}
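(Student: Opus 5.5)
The idea is to realize $f$ as the first morphism of an extriangle obtained by combining the fixed extriangle $X\to 0\to\Sigma X\overset{\epsilon_0^X}{\dashrightarrow}$ with the ``cone'' construction of \cite[Proposition~3.15]{NP} (or its dual). Concretely, for $X\in\pj{n-1}$ we take the element $f_*(\epsilon_0^X)\in\e(\Sigma X,Y)$ and realize it as an extriangle
\[
Y\xrightarrow{a} C\xrightarrow{b}\Sigma X\overset{f_*(\epsilon_0^X)}{\dashrightarrow}.
\]
Next we apply Proposition~\ref{prop:exrotation} in reverse, or more directly \cite[Proposition~3.15]{NP} to the two extriangles with common third term $\Sigma X$, namely $X\to 0\to\Sigma X\overset{\epsilon_0^X}{\dashrightarrow}$ and $Y\to C\to\Sigma X\overset{f_*(\epsilon_0^X)}{\dashrightarrow}$. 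This produces an object $E$ together with extriangles
\[
X\xrightarrow{m} E\to C\dashrightarrow
\qquad\text{and}\qquad
Y\to E\to 0\dashrightarrow,
\]
compatible with the given maps. The second extriangle forces $E\cong Y$, since an extriangle with third term $0$ has its first morphism an isomorphism. Under this identification, the compatibility condition of \cite[Proposition~3.15]{NP} (the commutative square involving $X\to 0$ and $Y\to C$, together with the relation among the extension classes) should identify $m$ with $f$, possibly up to sign or up to an automorphism of $Y$. We then get an extriangle $X\xrightarrow{f}Y\to C\dashrightarrow$, so $f$ is an inflation.

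The main obstacle is verifying that the induced map $X\to E\cong Y$ really equals $f$ and is not merely some other morphism. For this we plan to use the uniqueness in Lemma~\ref{lem:immdcor}(2). The map $(\epsilon_0^X)^{\#}\colon\Hom_\c(X,Y)\to\e(\Sigma X,Y)$ is injective, and the diagram of \cite[Proposition~3.15]{NP} gives $m_*(\epsilon_0^X)=f_*(\epsilon_0^X)$ after transport along $E\cong Y$. Hence $m=f$, up to the sign conventions, which are harmless since $-f$ is an inflation iff $f$ is. If the bookkeeping in Proposition~3.15 is awkward, we will use an alternative. Choose a deflation $M\to\Sigma X$ from a projective; by dominant dimension we may instead use $0\to\Sigma X$. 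Then apply the dual rotation Proposition~\ref{prop:exrotation} to $Y\to C\to\Sigma X$. That proposition requires its first term to lie in $\pj{n-1}$, which $Y$ need not satisfy, so the Proposition~3.15 route is preferred.

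The dual statement, where $Y=\Sigma Y'$ with $Y'\in\pj{n-1}$, is handled symmetrically. We take $f^*(\epsilon_0^{Y'})\in\e(X,Y')$ and realize it as $Y'\to C\to X\dashrightarrow$. We then apply the dual of \cite[Proposition~3.15]{NP} to this extriangle and $Y'\to 0\to\Sigma Y'\overset{\epsilon_0^{Y'}}{\dashrightarrow}$, which share the first term $Y'$. This yields an extriangle $C\to X\xrightarrow{d}\Sigma Y'\dashrightarrow$. We identify $d$ with $f$ using the injectivity of $(\epsilon_0^{Y'})_{\#}$ from Lemma~\ref{lem:immdcor}(1). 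Hence $f$ is a deflation.
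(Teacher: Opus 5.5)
Your proposal is correct and follows essentially the same route as the paper. Both realize $f_*(\epsilon_0^X)$ as an extriangle $Y\to Z\to\Sigma X$ and combine it with $X\to 0\to\Sigma X\overset{\epsilon_0^X}{\dashrightarrow}$ via a Nakaoka--Palu diagram lemma, which yields an extriangle $X\xrightarrow{f'}Y\to Z$ with $f'_*(\epsilon_0^X)=\pm f_*(\epsilon_0^X)$; both then conclude $f'=\pm f$ from Lemma~\ref{lem:immdcor}(2). The only difference is bookkeeping: the paper cites \cite[Proposition~3.17]{NP}, whose output lands directly on $Y$ without a sign, while your use of \cite[Proposition~3.15]{NP} requires identifying the middle object with $Y$ via $Y\to E\to 0$ and absorbing the sign from $m_{1*}\delta_1+m_{2*}\delta_2=0$, both of which you handle correctly, as you do the dual case.
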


\begin{proof}
We only prove the first statement since the second can be proved dually. By Proposition~\ref{prop:extrishift}, we have the extriangle $X\to 0\to\Sigma X\xdashrightarrow{\epsilon_{0}^{X}}$ in $\c$. Consider an extriangle $Y\xrightarrow{g}Z\xrightarrow{h}\Sigma X\xdashrightarrow{f_{*}(\epsilon_{0}^{X})}$, realizing the element $f_{*}(\epsilon_{0}^{X})\in\e(\Sigma X,Y)$. By~\cite[Proposition 3.17]{NP}, we obtain the following commutative diagram of extriangles: 
\begin{center}
\begin{tikzcd}
X \arrow[r] \arrow[d, "f'", dashed]                                            & 0 \arrow[r] \arrow[d]                           & \Sigma X \arrow[d, equal] \arrow[r, "\epsilon_{0}^{X}", dotted] & {} \\
Y \arrow[d, "g", dashed] \arrow[r, "g"]                                        & Z \arrow[d, "\id_Z"] \arrow[r, "h"] & \Sigma X \arrow[r, "f_{*}(\epsilon_{0}^{X})", dotted]                         & {} \\
Z \arrow[r, equal] \arrow[d, "(-h)^{*}(\epsilon_{0}^{X})", dotted] & Z \arrow[d, "0", dotted]                        &                                                                             &    \\
{}                                                                             & {}                                              &                                                                             &   
\end{tikzcd}
\end{center}
and $f'_{*}(\epsilon_{0}^{X})=f_{*}(\epsilon_{0}^{X})$ holds. This implies that $(\epsilon_{0}^{X})^{\#}(f')=(\epsilon_{0}^{X})^{\#}(f)$. Hence, by Lemma~\ref{lem:immdcor}(2), we have $f=f'$, which means $f$ is an inflation in $\c$.
\end{proof}

In the special case $n=1$, we refer the reader to \cite[Section~4.5]{PPPP} for some of these results.

Now we assume that there exists an extriangulated functor $(\Pn,\Omegan)$ from a reduced $(n-1)$-Auslander extriangulated category $(\c,\e)$ to $(K^{[-n,0]}(\m),\e_{K^{[-n,0]}(\m)})$ such that $\Pn|_{\m}=\id_{\m}$, where $\m$ is the subcategory of $\c$ consisting of the projective objects.

The following lemma describes a relationship between $\Pn$ and $\Sigma$.

\begin{lemma}\label{newlem1}
    Let $X\in\pj{n-1}$. Then the map $\Omegan^{\Sigma X,X}(\epsilon_{0}^{X})\colon \Pn(\Sigma X)\to\Sigma \Pn(X)$ is an isomorphism. Consequently, we have $\Pn(\Sigma^i\m)\simeq\Sigma^i\Pn(\m)$ and $\Pn(\pj{i})\subseteq K^{[-i,0]}(\m)$ for all $0\leq i\leq n$.
\end{lemma}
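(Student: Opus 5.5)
The plan is to apply the extriangulated functor $(\Pn,\Omegan)$ to the fixed extriangle
\[X\rightarrow 0\rightarrow\Sigma X\xdashrightarrow{\epsilon_{0}^{X}}.\]
By Definition~\ref{defn:extri fun}, the sequence
\[\Pn(X)\to \Pn(0)=0\to\Pn(\Sigma X)\xrightarrow{\Omegan^{\Sigma X,X}(\epsilon_0^X)}\Sigma\Pn(X)\]
is a triangle in $K^b(\m)$, since $\Pn$ is additive and hence preserves the zero object. In a triangulated category, a triangle whose middle term is zero has its third morphism an isomorphism: applying $\Hom(W,-)$ and the long exact sequence shows that $\Hom(W,\Pn(\Sigma X))\to\Hom(W,\Sigma\Pn(X))$ is bijective for all $W$, so Yoneda applies. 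This proves the first assertion.

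For the consequences, I induct on $i$. For $M\in\m$ and $0\le i\le n-1$, we have $\Sigma^iM\in\pj{n-1}$. Indeed, the extriangles of Proposition~\ref{prop:extrishift} with $l=1$ give $\Sigma^{i}M\in 0*\dots$; more directly, one shows $\Sigma^i M\in\pj{i}\subseteq\pj{n-1}$ by induction, using that the extriangle $\Sigma^{i-1}M\to0\to\Sigma^iM$ has a projective middle term. Hence the first assertion applies to $X=\Sigma^{i}M$ and gives $\Pn(\Sigma^{i+1}M)\simeq\Sigma\Pn(\Sigma^iM)\simeq\Sigma^{i+1}\Pn(M)=\Sigma^{i+1}M$, starting from $\Pn|_\m=\id_\m$. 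This yields $\Pn(\Sigma^i\m)\simeq\Sigma^i\Pn(\m)$ for $0\le i\le n$.

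For $\Pn(\pj{i})\subseteq K^{[-i,0]}(\m)$, I again induct on $i$. The case $i=0$ is $\Pn|_\m=\id$. For $X\in\pj{i}$ with $i\ge1$, take an extriangle $X_1\to M\to X\dashrightarrow$ with $M\in\m$ and $X_1\in\pj{i-1}$. Applying $\Pn$ gives a triangle $\Pn(X_1)\to M\to\Pn(X)\to\Sigma\Pn(X_1)$ in $K^b(\m)$, so
\[\Pn(X)\in \m*\Sigma\Pn(X_1)\subseteq \m*\Sigma K^{[-(i-1),0]}(\m)\subseteq K^{[-i,0]}(\m),\]
where the last inclusion holds because a cone of a map from a complex in degrees $[-(i-1),0]$ to a complex in degree $0$ lives in degrees $[-i,0]$, and the subcategory is closed under isomorphism.

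The main obstacle is minor. It is to justify cleanly that $\Sigma^iM$ lies in $\pj{n-1}$ for $i\le n-1$, so that $\epsilon_0^{\Sigma^iM}$ is defined and the first part applies. I would handle this via the extriangles of Proposition~\ref{prop:extrishift}: the existence of $\Sigma^{i-1}M\to0\to\Sigma^iM$ lets one splice projective resolutions, giving projective dimension at most $i$. Alternatively, the fixed choices $\epsilon_i^M=\epsilon_0^{\Sigma^iM}$, which are already used in the paper, presuppose this.
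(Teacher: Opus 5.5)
Your proposal is correct and follows the paper's proof: applying $(\Pn,\Omegan)$ to the extriangle $X\to 0\to\Sigma X$ realizing $\epsilon_0^X$ gives a triangle in $K^b(\m)$ with zero middle term, so $\Omegan^{\Sigma X,X}(\epsilon_0^X)$ is an isomorphism. The paper states the two consequences without further argument, and your inductions correctly supply the details. The first uses $\Sigma^iM\in\pj{i}$, obtained via the extriangles $\Sigma^{i-1}M\to 0\to\Sigma^iM$. The second uses $\Pn(X)\in\m*\Sigma K^{[-(i-1),0]}(\m)\subseteq K^{[-i,0]}(\m)$, obtained from a projective presentation $X_1\to M\to X$.
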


\begin{proof}
    Applying the extriangulated functor $(\Pn,\Omegan)$ to the extriangle
    $X\to 0\to \Sigma X\xdashrightarrow{\epsilon_{0}^{X}}$
    yields a triangle
    \[\Pn(X)\to 0\to \Pn(\Sigma X)\xrightarrow{\Omegan^{\Sigma X,X}(\epsilon_{0}^{X})}\Sigma\Pn(X)\]
    in $K^b(\m)$. This implies that $\Omegan^{\Sigma X,X}(\epsilon_{0}^{X})$ is an isomorphism.
\end{proof}

The following lemma describes a relationship between $\Pn$ and $\Omegan$.

\begin{lemma}\label{newlem2}
    For any objects $X\in\c$ and $Y\in\pj{n-1}$, we have the following commutative diagram
    \[\xymatrix@C=2cm{
    \Hom_\c(X,\Sigma Y)\ar[r]^{(\epsilon_{0}^{Y})_{\#}(-)}\ar[d]_{\Pn(-)}&\e(X,Y)\ar[d]^{\Omegan^{X,Y}(-)}\\
    \Hom_{K^b(\m)}(\Pn(X),\Pn(\Sigma Y))\ar[r]_{\Omega^{\Sigma Y,Y}(\epsilon_{0}^{Y})\circ-}&\Hom_{K^b(\m)}(\Pn(X),\Sigma \Pn(Y))
    }\]
\end{lemma}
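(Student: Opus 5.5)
This is a direct consequence of the naturality of $\Omegan$ in its first variable. Take an arbitrary $f\in\Hom_\c(X,\Sigma Y)$ and compute both paths around the square. The top-right path sends $f$ to $\Omegan^{X,Y}\bigl((\epsilon_0^Y)_{\#}(f)\bigr)=\Omegan^{X,Y}\bigl(f^{*}(\epsilon_0^Y)\bigr)$, using the definition of $(\epsilon_0^Y)_{\#}$ recalled in Section~\ref{section2}. The left-bottom path sends $f$ to $\Omegan^{\Sigma Y,Y}(\epsilon_0^Y)\circ\Pn(f)$.

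The key step is the naturality of $\Omegan$ as a transformation $\e(-,-)\to\e_{K^{[-n,0]}(\m)}(\Pn(-),\Pn(-))$ between functors $\c^{\mathrm{op}}\times\c\to\mathrm{Ab}$ (Definition~\ref{defn:extri fun}). Apply it to the morphism $(f,\id_Y)\colon(\Sigma Y,Y)\to(X,Y)$ in $\c^{\mathrm{op}}\times\c$. This gives
\[\Omegan^{X,Y}\bigl(f^{*}(\epsilon_0^Y)\bigr)=\Pn(f)^{*}\bigl(\Omegan^{\Sigma Y,Y}(\epsilon_0^Y)\bigr).\]
In $K^{[-n,0]}(\m)$ the extension group is $\Hom_{K^b(\m)}(-,\Sigma-)$, and pulling back along a morphism $a$ is precomposition with $a$. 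Hence the right-hand side equals $\Omegan^{\Sigma Y,Y}(\epsilon_0^Y)\circ\Pn(f)$, so the square commutes.

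The only point needing care is this last identification: in the extriangulated structure that $K^{[-n,0]}(\m)$ inherits from $K^b(\m)$, $\e'(a,b)(w)=\Sigma b\circ w\circ a$. That is the standard convention for triangulated categories viewed as extriangulated, and it is the one used in Section~\ref{section2}. No further obstacle is expected. In particular, the hypothesis $Y\in\pj{n-1}$ is needed only so that the extriangle $\epsilon_0^Y$ (Proposition~\ref{prop:extrishift}) and the object $\Sigma Y$ are defined.
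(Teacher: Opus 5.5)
Your proof is correct and is the same as the paper's: both apply naturality of $\Omegan$ in the first variable to $f\colon X\to\Sigma Y$. This gives $\Omegan^{X,Y}\bigl(f^{*}(\epsilon_0^Y)\bigr)=\Omegan^{\Sigma Y,Y}(\epsilon_0^Y)\circ\Pn(f)$, where pullback in $K^{[-n,0]}(\m)$ is precomposition.
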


\begin{proof}
    Since $\Omega$ is a natural transformation, for any $f\in\Hom_\c(X,\Sigma Y)$, we have
    \[\Omega^{X,Y}((\epsilon_{0}^{Y})_{\#}(f))=\Omega^{X,Y}(f^{*}(\epsilon_{0}^{Y}))=\Omega^{\Sigma Y,Y}(\epsilon_{0}^{Y})\circ \Pn(f).\]
    Thus, we obtain the required commutative diagram.
\end{proof}

The following lemma extends the equivalence induced by $\Pn$ on $\m$ to $\Sigma^i\m$ for all $0\leq i\leq n$.

\begin{lemma}\label{newlem3}
    Let $M,M'\in\m$. For any $0\leq i\leq n$, the map
    \[\Phi_i\colon\Hom_\c(\Sigma^i M,\Sigma^i M')\to\Hom_{K^b(\m)}(\Pn(\Sigma^i M),\Pn(\Sigma^i M'))\] induced by $\Pn$ is a bijection. For any $1\leq i\leq n$, the map \[\Psi_i:=\Omega^{\Sigma^{i+1}M,\Sigma^i M'}\colon\e(\Sigma^{i+1}M,\Sigma^i M')\to\Hom_{K^b(\m)}(\Pn(\Sigma^{i+1}M),\Sigma\Pn(\Sigma^i M'))\] is a bijection. Moreover, $\Pn$ restricts to an equivalence from $\Sigma^i\m$ to $\Sigma^i\m$ for all $0\leq i\leq n$.
\end{lemma}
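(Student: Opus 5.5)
We argue by a simultaneous induction on $i$, interleaving the bijectivity of $\Phi_i$ and $\Psi_i$. The base case is $i=0$: since $\Pn|_\m=\id_\m$, the map $\Phi_0\colon\Hom_\c(M,M')\to\Hom_{K^b(\m)}(M,M')$ is the identity, hence bijective. For the inductive step we link $\Phi_{i+1}$, $\Psi_{i+1}$ and $\Phi_i$ through the extriangle $\Sigma^iM'\to 0\to\Sigma^{i+1}M'\xdashrightarrow{\epsilon_i^{M'}}$ given by Proposition~\ref{prop:extrishift} (note $\Sigma^iM'\in\pj{n-1}$ up to the index shift, so Lemma~\ref{lem:immdcor} applies for $i\le n-1$).

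First we show that $\Phi_{i+1}$ is bijective when $\Phi_i$ is, for $0\le i\le n-1$. Remark~\ref{rmk:Sigma} gives a bijection $\Sigma\colon\Hom_\c(\Sigma^iM,\Sigma^iM')\to\Hom_\c(\Sigma^{i+1}M,\Sigma^{i+1}M')$. On the target side, Lemma~\ref{newlem1} provides isomorphisms $\theta_X:=\Omegan^{\Sigma X,X}(\epsilon_0^X)\colon\Pn(\Sigma X)\to\Sigma\Pn(X)$. We then check that $\theta_{\Sigma^iM'}\circ\Pn(\Sigma f)=\Sigma\Pn(f)\circ\theta_{\Sigma^iM}$. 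This follows from naturality of $\Omegan$ together with the defining identity $(\Sigma f)^*(\epsilon_0^{Y})=f_*(\epsilon_0^X)$: applying $\Omegan^{\Sigma X,Y}$ to both sides gives $\theta_Y\circ\Pn(\Sigma f)=\Sigma\Pn(f)\circ\theta_X$. Consequently $\Phi_{i+1}$ is conjugate to $\Phi_i$, composed with the bijection $\Sigma$ on each side ($\Sigma$ is an automorphism of $K^b(\m)$), and so it is bijective.

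Next we treat $\Psi_i$ for $1\le i\le n$, via Lemma~\ref{newlem2} with $X=\Sigma^{i+1}M$ and $Y=\Sigma^iM'$. This requires $Y\in\pj{n-1}$, and it is here that the boundary indices need care. For $i\le n-1$ we have $\Sigma^iM'\in\pj{n-1}$, since rotating the extriangles shows $\Sigma^jM'$ has projective dimension at most $j$. In the square of Lemma~\ref{newlem2} the top map is a bijection by Lemma~\ref{lem:immdcor}(1), the bottom map is a bijection because $\theta$ is an isomorphism, and the left map $\Hom_\c(\Sigma^{i+1}M,\Sigma^{i+1}M')\to\Hom(\Pn\Sigma^{i+1}M,\Pn\Sigma^{i+1}M')$ is $\Phi_{i+1}$. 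Hence $\Psi_i$ is bijective whenever $\Phi_{i+1}$ is.

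The case $i=n$ is degenerate, because $\Sigma^{n+1}M$ is not defined in $\c$. We interpret it via the convention $\e(\Sigma^{n+1}M,-)$ when $\Sigma^{n+1}M$ exists, which forces it to be treated separately. Since $\Sigma^n\m$ consists of injectives (Corollary~\ref{cor:exinject}), the source is $0$, and the target is $\Hom_{K^b(\m)}(\Sigma^{n+1}M,\Sigma^{n+1}M')$ restricted appropriately. We expect this boundary bookkeeping, and in particular checking that the stated range of $i$ matches which objects lie in $\pj{n-1}$, to be the main obstacle; the core argument is the conjugation above. Finally, $\Pn(\Sigma^i\m)\simeq\Sigma^i\m$ holds by Lemma~\ref{newlem1}, so $\Pn$ is dense onto $\Sigma^i\m$. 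Combined with the bijectivity of $\Phi_i$, this shows that $\Pn$ restricts to an equivalence $\Sigma^i\m\to\Sigma^i\m$.
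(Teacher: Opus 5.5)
Your arguments for $\Phi_i$ ($0\le i\le n$), for $\Psi_i$ ($1\le i\le n-1$) and for the final equivalence are correct. The route differs only mildly from the paper's.

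The paper does not use the functor $\Sigma$ of Remark~\ref{rmk:Sigma} here. It proves $\Phi_i\Leftrightarrow\Psi_i$ from the square $\Psi_i\circ(\epsilon_i^M)^{\#}=\Sigma\Phi_i(-)\circ\Omegan(\epsilon_i^M)$, which comes from naturality of $\Omegan$ in the second variable together with Lemma~\ref{lem:immdcor}(2). It proves $\Psi_i\Leftrightarrow\Phi_{i+1}$ from Lemma~\ref{newlem2}, which uses Lemma~\ref{lem:immdcor}(1). It then climbs up from $\Phi_0$.

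You fuse these two squares into the single identity $\theta_Y\circ\Pn(\Sigma f)=\Sigma\Pn(f)\circ\theta_X$. You then use full faithfulness of $\Sigma$ on $\pj{n-1}$, which Remark~\ref{rmk:Sigma} derives from the same two halves of Lemma~\ref{lem:immdcor}. Your version yields the cleaner by-product that $\theta$ is a natural isomorphism $\Pn\circ\Sigma\Rightarrow\Sigma\circ\Pn$ on $\pj{n-1}$. The paper's version gets the $\Psi_i$ automatically as intermediate steps, whereas you recover them afterwards from Lemma~\ref{newlem2}. That last step works verbatim for $i=0$ too, so you also get $\Psi_0$.

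The paragraph on $i=n$ should be deleted rather than flagged as an ``obstacle''; as written it does not prove anything. The map $\Psi_n$ is not defined. $\Sigma^{n+1}M$ would have to be the third term of an extriangle $\Sigma^nM\to 0\to\Sigma^{n+1}M\dashrightarrow$. But $\Sigma^nM$ is injective by Corollary~\ref{cor:exinject}, so any such extriangle has zero class and splits. That forces $\Sigma^nM=0$, and hence $M=0$.

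Your own heuristic points the same way: a zero source cannot be in bijection with a target isomorphic to $\Hom_{K^b(\m)}(M,M')$, which is nonzero in general. The range $1\le i\le n$ for $\Psi_i$ in the statement is an off-by-one slip. The paper's proof establishes bijectivity of $\Psi_i$ exactly for $0\le i\le n-1$, and with that range your proof is complete.
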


\begin{proof}
    We first show that for any $0\leq i\leq n-1$,  $\Phi_i$ is a bijection if and only if $\Psi_i$ is a bijection. Due to $0\leq i\leq n-1$, we have $\Sigma^{i+1} M'\in\Sigma\pj{n-1}$. 
    Since $\Omega$ is a natural transformation, for any $f\in\Hom_\c(\Sigma^i M,\Sigma^i M')$, we have
    \[\Psi_i((\epsilon_{i}^{M})^{\#}(f))=\Psi_i(f_{*}(\epsilon_{i}^{M}))=\Sigma\Phi_i(f)\circ\Omega^{\Sigma^{i+1}M,\Sigma^{i}M}(\epsilon_{i}^{M}).\]
    Hence, we have $(\Psi_i\circ(\epsilon_{i}^{M})^{\#})(-)=\Sigma\Phi_i(-)\circ\Omega^{\Sigma^{i+1}M,\Sigma^{i}M}(\epsilon_{i}^{M})$. By Lemma~\ref{lem:immdcor} (2) and Lemma~\ref{newlem1}, the maps $(\epsilon_{i}^{M})^{\#}$ and $\Omega^{\Sigma^{i+1}M,\Sigma^{i}M}(\epsilon_{i}^{M})$ are isomorphisms. Therefore, $\Phi_i$ is a bijection if and only if $\Psi_i$ is a bijection.

    We next show that for any $0\leq i\leq n-1$, $\Psi_i$ is a bijection if and only if $\Phi_{i+1}$ is a bijection. Applying Lemma~\ref{newlem2} to the case that $X=\Sigma^{i+1}M$ and $Y=\Sigma^i M'$, we obtain $\Psi_i((\epsilon_{i}^{M'})_{\#}(-))=\Omega^{\Sigma^{i+1}M',\Sigma^i M'}(\epsilon_{i}^{M'})\circ\Phi_{i+1}(-)$. Therefore, $\Psi_i$ is a bijection if and only if $\Phi_{i+1}$ is a bijection.

    Now, starting with the assumption $\Pn|_\m=\id_\m$, which shows that $\Phi_0$ is a bijection, we deduce recursively the required bijections, using the above results.

    The last statement follows from Lemmas~\ref{newlem1} and the bijections $\Phi_i$.
\end{proof}

The fullness of $\Pn$ implies its denseness.

\begin{lemma}\label{dense}
	Suppose that the functor $\Pn$ is full. Then for any $Y\in K^{[-l,0]}(\m)$ with an integer $1\leq l\leq n$, there exists an object $X$ in $\pj{l}$ such that $\Pn(X)\cong Y$. In particular, the functor $\Pn$ is dense.
\end{lemma}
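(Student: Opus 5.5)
We argue by induction on $l$, realizing each complex in $K^{[-l,0]}(\m)$ as a cone of a morphism that comes, by fullness, from a morphism in $\c$.

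\textbf{Base case $l=0$.} This is the trivial statement that $K^{[0,0]}(\m)=\m=\Pn(\m)$, since $\Pn|_\m=\id_\m$.

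\textbf{Inductive step.} Suppose $1\le l\le n$ and the statement holds for $l-1$. Let $Y\in K^{[-l,0]}(\m)$ be a complex $M^{-l}\to\cdots\to M^{0}$. The brutal truncation gives a triangle in $K^b(\m)$
\[
\Sigma^{-1}M^{0}\xrightarrow{\;a\;} Y'\longrightarrow Y\longrightarrow M^{0},
\]
where $Y'$ is the complex $M^{-l}\to\cdots\to M^{-1}$ shifted so that it lies in degrees $[-l+1,0]$. Equivalently, this is a triangle
\[
Y'\longrightarrow M^{0}\longrightarrow Y\longrightarrow \Sigma Y',
\]
with $Y'\in K^{[-(l-1),0]}(\m)$. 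By the induction hypothesis there is $X'\in\pj{l-1}$ with $\Pn(X')\cong Y'$.

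The map $s\colon Y'\to M^{0}=\Pn(M^{0})$ is then a morphism $\Pn(X')\to\Pn(M^0)$ up to isomorphism. Since $\Pn$ is full, $s=\Pn(f)$ for some $f\colon X'\to M^{0}$ in $\c$.

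Since $l-1\le n-1$, we have $X'\in\pj{n-1}$. Proposition~\ref{prop:complete} therefore shows that $f$ is an inflation, giving an extriangle
\[
X'\xrightarrow{f}M^{0}\to X\dashrightarrow .
\]
This extriangle exhibits $X$ as having projective dimension at most $l$, because $X'\in\pj{l-1}$; hence $X\in\pj{l}$.

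Applying the extriangulated functor $(\Pn,\Omegan)$ yields a triangle
\[
\Pn(X')\xrightarrow{\Pn(f)}M^0\to\Pn(X)\to\Sigma\Pn(X')
\]
in $K^b(\m)$. Both $\Pn(X)$ and $Y$ are cones of the same morphism $s$, so $\Pn(X)\cong Y$ by the uniqueness of cones up to isomorphism. Taking $l=n$ and using $\c=\pj{n}$ gives denseness.

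\textbf{Expected obstacle.} The delicate point is the bookkeeping in the inductive step: that the truncation has the right shape so that $Y'$ genuinely lies in $K^{[-(l-1),0]}(\m)$, and that $\Pn(X')\cong Y'$ lets us transport $s$ to a morphism $\Pn(X')\to \Pn(M^0)$. The latter is handled by composing $s$ with the chosen isomorphism before invoking fullness. The key input that makes the whole argument work is Proposition~\ref{prop:complete}, which guarantees that every morphism out of $\pj{n-1}$ is an inflation.
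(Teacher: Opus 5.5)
Your proof is correct and follows the paper's argument essentially step for step: induct on $l$, split off $M^0$ via a triangle $Y'\to M^0\to Y\to\Sigma Y'$, use fullness to lift $s\circ\varepsilon$ (where $\varepsilon\colon\Pn(X')\to Y'$ is the isomorphism from the induction hypothesis), complete the lift to an extriangle using Proposition~\ref{prop:complete}, and compare cones after applying $(\Pn,\Omegan)$. The only slip is your first displayed triangle $\Sigma^{-1}M^0\to Y'\to Y\to M^0$, which is not a rotation of the correct triangle you actually use (a valid rotation is $\Sigma^{-1}M^0\to\Sigma^{-1}Y\to Y'\to M^0$), so you should simply delete it.
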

	
\begin{proof}
	We proceed by induction on $l$. The starting case $l=0$ follows from that $\Pn|_\m=\id_\m$. 
		
	Suppose the assertion holds for $l\le s-1$, where $1\leq s\leq n$, and consider the case $l=s$. Since $Y\in K^{[-s,0]}(\m)$, there exists a triangle
    \[Y_{1}\xrightarrow{g}M_{0}\rightarrow Y\rightarrow\Sigma Y_{1}\]
    in $K^{b}(\m)$, where $Y_{1}\in K^{[-s+1,0]}(\m)$ and $M_{0}\in\m$. By the induction hypothesis, there is $X_{1}\in\pj{s-1}$ such that there exists an isomorphism $\varepsilon\colon\Pn(X_{1})\to Y_{1}$. Since $\Pn$ is full, there exists a morphism $f\in\Hom_{\c}(X_{1},M_{0})$ such that $\Pn(f)=g\circ\varepsilon$ (noting that $\Pn(M_0) = M_0$ since $M_0 \in \m$). Applying Proposition~\ref{prop:complete} to $f$ yields an extriangle 
    \[X_{1}\xrightarrow{f}M_{0}\rightarrow X\dashrightarrow\]
    in $\c$. Then by definition, we have $X\in\pj{s}$. Applying the extriangulated functor $(\Pn,\Omegan)$ to the above extriangle, we obtain a triangle in $K^b(\m)$:
    \[\Pn(X_{1})\xrightarrow{g\circ\varepsilon}\Pn(M_0)\rightarrow\Pn(X)\rightarrow\Sigma \Pn(X_{1}).\]
    Since $\varepsilon$ is an isomorphism, it follows that $\Pn(X)\cong Y$, which completes the induction. 
\end{proof}

We shall need the following lemma. 

\begin{lemma}\label{lem:five}
	Consider the following commutative diagram of abelian groups with exact rows
	\[
	\begin{tikzcd}
		M_1 \arrow[r,"a_1"] \arrow[d,"c_1"] & M_2 \arrow[r,"a_2"] \arrow[d,"c_2"] & M_3 \arrow[r,"a_3"] \arrow[d,"c_3"] & M_4 \arrow[r,"a_4"] \arrow[d,"c_4"] & M_5 \arrow[d,"c_5"] \\
			N_1 \arrow[r,"b_1"] & N_2 \arrow[r,"b_2"] & N_3 \arrow[r,"b_3"] & N_4 \arrow[r,"b_4"] & N_5.
	\end{tikzcd}
	\]
	If $c_4$ and $c_5$ are bijective and $c_1$ and $c_2$ are surjective, then $c_3$ is surjective and $\ker c_3 = \im(a_2|_{\ker c_2})$. In particular, if in addition, $c_2$ is bijective, then so is $c_3$.
\end{lemma}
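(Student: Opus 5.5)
This is a diagram chase, which I will run in three steps: surjectivity of $c_3$, then the inclusion $\im(a_2|_{\ker c_2})\subseteq\ker c_3$, then the reverse inclusion. Bijectivity of $c_3$ when $c_2$ is bijective then follows at once, since $\ker c_2=0$ forces $\ker c_3=0$.

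\emph{Surjectivity.} Let $y\in N_3$.
\begin{enumerate}
\item Since $c_4$ is surjective, choose $x_4\in M_4$ with $c_4(x_4)=b_3(y)$.
\item We have $c_5a_4(x_4)=b_4c_4(x_4)=b_4b_3(y)=0$. Since $c_5$ is injective, $a_4(x_4)=0$.
\item By exactness of the top row, $x_4=a_3(x_3)$ for some $x_3\in M_3$.
\item Then $b_3(y-c_3(x_3))=b_3(y)-c_4a_3(x_3)=0$. So by exactness of the bottom row, $y-c_3(x_3)=b_2(z)$ for some $z\in N_2$.
\item Since $c_2$ is surjective, write $z=c_2(x_2)$. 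Then $y=c_3(x_3+a_2(x_2))$.
\end{enumerate}
Only surjectivity of $c_2$ and $c_4$ and injectivity of $c_5$ are used here.

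\emph{The inclusion $\supseteq$.} If $x_2\in\ker c_2$, then $c_3a_2(x_2)=b_2c_2(x_2)=0$.

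\emph{The inclusion $\subseteq$.} Let $x_3\in\ker c_3$.
\begin{enumerate}
\item We have $c_4a_3(x_3)=b_3c_3(x_3)=0$. Since $c_4$ is injective, $a_3(x_3)=0$.
\item By exactness, $x_3=a_2(x_2)$ for some $x_2\in M_2$.
\item Now $b_2c_2(x_2)=c_3(x_3)=0$, so $c_2(x_2)=b_1(w)$ for some $w\in N_1$.
\item Since $c_1$ is surjective, write $w=c_1(x_1)$.
\item Set $x_2'=x_2-a_1(x_1)$. Then $c_2(x_2')=b_1(w)-b_1c_1(x_1)=0$, so $x_2'\in\ker c_2$.
\item Finally $a_2(x_2')=a_2(x_2)-a_2a_1(x_1)=x_3$.
\end{enumerate}

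All of this is routine. The only point needing care is the correction of $x_2$ by $a_1(x_1)$ in the last step, which is exactly where the surjectivity of $c_1$ enters.
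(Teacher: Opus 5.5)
Your proof is correct and follows the paper's approach. The kernel computation is the same chase as in the paper, including the correction $x_2'=x_2-a_1(x_1)$ that uses surjectivity of $c_1$. Your explicit surjectivity chase is the standard four-lemma argument, which the paper does not write out but cites from \cite[Lemma A.4]{BY}.
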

	
\begin{proof}
    By \cite[Lemma A.4]{BY}, $c_3$ is surjective. A proof of $\ker c_3 = \im(a_2|_{\ker c_2})$ is included in the proof of~\cite[Proposition~A.5]{BY}. We include it here for the convenience of the reader.
		
	For any $\alpha\in\ker c_{2}$, we have $c_{3}(a_{2}(\alpha))=b_{2}(c_{2}(\alpha))=0$. So $\im(a_2|_{\ker c_2})\subseteq\ker c_3$.
		
	Conversely, for any $\beta\in\ker c_{3}$, we have $c_{4}(a_{3}(\beta))=b_{3}(c_{3}(\beta))=0$. Since $c_4$ is bijective (and thus injective), we have $a_3(\beta)=0$. So $\beta\in\ker a_{3}=\im a_2$. That is, there exists an $\alpha\in M_2$ such that $\beta =a_2(\alpha)$. Since $b_2(c_2(\alpha))=c_3(a_2(\alpha))=c_3(\beta)=0$, we have $c_2(\alpha)\in\ker b_2 =\im b_1$. So there exists a $\gamma\in N_1$ such that $c_2(\alpha) = b_1(\gamma)$. Since $c_1$ is surjective, there exists a $\delta\in M_1$ such that $\gamma = c_1(\delta)$. Let $\alpha'=\alpha-a_1(\delta)$. Then we have 
    \[c_2(\alpha') = c_2(\alpha)-c_2(a_1(\delta))=c_2(\alpha)-b_1(c_1(\delta))=b_1(\gamma)-b_1(\gamma) = 0.\]
    This implies $\alpha'\in\ker c_2$. On the other hand, since $a_2 \circ a_1 = 0$, we have $a_2(\alpha')=a_2(\alpha-a_1(\delta))=a_2(\alpha)=\beta$. So $\beta\in\im(a_2|_{\ker c_2})$. Hence, $\ker c_3\subseteq\im(a_2|_{\ker c_2})$.
\end{proof}

Recall that $[\Sigma^n\m,\m]$ denotes the ideal of $\c$ consisting of morphisms factoring through morphisms $\Sigma^{n}M\rightarrow M'$ with $M,M'\in\m$. The main result in this section is the following.

\begin{theorem}\label{finalequiv}
	Let $(\c,\e)$ be a reduced $(n-1)$-Auslander extriangulated category and $\m$ be the subcategory of $\c$ consisting of the projective objects. Let $(\Pn,\Omegan)\colon\c\to K^{[-n,0]}(\m)$ be an extriangulated functor satisfying $\Pn|_\m=\id_\m$. Then the following statements are equivalent.
    \begin{enumerate}
        \item[(i)] The functor $\Pn$ is full.
        \item[(ii)] The map $\Omegan^{X,Y}$ is injective for all objects $X,Y\in\c$.
        \item[(iii)] $\Hom_{\c}(\Sigma^{i}\m,\m)=0$ for all $1\leq i\leq n-1$.
    \end{enumerate}
    If any of the above three equivalent conditions holds, then $\Pn$ is dense, the kernel ideal of $\Pn$ is $[\Sigma^n \m,\m]$, and $\Omegan$ is a natural isomorphism.
\end{theorem}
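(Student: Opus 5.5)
The plan is to prove the three easy implications (i)$\Rightarrow$(iii) and (ii)$\Rightarrow$(iii) by producing explicit test extriangles, and then to prove (iii)$\Rightarrow$(i),(ii), together with the final assertions, by a simultaneous induction on projective dimension using Lemma~\ref{lem:five}. Throughout, the basic input on the $K^b$ side is that $\Hom_{K^b(\m)}(\Sigma^j M,M')=0$ for $j\ge 1$. Combined with Lemma~\ref{newlem3}, this identifies $\Pn$ on objects of the form $\Sigma^i M$ with the corresponding shifts in $K^b(\m)$.

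\emph{(ii)$\Rightarrow$(iii).} Let $1\le i\le n-1$ and $M,M'\in\m$.
\begin{itemize}
\item By Remark~\ref{rmk:Sigma}, $\Hom_\c(\Sigma^iM,M')\cong\Hom_\c(\Sigma^{i+1}M,\Sigma M')$.
\item By Lemma~\ref{lem:immdcor}(1), this group is isomorphic to $\e(\Sigma^{i+1}M,M')$.
\item By Lemma~\ref{newlem2}, $\Omegan^{\Sigma^{i+1}M,M'}$ is identified with $\Pn$ on $\Hom_\c(\Sigma^{i+1}M,\Sigma M')$, followed by an isomorphism.
\item The target is $\Hom_{K^b(\m)}(\Sigma^{i+1}M,\Sigma M')=0$ by Lemma~\ref{newlem1}.
\end{itemize}
So injectivity of $\Omegan^{\Sigma^{i+1}M,M'}$ forces $\Hom_\c(\Sigma^iM,M')=0$.

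\emph{(i)$\Rightarrow$(iii).} Take $0\ne f\in\Hom_\c(\Sigma^iM,M')$ and let $0\ne w\in\e(\Sigma^{i+1}M,M')$ be the element corresponding to it as above. Realize $w$ by an extriangle $M'\xrightarrow{u}Z\to\Sigma^{i+1}M\overset{w}{\dashrightarrow}$. Its image under $\Pn$ is a triangle whose connecting morphism lies in $\Hom_{K^b(\m)}(\Sigma^{i+1}M,\Sigma M')=0$, so the triangle splits. Hence there is $r'\colon\Pn(Z)\to M'$ with $r'\Pn(u)=\id_{M'}$. By fullness, $r'=\Pn(r)$ for some $r\colon Z\to M'$. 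Since $ru\in\End_\c(M')$ and $\Pn|_\m=\id_\m$, we get $ru=\id$. The extriangle therefore splits and $w=0$, a contradiction.

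\emph{(iii)$\Rightarrow$(i),(ii) and the final assertions.} Assume (iii). I will prove by induction on $l$ the following statement: for $X\in\pj{l}$ and all $Y\in\c$,
\begin{itemize}
\item $\Pn\colon\Hom_\c(X,Y)\to\Hom(\Pn X,\Pn Y)$ is surjective with kernel $[\Sigma^n\m,\m](X,Y)$;
\item $\Omegan^{X,Y}$ is bijective.
\end{itemize}

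Two preliminary steps come before the induction.

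First, handle the case where both arguments are shifts of projectives. From (iii) and Lemma~\ref{newlem3}, $\Pn$ is bijective on $\Hom_\c(\Sigma^iM,\Sigma^jM')$ for all $i,j$, except for $\Hom_\c(\Sigma^nM,M')$: its target is zero, so the kernel is all of $[\Sigma^n\m,\m]$.

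Second, extend to $X=\Sigma^iM$ and arbitrary $Y$ by induction on the projective dimension of $Y$. For this, apply the five-term exact sequences of Proposition~\ref{def:exactseq} in the second variable to a projective presentation $Y_1\to M_0\to Y\dashrightarrow$, and compare with the corresponding triangle in $K^b(\m)$.

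For the main induction, given $X_1\to M_0\to X\dashrightarrow$ with $X_1\in\pj{l-1}$, I compare
\[\Hom(M_0,Y)\to\Hom(X_1,Y)\to\e(X,Y)\to\e(M_0,Y)=0\]
with its $K^b$ analogue
\[\Hom(M_0,\Pn Y)\to\Hom(\Pn X_1,\Pn Y)\to\Hom(\Pn X,\Sigma\Pn Y)\to 0.\]
The analogue is exact because $\Hom_{K^b(\m)}(M_0,\Sigma\,\Pn Y)=0$. I also use the longer sequence
\[\Hom(\Sigma X_1,Y)\to\Hom(X,Y)\to\Hom(M_0,Y)\to\Hom(X_1,Y)\to\e(X,Y),\]
where the first term comes from rotation (Proposition~\ref{prop:exrotation}). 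Naturality of $\Omegan$ makes the relevant squares commute, and Lemma~\ref{lem:five} then yields surjectivity of $\Pn$ on $\Hom(X,Y)$ and bijectivity of $\Omegan^{X,Y}$. Its kernel formula $\ker c_3=\im(a_2|_{\ker c_2})$ identifies the kernel of $\Pn$ as the image of $[\Sigma^n\m,\m]$-morphisms, which is again in the ideal.

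Density then follows from Lemma~\ref{dense}. The main obstacle is arranging these ladders so that the kernel bookkeeping closes up: the term $\Hom(\Sigma X_1,Y)$ requires shifted objects, which is why the cases $X\in\Sigma^i\m$ must be settled first. I expect the delicate point to be checking that the maps $c_1$ and $c_2$ in Lemma~\ref{lem:five} are genuinely surjective at each stage of the induction.
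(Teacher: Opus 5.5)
Your arguments for (ii)$\Rightarrow$(iii) and (i)$\Rightarrow$(iii) are correct. In (i)$\Rightarrow$(iii) you split the image triangle at $M'$ and use only $\Pn|_\m=\id_\m$ to get $ru=\id_{M'}$. This is slightly leaner than the paper, which splits at $\Sigma^{i+1}M$ and needs Lemma~\ref{newlem3}. The $\Omegan$-half of your main step is also fine, once you note that $\delta^{\#}$ kills $[\Sigma^n\m,\m](X_1,Y)$.

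The gap is in the $\Hom$-half of (iii)$\Rightarrow$(i),(ii). Your induction ``for $X\in\pj{l}$ and all $Y$'' is not well-founded. To control $\Hom_\c(X,Y)$ you need surjectivity of $\Pn$ on $\Hom_\c(\Sigma X_1,Y)$. But for $X_1\in\pj{l-1}$ the object $\Sigma X_1$ is only known to lie in $\pj{l}$ (its presentation is $X_1\to 0\to\Sigma X_1\dashrightarrow$), so the induction hypothesis does not cover it. Settling $X\in\Sigma^i\m$ first handles $\Sigma X_1$ only when $l=1$.

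There is a second problem with your window $\Hom_\c(\Sigma X_1,Y)\to\Hom_\c(X,Y)\to\Hom_\c(M_0,Y)\to\Hom_\c(X_1,Y)\to\e(X,Y)$. Here $\Hom_\c(X,Y)$ sits in the second slot, so Lemma~\ref{lem:five} tells you about $\Hom_\c(M_0,Y)$, not about $\Hom_\c(X,Y)$. To put $\Hom_\c(X,Y)$ in the middle you must rotate once more, to $X\to\Sigma X_1\to\Sigma M_0\dashrightarrow$. Then Lemma~\ref{lem:five} needs $\Pn$ to be \emph{bijective} on $\Hom_\c(M_0,Y)$ and $\Hom_\c(X_1,Y)$, not just surjective with kernel $[\Sigma^n\m,\m]$. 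Those kernels do vanish: $\Hom_\c(M_0,\Sigma^n\m)=0$, and $X_1$ inflates into $M_0$ with $\Sigma^n\m$ injective. But this must be argued, and your stated worry about $c_1,c_2$ misses it.

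The repair is to carry a shift index, as in the claim inside the proof of Lemma~\ref{XtoM}:
\begin{enumerate}
\item Prove the statement for $\Sigma^kX$ with $X\in\pj{p}$ and $0\le k\le n-p$, by induction on $p$.
\item Use the three rotations $\Sigma^kX_1\to\Sigma^kM_0\to\Sigma^kX$, $\Sigma^kM_0\to\Sigma^kX\to\Sigma^{k+1}X_1$ and $\Sigma^kX\to\Sigma^{k+1}X_1\to\Sigma^{k+1}M_0$.
\item Now $\Sigma^{k+1}X_1$ is covered, since $X_1\in\pj{p-1}$ and $k+1\le n-(p-1)$.
\end{enumerate}
The paper also keeps the second argument in $\Sigma^l\m$ during this induction, where bijectivity for all pairs $(k,l)\neq(n-p,0)$ is part of the inductive claim. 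Only afterwards does it pass to arbitrary $Y$ (Proposition~\ref{prop:bi+surj}), again inducting over $\Sigma^lY$ with $Y\in\pj{q}$, $0\le l\le n-q$.

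Your Step~2 has the analogous defect. The covariant five-term sequence for $Y_1\to M_0\to Y\dashrightarrow$ controls $\Hom_\c(\Sigma^iM,Y)$. However, it already requires $\Omegan^{\Sigma^iM,Y_1}$ and $\Omegan^{\Sigma^iM,M_0}$ to be bijective, and you never prove bijectivity of $\Omegan^{\Sigma^iM,Y}$. You need one of two things:
\begin{enumerate}
\item the rotation $Y\to\Sigma Y_1\to\Sigma M_0\dashrightarrow$, which means putting shifts of $Y$ into the induction; or
\item the reduction of $\Omegan^{\Sigma^iM,Y}$ to $\Pn$ on $\Hom_\c(\Sigma^{i-1}M,Y)$ via $(\epsilon_0^{\Sigma^{i-1}M})^{\#}$, using Lemmas~\ref{lem:immdcor}(2) and~\ref{newlem1}. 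That map is bijective because $\Hom_\c(\Sigma^{i-1}M,\Sigma^n\m)=0$ for $i\le n$.
\end{enumerate}
With these repairs, your ordering (first $X\in\Sigma^i\m$ with arbitrary $Y$, then arbitrary $X$) is a workable dual of the paper's ordering. As written, though, the induction does not close.
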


\begin{proof}
    (i) $\implies$ (iii): Let $f\in\Hom_\c(\Sigma^i M,M')$ for some $1\leq i\leq n-1$ and some objects $M,M'\in\m$. Consider an extriangle realizing $(\epsilon_{i}^{M})^{\#}(f)$ in $\c$:
\[M'\to N\xrightarrow{h}\Sigma^{i+1} M\xdashrightarrow{(\epsilon_{i}^{M})^{\#}(f)}.\]
Applying the extriangulated functor $(\Pn,\Omegan)$ to this extriangle in $\c$, we obtain a triangle in $K^b(\m)$:
\[\Pn(M')\to\Pn(N)\xrightarrow{\Pn(h)}\Pn(\Sigma^{i+1}M)\xrightarrow{\Omegan((\epsilon_{i}^{M})^{\#}(f))}\Sigma\Pn(M').\]
By Lemma~\ref{newlem1}, we have $\Pn(\Sigma^{i+1}M)\cong\Sigma^{i+1}\Pn(M)$, which implies \[\Hom_{K^b(\m)}(\Pn(\Sigma^{i+1}M),\Sigma\Pn(M'))\cong\Hom_{K^b(\m)}(\Sigma^{i+1}\Pn(M),\Sigma\Pn(M'))=0.\]
Thus, $\Omegan((\epsilon_{i}^{M})^{\#}(f))=0$ and hence the identity $\id_{\Pn(\Sigma^{i+1}M)}$ of $\Pn(\Sigma^{i+1}M)$ factors through $\Pn(N)$. That is, there exists a morphism $a\colon \Pn(\Sigma^{i+1}M)\to \Pn(N)$ such that $\id_{\Pn(\Sigma^{i+1}M)}=\Pn(h)\circ a$. Since $\Pn$ is full by assumption, there exists a morphism $g:\Sigma^{i+1}M\to N$ such that $a=\Pn(g)$. Thus, we have
\[\Pn(\id_{\Sigma^{i+1}M})=\id_{\Pn(\Sigma^{i+1}M)}=\Pn(h\circ g).\]
By Lemma~\ref{newlem3}, $\Pn$ restricts to an equivalence from $\Sigma^{i+1}\m$ to $\Sigma^{i+1}\m$. Therefore, we have $\id_{\Sigma^{i+1}M}=h\circ g$. It follows that $(\epsilon_{i}^{M})^{\#}(f)=0$. So we have $f=0$ by Lemma~\ref{lem:immdcor} (2).

(ii) $\implies$ (iii): Let $f\in\Hom_\c(\Sigma^i M,M')$ for some $1\leq i\leq n-1$ and some objects $M,M'\in\m$. Applying $\Omegan$ to $(\epsilon_{i}^{M})^{\#}(f)$, we obtain an element $\Omegan^{\Sigma^{i+1} M,M'}((\epsilon_{i}^{M})^{\#}(f))$ in $\Hom_{K^b(\m)}(\Pn(\Sigma^{i+1} M),\Sigma \Pn(M'))$. Since $\Pn(\Sigma^{i+1} M)\cong\Sigma^{i+1}\Pn(M)$ by Lemma~\ref{newlem1}, we have $\Hom_{K^b(\m)}(\Pn(\Sigma^{i+1} M),\Sigma \Pn(M'))=0$, which implies $\Omegan^{\Sigma^{i+1} M,M'}((\epsilon_{i}^{M})^{\#}(f))=0$. Using the injectivity of $\Omegan^{\Sigma^{i+1} M,M'}$, we have $(\epsilon_{i}^{M})^{\#}(f)=0$ and hence $f=0$ by Lemma~\ref{lem:immdcor} (2).

 
The remaining part of this theorem follows from Lemma~\ref{dense} and Proposition~\ref{prop:bi+surj} below.
\end{proof}

We first analyze how the functor acts on $\Hom$ spaces involving objects in $\Sigma^i\m$, $0\leq i\leq n$.
	
\begin{lemma}\label{XtoM}
	Suppose $\Hom_{\c}(\Sigma^{i}\m,\m)=0$ for all $1\leq i\leq n-1$. For any $X\in\c$ and $M\in\m$, the functor $\Pn$ induces a bijection
    \begin{equation}\label{eq:XtolM}
		\Hom_{\c}(X,\Sigma^{l}M)\xrightarrow{\Pn(-)}\Hom_{K^{b}(\m)}(\Pn(X),\Pn(\Sigma^{l}M))
    \end{equation}
    for any integer $1\leq l\leq n$, and a surjective map
	\[
		\Hom_{\c}(X,M)\xrightarrow{\Pn(-)}\Hom_{K^{b}(\m)}(\Pn(X),\Pn(M))
	\]
    whose kernel is $[\Sigma^n\m](X,M)$.
    
    Moreover, we have the following bijections: 
    \[
		\e(X,\Sigma^{l}M)\xrightarrow{\Omegan(-)}\Hom_{K^{b}(\m)}(\Pn(X),\Sigma\Pn(\Sigma^{l}M)), \ 0\leq l\leq n.
	\]
\end{lemma}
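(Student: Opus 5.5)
I will prove the three assertions together by induction on the projective dimension of $X$: I show that they hold for all $X\in\pj{m}$, for $m=0,1,\dots,n$. Throughout, I use that $\Pn(\Sigma^l M)\cong\Sigma^l M$ (Lemma~\ref{newlem1}) and that $\Pn$ sends $\pj{m}$ into $K^{[-m,0]}(\m)$.

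\emph{Base case $m=0$, i.e.\ $X=M''\in\m$.}
\begin{itemize}
\item For $1\le l\le n-1$, the source $\Hom_\c(M'',\Sigma^lM)$ is zero. Indeed, by Lemma~\ref{lem:immdcor}(1) it is isomorphic to $\e(M'',\Sigma^{l-1}M)$, which vanishes since $M''$ is projective. The target is $\Hom_{K^b(\m)}(M'',\Sigma^lM)=0$.
\item For $l=n$, the same vanishing holds, since $\Sigma^{n-1}M\in\pj{n-1}$ and so Lemma~\ref{lem:immdcor}(1) still applies.
\item The map $\Hom(M'',M)\to\Hom(M'',M)$ is the identity. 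Its kernel is $[\Sigma^n\m](M'',M)$, which must be shown to be $0$. This holds because a map $M''\to\Sigma^nN$ lies in $\Hom_\c(M'',\Sigma^nN)=0$ by the previous point.
\item For the $\Omegan$ statement, both sides vanish: $\e(M'',-)=0$, and $\Hom(M'',\Sigma^{l+1}M)=0$ in $K^b(\m)$.
\end{itemize}

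\emph{Inductive step.} Take an extriangle $X_1\to M_0\to X\overset{\omega}{\dashrightarrow}$ with $X_1\in\pj{m-1}$. By Proposition~\ref{def:exactseq}, applying $\Hom_\c(-,\Sigma^lM)$ gives the exact sequence
\[\Hom(\Sigma X_1',\ldots)\ \to\ \Hom(X_1,\Sigma^{l-1}\ldots)\to \Hom(X,\Sigma^lM)\to\Hom(M_0,\Sigma^lM)\to\Hom(X_1,\Sigma^lM)\to\e(X,\Sigma^lM)\to\e(M_0,\Sigma^lM)=0.\]
In practice I use the rotated extriangle $M_0\to X\to\Sigma X_1\dashrightarrow$ from Proposition~\ref{prop:exrotation}, since $X_1\in\pj{n-1}$, and compare it with the image triangle $\Pn(M_0)\to\Pn(X)\to\Pn(\Sigma X_1)\to\Sigma M_0$ in $K^b(\m)$. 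Applying $\Hom(-,\Sigma^lM)$ to both gives a ladder whose vertical maps are induced by $\Pn$ and $\Omegan$; it commutes because $(\Pn,\Omegan)$ is an extriangulated functor and $\Omegan$ is natural. The ladder has the form
\[\Hom(\Sigma M_0,\Sigma^lM)\to\Hom(\Sigma X_1,\Sigma^lM)\to\Hom(X,\Sigma^lM)\to\Hom(M_0,\Sigma^lM)\to\e(\Sigma X_1,\Sigma^lM).\]
By Remark~\ref{rmk:Sigma} and Lemma~\ref{lem:immdcor}(2), I identify $\Hom(\Sigma X_1,\Sigma^lM)\cong\Hom(X_1,\Sigma^{l-1}M)$ and $\Hom(\Sigma M_0,\Sigma^l M)\cong\Hom(M_0,\Sigma^{l-1}M)$ compatibly with $\Pn$, using Lemma~\ref{newlem2} and Lemma~\ref{newlem1}. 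The induction hypothesis, together with Lemma~\ref{newlem3} for the $\m$-terms, then makes the outer vertical maps bijective or surjective. Lemma~\ref{lem:five} (and its dual form, for injectivity) then yields bijectivity of $\Hom(X,\Sigma^lM)\to\Hom(\Pn X,\Pn\Sigma^lM)$ for $l\ge1$.

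The fourth vertical map $\Hom(M_0,\Sigma^l M)\to\Hom(M_0,\Sigma^lM)$ is trivially bijective, being $0\to0$ for $l\ge1$. The fifth, $\e(\Sigma X_1,\Sigma^lM)\to\Hom(\Sigma\Pn X_1,\Sigma^{l+1}M)$, is the $\Omegan$ statement for $X_1$ at the index $l$, and is bijective by induction. The second-level terms at index $l-1$ come from the induction hypothesis, and this is where the case $l=1$ enters: $\Hom(X_1,M)\to\Hom(\Pn X_1,M)$ is only surjective. That is precisely enough for Lemma~\ref{lem:five}, whose hypothesis only needs surjectivity at $c_1,c_2$.

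The case $l=0$ runs through the same ladder with target $M$. Here Lemma~\ref{lem:five} gives surjectivity together with the formula $\ker=\im(a_2|_{\ker c_2})$, where $\ker c_2\subseteq\Hom(\Sigma X_1,M)\cong\Hom(X_1,\Sigma^{-1}\ldots)$. To show the kernel equals $[\Sigma^n\m](X,M)$, I argue as follows:
\begin{itemize}
\item Maps factoring through $\Sigma^nN$ are killed, because $\Pn(\Sigma^nN)\cong\Sigma^nN$ and $\Hom_{K^b}(\Sigma^nN,M)=0$.
\item Conversely, a kernel element factors as $X\to\Sigma X_1\xrightarrow{\phi}M$ with $\Pn(\phi)=0$. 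By the inductive kernel description applied to $\Sigma X_1$, which lies in $\pj{m}$ but is handled by a separate downward argument, $\phi$ lies in $[\Sigma^n\m]$.
\end{itemize}

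The $\Omegan$ bijections for $0\le l\le n$ follow from Lemma~\ref{newlem2} when $\Sigma^lM\in\pj{n-1}$, i.e.\ for $l\le n-1$. In that case $\Omegan^{X,\Sigma^lM}$ is identified with $\Pn$ on $\Hom(X,\Sigma^{l+1}M)$, which is bijective by the first part. For $l=n$, both sides vanish: $\Sigma^nM$ is injective by Corollary~\ref{cor:exinject}, and $\Hom_{K^b}(\Pn X,\Sigma^{n+1}M)=0$ since $\Pn X\in K^{[-n,0]}(\m)$.

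The main obstacle is the kernel computation at $l=0$. I would organise it by a second induction showing that $\ker\big(\Hom(W,M)\to\Hom(\Pn W,M)\big)=[\Sigma^n\m](W,M)$ for $W$ ranging over $\Sigma^j\pj{n-j}$. The key fact is that when $j=n$, every map $\Sigma^nN\to M$ is itself in the ideal, and this is where condition (iii) enters: it forces $\Hom(\Sigma^i\m,\m)=0$ for the intermediate shifts $1\le i\le n-1$, so the only surviving kernel is the one from $\Sigma^n\m$.
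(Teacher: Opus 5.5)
There is a genuine gap, and it is exactly the step you flag as the main obstacle. Your induction runs over $\pj{m}$ with unshifted sources only. At $l=0$, the ladder built from $M_0\to X\to\Sigma X_1\dashrightarrow$ needs two things from the vertical map $c_2\colon\Hom_{\c}(\Sigma X_1,M)\to\Hom_{K^b(\m)}(\Pn(\Sigma X_1),M)$: it must be surjective (to get surjectivity of $c_3$ from Lemma~\ref{lem:five}), and its kernel must be known (to get the kernel of $c_3$). There is no $\Sigma^{-1}$ in $\c$ to move this back to $X_1$. Moreover, $\Sigma X_1$ lies in $\pj{m}$, not in $\pj{m-1}$, so your induction hypothesis says nothing about it. Saying it is ``handled by a separate downward argument'' leaves unproved precisely the part of the lemma where the hypothesis $\Hom_{\c}(\Sigma^i\m,\m)=0$ is needed. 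The rest of your argument depends on this part, since your $l=1$ step uses the $l=0$ statement for $X_1$.

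There is also a smaller slip at $l=1$. With $c_2$ only surjective, Lemma~\ref{lem:five} gives surjectivity of $c_3$ and $\ker c_3=\im(a_2|_{\ker c_2})$, not bijectivity; the ``dual form'' for injectivity needs $c_2$ injective. You must add that $[\Sigma^n\m](X_1,M)=0$. This holds because $X_1\in\pj{n-1}$ gives $\Hom_{\c}(X_1,\Sigma^nN)\cong\e(\Sigma X_1,\Sigma^nN)=0$, by Lemma~\ref{lem:immdcor}~(2) and Corollary~\ref{cor:exinject}.

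The way to close the gap is to build the shift into the induction statement from the outset, as the paper does. For $X\in\pj{p}$, $0\le k\le n-p$ and $0\le l\le n$, claim that $\Hom_{\c}(\Sigma^kX,\Sigma^lM)\to\Hom_{K^b(\m)}(\Pn(\Sigma^kX),\Pn(\Sigma^lM))$ is bijective unless $(k,l)=(n-p,0)$. In that one case it is surjective with kernel $[\Sigma^n\m](\Sigma^{n-p}X,M)$. Then induct on $p$ alone.

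Your proposed second induction, phrased only as ``kernel $=[\Sigma^n\m](W,M)$'', would not suffice to run Lemma~\ref{lem:five}. Its maps $c_4,c_5$ must be bijective, so you would also need this ideal to vanish on $\Sigma^kX_1$ in the non-extremal range. The paper's formulation builds that in.

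The rest of the paper's argument runs as follows.
\begin{itemize}
\item Base case $p=0$: the target vanishes for $k\neq l$. The source vanishes for $k<l$ by projectivity, and for $0<k-l<n$ by the hypothesis via Remark~\ref{rmk:Sigma}. The case $k=l$ is Lemma~\ref{newlem3}.
\item Inductive step: one rotates $\Sigma^kX_1\to\Sigma^kM_0\to\Sigma^kX\dashrightarrow$ twice. Then all five vertical maps are induced by $\Pn$, and the $X_1$-terms occur at shifts $k$ and $k+1\le n-(p-1)$, which are within range.
\item Extremal case $(k,l)=(n-p,0)$: $\Hom_{\c}(\Sigma^{n-p}M_0,\Sigma^n\m)\cong\Hom_{\c}(M_0,\Sigma^p\m)=0$. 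So every map $\Sigma^{n-p}X\to\Sigma^n\m$ factors through $\gamma\colon\Sigma^{n-p}X\to\Sigma^{n-p+1}X_1$, which identifies the kernel.
\end{itemize}

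Your base case, and your derivation of the $\Omegan$-bijections via Lemma~\ref{newlem2} and Corollary~\ref{cor:exinject}, match the paper.
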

	
\begin{proof}
    To prove the first statement, we establish a more general claim.
    
    Claim: For any $X \in \pj{p}$ with $0 \leq p \leq n$, and integers $0 \leq k \leq n-p$, $0 \leq l \leq n$, the map
    \begin{equation}\label{eq:bi0}
        \Hom_{\c}(\Sigma^{k}X,\Sigma^{l}M)\xrightarrow{\Pn(-)}\Hom_{K^{b}(\m)}(\Pn(\Sigma^{k}X),\Pn(\Sigma^{l}M)),
    \end{equation}
	is a bijection if $(k,l)\neq (n-p,0)$, and is a surjective map if $k=n-p$ and $l=0$, whose kernel consists of the morphisms factoring through objects in $\Sigma^{n}\m$. 

	We proceed by induction on $p$. For the base case $p=0$, we have $X\in\m$. Then by Lemma~\ref{newlem1}, we have $\Pn(\Sigma^i X)\cong\Sigma^i \Pn(X)$ and $\Pn(\Sigma^i M)\cong\Sigma^i \Pn(M)$ for any $0\leq i\leq n$. Hence, if $k\neq l$, we have
    \[\begin{array}{rcl}
        \Hom_{K^{[-n,0]}(\m)}(\Pn(\Sigma^{k}X),\Pn(\Sigma^{l}M))&\cong&\Hom_{K^{[-n,0]}(\m)}(\Sigma^{k}\Pn(X),\Sigma^{l}\Pn(M))\\
        &\cong&\Hom_{K^b(\m)}(\Pn(X),\Sigma^{l-k}\Pn(M))\\
        & = &0.
    \end{array}\]
    Hence, if $k=n$ and $l=0$, the map shown in \eqref{eq:bi0} is surjective and its kernel equals its domain $\Hom_\c(\Sigma^n X,M)=[\Sigma^n\m](\Sigma^n X,M)$. If $(k,l)\neq (n,0)$ and $k\neq l$, since $\m$ consists of projective objects in $\c$ and $\Hom_\c(\Sigma^i\m,\m)=0$ for all $1\leq i\leq n-1$, by Remark~\ref{rmk:Sigma}, we have 
    \[\Hom_\c(\Sigma^{k}X,\Sigma^{l}M)\cong
    \begin{cases}
    \Hom_\c(X,\Sigma^{l-k}M)=0, & \text{if } k<l,\\
\Hom_\c(\Sigma^{k-l}X,M)=0, & \text{if } k>l.
    \end{cases}
    \]
    Hence, the map~\eqref{eq:bi0} is bijective in this case. If $k=l$, the bijectivity of this map follows from Lemma~\ref{newlem3}.
    
    Assume that the claim holds for all $p\leq s-1$ for some integer $1\leq s\leq n$, and consider the case $p=s$. 
		
	Since $X\in\pj{s}$, by definition, there exists an extriangle in $\c$:
    \[
    X_1\rightarrow M_0\rightarrow X\dashrightarrow
    \]
    with $M_0\in\m$ and $X_1\in\pj{s-1}$. By repeated application of Proposition~\ref{prop:exrotation}, we obtain three extriangles in $\c$:
    \[\Sigma^k X_1\xrightarrow{\alpha}\Sigma^k M_0\xrightarrow{\beta}\Sigma^k X\dashrightarrow,\]
    \[\Sigma^k M_0\xrightarrow{\beta}\Sigma^k X\xrightarrow{\gamma}\Sigma^{k+1} X_1\dashrightarrow,\]
    \[\Sigma^k X\xrightarrow{\gamma}\Sigma^{k+1} X_1\xrightarrow{-\Sigma\alpha}\Sigma^{k+1} M_0\dashrightarrow.\]
    Applying the extriangulated functor $(\Pn,\Omegan)$ to these three extriangles, we obtain three triangles in $K^b(\m)$:
    \[\Pn(\Sigma^k X_1)\xrightarrow{\Pn(\alpha)}\Pn(\Sigma^k M_0)\xrightarrow{\Pn(\beta)}\Pn(\Sigma^k X)\rightarrow\Sigma\Pn(\Sigma^{k} X_1),\]
    \[\Pn(\Sigma^k M_0)\xrightarrow{\Pn(\beta)}\Pn(\Sigma^k X)\xrightarrow{\Pn(\gamma)}\Pn(\Sigma^{k+1} X_1)\rightarrow\Sigma\Pn(\Sigma^k M_0),\]
    \[\Pn(\Sigma^k X)\xrightarrow{\Pn(\gamma)}\Pn(\Sigma^{k+1} X_1)\xrightarrow{-\Pn(\Sigma\alpha)} \Pn(\Sigma^{k+1} M_0)\rightarrow\Sigma\Pn(\Sigma^k X).\]
    Applying the functors $\Hom_\c(-,\Sigma^{l}M)$ and $\Hom_{K^b(\m)}(-,\Pn(\Sigma^{l}M))$ to these two sequences of three extriangles respectively, we obtain a commutative diagram of exact sequences of abelian groups:
    \begingroup\footnotesize
    \[\begin{tikzcd}[column sep=.45em]
		{(\Sigma^{k+1}M_{0},\Sigma^{l}M)} \arrow[r, "-\circ\Sigma\alpha"] \arrow[d, "f_{1}:=\Pn(-)"']             & {(\Sigma^{k+1}X_{1},\Sigma^{l}M)} \arrow[r, "-\circ\gamma"] \arrow[d, "f_{2}:=\Pn(-)"'] & {(\Sigma^{k}X,\Sigma^{l}M)} \arrow[r, "-\circ\beta"] \arrow[d, "f_{3}:=\Pn(-)"]              & {(\Sigma^{k}M_{0},\Sigma^{l}M)} \arrow[r, "-\circ\alpha"] \arrow[d, "f_{4}:=\Pn(-)"]              & {(\Sigma^{k}X_{1},\Sigma^{l}M)} \arrow[d, "f_{5}:=\Pn(-)"]              \\
		{(\Pn(\Sigma^{k+1}M_{0}),\Pn(\Sigma^{l}M))} \arrow[r, "-\circ\Pn(\Sigma\alpha)"' {yshift=-1.5pt}] & {(\Pn(\Sigma^{k+1}X_{1}),\Pn(\Sigma^{l}M))} \arrow[r, "-\circ\Pn(\gamma)"' {yshift=-1.5pt}]          & {(\Pn(\Sigma^{k}X),\Pn(\Sigma^{l}M))} \arrow[r, "-\circ\Pn(\beta)"' {yshift=-1.5pt}] & {(\Pn(\Sigma^{k}M_{0}),\Pn(\Sigma^{l}M))} \arrow[r, "-\circ\Pn(\alpha)"' {yshift=-1.5pt}] & {(\Pn(\Sigma^{k}X_{1}),\Pn(\Sigma^{l}M))}.
	\end{tikzcd}\]
    \endgroup
    Here, $(Y,Z)$ in the first row denotes $\Hom_\c(Y,Z)$, while it denotes $\Hom_{K^b(\m)}(Y,Z)$ in the second row.
    
	If either $k\ne n-s$ or $l\ne 0$, the maps $f_1$, $f_2$, $f_4$, and $f_5$ are bijections by the induction hypothesis. Hence, by Lemma~\ref{lem:five}, the map $f_3$ is also a bijection. 
		
	If $k=n-s$ and $l=0$, by the induction hypothesis, $f_1$ and $f_2$ are surjective, and $f_{4}$ and $f_{5}$ are bijections. Hence, by Lemma~\ref{lem:five}, the map $f_3$ is surjective and $\ker f_3=\im((-\circ\gamma)|_{\ker f_2})$. By the induction hypothesis, $\ker f_2=[\Sigma^n\m](\Sigma^{n-s+1} X_1, M)$. By Remark~\ref{rmk:Sigma}, we have $\Hom_\c(\Sigma^{n-s}M_0,\Sigma^n\m)\cong\Hom_{\c}(M_{0},\Sigma^{s}\m)=0$, which implies that any morphism from $\Sigma^{n-s}X$ to an object in $\Sigma^n\m$ factors through $\gamma$. Therefore, we have $\ker f_3=[\Sigma^n\m](\Sigma^{n-s} X, M)$. This completes the inductive step, and hence the proof of the claim.

    For the last statement of this lemma, if $l<n$, 
    by Lemma~\ref{newlem2}, for any morphism $f\in\Hom_{\c}(X,\Sigma^{l+1}M)$, we have $\Omega^{X,\Sigma^{l}M}((\epsilon_{0}^{\Sigma^{l}M})_{\#}(f))=\Omega^{\Sigma^{l+1}M,\Sigma^{l}M}(\epsilon_{0}^{\Sigma^{l}M})\circ\Pn(f)$. By Lemma~\ref{lem:immdcor} and Lemma~\ref{newlem1}, $(\epsilon_{0}^{\Sigma^{l}M})_{\#}$ and $\Omega^{\Sigma^{l+1}M,\Sigma^{l}M}(\epsilon_{0}^{\Sigma^{l}M})$ are isomorphisms. Therefore, the bijectivity of $\Omegan$ follows from the bijectivity of the map~\eqref{eq:XtolM}. 
    If $l=n$, on one hand, by Corollary~\ref{cor:exinject}, $\Sigma^{n}M$ is injective in $\c$, which implies
    $\e(X,\Sigma^{n}M)=0.$ 
    On the other hand, we have
    \[\Hom_{K^{b}(\m)}(\Pn(X),\Sigma\Pn(\Sigma^{n}M))\cong\Hom_{K^{b}(\m)}(\Pn(X),\Sigma^{n+1}\Pn(M))=0.\]
    Hence, the map $\Omegan$ is also a bijection in this case.
\end{proof}

\begin{prop}\label{prop:bi+surj}
    Suppose $\Hom_{\c}(\Sigma^{i}\m,\m)=0$ for all $1\leq i\leq n-1$. For any objects $X,Y\in\c$, there exists a bijection
	\[
	\e(X,Y)\xrightarrow{\Omegan^{X,Y}(-)}\Hom_{K^{b}(\m)}(\Pn(X),\Sigma\Pn(Y)),
	\]
    and a surjection
    \[
    \Hom_{\c}(X,Y)\xrightarrow{\Pn(-)}\Hom_{K^{b}(\m)}(\Pn(X),\Pn(Y))
    \]
    whose kernel is $[\Sigma^n\m,\m](X,Y)$.
\end{prop}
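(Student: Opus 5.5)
We argue by induction on the projective dimension of the second variable $Y$, keeping $X\in\c$ arbitrary. We prove both statements together in the following form. For $Y\in\pj{p}$ and $0\leq k\leq n-p$:
\begin{itemize}
\item the map $\Pn\colon\Hom_\c(X,\Sigma^kY)\to\Hom_{K^b(\m)}(\Pn(X),\Pn(\Sigma^kY))$ is surjective, and bijective when $k\geq 1$;
\item when $k=0$, its kernel is $[\Sigma^n\m,\m](X,Y)$;
\item the map $\Omegan^{X,\Sigma^kY}$ is bijective.
\end{itemize}
The base case $p=0$, i.e.\ $Y=M\in\m$, is exactly Lemma~\ref{XtoM}. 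Note that $[\Sigma^n\m](X,M)=[\Sigma^n\m,\m](X,M)$, since the target $M$ already lies in $\m$.

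For the inductive step, take $Y\in\pj{s}$ and choose an extriangle $Y_1\to M_0\to Y\dashrightarrow$ with $Y_1\in\pj{s-1}$. Using Proposition~\ref{prop:exrotation}, we rotate it to obtain extriangles
\[\Sigma^kY_1\to\Sigma^kM_0\to\Sigma^kY\dashrightarrow \quad\text{and}\quad \Sigma^kM_0\to\Sigma^kY\to\Sigma^{k+1}Y_1\dashrightarrow,\]
which is legitimate for $k\leq n-s$. Applying $(\Pn,\Omegan)$ gives triangles in $K^b(\m)$.

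Next we apply $\Hom_\c(X,-)$ and $\e(X,-)$ to the first extriangle, using the long exact sequence of Proposition~\ref{def:exactseq}. We compare it with the long exact sequence obtained by applying $\Hom_{K^b(\m)}(\Pn(X),-)$ to the image triangle. The vertical maps are $\Pn(-)$ on the $\Hom$ terms and $\Omegan$ on the $\e$ terms. Commutativity is the naturality of $\Omegan$ together with the defining property of an extriangulated functor, which identifies $\Omegan(w)$ as the connecting morphism. We use the five-term window
\[\Hom(X,\Sigma^kY_1)\to\Hom(X,\Sigma^kM_0)\to\Hom(X,\Sigma^kY)\to\e(X,\Sigma^kY_1)\to\e(X,\Sigma^kM_0).\]
By induction, the vertical maps at positions $4$ and $5$ are bijections; this is the $\Omegan$-part with $Y_1$ at shift $k$ and with $M_0$. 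Also by induction, positions $1$ and $2$ are surjective. Lemma~\ref{lem:five} then gives that $\Pn$ is surjective on $\Hom(X,\Sigma^kY)$, with kernel the image of $\ker$ at position $2$.

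For $k\geq1$, positions $1$ and $2$ are bijections, so position $3$ is bijective. For $k=0$, the kernel at position $2$ is $[\Sigma^n\m,\m](X,M_0)$. Its image under composition with $M_0\to Y$ lies in $[\Sigma^n\m,\m](X,Y)$. Conversely, any morphism $X\to\Sigma^nM\to M'\to Y$ lies in the kernel of $\Pn$, because $\Pn(\Sigma^nM\to M')=0$ by the base case. So the kernel is exactly $[\Sigma^n\m,\m](X,Y)$.

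For the $\Omegan$-bijectivity we use the window one step further:
\[\Hom(X,\Sigma^kM_0)\to\Hom(X,\Sigma^kY)\to\e(X,\Sigma^kY_1)\to\e(X,\Sigma^kM_0)\to\e(X,\Sigma^kY).\]
Exactness of the bottom row at the last term holds because the image triangle continues, and the long sequence in $K^b(\m)$ is fully exact. Here the last term on top does not continue exactly in $\c$, so the five lemma cannot be applied to this window directly. Instead we use the second (rotated) extriangle. If $k<n-s$, Lemma~\ref{lem:immdcor} together with Lemma~\ref{newlem2} reduces $\Omegan^{X,\Sigma^kY}$ to $\Pn$ on $\Hom(X,\Sigma^{k+1}Y)$: the maps $(\epsilon_0^{\Sigma^kY})_\#$ and $\Omegan(\epsilon_0^{\Sigma^kY})\circ-$ are isomorphisms, so bijectivity transfers. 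That $\Pn$-map is bijective by the $k+1\geq1$ case just proved. We therefore order the induction in decreasing $k$ within each $p$, or we simply note that the $\Hom$-statements for all $k$ were obtained first from data on $Y_1$ and $M_0$ alone.

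The remaining case $k=n-s$, where $\Sigma^{k+1}Y$ need not exist, is the main obstacle. There we plan to use the rotated extriangle $\Sigma^kM_0\to\Sigma^kY\to\Sigma^{k+1}Y_1\dashrightarrow$ in $\e(-)$-variables. Its long sequence
\[\Hom(X,\Sigma^{k+1}Y_1)\to\e(X,\Sigma^kM_0)\to\e(X,\Sigma^kY)\to\e(X,\Sigma^{k+1}Y_1)\to\e(X,\Sigma^{k+1}M_0)\]
is compared with the triangle sequence. The vertical maps are:
\begin{itemize}
\item position $1$: a bijection, by induction with $k+1\leq n-s+1$ for $Y_1$;
\item position $2$: a bijection, by Lemma~\ref{XtoM};
\item position $4$: a bijection, by induction on $Y_1$;
\item position $5$: a bijection, again by Lemma~\ref{XtoM}.
\end{itemize}
The five lemma then gives bijectivity of $\Omegan^{X,\Sigma^kY}$. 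The only delicate point is checking that the first square commutes, i.e.\ that connecting maps correspond up to the isomorphisms $\Omegan(\epsilon_0)$ of Lemma~\ref{newlem1}. Taking $k=0$ in everything proves the proposition.
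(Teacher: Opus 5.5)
Your handling of the $\Hom$-statements matches the paper's argument: the same induction on the projective dimension of $Y$, the same rotated extriangle $\Sigma^kY_1\xrightarrow{\alpha}\Sigma^kM_0\xrightarrow{\beta}\Sigma^kY\dashrightarrow$, and the same five-term window. Your kernel identification via $[\Sigma^n\m,\m]\subseteq\ker\Pn$ is a valid substitute for the paper's appeal to $\varepsilon$ being a right $\m$-approximation. The reduction of $\Omegan^{X,\Sigma^kY}$ to $\Pn$ on $\Hom_\c(X,\Sigma^{k+1}Y)$ via Lemma~\ref{newlem2} for $k<n-s$ is also fine.

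\textbf{The gap is in the case $k=n-s$.} Let $\Sigma^kM_0\xrightarrow{\beta}\Sigma^kY\xrightarrow{\gamma}\Sigma^{k+1}Y_1\dashrightarrow$ realize $\delta$. Proposition~\ref{def:exactseq} gives its exact sequence only up to $\e(X,\Sigma^{k+1}Y_1)$. So you have neither a map $\e(X,\Sigma^{k+1}Y_1)\to\e(X,\Sigma^{k+1}M_0)$ nor exactness at $\e(X,\Sigma^{k+1}Y_1)$. This is exactly the defect you yourself pointed out for your previous window. The exactness is indispensable: in Lemma~\ref{lem:five}, surjectivity of $c_3$ is obtained by lifting $b_3(y)$ along $c_4$ and then using $\ker a_4=\im a_3$. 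By contrast, the first square, which you single out as the delicate point, commutes by plain naturality of $\Omegan$: $\Omegan(f^{*}\delta)=\Omegan(\delta)\circ\Pn(f)$. No appeal to Lemma~\ref{newlem1} is needed there.

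\textbf{The fix is one more rotation.} Since $s\geq 1$, we have $\Sigma^kM_0\in\pj{k}\subseteq\pj{n-1}$. The second part of Proposition~\ref{prop:exrotation} then yields an extriangle $\Sigma^kY\xrightarrow{\gamma}\Sigma^{k+1}Y_1\xrightarrow{-\Sigma\alpha}\Sigma^{k+1}M_0\dashrightarrow$. Its long exact sequence gives exactness at $\e(X,\Sigma^{k+1}Y_1)$ with the map $(-\Sigma\alpha)_*$, and naturality of $\Omegan$ handles the last square.

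Once this extriangle is available, your mixed window is no longer needed. Its own long exact sequence
\[\Hom_\c(X,\Sigma^{k+1}Y_1)\to\Hom_\c(X,\Sigma^{k+1}M_0)\to\e(X,\Sigma^kY)\to\e(X,\Sigma^{k+1}Y_1)\to\e(X,\Sigma^{k+1}M_0)\]
can be compared with the triangle obtained by applying $(\Pn,\Omegan)$. All four outer vertical maps are bijections: by induction for $Y_1$ at shift $k+1\leq n-(s-1)$, and by Lemma~\ref{XtoM} for $M_0$. Lemma~\ref{lem:five} then gives bijectivity of $\Omegan^{X,\Sigma^kY}$ for every $0\leq k\leq n-s$ at once. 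This is what the paper does, and it removes both your case distinction and the detour through Lemma~\ref{newlem2}.
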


\begin{proof}
    We fix $X \in \c$ and prove a more general claim.
    
    Claim: for any $Y\in\pj{q}$ with $0\leq q\leq n$ and integer $0\leq l\leq n-q$, the functor $\Pn$ induces a bijection
	\[
		\Hom_{\c}(X,\Sigma^{l}Y)\xrightarrow{\Pn(-)}\Hom_{K^{b}(\m)}(\Pn(X),\Pn(\Sigma^{l}Y)),
	\]
	if $l\ne 0$, a surjective map
	\[
	\Hom_{\c}(X,Y)\xrightarrow{\Pn(-)}\Hom_{K^{b}(\m)}(\Pn(X),\Pn(Y))
	\]
	whose kernel consists of the morphisms factoring through morphisms $\Sigma^{n}M\rightarrow M'$ with $M,M'\in\m$, and a bijection
	\[
		\e(X,\Sigma^{l}Y)\xrightarrow{\Omegan^{X,\Sigma^{l}Y}(-)}\Hom_{K^{b}(\m)}(\Pn(X),\Sigma\Pn(\Sigma^{l}Y)). 
	\]

    We proceed by induction on $q$. The starting case $q=0$ is Lemma~\ref{XtoM}. We suppose that the claim holds for the case $q\leq s-1$ where $1\leq s\leq n$ is an integer, and consider the case $q=s$. 
    
    Since $Y\in\pj{s}$, there exists an extriangle in $\c$:
    \[
        Y_1\to M\to Y\dashrightarrow,
    \]
    where $M\in\m$ and $Y_1\in\pj{s-1}$. Applying Proposition~\ref{prop:exrotation} to rotate this extriangle, there exists an extriangle in $\c$:
    \begin{equation}\label{eq:triY}
    \Sigma^l Y_1\xrightarrow{\eta}\Sigma^l M\xrightarrow{\varepsilon}\Sigma^l Y\xdashrightarrow{\theta}.
    \end{equation}
    Applying the extriangulated functor $(\Pn,\Omegan)$ to this extriangle, we obtain a triangle in $K^b(\m)$:
    \[\Pn(\Sigma^l Y_1)\xrightarrow{\Pn(\eta)} \Pn(\Sigma^l M)\xrightarrow{\Pn(\varepsilon)} \Pn(\Sigma^l Y)\xrightarrow{\Omegan(\theta)}\Sigma \Pn(\Sigma^l Y_{1}).\]
    Applying the functors $\Hom_\c(X,-)$ and $\Hom_{K^b(\m)}(\Pn(X),-)$ to these two extriangles, respectively, we have the following commutative diagram of exact sequences of abelian groups:
	\begingroup\small
    \[\begin{tikzcd}[column sep=1em]
		{(X,\Sigma^{l}Y_{1})} \arrow[r, "\eta\circ-"] \arrow[d, "f_{1}:=\Pn(-)"']             & {(X,\Sigma^{l}M)} \arrow[r, "\varepsilon\circ-"] \arrow[d, "f_{2}:=\Pn(-)"'] & {(X,\Sigma^{l}Y)} \arrow[r, "\theta_{\#}(-)"] \arrow[d, "f_{3}:=\Pn(-)"]              & {\e(X,\Sigma^{l}Y_{1})} \arrow[r, "\eta_{*}(-)"] \arrow[d, "f_{4}:=\Omegan(-)"]             & {\e(X,\Sigma^{l}M)} \arrow[d, "f_{5}:=\Omegan(-)"]           \\
		{(\Pn(X),\Pn(\Sigma^{l}Y_{1}))} \arrow[r, "\Pn(\eta)\circ-"' {yshift=-1.5pt}] & {(\Pn(X),\Pn(\Sigma^{l}M))} \arrow[r, "\Pn(\varepsilon)\circ-"' {yshift=-1.5pt}]                                & {(\Pn(X),\Pn(\Sigma^{l}Y))} \arrow[r, "\Omegan(\theta)\circ-"' {yshift=-1.5pt}] & {(\Pn(X),\Sigma\Pn(\Sigma^{l}Y_{1}))} \arrow[r, "\Sigma\Pn(\eta)\circ"' {yshift=-1.5pt}] & {(\Pn(X),\Sigma\Pn(\Sigma^{l}M))},
	\end{tikzcd}\]
    \endgroup
    where the commutativity of the first two squares follows from the fact that $\Pn$ is a functor, and the commutativity of the last two squares follows from that $\Omegan$ is a natural transformation. If $l\neq 0$, by the induction hypothesis, the maps $f_1$, $f_2$, $f_4$ and $f_5$ are bijections. Hence, $f_3$ is also a bijection. If $l=0$, by the induction hypothesis, $f_1$ and $f_2$ are surjective, and $f_4$ and $f_5$ are bijections, and by Lemma~\ref{XtoM}, $\ker f_2=[\Sigma^n \m](X,M)$. Therefore, by Lemma~\ref{lem:five}, we have
    \[\ker f_3 = \im (\varepsilon\circ-)|_{\ker f_2}=[\Sigma^n\m,\m](X,Y),\]
    where the last equality follows from the fact that $\varepsilon$ is a right $\m$-approximation of $Y$ in this case.
    
    Applying Proposition~\ref{prop:exrotation} to rotate the extriangle~\eqref{eq:triY}, there exists an extriangle in $\c$:
    \[\Sigma^l Y\xrightarrow{\alpha} \Sigma^{l+1} Y_1\xrightarrow{\beta} \Sigma^{l+1} M\xdashrightarrow{\gamma}.\]
    Applying the extriangulated functor $(\Pn,\Omegan)$ to this extriangle, we obtain a triangle in $K^b(\m)$:
    \[\Pn(\Sigma^l Y)\xrightarrow{\Pn(\alpha)}\Pn(\Sigma^{l+1} Y_{1})\xrightarrow{\Pn(\beta)}\Pn(\Sigma^{l+1} M)\xrightarrow{\Omegan(\gamma)}\Sigma\Pn(\Sigma^l Y).\]
    Applying $\Hom_{\c}(X,-)$ and $\Hom_{K^b(\m)}(\Pn(X),-)$ to these two extriangles, respectively, 
    there exists a commutative diagram of exact sequences of abelian groups:
    \begingroup\footnotesize
    \[\begin{tikzcd}[column sep=1em]
		{(X,\Sigma^{l+1}Y_{1})} \arrow[r, "\beta\circ-"] \arrow[d, "g_1:=\Pn(-)"']             & {(X,\Sigma^{l+1}M)} \arrow[r, "\gamma_{\#}(-)"] \arrow[d, "g_2:=\Pn(-)"'] & {\e(X,\Sigma^{l}Y)} \arrow[r, "\alpha_{*}(-)"] \arrow[d, "g_3:=\Omegan(-)"]             & {\e(X,\Sigma^{l+1}Y_{1})} \arrow[r, "\beta_{*}(-)"] \arrow[d, "g_4:=\Omegan(-)"]             & {\e(X,\Sigma^{l+1}M)} \arrow[d, "g_5:=\Omegan(-)"]             \\
		{(\Pn(X),\Pn(\Sigma^{l+1}Y_{1}))} \arrow[r, "\Pn(\beta)\circ-"' {yshift=-1.5pt}] & {(\Pn(X),\Pn(\Sigma^{l+1}M))} \arrow[r, "\Omegan(\gamma)\circ-"' {yshift=-1.5pt}]                      & {(\Pn(X),\Sigma\Pn(\Sigma^{l}Y))} \arrow[r, "\Sigma\Pn(\alpha)\circ-"' {yshift=-1.5pt}] & {(\Pn(X),\Sigma\Pn(\Sigma^{l+1}Y_{1}))} \arrow[r, "\Sigma\Pn(\beta)\circ-"' {yshift=-1.5pt}] & {(\Pn(X),\Sigma\Pn(\Sigma^{l+1}M))},
	\end{tikzcd}\]
    \endgroup
    where the commutativity of the first square follows from the fact that $\Pn$ is a functor, and the commutativity of the last three squares follows from that $\Omegan$ is a natural transformation. Since by the induction hypothesis, $g_{1}$, $g_{2}$, $g_{4}$ and $g_{5}$ are bijections, the map $g_{3}$ is also bijective by Lemma~\ref{lem:five}. 
\end{proof}

For the reduced $(n-1)$-Auslander extriangulated category $(\c,\e)$, $\m$ is the subcategory consisting of all projective objects, and by Corollary~\ref{cor:exinject}, $\Sigma^{n}\m$ is the subcategory consisting of all injective objects. Consequently, the ideal $[\Sigma^n\m,\m]$ consists of morphisms from injective objects to projective objects. By \cite[Theorem 2.8]{FGPPP}, the additive quotient $\c/[\Sigma^{n}\m,\m]$ is an extriangulated category, where the induced extriangulated structure $\widetilde{\e}$ is given as follows: for any $X,Y\in\c$, the extension group is $\tilde{\e}(X,Y) = \e(X,Y)$, and the $\tilde{\e}$-triangles in $\c/[\Sigma^n\m,\m]$ are of the form
\[
    Y\xrightarrow{\underline{u}}Z\xrightarrow{\underline{v}}X\stackrel{w}{\dashrightarrow},
\]
for any extriangle $Y\xrightarrow{u}Z\xrightarrow{v}X\xrightarrow{w}\Sigma Y$ in $\c$, where $\underline{u}$ and $\underline{v}$ denote the images of the morphisms $u$ and $v$ under the quotient functor $\pi\colon \c\to\c/[\Sigma^n\m,\m]$. Consequently, this quotient functor, together with the identity natural transformation, forms an extriangulated functor.

\begin{theorem}\label{newcor}
    Let $\c$ be a reduced $(n-1)$-Auslander extriangulated category and let $\m$ be the subcategory of $\c$ consisting of projective objects in $\c$. Let $(\Pn,\Omegan)\colon\c\to K^{[-n,0]}(\m)$ be an extriangulated functor satisfying $\Pn|_\m=\id_\m$. Suppose $\Hom_\c(\Sigma^i\m,\m)=0$ for all $1\leq i\leq n-1$.
    \begin{enumerate}
        \item The extriangulated functor $(\Pn,\Omegan)$ induces an extriangle equivalence 
        \[\c/[\Sigma^n \m,\m]\xrightarrow{\simeq} K^{[-n,0]}(\m).\]
        \item For any objects $X$ and $Y$ in $\c$, there exist isomorphisms
        \[\e^i(X,Y)\cong \Hom_{K^b(\m)}(\Pn(X),\Sigma^i \Pn(Y)),\ \text{for all $i>0$.}\]
    \end{enumerate}
\end{theorem}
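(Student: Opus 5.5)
For part (1), I will assemble the equivalence from results already proved. By Theorem~\ref{finalequiv}, condition (iii) gives that $\Pn$ is full and dense, that its kernel ideal is exactly $[\Sigma^n\m,\m]$, and that $\Omegan$ is a natural isomorphism.

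Since $[\Sigma^n\m,\m]$ lies in the kernel of $\Pn$, the functor $\Pn$ factors uniquely through the quotient functor $\c\to\c/[\Sigma^n\m,\m]$. Call the induced additive functor $\overline{\Pn}$. It is full and dense because $\Pn$ is. It is faithful because the kernel of $\Pn$ equals $[\Sigma^n\m,\m]$ exactly, and this is the one point where Proposition~\ref{prop:bi+surj} is really used. So $\overline{\Pn}$ is an equivalence of additive categories.

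For the extriangulated structure, recall that by \cite[Theorem 2.8]{FGPPP} the quotient satisfies $\widetilde{\e}(X,Y)=\e(X,Y)$. I will set $\overline{\Omegan}^{X,Y}:=\Omegan^{X,Y}$, and must check three things:
\begin{itemize}
\item[(a)] Naturality descends to the quotient. Morphisms in $[\Sigma^n\m,\m]$ act by zero on $\e$: a morphism factoring through an injective object $\Sigma^n M$ kills $\e(-,-)$ in the first variable, since $\e(\Sigma^nM,-)\to$ is not the point; rather, the factorisation passes through the projective $M'$, so pullback along it factors through $\e(M',-)=0$. Dually, pushforward factors through $\e(-,\Sigma^nM)=0$ by Corollary~\ref{cor:exinject}. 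Hence $\e$ is genuinely a bifunctor on the quotient, and $\overline{\Omegan}$ is natural there.
\item[(b)] Extriangles are preserved. Every $\widetilde{\e}$-triangle is the image of an extriangle of $\c$, so it is sent to a triangle because $(\Pn,\Omegan)$ is an extriangulated functor.
\item[(c)] $\overline{\Omegan}$ is a natural isomorphism, since $\Omegan$ is.
\end{itemize}
Together these make $(\overline{\Pn},\overline{\Omegan})$ an extriangle equivalence in the sense of Definition~\ref{defn:extri fun}.

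For part (2), I plan to apply Lemma~\ref{lem:siltiff} to $(\Pn,\Omegan)\colon\c\to K^{[-n,0]}(\m)$. Its hypotheses hold:
\begin{itemize}
\item $\Omegan$ is a natural isomorphism, by Theorem~\ref{finalequiv};
\item both categories have enough projectives, namely $\m$ in $\c$ and $\m$ in $K^{[-n,0]}(\m)$;
\item $\Pn$ sends projectives to projectives, because $\Pn|_\m=\id_\m$.
\end{itemize}
The lemma then gives $\e^i(X,Y)\cong\e_{K^{[-n,0]}(\m)}^i(\Pn X,\Pn Y)$. Combining this with the isomorphism $\e_{K^{[-n,0]}(\m)}^i(A,B)\cong\Hom_{K^b(\m)}(A,\Sigma^iB)$ recorded in Section~\ref{section2} yields the claim.

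The main obstacle I expect is making the descent of the extriangulated structure to the quotient rigorous, in particular checking that the realization in the quotient matches the one described after \cite[Theorem 2.8]{FGPPP}. Everything else is a direct citation.
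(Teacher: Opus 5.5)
Your proposal is correct and follows the paper's own proof. Part (1) comes from Theorem~\ref{finalequiv} together with the induced extriangulated structure on $\c/[\Sigma^n\m,\m]$ from \cite{FGPPP}, and part (2) from Lemma~\ref{lem:siltiff} combined with $\e^i_{K^{[-n,0]}(\m)}(A,B)\cong\Hom_{K^b(\m)}(A,\Sigma^iB)$. You spell out the descent of $\e$ and $\Omegan$ to the quotient in more detail than the paper; the garbled first clause of your item (a) should be deleted, since the projective/injective factorization argument that follows it is the correct one.
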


\begin{proof}
    By Theorem~\ref{finalequiv}, the assertion (1) follows by the induced extriangulated structure of $\c$, while the assertion (2) follows by Lemma~\ref{lem:siltiff}. 
\end{proof}

Since $\m$ consists of projective objects in $\c$, by Lemma~\ref{lem:immdcor} we have
\[
\Hom_{\c}(\m,\Sigma^{n}\m)=\e(\m,\Sigma^{n-1}\m)=0.
\]
Hence, the ideal $[\Sigma^n \m, \m]$ satisfies $[\Sigma^n \m, \m]^2 = 0$. Consequently, we have the following immediate consequence of Theorem~\ref{newcor}.

\begin{corollary}\label{cor:3bi}
    Keeping the setup of Theorem~\ref{newcor}, for any objects $X, Y$ in $\c$, if $\Pn(X)\cong\Pn(Y)$, then $X\cong Y$. In particular, $\Pn$ induces bijections between:
    \begin{enumerate}
        \item[(a)] the isoclasses of objects in $\c$ and those in $K^{[-n,0]}(\m)$;
        \item[(b)] the subcategories of $\c$ and $K^{[-n,0]}(\m)$ (respectively, those closed under direct summands);
        \item[(c)] the thick subcategories of $\c$ and $K^{[-n,0]}(\m)$.
    \end{enumerate}
\end{corollary}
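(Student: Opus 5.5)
The main point is to show that if $\Pn(X)\cong\Pn(Y)$ then $X\cong Y$; the three bijections then follow directly. By Theorem~\ref{newcor}(1), $\Pn$ induces an equivalence $\c/[\Sigma^n\m,\m]\xrightarrow{\simeq}K^{[-n,0]}(\m)$. So an isomorphism $\Pn(X)\cong\Pn(Y)$ lifts to morphisms $f\colon X\to Y$ and $g\colon Y\to X$ in $\c$ whose images in the quotient are mutually inverse. In other words, $\id_X-gf$ and $\id_Y-fg$ lie in the ideal $I:=[\Sigma^n\m,\m]$.

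Next I use the computation just before the statement: $\Hom_\c(\m,\Sigma^n\m)=0$, which gives $I^2=0$. Hence $a:=\id_X-gf$ satisfies $a^2=0$, so $gf=\id_X-a$ is invertible with inverse $\id_X+a$. In the same way $fg$ is invertible. Then $f$ has a left inverse $(gf)^{-1}g$ and a right inverse $g(fg)^{-1}$, so $f$ is an isomorphism and $X\cong Y$. The only point to check is that $I^2=0$ really applies to composites of endomorphisms. It does: a product of two morphisms in $I$ passes through a composite $M'\to\Sigma^n M''$ with $M',M''\in\m$, and this composite vanishes.

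For the bijections, (a) holds because $\Pn$ is dense by Theorem~\ref{finalequiv} and, by the above, injective on isoclasses. For (b), send a subcategory $\X$ (full, additive, closed under isomorphisms) to the closure of $\Pn(\X)$ under isomorphisms. This map is a bijection because subcategories are determined by their isoclasses of objects and (a) holds. Additivity of the image holds because $\Pn$ is additive.

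Closure under direct summands is preserved in both directions. Suppose $\Pn(X)\cong A\oplus B$. By density write $A\cong\Pn(X')$ and $B\cong\Pn(X'')$; then $\Pn(X)\cong\Pn(X'\oplus X'')$, so $X\cong X'\oplus X''$ by the main point. Hence summands of images correspond to images of summands.

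For (c), a subcategory is thick if it is closed under summands and has the 2-out-of-3 property for extriangles. Extriangles in $\c$ go to extriangles under $(\Pn,\Omegan)$. Conversely, every extriangle in $K^{[-n,0]}(\m)$ is, up to isomorphism, the image of an extriangle in $\c$. This follows from Remark~\ref{rmk:detectextri}, since $\Pn$ is dense and each $\Omegan^{X,Y}$ is surjective (indeed bijective) by Theorem~\ref{finalequiv}. Together with (a) and (b), this shows that the 2-out-of-3 condition transfers in both directions, giving (c). I expect the only real care point to be this reflection step; everything else is formal.
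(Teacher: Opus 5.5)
Your proof is correct and takes the same route as the paper. The paper observes that $\Hom_\c(\m,\Sigma^n\m)=\e(\m,\Sigma^{n-1}\m)=0$, so $[\Sigma^n\m,\m]^2=0$, and then calls the corollary an immediate consequence of the equivalence $\c/[\Sigma^n\m,\m]\simeq K^{[-n,0]}(\m)$ of Theorem~\ref{newcor}; your square-zero lifting of isomorphisms, together with your transfer of summand-closure and the 2-out-of-3 property via density, surjectivity of $\Omegan$ and Remark~\ref{rmk:detectextri}, just writes out the details the paper leaves implicit.
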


\begin{remark}
    If the $(n-1)$-Auslander extriangulated category $\c$ is idempotent complete, then the extriangulated category $K^{[-n,0]}(\m)$ is idempotent complete. In this case, the bijection in Corollary~\ref{cor:3bi} restricts to a bijection between the isoclasses of indecomposable objects in $\c$ and those in $K^{[-n,0]}(\m)$.
\end{remark}

We further investigate the preservation of silting structures under the functor $\Pn$.

\begin{theorem}\label{silt_corres}
	Keeping the setup of Theorem~\ref{newcor}, the functor $\Pn$ induces a bijection between presilting subcategories of $\c$ and those of $K^{[-n,0]}(\m)$, which restricts to a bijection between silting subcategories of $\c$ and those of $K^{[-n,0]}(\m)$. 
    
    Moreover, let $\X$ be a silting subcategory of $\c$, and let $\Y$ be a good covariantly (resp., contravariantly) finite subcategory of $\X$. Then $\Pn(\Y)$ is a good covariantly (resp., contravariantly) finite subcategory of $\Pn(\X)$, and 
    \[\Pn(\mu^L(\X;\Y))=\mu^L(\Pn(\X);\Pn(\Y))\ \ \text{(resp., $\Pn(\mu^R(\X;\Y))=\mu^R(\Pn(\X);\Pn(\Y))$)}.\]
\end{theorem}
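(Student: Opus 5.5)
The plan is to transport everything along $\Pn$, using Theorem~\ref{newcor}, Corollary~\ref{cor:3bi} and Theorem~\ref{finalequiv}. Under the hypothesis, $\Pn$ is full and dense with kernel $[\Sigma^n\m,\m]$, and $\Omegan$ is a natural isomorphism.

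\textbf{Presilting.} By Corollary~\ref{cor:3bi}(b), $\X\mapsto\add\Pn(\X)$ is a bijection between subcategories closed under direct summands. Here $\add\Pn(\X)$ is just the isomorphism closure of $\Pn(\X)$, since $\Pn$ is full and $[\Sigma^n\m,\m]^2=0$: idempotents lift modulo a square-zero ideal, so summands correspond. By Theorem~\ref{newcor}(2), $\e^i(X,Y)\cong\Hom_{K^b(\m)}(\Pn X,\Sigma^i\Pn Y)=\e^i_{K^{[-n,0]}(\m)}(\Pn X,\Pn Y)$ for all $i>0$. Hence $\X$ is presilting if and only if $\Pn(\X)$ is.

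\textbf{Silting.} Corollary~\ref{cor:3bi}(c) says $\Pn$ matches thick subcategories. I would make this explicit via the order-preserving bijection of Corollary~\ref{cor:3bi}(b). $\Pn$ sends extriangles to extriangles, and extriangles are reflected by Remark~\ref{rmk:detectextri}, which applies since $\Pn$ is dense and $\Omegan$ is surjective. Both also respect summands. So the correspondence preserves thickness, and therefore $\mathrm{thick}_{K^{[-n,0]}(\m)}\Pn(\X)$ is the image of $\mathrm{thick}_\c\X$. Thus $\mathrm{thick}_\c\X=\c$ if and only if $\mathrm{thick}\,\Pn(\X)=K^{[-n,0]}(\m)$.

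\textbf{Mutation.} Let $X\in\X$ with extriangle $X\xrightarrow{f}Y\to Z_X\dashrightarrow$, where $f$ is a left $\Y$-approximation. Applying $\Pn$ gives an extriangle $\Pn X\to\Pn Y\to\Pn Z_X\dashrightarrow$. Because $\Pn$ is full, $\Pn(f)$ is again a left $\Pn(\Y)$-approximation: any $\Pn X\to\Pn Y'$ lifts to $X\to Y'$, factors through $f$, and we apply $\Pn$. Every object of $\Pn(\X)$ up to isomorphism is some $\Pn X$, and approximations are stable under isomorphism. Hence $\Pn(\Y)$ is good covariantly finite in $\Pn(\X)$, and
\[\Pn(\mu^L(\X;\Y))=\add(\Pn(\Y)\cup\{\Pn Z_X\})=\mu^L(\Pn\X;\Pn\Y),\]
using the description of $\add$ above. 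The right mutation case is dual, with deflations and right approximations.

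\textbf{Main obstacle.} The delicate points are the compatibility of $\Pn$ with $\add$ (closure under summands) and the fact that $\mu^L$ is defined from a choice of extriangles. The first is handled by Corollary~\ref{cor:3bi}. For the second, any choice in $K^{[-n,0]}(\m)$ gives the same $\add$, because by Remark~\ref{rmk:detectextri} every extriangle starting at $\Pn X$ is isomorphic to the image of one in $\c$.
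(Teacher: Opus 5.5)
Your proposal is correct. For the presilting and silting parts it follows the paper's argument: Theorem~\ref{newcor}(2) transports the vanishing of $\e^i$, and Corollary~\ref{cor:3bi}(b),(c) matches summand-closed and thick subcategories. Your check of (c) via Remark~\ref{rmk:detectextri} just unpacks what the paper calls immediate.

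The genuine difference is in how you show that $\Pn(f)$ is a left $\Pn(\Y)$-approximation. The paper uses \cite[Theorem~4.12]{AT25} to know that $\mu^L(\X;\Y)$ is silting. It transports this through the first part of the theorem to get $\e_{K^{[-n,0]}(\m)}(\Pn(Z_X),\Pn(\Y))=0$. It then reads off surjectivity of $-\circ\Pn(f)$ from the long exact sequence of the image triangle. You instead lift a morphism $\Pn(X)\to\Pn(Y')$ along the full functor $\Pn$, factor it through $f$ in $\c$, and apply $\Pn$. Your route is shorter and needs only fullness at this step. The paper's route uses at this step only that $(\Pn,\Omegan)$ preserves extriangles and silting subcategories, so it would work for any such functor.

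Two side justifications should be tightened, though the conclusions stand.

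First, lifting idempotents modulo the square-zero ideal does not by itself make the lift split in $\c$. The reason $\add\Pn(\X)$ is the isomorphism closure of $\Pn(\X)$ is density together with $\Pn(X)\cong\Pn(X')\Rightarrow X\cong X'$. Then $\Pn(X)\cong\Pn(X_A)\oplus\Pn(X_B)$ forces $X\cong X_A\oplus X_B$.

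Second, the inflation $u\colon X\to Z$ produced by Remark~\ref{rmk:detectextri} is a priori a left $\Y$-approximation only modulo $\ker\Pn=[\Sigma^n\m,\m]$. It is a genuine one, because every morphism $X\to\Sigma^n M$ factors through any inflation out of $X$, since $\Sigma^n M$ is injective (Corollary~\ref{cor:exinject}). In any case, $\mu^L$ is independent of choices in every extriangulated category. Two approximation extriangles $X\to Y\to Z$ and $X\to Y'\to Z'$ give $Z\oplus Y'\cong Z'\oplus Y$, and this is what the paper tacitly uses.
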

	
\begin{proof}
	Note that $\m$ is the subcategory of both $\c$ and $K^{[-n,0]}(\m)$ consisting of their respective projective objects. Hence, by Theorem~\ref{newcor}, for any subcategory $\X$ of $\c$, $\e^i(\X,\X)=0$ for all $i>0$ if and only if $\e_{K^{[-n,0]}(\m)}^i(\Pn(\X),\Pn(\X))=0$ for all $i>0$. Therefore, the bijection in Corollary~\ref{cor:3bi}~(b) restricts to the first bijection in this theorem.

    Let $\X$ be a subcategory of $\c$.  By the bijection in (c) of Corollary~\ref{cor:3bi}, we have
    \[\Pn(\mathrm{thick}_{\c}(\X))=\mathrm{thick}_{K^{[-n,0]}(\m)}(\Pn(\X)).\]
    It follows that the first bijection in this theorem restricts to the second one, i.e., between silting subcategories.

    Let $\X$ be a silting subcategory of $\c$, and let $\Y$ be a good covariantly finite subcategory of $\X$. By definition, for each object $X$ of $\X$, there exists an extriangle in $\c$:
    \[X\xrightarrow{f} Y\to Z_X\dashrightarrow,\]
    where $Y\in\Y$ and $Z_X\in\mu^L(\X;\Y)$. Applying the extriangulated functor $(\Pn,\Omegan)$ to it, we obtain a triangle in $K^{[-n,0]}(\m)$:
    \[\Pn(X)\xrightarrow{\Pn(f)} \Pn(Y)\to \Pn(Z_X)\rightarrow\Sigma\Pn(X).\]
    Since $\mu^L(\X;\Y)$ is a silting subcategory of $\c$, $\Pn(\mu^L(\X;\Y))$ is a silting subcategory of $K^{[-n,0]}(\m)$. Consequently, we have $\e_{K^{[-n,0]}(\m)}(\Pn(Z_X),\Pn(\Y))=0$. Therefore, for any $Y'\in\Y$, we have the following exact sequence obtained from the above extriangle:
    \[\Hom_{K^{[-n,0]}(\m)}(\Pn(Y),\Pn(Y'))\xrightarrow{-\circ\Pn(f)}\Hom_{K^{[-n,0]}(\m)}(\Pn(X),\Pn(Y'))\to 0.\]
    This implies that the morphism $\Pn(f)$ is a left $\Pn(\Y)$-approximation of $\Pn(X)$. Therefore, $\Pn(\Y)$ is a good covariantly finite subcategory of $\Pn(\X)$. Thus, the left mutation $\mu^L(\Pn(\X);\Pn(\Y))$ is well-defined, and it coincides with $\Pn(\mu^L(\X;\Y))$ by Corollary~\ref{cor:3bi}~(b). The case for contravariantly finite subcategories and right mutations follows by a dual argument.
\end{proof}

Now we return to the case of an algebraic triangulated category $\t$ with a $n$-rigid subcategory $\m$. Applying Theorem~\ref{finalequiv},~\ref{newcor} and~\ref{silt_corres} to this case, we have the following theorem.

\begin{theorem}\label{thm:algtri}
	Let $\t$ be an algebraic triangulated category and let $\m$ be an $n$-rigid subcategory of $\t$ which is closed under direct summands. Let $(\Pn,\Omegan)\colon\pr{\t}{n+1}{\m}\to K^{[-n,0]}(\m)$ be an extriangulated functor satisfying $\Pn|_\m=\id_\m$. Then the following statements are equivalent.
    \begin{enumerate}
        \item[(i)] The functor $\Pn$ is full.
        \item[(ii)] The map $\Omegan^{X,Y}$ is injective for all objects $X,Y\in\pr{\t}{n+1}{\m}$.
        \item[(iii)] $\Hom_{\t}(\Sigma^{i}\m,\m)=0$ for all $1\leq i\leq n-1$.
    \end{enumerate}
    If one of the above three equivalent conditions holds, then $\Pn$ is dense, the kernel ideal of $\Pn$ is $[\Sigma^n \m,\m]$, and $\Omegan$ is a natural isomorphism, and this extriangulated functor induces an extriangle equivalence: 
    \[\pr{\t}{n+1}{\m}/[\Sigma^n \m,\m]\rightarrow K^{[-n,0]}(\m),\]
    where $[\Sigma^n \m,\m]$ denotes the ideal of $\pr{\t}{n+1}{\m}$ consisting of morphisms factoring through a morphism from an object in $\Sigma^{n}\m$ to an object in $\m$. 
    
    Moreover, the functor $\Pn$ induces a bijection between the set of silting subcategories of $\pr{\t}{n+1}{\m}$ and the set of silting subcategories of 
	$K^{[-n,0]}(\m)$ (that is, the set of $(n+1)$-term silting subcategories of $K^b(\m)$), which is compatible with mutation.	
\end{theorem}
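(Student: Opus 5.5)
The plan is to deduce Theorem~\ref{thm:algtri} directly from the general results of Section~\ref{section5}, by specializing $\c$ to $\pr{\t}{n+1}{\m}$ with the structure $\e_\m$.

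First, I check that the hypotheses of Theorems~\ref{finalequiv}, \ref{newcor} and \ref{silt_corres} hold in this setting:
\begin{itemize}
\item By Proposition~\ref{prop:reduced}, $(\pr{\t}{n+1}{\m},\e_\m)$ is a reduced $(n-1)$-Auslander extriangulated category.
\item As observed just before that proposition, its subcategory of projective objects is exactly $\m$. A projective object $P$ admits an extriangle $X_1\to M_0\to P\dashrightarrow$ which must split, so $P$ is a summand of $M_0$. Since $\m$ is closed under direct summands, $P\in\m$.
\item The given $(\Pn,\Omegan)$ is an extriangulated functor with $\Pn|_\m=\id_\m$; one exists by Theorem~\ref{preserve}.
\end{itemize}
Thus every statement in Section~\ref{section5} applies verbatim with $\c=\pr{\t}{n+1}{\m}$.

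The only point needing care is condition (iii). In Theorem~\ref{finalequiv} it reads $\Hom_\c(\Sigma^i\m,\m)=0$, where $\Sigma$ is the intrinsic shift of $\c$: the third term of an extriangle $X\to 0\to\Sigma X\dashrightarrow$. I must identify this with the triangulated suspension of $\t$. For $X\in\pr{\t}{n}{\m}$, the triangle $X\to 0\to\Sigma X\xrightarrow{\id}\Sigma X$ in $\t$ has all terms in $\pr{\t}{n+1}{\m}$. Its connecting morphism $\id_{\Sigma X}$ factors through $\Sigma X\in\Sigma\pr{\t}{n}{\m}$, hence lies in $\e_\m(\Sigma X,X)$. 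So this triangle is an extriangle, and the intrinsic shift agrees with the triangulated $\Sigma$ up to isomorphism. Iterating, for $M\in\m$ and $1\le i\le n$ we get $\Sigma^iM\in\pr{\t}{i+1}{\m}$, the step from $\Sigma^{i-1}M$ to $\Sigma^iM$ being legitimate because $\Sigma^{i-1}M\in\pr{\t}{n}{\m}$. Since $\c$ is a full subcategory of $\t$, $\Hom_\c(\Sigma^i\m,\m)=\Hom_\t(\Sigma^i\m,\m)$, and condition (iii) of the two theorems coincides. This also shows that the ideal $[\Sigma^n\m,\m]$ means the same in both settings.

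With these identifications:
\begin{itemize}
\item The equivalence of (i)--(iii), the density of $\Pn$, the kernel ideal, and $\Omegan$ being a natural isomorphism come from Theorem~\ref{finalequiv}.
\item The extriangle equivalence $\pr{\t}{n+1}{\m}/[\Sigma^n\m,\m]\to K^{[-n,0]}(\m)$ is Theorem~\ref{newcor}(1).
\item The mutation-compatible bijection of silting subcategories is Theorem~\ref{silt_corres}.
\item The identification of silting subcategories of $K^{[-n,0]}(\m)$ with $(n+1)$-term silting subcategories of $K^b(\m)$ is Remark~\ref{rmk:termsilt}.
\end{itemize}

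I expect no real obstacle. The only nonformal step is matching the intrinsic $\Sigma$ with the suspension of $\t$, that is, checking that the connecting morphisms lie in $\e_\m$, which is immediate from the definition of $\e_\m$.
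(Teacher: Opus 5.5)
Your proposal is correct and follows the paper's route. The paper obtains this theorem simply by applying Theorems~\ref{finalequiv}, \ref{newcor} and~\ref{silt_corres} to $\c=\pr{\t}{n+1}{\m}$, which is reduced $(n-1)$-Auslander with projectives $\m$ by Proposition~\ref{prop:reduced}. Your explicit check that the intrinsic shift $X\to 0\to\Sigma X\dashrightarrow$ agrees with the suspension of $\t$ on $\pr{\t}{n}{\m}$ is a detail the paper leaves implicit; it ensures that condition (iii) and the ideal $[\Sigma^n\m,\m]$ mean the same thing in both settings.
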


\begin{exam}
    Let $\t$ be the 2-cluster category with 2-cluster tilting subcategory $\m$ considered in Example~\ref{exm:1}. Note that $\Hom_{\t}(\Sigma\m,\m)\ne0$, that is, (ii) in Theorem~\ref{finalequiv} does not hold. Then $\Pn$ is not full.

    By~\cite[Theorem 3.1]{IY} we have $\pr{\t}{3}{\m}=\t$. For $K^{[-2,0]}(\m)\simeq K^{[-2,0]}(\End_{\t}(P_1\oplus\Sigma P_2)$, since $\End_{\t}(P_1\oplus\Sigma P_2))$ is semisimple, the number of isoclasses of indecomposable objects in $K^{[-2,0]}(\m)$ is $6$. However, the number of isoclasses of indecomposable objects in $\t$  is $8$, which implies that $\Pn$ cannot induce a bijection between the isoclasses of indecomposable objects in $\pr{\t}{3}{\m}$ and those in $K^{[-2,0]}(\m)$. Indeed, in this case, any indecomposable object in $K^{[-2,0]}(\m)$, up to isomorphism, belongs to one of $\m$, $\Sigma\m$ and $\Sigma^2\m$. Therefore, by Lemma~\ref{newlem1}, $\Pn$ is dense.

\end{exam}

The following relationship regarding negative extensions between the two extriangulated categories will be applied in the next section.

\begin{lemma}\label{lem:firstneg}
    Keeping the setup of Theorem~\ref{thm:algtri}, suppose $\Hom_{\t}(\Sigma^n\m, \m) = 0$. Then for any $X,Y\in\pr{\t}{n+1}{\m}$, there exists a surjective map
    \[
    \Hom_{\t}(X, \Sigma^{-1}Y) \to \Hom_{K^b(\m)}(\Pn(X), \Sigma^{-1}\Pn(Y)),\]
    whose kernel is $[\Sigma^{-1}\m](X,\Sigma^{-1}Y)$.
\end{lemma}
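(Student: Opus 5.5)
Since $\Hom_\t(\Sigma^n\m,\m)=0$ forces $\Hom_\t(\Sigma^i\m,\m)=0$ for $1\le i\le n-1$ only if that is part of the setup, I work under the hypotheses of Theorem~\ref{thm:algtri} with condition (iii) in force. Then $\Pn$ is full, with kernel ideal $[\Sigma^n\m,\m]$, and $\Omegan$ is a natural isomorphism. The extra hypothesis $\Hom_\t(\Sigma^n\m,\m)=0$ makes $[\Sigma^n\m,\m]=0$, so $\Pn$ is fully faithful. The plan is to realize negative morphisms as ordinary morphisms into a shifted object and reduce to this fully faithful statement, correcting for objects that leave $\pr{\t}{n+1}{\m}$.

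\textbf{Step 1 (resolve $Y$).} Take the first triangle $Y_1\xrightarrow{\alpha}M_0\xrightarrow{\beta}Y\xrightarrow{\gamma}\Sigma Y_1$ with $Y_1\in\pr{\t}{n}{\m}$ and $M_0\in\m$. Rotating gives $\Sigma^{-1}Y\xrightarrow{-\Sigma^{-1}\gamma}Y_1\xrightarrow{\alpha}M_0\xrightarrow{\beta}Y$. On the complex side, choose $\Pn(Y)$ as the complex of the presentation $\eta^Y$. Example~\ref{intrinsic} gives the analogous triangle $\Pn(Y_1)\to M_0\to\Pn(Y)\to\Sigma\Pn(Y_1)$ in $K^b(\m)$, and this triangle is the image under $(\Pn,\Omegan)$ of the extriangle $Y_1\to M_0\to Y\dashrightarrow$; that this is an extriangle holds because $\gamma\in[\Sigma\pr{\t}{n}{\m}]$.

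\textbf{Step 2 (long exact sequences).} Apply $\Hom_\t(X,-)$ to the triangle and $\Hom_{K^b(\m)}(\Pn(X),-)$ to its image. This yields two exact rows
\[(X,\Sigma^{-1}M_0)\to(X,\Sigma^{-1}Y)\to(X,Y_1)\to(X,M_0)\to(X,Y),\]
compared by vertical maps. The last three vertical maps are given by $\Pn$, and they are bijective by Theorem~\ref{thm:algtri} together with $[\Sigma^n\m,\m]=0$. On the complex side the first term is $\Hom(\Pn(X),\Sigma^{-1}M_0)$. This term vanishes because $\Pn(X)$ lies in degrees $[-n,0]$ while $\Sigma^{-1}M_0$ sits in degree $1$. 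The vertical map at $(X,\Sigma^{-1}Y)$ is defined as follows. A morphism $f\colon X\to\Sigma^{-1}Y$ gives $g=-\Sigma^{-1}\gamma\circ f\colon X\to Y_1$ with $\alpha g=0$. Then $\Pn(g)$ satisfies $\Pn(\alpha)\Pn(g)=0$, so it lifts uniquely to $\Pn(X)\to\Sigma^{-1}\Pn(Y)$; uniqueness holds because $\Hom(\Pn(X),\Sigma^{-1}M_0)=0$. Equivalently, the map is $f\mapsto\Sigma^{-1}(\Omegan$-type connecting morphism$)\circ\Pn(g)$, and this is compatible by the naturality of $\Omegan$.

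\textbf{Step 3 (surjectivity and kernel).} Under the identification of Step 2, $\Hom(\Pn(X),\Sigma^{-1}\Pn(Y))\cong\ker(\Pn(\alpha)\circ-)$. Since $\Pn$ is bijective on $(X,Y_1)$ and $(X,M_0)$, this kernel corresponds exactly to $\ker(\alpha\circ-)$ on $(X,Y_1)$, which is the image of $(X,\Sigma^{-1}Y)$. This gives surjectivity. The kernel of the map is the kernel of $(X,\Sigma^{-1}Y)\to(X,Y_1)$, namely the morphisms factoring through $\Sigma^{-1}\beta\colon\Sigma^{-1}M_0\to\Sigma^{-1}Y$. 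It remains to show this equals $[\Sigma^{-1}\m](X,\Sigma^{-1}Y)$. The inclusion $\subseteq$ is clear. For $\supseteq$, any map $\Sigma^{-1}M\to\Sigma^{-1}Y$ factors through $\Sigma^{-1}\beta$, because $\beta$ is a right $\m$-approximation; this follows from $\Hom(\m,\Sigma Y_1)=0$ by \eqref{eq:Mtopr}.

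\textbf{Main obstacle.} I expect the main obstacle to be making the vertical map at $(X,\Sigma^{-1}Y)$ canonical, so that it is a map rather than a mere isomorphism of abstract groups, and checking the sign. The defining lift is unique because $\Hom(\Pn(X),\Sigma^{-1}M_0)=0$, so well-definedness is automatic. One must also check that the triangle in $K^b(\m)$ is really the $(\Pn,\Omegan)$-image; I would quote Theorem~\ref{preserve} for this.
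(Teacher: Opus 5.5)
Your proposal is correct and follows essentially the same route as the paper. Both arguments apply $(\Pn,\Omegan)$ to the extriangle $Y_1\to M_0\to Y\dashrightarrow$ and compare the two long exact Hom sequences, using $\Hom_{K^b(\m)}(\Pn(X),\Sigma^{-1}M_0)=0$ and the full faithfulness of $\Pn$. The map is then obtained from the universal property of kernels, and its kernel is identified through the right $\m$-approximation $M_0\to Y$.

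You are right that condition (iii) is implicitly needed; the paper also uses it when it invokes Theorem~\ref{newcor}. Your appeal to Example~\ref{intrinsic} is unnecessary, since only the extriangulated-functor property of $(\Pn,\Omegan)$ is used.
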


\begin{proof}
    Since $Y \in \pr{\t}{n+1}{\m}$, there exists an extriangle in $\nprt(\m)$:
    \[
    Y_1 \xrightarrow{u} M_0 \xrightarrow{v} Y \xrightarrow{w} \Sigma Y_1,
    \]
    where $M_0\in\m$ and $Y_1\in\pr{\t}{n}{\m}$. Applying the extriangulated functor $(\Pn,\Omegan)$ to it, we obtain a triangle in $K^b(\m)$:
    \[
    \Pn(Y_1) \xrightarrow{\Pn(u)} \Pn(M_0) \xrightarrow{\Pn(v)} \Pn(Y) \xrightarrow{\Omegan(w)} \Sigma \Pn(Y_1)
    \]
    Applying $\Hom_{\t}(X, -)$ and $\Hom_{K^b(\m)}(\Pn(X), -)$ to these two triangles respectively, we obtain the following diagram of exact sequences, where the square commutes:
    \[
    \begin{tikzcd}[column sep=1.7em]
    (X,\Sigma^{-1}M_0) \arrow[r, "(\Sigma^{-1}v) \circ -"]& (X, \Sigma^{-1}Y) \arrow[r, "(\Sigma^{-1}w) \circ -"]& (X, Y_1) \arrow[r, "u \circ -"] \arrow[d, "\Pn"] & (X, M_0) \arrow[d, "\Pn"] \\
    0 = (\Pn(X), \Sigma^{-1}\Pn(M_0)) \arrow[r] & (\Pn(X), \Sigma^{-1}\Pn(Y)) \arrow[r, "(\Sigma^{-1}\Omegan(w)) \circ -"] & (\Pn(X), \Pn(Y_1)) \arrow[r, "\Pn(u) \circ -"] & (\Pn(X), \Pn(M_0))
    \end{tikzcd}\]
    Since $\Hom_{\t}(\Sigma^n\m, \m) = 0$, by Theorem~\ref{newcor}, $\Pn$ is an equivalence. Therefore, by the universal property of kernel, there exists uniquely a map
    \[
\Hom_{\t}(X, \Sigma^{-1} Y) \longrightarrow \Hom_{K^b(\m)}(\Pn(X), \Sigma^{-1}\Pn(Y)),
\]
which is surjective and whose kernel is $\operatorname{Im}((\Sigma^{-1}v)\circ-)$. Since $\m$ is $n$-rigid and $Y_1\in\pr{\t}{n}{\m}$, we have $\Hom_\t(\m,\Sigma Y_1)=0$. Hence, $v$ is a right $\m$-approximation of $Y$. Consequently, we have $\operatorname{Im}((\Sigma^{-1}v)\circ-)=[\Sigma^{-1}\m](X, \Sigma^{-1} Y)$, which completes the proof.
\end{proof}

\section{An application}\label{section6}

Following \cite{IY}, an $n$-rigid subcategory $\m$ of a triangulated category $\t$ is called \emph{$n$-cluster tilting} if it is functorially finite and satisfies:
\begin{align*}
    \m &= \{X \in \t \mid \Hom_{\t}(\m, \Sigma^{i}X) = 0, \ 1 \leq i \leq n\} \\
       &= \{X \in \t \mid \Hom_{\t}(X, \Sigma^{i}\m) = 0, \ 1 \leq i \leq n\}.
\end{align*}
An object $M$ in $\t$ is called \emph{$n$-cluster tilting} if $\add M$ is an $n$-cluster tilting subcategory. Throughout this section, $\k$ will be a field. An endofunctor $\mathbb{S}$ of a $\k$-linear category is called a \emph{Serre} functor if for any objects $X$ and $Y$, there exists a functorial isomorphism
\[\Hom(X,Y)\cong D\Hom(Y,\mathbb{S}X),\]
where $D=\Hom_\k(-,\k)$ is the standard dual. Now we state the application of Theorem~\ref{finalequiv} to $n$-cluster tilting subcategories.
	
\begin{corollary}\label{cor:cluster-tilting}
	Let $\t$ be an algebraic triangulated category with an $n$-cluster tilting subcategory $\m$. Suppose $\Hom_{\t}(\Sigma^{i}\m,\m)=0$ for all $1\leq i\leq n-1$. Then there exists an extriangulated functor
	\[
		(\Pn,\Omegan)\colon (\t,\e_\m)\rightarrow K^{[-n,0]}(\m),
	\]
	which induces an extriangle equivalence
	\[
		\t/[\Sigma^n\m,\m]\xrightarrow{\simeq}K^{[-n,0]}(\m). 
	\]
	Furthermore, the functor $\Pn$ is an equivalence if and only if $\Sigma^{n+1}\m=\m$, in which case $K^{[-n,0]}(\m)$ admits a triangulated structure. 
    
    If $\t$ is $\k$-linear and admits a Serre functor $\mathbb{S}$, then $\Pn$ is an equivalence if and only if $\m=\mathbb{S}\m$, which is also equivalent to $\m$ being self-injective, i.e., the projective and injective objects in $\mod \m$ coincide, where $\mod\m$ denotes the category of finitely presented functors $\m^{op}\to\mod\k$.
\end{corollary}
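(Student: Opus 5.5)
The plan has three parts: identify $\t$ with $\pr{\t}{n+1}{\m}$ and apply the general results, characterize when $\Pn$ is an equivalence via the ideal $[\Sigma^n\m,\m]$, and translate that criterion into Serre-functor language.

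First, since $\m$ is $n$-cluster tilting, \cite[Theorem~3.1]{IY} gives $\t=\m*\Sigma\m*\cdots*\Sigma^n\m=\pr{\t}{n+1}{\m}$. Proposition~\ref{prop:extri} then makes $(\t,\e_\m)$ an extriangulated category. Theorem~\ref{preserve} supplies $(\Pn,\Omegan)$ with $\Pn|_\m=\id_\m$. Under the hypothesis $\Hom_\t(\Sigma^i\m,\m)=0$ for $1\le i\le n-1$, Theorem~\ref{thm:algtri} yields the extriangle equivalence $\t/[\Sigma^n\m,\m]\simeq K^{[-n,0]}(\m)$.

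For the equivalence criterion, Theorem~\ref{thm:algtri} shows $\Pn$ is full and dense with kernel $[\Sigma^n\m,\m]$. Hence $\Pn$ is an equivalence if and only if $[\Sigma^n\m,\m]=0$, i.e.\ $\Hom_\t(\Sigma^n\m,\m)=0$, i.e.\ $\Hom_\t(\m,\Sigma^{n+1}\m)$... more precisely $\Hom_\t(\Sigma^{n+1}\m,\Sigma\m)=0$. Combined with the hypothesis and $n$-rigidity, this condition says exactly
\[\Hom_\t(\Sigma^{n+1}\m,\Sigma^i\m)=0\quad\text{for }1\le i\le n,\]
so by the cluster tilting characterization $\Sigma^{n+1}\m\subseteq\m$. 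Conversely, $\Sigma^{n+1}\m\subseteq\m$ together with $\Hom(\m,\Sigma\m)=0$ gives $\Hom(\Sigma^n\m,\m)\cong\Hom(\Sigma^{n+1}\m,\Sigma\m)=0$.

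The main obstacle is upgrading the inclusion $\Sigma^{n+1}\m\subseteq\m$ to equality. I would do this with the other half of the cluster tilting condition: $\Hom(\Sigma^{-n-1}\m,\Sigma^i\m)\cong\Hom(\m,\Sigma^{i+n+1}\m)$ should also vanish, because $\Sigma^{i+n+1}\m\subseteq\Sigma^{i}\m$. That gives $\Sigma^{-n-1}\m\subseteq\m$ as well. Once $\Pn$ is an equivalence, $K^{[-n,0]}(\m)\simeq\t$ inherits a triangulated structure by transport. In particular $\e_\m=\Hom(-,\Sigma-)$ there, since $\t=\Sigma\pr{\t}{n}{\m}$ follows from $\Sigma^{-1}\t=\t$.

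For the Serre functor part, I use that for $n$-cluster tilting $\m$ one has $\mathbb{S}\m=\Sigma^{n+1}\m$ \cite{IY}: indeed $\mathbb{S}_{n+1}:=\mathbb{S}\Sigma^{-n-1}$ preserves $n$-cluster tilting subcategories and fixes $\m$. This turns the criterion into $\m=\mathbb{S}\m$. Finally, the projectives of $\mod\m$ are $\Hom(-,\m)$ and the injectives are $D\Hom(\m,-)\cong\Hom(-,\mathbb{S}\m)$ restricted to $\m$. Thus self-injectivity is equivalent to $\add\mathbb{S}\m|_\m$ matching $\m$, which I would deduce via Yoneda, using that $\mathbb{S}\m$ is again $n$-cluster tilting.
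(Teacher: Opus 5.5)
The genuine gap is in the Serre-functor part, in the implication ``$\m$ self-injective $\Rightarrow$ $\m=\mathbb{S}\m$''. Suppose $\Hom_\t(-,\mathbb{S}M)|_\m\cong\Hom_\t(-,M')|_\m$ with $M'\in\m$. Yoneda then only gives a morphism $f\colon M'\to\mathbb{S}M$ with $\Hom_\t(\m,f)$ bijective. The restricted Yoneda functor kills $\Sigma\m,\dots,\Sigma^n\m$, so this does not make $f$ invertible. Knowing that $\mathbb{S}\m$ is again $n$-cluster tilting does not help either.

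In fact the implication is false without the hypothesis $\Hom_\t(\Sigma^i\m,\m)=0$ for $1\le i\le n-1$. Take Example~\ref{exm:1}, which is $3$-Calabi--Yau with $\mathbb{S}=\Sigma^3$. There $\End_\t(P_1\oplus\Sigma P_2)$ is semisimple, so $\m$ is self-injective, and $\Hom_\t(-,P_2)|_\m\cong\Hom_\t(-,P_1)|_\m$. Yet $\mathbb{S}P_1=P_2\notin\m$. So your argument must use that hypothesis at this step.

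One way to do it: complete $f$ to a triangle $M'\xrightarrow{f}\mathbb{S}M\to C\to\Sigma M'$. Combine bijectivity of $\Hom_\t(\m,f)$, $n$-rigidity, and Serre duality $\Hom_\t(N,\Sigma^{i}\mathbb{S}M)\cong D\Hom_\t(\Sigma^{i}M,N)$ for $N\in\m$. This gives $\Hom_\t(\m,\Sigma^jC)=0$ for $-1\le j\le n-1$. The range $1\le j\le n-1$ is exactly where the hypothesis $\Hom_\t(\Sigma^i\m,\m)=0$ is needed, and $j=-1$ uses $\Hom_\t(M,\Sigma N)=0$. Hence $\Sigma^{-1}C\in\m$ and $\Hom_\t(\m,\Sigma^{-1}C)=0$, so $C=0$ and $\mathbb{S}M\cong M'\in\m$. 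Then $\Sigma^{n+1}\m=\mathbb{S}\m\subseteq\m$, and your upgrade argument gives equality.

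The rest of your plan is correct, and it is more self-contained than the paper, which cites \cite[Proposition~3.6]{IO} for everything after Theorem~\ref{finalequiv}. The following steps all work:
\begin{itemize}
\item the reduction to $\Hom_\t(\Sigma^n\m,\m)=0$, via fullness, density and kernel $[\Sigma^n\m,\m]$;
\item the translation of this into $\Sigma^{n+1}\m\subseteq\m$;
\item the upgrade to equality via $\Hom_\t(\m,\Sigma^{i+n+1}\m)\subseteq\Hom_\t(\m,\Sigma^i\m)=0$;
\item the identity $\mathbb{S}\m=\Sigma^{n+1}\m$.
\end{itemize}

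One side remark is false, though you do not need it. $\e_\m$ never equals $\Hom_\t(-,\Sigma-)$ when $\m\neq0$: every $M\in\m$ is $\e_\m$-projective, while $\Hom_\t(M,\Sigma\Sigma^{-1}M)\neq0$. Likewise $\t\neq\Sigma\pr{\t}{n}{\m}$, because $\Hom_\t(\m,\Sigma\pr{\t}{n}{\m})=0$. The triangulated structure on $K^{[-n,0]}(\m)$ is simply transported along the additive equivalence $\Pn$.
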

	
\begin{proof}
	By~\cite[Theorem 3.1]{IY}, we have $\t=\pr{\t}{n+1}{\m}$. The first statement follows directly from Theorem~\ref{finalequiv}. Together with \cite[Proposition 3.6]{IO}, this implies the remaining assertions.
\end{proof}

\begin{remark}
    The vanishing condition $\Hom_{\t}(\Sigma^{i}\m,\m)=0$ for all $1\leq i\leq n-1$ in Corollary~\ref{cor:cluster-tilting} is referred to as the ``vanishing of small negative extensions'' (vosnex) property in \cite[Notation~3.5]{IO}. For the historical context and earlier applications of this condition, we refer the reader to \cite{KR07, KR08}.
\end{remark}

\begin{exam}
    Let $Q$ be the quiver $3\to 2\to 1$, and let $\t=D^{b}(\k Q)/(\tau^{-1}\Sigma^{2})$ be the 2-cluster category of $Q$. Let $P_{i}$ denote the indecomposable projective module corresponding to vertex $i$. Let $T=P_{1}\oplus P_{2}\oplus P_{3}$. Then $T$ is a $2$-cluster tilting object in $\t$ satisfying $\Hom_\t(\Sigma T,T)=0$. Set $A:=\End_{\t}(T)$. By Corollary~\ref{cor:cluster-tilting}, we have an equivalence
    \[
    \Pn\colon\t/[\Sigma^{2}T,T]\xrightarrow{\simeq} K^{[-2,0]}(\proj A).
    \]
\end{exam}

\begin{exam}
    Let $Q$ be the quiver 
    \[ n+2 \to n+1\to n\to \cdots \to 2\to 1,\]
    and let $D^{b}(\mod \k Q)$ be the bounded derived category of finitely generated right modules of the path algebra $\k Q$. Let $P_{1}$ denote the projective $\k Q$-module corresponding to vertex $1$. Let 
    \[ \m = \add\{(\tau\Sigma)^{i}(P_{1}) \mid i \in \mathbb{Z}\},\]
    where $\tau$ is the Auslander-Reiten translation. Then $\m$ is an $n$-cluster tilting subcategory of $D^{b}(\mod \k Q)$. Since $\tau^{n+3}=\Sigma^{-2}$, we have $\m=\Sigma^{n+1}\m$. By Corollary~\ref{cor:cluster-tilting}, there exists an extriangle equivalence 
    \[(\Pn,\Omegan)\colon D^{b}(\mod \k Q) \simeq K^{[-n,0]}(\m).\]
    In this case, $K^{[-n,0]}(\m)$ admits a triangulated structure induced by the equivalence $\Pn$ of additive categories. 

    Let $C_{n}(\k Q):=D^{b}(\mod \k Q)/(\tau^{-1}\Sigma^{n})$ be the $n$-cluster category of $Q$. Let 
    \[p\colon D^{b}(\mod \k Q) \to C_{n}(\k Q)\]
    be the canonical functor. Then $p(\m)$ is an $n$-cluster tilting subcategory of $C_{n}(\k Q)$. Note that $p(\m) = \add M$, where 
    \[ M = P_{1} \oplus (\tau\Sigma)(P_{1}) \oplus \dots \oplus (\tau\Sigma)^{n+1}(P_{1}).\]
    The endomorphism algebra $A:=\End_{C_{n}(\k Q)}(M)$ is isomorphic to the quotient of the path algebra of the quiver
    \[
    \begin{tikzcd}
    1 \arrow[rrrr, bend left=25, "\alpha_{n+2}"] & 2 \arrow[l, "\alpha_1"] & 3 \arrow[l, "\alpha_2"] & \dots \arrow[l, "\alpha_3"] & n+2 \arrow[l, "\alpha_{n+1}"]
    \end{tikzcd}
    \]
    by the ideal generated by $\alpha_{i+1}\alpha_i$ (with indices taken modulo $n+2$). Hence, $A$ is a self-injective algebra. Thus, by Corollary~\ref{cor:cluster-tilting}, there exists an extriangle equivalence 
    \[C_{n}(\k Q) \simeq K^{[-n,0]}(p(\m)),\]
    whose underlying equivalence of additive categories induces a triangulated structure on the $(n+1)$-term subcategory $K^{[-n,0]}(\proj A)$ of the bounded homotopy category $K^b(\proj A)$ of finitely generated projective modules over $A$.
\end{exam}

Recall that a $\k$-linear triangulated category $\t$ is called \emph{$n$-Calabi-Yau} if $\Sigma^n$ is a Serre functor of $\t$. 

Throughout the rest of this section, let $\t$ be an algebraic $\k$-linear $(n+1)$-Calabi-Yau triangulated category with an $n$-cluster tilting subcategory $\m$ satisfying $\Hom_\t(\Sigma\m,\m)=0$ for all $1\leq i\leq n-1$. We call an object (resp., a subcategory) of $\t$ \emph{$2$-term} (with respect to $\m$) if it belongs to (resp., is contained in) $\pr{\t}{2}{\m}$.

\begin{prop}\label{prop:3equiv}
    Let $\X$ be a $2$-term functorially finite subcategory of $\t$. Suppose that $\Pn(\X)$ is covariantly finite in $K^{b}(\m)$. Then the following statements are equivalent.
    \begin{enumerate}
        \item $\X$ is an $n$-cluster tilting subcategory of the triangulated category $\t$.
        \item $\X=\{Y\in\t\mid \e_\m^i(Y,\X)=0=\e^i_\m(\X,Y)\ \text{for all $i>0$}\}$.
        \item $\X$ is silting in $(\t,\e_\m)$.
    \end{enumerate}
\end{prop}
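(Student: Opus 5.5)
The plan is to translate all three conditions into Hom-vanishing statements in $\t$ and in $K^b(\m)$, using Lemma~\ref{lem:shorter} together with the equivalence of Corollary~\ref{cor:cluster-tilting} (recall $\t=\pr{\t}{n+1}{\m}$ by \cite[Theorem~3.1]{IY}). Since $\X\subseteq\pr{\t}{2}{\m}$, Lemma~\ref{lem:shorter}(1) with $i=2$ gives $\e_\m^j(\X,-)=0$ for $j\ge2$. Lemma~\ref{lem:shorter}(2) with $i=2$ gives $\e_\m^j(Y,\X)\cong\Hom_\t(Y,\Sigma^j\X)$ for $1\le j\le n-1$, and $\e_\m^n(Y,\X)\cong[\Sigma\pr{\t}{n}{\m}](Y,\Sigma^n\X)$. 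Also $\e_\m^1(\X,Y)=[\Sigma\pr{\t}{n}{\m}](\X,\Sigma Y)$.

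\emph{(1)$\Rightarrow$(2).} If $\X$ is $n$-cluster tilting, then $\Hom_\t(\X,\Sigma^i\X)=0$ for $1\le i\le n$, so every $\e_\m^i(\X,\X)$ vanishes by the formulas above. For the reverse inclusion, let $Y$ satisfy the vanishing in (2). The formulas give $\Hom_\t(Y,\Sigma^i\X)=0$ for $1\le i\le n-1$. For $i=n$ one only has vanishing of the part factoring through $\Sigma\pr{\t}{n}{\m}$, and I expect this to be the main obstacle. To handle it, write $X\in\X$ via a triangle $M_1\to M_0\to X\to\Sigma M_1$, so that $\Sigma^nX\in\Sigma^n\m*\Sigma^{n+1}\m$. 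By $(n+1)$-Calabi--Yau duality, $\Hom_\t(Y,\Sigma^{n+1}\m)\cong D\Hom_\t(\m,Y)$. The goal is to show that any map $Y\to\Sigma^n X$ which does not factor through $\Sigma\pr{\t}{n}{\m}$ yields, by Serre duality, a nonzero element of $\Hom_\t(\X,\Sigma Y)$; this element should in fact lie in $\e_\m(\X,Y)$ because $\X$ is $2$-term, contradicting the hypothesis. Using the duality $\Hom_\t(Y,\Sigma^i\X)\cong D\Hom_\t(\X,\Sigma^{n+1-i}Y)$, the two-sided vanishing in (2) should then give $\Hom_\t(Y,\Sigma^i\X)=0$ for all $1\le i\le n$, and hence $Y\in\X$ since $\X$ is $n$-cluster tilting.

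\emph{(2)$\Rightarrow$(3).} Condition (2) gives presilting directly. To get silting, transport the situation along $\Pn$: by Theorem~\ref{newcor}(2), $\e_\m^i(X,Y)\cong\Hom_{K^b(\m)}(\Pn X,\Sigma^i\Pn Y)$. By Corollary~\ref{cor:3bi}, $\Pn(\X)$ then satisfies exactly the criterion of Lemma~\ref{lem:silequiv}; covariant finiteness of $\Pn(\X)$ is assumed. Hence $\Pn(\X)$ is silting in $K^{[-n,0]}(\m)$, and so $\X$ is silting in $\t$ by Theorem~\ref{silt_corres}.

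\emph{(3)$\Rightarrow$(1).} A silting $\X$ is presilting, so $\Hom_\t(\X,\Sigma^i\X)=0$ for $1\le i\le n-1$. For $i=n$, the vanishing of $\e_\m^n(\X,\X)$ controls the part of $\Hom_\t(\X,\Sigma^n\X)$ factoring through $\Sigma\pr{\t}{n}{\m}$, and Serre duality gives $\Hom_\t(\X,\Sigma^n\X)\cong D\Hom_\t(\X,\Sigma\X)$. For $n\ge2$ this vanishes already; for $n=1$ one uses $\e_\m(\X,\X)=\Hom_\t(\X,\Sigma\X)$ since $\X\subseteq\pr{\t}{1}{\m}*\Sigma\m$. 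Together with Serre duality this shows $\X$ is $n$-rigid. For maximality, take $Y$ with $\Hom_\t(\X,\Sigma^iY)=0$ for $1\le i\le n$. Duality gives the other vanishing, so $\Pn(Y)$ is Ext-orthogonal to the silting subcategory $\Pn(\X)$ on both sides, and Lemma~\ref{lem:silequiv} yields $\Pn(Y)\in\Pn(\X)$. Corollary~\ref{cor:3bi} then gives $Y\in\X$.
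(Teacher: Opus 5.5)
Your cycle (1)$\Rightarrow$(2)$\Rightarrow$(3)$\Rightarrow$(1) differs from the paper's (1)$\Rightarrow$(3)$\Rightarrow$(2)$\Rightarrow$(1). Several of your steps are sound:
\begin{itemize}
\item (2)$\Rightarrow$(3), via the ``if'' half of Lemma~\ref{lem:silequiv}, which is where the covariant finiteness of $\Pn(\X)$ is genuinely used;
\item the maximality part of (3)$\Rightarrow$(1), which is the paper's (3)$\Rightarrow$(2)$\Rightarrow$(1) merged;
\item your rigidity check for $n\ge 2$.
\end{itemize}

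\textbf{The gap in (1)$\Rightarrow$(2).} It is the step you flag yourself: the vanishing of $\Hom_\t(Y,\Sigma^n\X)$ is only announced (``the goal is'', ``should''). Serre-dualizing a single map $Y\to\Sigma^nX$ does not by itself land in $\e_\m(X,Y)$. What is needed is an annihilator statement.
\begin{itemize}
\item Fix a triangle $M_1\to M_0\xrightarrow{\beta}X\xrightarrow{\gamma}\Sigma M_1$ with $M_0,M_1\in\m$. Then $\e_\m(X,Y)=\{f\mid f\beta=0\}$ is the image of $-\circ\gamma$.
\item Since $\Hom_\t(\pr{\t}{n}{\m},\Sigma^n\m)=0$, the group $[\Sigma\pr{\t}{n}{\m}](Y,\Sigma^nX)\cong\e^n_\m(Y,X)$ is exactly the kernel of $\Sigma^n\gamma\circ-$.
\item Naturality of the $(n+1)$-Calabi--Yau pairing $\Hom_\t(X,\Sigma Y)\times\Hom_\t(Y,\Sigma^nX)\to\k$ makes these two subspaces mutual annihilators. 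That is, $\e^n_\m(Y,X)\cong D\bigl(\Hom_\t(X,\Sigma Y)/\e_\m(X,Y)\bigr)$.
\item Hence $\e_\m(X,Y)=0=\e^n_\m(Y,X)$ gives $\Hom_\t(X,\Sigma Y)=0$, equivalently $\Hom_\t(Y,\Sigma^nX)=0$, and your argument closes.
\end{itemize}
Without this lemma the implication is not proved.

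\textbf{The case $n=1$ in (3)$\Rightarrow$(1).} The same lemma is needed here. Your identity $\e_\m(\X,\X)=\Hom_\t(\X,\Sigma\X)$ does not follow from $\X\subseteq\m*\Sigma\m$ and is false in general: $\e_\m(M,\Sigma M')=0$ for $M,M'\in\m$, whereas $\Hom_\t(M,\Sigma^2M')\cong D\Hom_\t(M',M)$. Rigidity follows instead because $\e_\m(X',X)$ is the annihilator of $\e_\m(X,X')$, so vanishing of both gives $\Hom_\t(X',\Sigma X)=0$.

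\textbf{Comparison with the paper.} The paper sidesteps this duality for (1). It proves (1)$\Rightarrow$(3) as follows: take a left $\X$-approximation $M\to X_0$ of each $M\in\m$, show its cone $M_1$ satisfies $\Hom_\t(M_1,\Sigma^i\X)=0$ for $1\le i\le n$ (so $M_1\in\X$), and observe that the triangle is an extriangle, whence $\m\subseteq\mathrm{thick}\,\X$. Then (3)$\Rightarrow$(2) uses only the ``only if'' half of Lemma~\ref{lem:silequiv}. Once the annihilator lemma is supplied, your route gives a direct, silting-free proof of (1)$\Rightarrow$(2). It also makes the $n$-rigidity in the return direction explicit, which the paper's (2)$\Rightarrow$(1) leaves implicit.
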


\begin{proof}
    (1)$\implies$(3): Suppose that $\X$ is an $n$-cluster tilting subcategory of $\t$. Since $\X\subseteq\pr{\t}{2}{\m}$, by Lemma~\ref{lem:shorter}~(1), we have $\e_\m^i(\X,\X)=0$ for all $i\geq 2$. By definition, we have $\e_\m(\X,\X)\subseteq\Hom_\t(\X,\Sigma\X)=0$. Thus, $\X$ is presilting in $(\t,\e_\m)$.
    
    For any object $M\in\m$, since $\X$ is functorially finite in $\pr{\t}{n+1}{\m}$, there exists a left $\X$-approximation $f\colon M\to X_0$ of $M$. We extend this morphism to a triangle in $\t$:
    \[M\xrightarrow{f} X_0\to M_1\to \Sigma M.\]
    For any $X'\in\X$, applying the functor $\Hom_\t(-,X')$ to this triangle yields a long exact sequence
    \[\begin{array}{l}
         \Hom_\t(X_0,X')\xrightarrow{-\circ f}\Hom_\t(M,X')\to \Hom_\t(M_1,\Sigma X')\to \Hom_\t(X_0,\Sigma X')\to\cdots \\
         \to \Hom_\t(M,\Sigma^i X')\to \Hom_\t(M_1,\Sigma^{i+1}X')\to\Hom_\t(X_0,\Sigma^{i+1}X')\to\cdots.
    \end{array}
    \]
    Since $f$ is a left $\X$-approximation of $M$, the map $-\circ f$ is surjective. Since $\X$ is $n$-cluster tilting, we have $\Hom_\t(X_0,\Sigma^{i+1}X')=0$ for all $0\leq i\leq n-1$. Since $\X\subseteq\pr{\t}{2}{\m}$, we have $\Hom_\t(M,\Sigma^iX')=0$ for all $1\leq i\leq n-1$. Therefore, $\Hom_\t(M_1,\Sigma^{i+1} X')=0$ for all $0\leq i\leq n-1$. This implies that $M_1\in\X$. Note that the triangle above is an extriangle in $(\t,\e_\m)$, because $\Sigma M\in\Sigma\pr{\t}{n}{\m}$. Hence, $\m\subseteq\mathrm{thick}_{(\t,\e_\m)}(\X)$ and thus, $\X$ is silting in $(\t,\e_\m)$.
    
    (3)$\implies$(2): Since $\X$ is silting in $(\t,\e_\m)$, by Theorem~\ref{silt_corres}, $\Pn(\X)$ is silting in $K^{[-n,0]}(\m)$. Let $Y$ be an object in $\t$ such that $\e_\m^i(\X,Y)=0=\e_\m^i(Y,\X)$ for all $i>0$. Then by Theorem~\ref{finalequiv}~(2), we have $\Hom_{K^b(\m)}(\Pn(\X),\Sigma^i \Pn(Y))=0=\Hom_{K^b(\m)}(\Pn(Y),\Sigma^i\Pn(\X))$ for all $i>0$. Hence, by Lemma~\ref{lem:silequiv}, we have $\Pn(Y)\in\Pn(\X)$. By Corollary~\ref{cor:3bi}, this implies that $Y\in\X$.

    (2)$\implies$(1): Let $Y$ be an object in $\pr{\t}{n+1}{\m}$ satisfying $\Hom_\t(\X,\Sigma^i Y)=0$ for all $1\leq i\leq n$, which, by the $(n+1)$-Calabi-Yau property, is equivalent to the condition that $\Hom_\t(Y,\Sigma^i \X)=0$ for all $1\leq i\leq n$. Then by definition, $\e_\m(\X,Y)\subseteq\Hom_\t(\X,\Sigma Y)=0$. Since $\X\subseteq\pr{\t}{2}{\m}$, by Lemma~\ref{lem:shorter}~(1), we have $\e_\m^i(\X,Y)=0$ for all $i\geq 2$. On the other hand, by Lemma~\ref{lem:shorter}~(2), we have $\e_\m^i(Y,\X)=\Hom_\t(Y,\Sigma^i\X)=0$ for all $1\leq i\leq n-1$, and $\e_\m^n(Y,\X)=[\Sigma\pr{\t}{n}{\m}](Y,\Sigma^n\X)=0$. Furthermore, by Lemma~\ref{lem:shorter}~(1), we have $\Hom_\t(Y,\Sigma^i\X)=0$ for all $i>n$. Consequently, $Y\in\X$, which implies that $\X$ is $n$-cluster tilting in $\t$.
\end{proof}

We now apply the preceding results to establish a correspondence between $n$-cluster tilting objects and classical tilting complexes. A silting object $\n$ in $K^b(\m)$ is called \emph{tilting} if $\Hom_{K^b(\m)}(\Sigma^i\n,\n)=0$ for all $i>0$. 

\begin{corollary}\label{cor:cto}
    Let $\t$ be a Hom-finite Krull-Schmidt algebraic $\k$-linear $(n+1)$-Calabi-Yau triangulated category and let $M$ be an $n$-cluster tilting object in $\t$. Suppose that $\Hom_\t(\Sigma^iM,M)=0$ for all $1\leq i\leq n-1$. Let $A=\End_\t(M)$. Then the functor $\Pn$ induces a bijection between the isoclasses of basic $2$-term $n$-cluster tilting objects in $\t$ and the isoclasses of basic $2$-term silting complexes in $K^b(\proj A)$, which is compatible with mutation. 
    
    Assume in addition that $\Hom_\t(\Sigma^n M,M)=0$. Then the above bijection restricts to a bijection between the isoclasses of basic $2$-term $n$-cluster tilting objects $X$ in $\t$ satisfying $\Hom_\t(\Sigma X,X)=0$ and the isoclasses of basic $2$-term tilting complexes in $K^b(\proj A)$. Moreover, for any $2$-term $n$-cluster tilting object $X$ in $\t$, $\Hom_\t(\Sigma X, X)=0$ if and only if $\Hom_\t(\Sigma^i X,X)=0$ for all $1\leq i\leq n$.
\end{corollary}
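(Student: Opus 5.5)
We apply the machinery of Sections~\ref{section5} and~\ref{section6} with $\m=\add M$. Since $\t$ is Hom-finite Krull--Schmidt and $M$ is $n$-cluster tilting, \cite[Theorem 3.1]{IY} gives $\t=\pr{\t}{n+1}{\m}$. The vanishing hypothesis is condition (iii) of Theorem~\ref{thm:algtri}, so $(\Pn,\Omegan)$ induces the extriangle equivalence $\t/[\Sigma^n\m,\m]\simeq K^{[-n,0]}(\m)$. We also get the silting bijection of Theorem~\ref{silt_corres} and the object bijections of Corollary~\ref{cor:3bi}. Moreover $K^b(\m)\simeq K^b(\proj A)$ via $\Hom_\t(M,-)$, which identifies $K^{[-n,0]}(\m)$ with $K^{[-n,0]}(\proj A)$.

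\textbf{Step 1: $2$-term objects.} By Lemma~\ref{newlem1} and the density part of Lemma~\ref{dense}, $\Pn$ maps $\pr{\t}{2}{\m}$ onto $K^{[-1,0]}(\m)$ up to isomorphism. By Corollary~\ref{cor:3bi}(a), objects of $\pr{\t}{2}{\m}$ correspond bijectively, up to isomorphism, with objects of $K^{[-1,0]}(\m)$. Since $\Pn$ is full with nilpotent kernel, it preserves and reflects indecomposability and direct sums, so it respects basicness (the Krull--Schmidt property of $\t$ transfers).

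\textbf{Step 2: silting versus $n$-cluster tilting.} For a $2$-term $X$, the subcategory $\add X$ is functorially finite, and $\Pn(\add X)=\add\Pn(X)$ is covariantly finite in $K^b(\m)$ because of Hom-finiteness. Proposition~\ref{prop:3equiv} then says that $X$ is $n$-cluster tilting iff $\add X$ is silting in $(\t,\e_\m)$. By Theorem~\ref{silt_corres} this holds iff $\Pn(X)$ is silting in $K^{[-n,0]}(\m)$. By Remark~\ref{rmk:termsilt} that means $\Pn(X)$ is a silting complex of $K^b(\proj A)$, which is $2$-term by Step~1. Conversely, every $2$-term silting complex lies in $K^{[-1,0]}$ and lifts to a $2$-term $X$ by Step~1. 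Compatibility with mutation is the mutation statement of Theorem~\ref{silt_corres}. One point needs checking: mutations computed in $K^{[-n,0]}$ must agree with those in $K^b$, and mutating a $2$-term silting object inside the $2$-term world should stay $2$-term.

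\textbf{Step 3: the tilting refinement.} Now assume $\Hom_\t(\Sigma^nM,M)=0$. Then $[\Sigma^n\m,\m]=0$, so $\Pn$ is an equivalence. Lemma~\ref{lem:firstneg} gives a surjection $\Hom_\t(X,\Sigma^{-1}X)\to\Hom(\Pn X,\Sigma^{-1}\Pn X)$ with kernel $[\Sigma^{-1}\m](X,\Sigma^{-1}X)$. For $2$-term complexes, $\Hom(\Sigma^i\Pn X,\Pn X)=0$ automatically for $i\ge2$. So $\Pn X$ is tilting iff this target vanishes, i.e.\ iff every map $\Sigma X\to X$ factors through $\m$.

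The main obstacle is showing that such factoring maps are zero, giving $\Hom_\t(\Sigma X,X)=0$. The first attempt is this: a map $\Sigma X\to M'\to X$ has its first factor in $\Hom(\Sigma X,\m)$. Using the triangle $M_1\to M_0\to X\to\Sigma M_1$, we get $\Sigma X\in \Sigma\m*\Sigma^2\m$, and the hypotheses $\Hom(\Sigma\m,\m)=0$ (when $n\ge2$) and $\Hom(\Sigma^2\m,\m)$ should kill this. Here the case $n=1$ uses $\Hom(\Sigma^n M,M)=0$ with $n=1$, and the case $n\geq 2$ needs $\Hom(\Sigma^2\m,\m)=0$, which holds when $2\le n-1$ or $2=n$.

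For the final equivalence, the direction $\Leftarrow$ is trivial. For $\Rightarrow$, with $2\le i\le n$ we write $\Sigma^iX\in\Sigma^i\m*\Sigma^{i+1}\m$ and $X\in\m*\Sigma\m$. Vanishing then follows from $\Hom(\Sigma^j\m,\m)=0$ for $1\le j\le n$, together with $n$-rigidity and the $(n+1)$-Calabi--Yau duality for the extremal term $\Hom(\Sigma^{n+1}\m,\Sigma\m)$. If that extremal term resists, we would instead use Serre duality $\Hom(\Sigma^iX,X)\cong D\Hom(X,\Sigma^{n+1-i}X)$, which vanishes for $1\le n+1-i\le n$ by cluster tilting.
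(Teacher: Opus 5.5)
Your Steps 1--2 are the paper's proof of the first assertion (Theorem~\ref{silt_corres} plus Proposition~\ref{prop:3equiv}). For $n\ge 2$, your use of Lemma~\ref{lem:firstneg} together with $\Hom_\t(\Sigma X,\m)=0$ is exactly how the paper shows that $\Hom_\t(\Sigma X,X)=0$ iff $\Pn(X)$ is tilting.

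\textbf{The gap: the case $i=n$ of the last assertion.} What fails is the implication $\Hom_\t(\Sigma X,X)=0\Rightarrow\Hom_\t(\Sigma^nX,X)=0$. Run the d\'evissage with $\Sigma^nX\in\Sigma^n\m*\Sigma^{n+1}\m$ and $X\in\m*\Sigma\m$.
\begin{itemize}
\item The term you single out, $\Hom_\t(\Sigma^{n+1}\m,\Sigma\m)=\Hom_\t(\Sigma^n\m,\m)$, already vanishes by hypothesis.
\item The real obstruction is $\Hom_\t(\Sigma^{n+1}\m,\m)$. Under your hypotheses $\Pn$ is an equivalence, so $\Sigma^{n+1}\m=\m$ by Corollary~\ref{cor:cluster-tilting}. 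This term is therefore $\Hom_\t(\m,\m)\neq 0$, and no d\'evissage through $\m*\Sigma\m$ can handle $i=n$.
\item Your fallback is miscomputed. Serre duality gives $\Hom_\t(\Sigma^iX,X)\cong D\Hom_\t(X,\Sigma^{n+1+i}X)$. The formula $D\Hom_\t(X,\Sigma^{n+1-i}X)$ is the dual of $\Hom_\t(X,\Sigma^iX)$ instead. If your formula held, every $n$-cluster tilting object would satisfy $\Hom_\t(\Sigma X,X)=0$, and the tilting refinement would be vacuous.
\end{itemize}

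\textbf{The same obstruction at $n=1$.} There, getting $[\m](\Sigma X,X)=0$ from $\Hom_\t(\Sigma X,\m)=0$ would need $\Hom_\t(\Sigma^2\m,\m)=\Hom_\t(\Sigma^{n+1}\m,\m)=0$, which fails. The paper does not argue this case directly; it quotes \cite[Theorem~3.16~(d)]{Yang}.

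\textbf{The missing idea is a global argument via self-injectivity.}
\begin{enumerate}
\item Since $\Hom_\t(\Sigma^nM,M)=0$, Corollary~\ref{cor:cluster-tilting} gives that $\Pn$ is an equivalence and $A$ is self-injective.
\item Suppose $\Pn(X)$ is tilting; for $n\ge 2$ this is equivalent to $\Hom_\t(\Sigma X,X)=0$, as you showed. Then $\End_{K^b(\proj A)}(\Pn(X))$ is derived equivalent to $A$, hence self-injective by \cite[Theorem~2.1]{Al}.
\item Because $\Pn$ is an equivalence, $\End_\t(X)$ is self-injective as well.
\item The subcategory $\add X$ is $n$-cluster tilting with $\Hom_\t(\Sigma^iX,X)=0$ for $1\le i\le n-1$. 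For $i=1$ this is the assumption; for $2\le i\le n-1$ it follows from your d\'evissage, which is valid in that range.
\item Corollary~\ref{cor:cluster-tilting} applied to $\add X$ then gives $\Sigma^{n+1}\add X=\add X$. Hence $\Hom_\t(\Sigma^nX,X)\subseteq\Hom_\t(\Sigma^n\add X,\Sigma^{n+1}\add X)=\Hom_\t(\add X,\Sigma\add X)=0$.
\end{enumerate}
For $n=1$ the same chain works with the vanishing condition in step 4 vacuous. It yields ``$\Pn(X)$ tilting $\Rightarrow\Hom_\t(\Sigma X,X)=0$'', which also closes your $n=1$ case.
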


\begin{proof}
    The first assertion follows from Theorem~\ref{silt_corres} and Proposition~\ref{prop:3equiv}. When $n=1$, the second assertion is \cite[Theorem~3.16~(d)]{Yang}, while the third assertion is trivial. Now we assume $n>1$. Since $\Hom_\t(\Sigma^n M,M)=0$, by Corollary~\ref{cor:cluster-tilting}, the functor $\Pn\colon\t\to K^{[-n,0]}(\proj A)$ is an equivalence of additive categories. Let $X$ be a $2$-term $n$-cluster tilting object in $\t$. Since $X$ is $2$-term, we have $\Hom_\t(X,\Sigma^{-1}\add M)=0$ and $\Hom_\t(\Sigma^i X,X)=0$ for all $2\leq i\leq n-1$. Hence, by Lemma~\ref{lem:firstneg}, there exists an isomorphism
    \[\Hom_\t(\Sigma X,X)\cong\Hom_{K^b(\add M)}(\Sigma\Pn(X),\Pn(X)).\]
    Thus, $\Hom_\t(\Sigma X,X)=0$ if and only if $\Pn(X)$ is tilting. In this case, by \cite[Theorem~2.1]{Al}, $\End_{K^b(\proj A)}(\Pn(X))$ is self-injective, which in turn implies that $\End_\t(X)$ is self-injective. Therefore, by Corollary~\ref{cor:cluster-tilting}, we have $\Hom_\t(\Sigma^n X,X)=0$. This completes the proof.
\end{proof}

\noindent {\bf Acknowledgement. }We would like to thank Xiaofa Chen for his interseting and helpful discussions.

\end{document}